\documentclass[a4paper, twoside, 11pt]{article}
\usepackage{a4wide, amsmath, amssymb, mathtools, yfonts}
\usepackage{float}
\usepackage{tikz}
\usepackage[all]{xy}
\usepackage[utf8]{inputenc}
\usepackage{amsthm}
\usepackage[english]{babel}
\usepackage[pdfencoding=auto,hidelinks,backref]{hyperref}
\usepackage[short-journals,initials,msc-links,backrefs]{amsrefs}
\usepackage{authblk}
\usepackage{ross_macros}
\usepackage{array}
\newcolumntype{R}[1]{>{\raggedleft\let\newline\\\arraybackslash\hspace{0pt}}m{#1}}

\numberwithin{equation}{section}
\newtheorem{lemma}{Lemma}[section]
\newtheorem{theorem}[lemma]{Theorem}
\newtheorem{proposition}[lemma]{Proposition}
\newtheorem{corollary}[lemma]{Corollary}

\newtheorem{conjecture}[lemma]{Conjecture}
\newtheorem{remark}[lemma]{Remark}
\newtheorem{example}[lemma]{Example}
\theoremstyle{definition}
\newtheorem{definition}[lemma]{Definition}

\newtheorem{assumption}[lemma]{Assumption}

%new commands added by Alec -------
\newcommand{\eps}{\epsilon}
\newcommand{\bm}[1]{\boldsymbol{#1}}
\renewcommand{\l}{\left}
\renewcommand{\r}{\right}
\renewcommand{\ss}{\substack}
\newcommand{\f}{\frac}
\newcommand{\op}[1]{\operatorname{#1}}
\newcommand{\mc}[1]{\mathcal{#1}}
\newcommand{\mf}[1]{\mathfrak{#1}}
\newcommand{\bw}{\bm{w}}

\newcommand{\bv}{\bm{v}}
\newcommand{\bu}{\bm{u}}
\newcommand{\bz}{\bm{z}}
\newcommand{\bb}{\bm{b}}
\newcommand{\ba}{\bm{a}}

\newcommand{\br}{\bm{r}}
\newcommand{\bt}{\bm{t}}
%---------

\newcommand{\N}{\mathbb{N}}
\newcommand{\Z}{\mathbb{Z}}
\newcommand{\Q}{\mathbb{Q}}
\newcommand{\R}{\mathbb{R}}
\newcommand{\C}{\mathbb{C}}
\newcommand{\A}{\mathbb{A}}
\newcommand{\FF}{\mathbb{F}}
\newcommand\Proj{\mathbb{P}}

\newcommand\Gal{\mathrm{Gal}}
\newcommand\Norm{\mathrm{N}}

\newcommand{\RBw}{\mathcal{R}_{\bm{B}, \bw}}

\newcommand{\mn}[2][n]{\left[#2\right]_{#1}}

\bbsymbols{Z,Q,R,P,A,b,C}
\boldsymbols{v,w,A,a,B,s,x,b,c,t,e}
\calsymbols{C,B,A,I,J,M,X}
\upsymbols{Aut,ord,Hom}
\fraksymbols{p,m}

\DeclareMathOperator{\Br}{Br}

%-------------------------------- Title ----------------------------------
\title{\vspace{-\baselineskip}\sffamily\bfseries Local solubility of generalised Fermat equations}
\author[1]{Peter Koymans\thanks{Mathematisch Instituut, Universiteit Utrecht, Postbus 80.010, 3508 TA Utrecht, The Netherlands, p.h.koymans@uu.nl}}
\author[2]{Ross Paterson\thanks{School of Mathematics, School of Mathematics, University of Bristol, Bristol, BS8 1TW, UK, and the Heilbronn Institute for Mathematical Research, Bristol, UK, rosspatersonmath@gmail.com}}
\author[3]{Tim Santens\thanks{Centre for Mathematical Sciences, Wilberforce Rd, Cambridge CB3 0WB, ts996@cam.ac.uk}}
\author[2]{Alec Shute\thanks{School of Mathematics, University of Bristol, Bristol, BS8 1TW, UK, and the Heilbronn Institute for Mathematical Research, Bristol, UK, alec.shute@bristol.ac.uk}}
\affil[1]{Utrecht University}
\affil[2]{University of Bristol}
\affil[3]{University of Cambridge}
\date{\today}

%--------------------------------- Text ----------------------------------
\begin{document}
\maketitle

\begin{abstract}
For every $n\geq 2$ we determine the asymptotic formula for the number of integer triples $(a,b,c)$ of bounded absolute value such that the generalised Fermat equation given by $ax^n+by^n+cz^n=0$ is everywhere locally soluble.  We compute the leading constant, answering a question of Loughran--Rome--Sofos, and determine that the conjectures of Loughran--Smeets and Loughran--Rome--Sofos hold for such equations.
\end{abstract}

\section{Introduction}\label{sec:introduction}
Following Wiles's \cite{Wiles} proof of Fermat's last theorem, generalised Fermat equations 
$$
ax^p + by^q + cz^r = 0
$$
have been extensively studied in the literature, see for example \cites{BCDY, Darmon, DG, Ellenberg, FNS}. These papers ultimately rest on the modular method introduced by Wiles, and results are only known in a very limited range of values for the coefficients $a,b,c$ and the exponents $p, q, r$.

In this article we study the family of generalised Fermat equations of the form
\begin{align}
\label{eGenFermat}
ax^n + by^n + cz^n = 0,
\end{align}
where $(a, b, c)$ is a triple of integers of absolute value at most $B$.  The question of how many such equations have a non-zero rational solution $(x, y, z)$ appears to be completely out of reach to present techniques, with the exception of the much larger family of all cubic forms due to Bhargava \cite{Bhargava}.  In the light of this, one instead studies how often the equation has a solution everywhere-locally (i.e.~over every $\QQ_p$ and over $\RR$).  This perspective has seen significant interest in arithmetic geometry, with landmark conjectures of Loughran--Smeets \cite{LS} and Loughran--Rome--Sofos \cite{LRS} predicting the asymptotic for the number of everywhere-locally soluble members in certain families of varieties.  Indeed, in the latter \eqref{eGenFermat} is described as an interesting special case.  These conjectures have attracted a great deal of recent interest, see for example \cites{BL, BLS, LTT, LM, SV, Wilson, Wolff}.

Specifically for \Cref{eGenFermat}, the problem was first considered in the case $n=2$ by Serre \cite{Serre}, Guo \cite{Guo}, Friedlander--Iwaniec \cite{FI},  and Loughran--Rome--Sofos \cite{LRS}.  The case of general $n$ was first considered by Browning--Dietmann \cite{BD}, who gave an asymptotic upper bound which is sharp up to a constant when $n$ is prime.  Our main result solves the problem in total generality.

\begin{theorem}
\label{tMain}
% Theorem on local solubility.
Let $n > 1$ be an integer. Then as $B\to\infty$
\[
\# \set{(a, b, c)\in\ZZ^3 : \substack{\abs{a},\abs{b},\abs{c} \leq B, \\ \gcd(a,b,c) = 1, \\ ax^n+by^n+cz^n=0\textnormal{ has a} \\\textnormal{non-zero solution everywhere-locally}}}\sim C_n B^3 (\log B)^{3\alpha_n - 3},
\]
where, writing $\phi$ for Euler's phi function,
\[
\alpha_n = \frac{1}{\phi(n)} \sum_{r \in (\Z/n\Z)^{\times}}\frac{1}{\gcd(n, r - 1)},
\]
and $C_n > 0$ is an explicit constant which is independent of $B$.
\end{theorem}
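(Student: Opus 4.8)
The plan is to write the quantity to be estimated as $\sum_{(a,b,c)}\delta_\infty(a,b,c)\prod_p\delta_p(a,b,c)$, where $\delta_v(a,b,c)$ is the indicator that the plane curve $C_{a,b,c}\colon ax^n+by^n+cz^n=0$ has a $\QQ_v$-point, and then to extract the asymptotic by an analytic argument (a multiple Dirichlet series together with a multivariate Tauberian theorem). The first task is to make the local conditions explicit. The archimedean condition $\delta_\infty$ is automatic for $n$ odd, and for $n$ even asks only that $a,b,c$ do not all have the same sign, so it cuts out a fixed positive-measure region of the box. For $p$ larger than a bound depending only on $n$ and $p\nmid abc$, the reduction of $C_{a,b,c}$ is a smooth plane curve of degree $n$ possessing an $\FF_p$-point by the Lang--Weil/Weil estimates, hence $\delta_p=1$ by Hensel; the finitely many remaining primes (those below the bound, in particular those dividing $n$) each contribute a fixed local density in $(0,1]$ to the final constant. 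The heart of the matter is at primes $p\mid abc$ with $p$ large and $p\nmid n$: analysing the valuations of a putative primitive point $[x:y:z]\in C_{a,b,c}(\QQ_p)$, one shows, say when $p^{e}\parallel c$ and $p\nmid ab$, that $\delta_p=1$ precisely when $-b/a$ is an $n$-th power in $\FF_p^\times$, with one additional --- and, crucially, rarer --- way of being soluble when $\gcd(n,e)>1$; symmetric statements hold at primes dividing $a$ or $b$, and since $\gcd(a,b,c)=1$ the event that $p$ divides two of $a,b,c$ has density $O(1/p^2)$ and affects only the constant.

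This already explains the exponent of $\log B$. Since the $n$-th powers form a subgroup of $\FF_p^\times$ of index $\gcd(n,p-1)$, and $\gcd(n,p-1)=\gcd(n,r-1)$ when $p\equiv r\pmod n$, the ``cost'' imposed at a generic (valuation-one) prime $p\mid c$ is a density $1/\gcd(n,p-1)$; averaging over $p$ by Dirichlet's theorem on primes in progressions, this density equals $\alpha_n$ on average, so that $\sum_{p\leq B}\tfrac1p\bigl(\tfrac{1}{\gcd(n,p-1)}-1\bigr)\sim(\alpha_n-1)\log\log B$. The three coordinate lines $\{a=0\},\{b=0\},\{c=0\}$ of the parameter space each contribute such a factor, giving the exponent $3\alpha_n-3$. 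This matches the Loughran--Smeets heuristic $\sum_D(\delta_D(\pi)-1)$: over the generic point of each coordinate line the fibre is a union of $n$ lines permuted through $\operatorname{AGL}_1(\Z/n\Z)$, and the proportion of affine-linear maps of $\Z/n\Z$ fixing some point equals $\frac{1}{\phi(n)}\sum_{r\in(\Z/n\Z)^\times}\frac{1}{\gcd(n,r-1)}=\alpha_n$, so $\delta_D(\pi)=\alpha_n$ for each of the three divisors.

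To turn this into a theorem I would study the multiple Dirichlet series $Z(\bms)=\sum_{a,b,c\geq 1}\bigl(\delta_\infty\prod_p\delta_p\bigr)(a,b,c)\,a^{-s_1}b^{-s_2}c^{-s_3}$ (treating the sign patterns of $(a,b,c)$ separately), which has an Euler product. Each Euler factor couples the variables through conditions like ``$-b/a\in(\FF_p^\times)^n$''; writing the indicator of such a condition as $\frac{1}{\gcd(n,p-1)}\sum_{\psi}\psi(-b/a)$ with $\psi$ running over the characters of $\FF_p^\times$ of order dividing $n$ expresses $Z(\bms)$ as a product of Hecke $L$-functions over $\QQ(\zeta_n)$ in the variables $s_1,s_2,s_3$ times a Dirichlet series converging absolutely in the region $\Real(s_i)>1-\epsilon$. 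The holomorphy and non-vanishing of these $L$-functions then show that $Z(\bms)$ has, near $(1,1,1)$, the singularity $\prod_{i=1}^3(s_i-1)^{-\alpha_n}$ multiplied by a holomorphic non-vanishing factor, admits meromorphic continuation slightly beyond, and has polynomial growth on vertical lines. A multivariate Tauberian theorem of de la Bretèche type then yields $C_nB^3(\log B)^{3\alpha_n-3}$ with $C_n$ the product of the archimedean density, the real density, and a convergent Euler product of local solubility densities --- in particular $C_n>0$ --- and a term-by-term comparison identifies $C_n$ with the leading constant predicted by Loughran--Rome--Sofos.

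The main obstacle is the uniformity required to isolate a main term that is itself \emph{smaller} than $B^3$ by a power of $\log B$: the per-variable saving $\alpha_n-1<0$ is fragile, so the coupling of $a$, $b$ and $c$ must be tracked at every prime up to $B$ without loss. Concretely the difficulty is (i) proving the meromorphic continuation of $Z(\bms)$ past its poles with adequate control on the ``off-diagonal'' contributions --- the terms involving products $L(s_i,\psi)\,L(s_j,\psi^{-1})$ with $\psi$ non-trivial, which are holomorphic but must be shown to produce no spurious poles and to grow only polynomially, ultimately a question about sums of $a,b,c$ against Dirichlet characters to highly composite moduli and so requiring Siegel--Walfisz- and large-sieve-type input --- and (ii) folding the higher-prime-power and multiple-bad-prime corrections into the Euler factors without spoiling convergence of the correction series. (Equivalently, if one runs a sieve rather than a Dirichlet series, steps (i)--(ii) become the problem of operating a sieve of fractional dimension $3-3\alpha_n$, generalising the half-dimensional sieve used by Friedlander--Iwaniec in the case $n=2$, so as to produce matching upper and lower bounds of the expected order of magnitude.) I expect step (i) to be the crux.
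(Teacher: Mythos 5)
Your reduction of the local conditions, your heuristic for the exponent $3\alpha_n-3$, and your diagnosis that the cross terms are the crux all match the paper. But the route you propose for the analytic step has a genuine gap, which you partly acknowledge but do not resolve, and as formulated it would not go through. First, the series $Z(\bms)$ does \emph{not} factor as a product of fixed Hecke $L$-functions $L(s_i,\psi)$ times an absolutely convergent remainder: the condition ``$-b/a\in(\FF_p^{\times})^n$'' is imposed at primes $p\mid c$, so when you unfold it by characters the modulus of the character acting on $(a,b)$ is built from the prime divisors of $c$, and vice versa. The off-diagonal contributions are therefore genuinely \emph{bilinear} sums of power residue symbols of the shape $\bigl(\tfrac{\Norm(J)}{I}\bigr)_{\QQ(\zeta_d),d}\bigl(\tfrac{\Norm(I)}{J}\bigr)_{\QQ(\zeta_e),e}$ with both ideals varying, not values of a fixed $L$-function at shifted arguments. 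Handling them requires the double-oscillation machinery of Heath-Brown and Fouvry--Kl\"uners applied directly to the counting function: a bilinear large sieve over $\QQ(\zeta_d)\times\QQ(\zeta_e)$ when both norms are of comparable size, and a Siegel--Walfisz bound (uniform in conductors of size a power of $\log B$) when one is much smaller --- together with a combinatorial classification of which pairs $(\bv,d),(\bw,e)$ of valuation-patterns and character-orders are ``linked'' (oscillating) versus ``unlinked'' (contributing to the main term). This is the content the paper's Section 4 supplies and your step (i) leaves open. Second, even for the diagonal term, the singularity of $Z(\bms)$ at $s_i=1$ has the non-integral order $\alpha_n<1$, so there is no meromorphic continuation past $(1,1,1)$ and no de la Bret\`eche-type Tauberian theorem applies; one needs a Selberg--Delange/LSD analysis, which is only available once the sieve has reduced the main term to a sum of a genuinely multiplicative function of $(a,b,c)$.

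A further consequence of the unlinked-configuration analysis that your sketch would miss: for even $n$ there are \emph{two} non-oscillating configurations (all trivial characters, and all quadratic characters), whose contributions are equal by Hilbert reciprocity, so the leading constant carries an extra factor $1+\mathbf{1}_{\textnormal{even}}(n)=\abs{\Br_{\textnormal{sub}}(\PP^2,f_n)/\Br\QQ}$ on top of the product of local densities and the $\Gamma(\alpha_n)^{-3}$ from Selberg--Delange. Your description of $C_n$ as ``the product of the archimedean density \dots and a convergent Euler product of local solubility densities'' would be off by this factor of $2$ when $n$ is even.
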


Our proof techniques are inspired by work of Heath-Brown \cite{HB} and Fouvry--Kl\"uners \cite{FK}, which have not been previously applied to the area until the work of Loughran--Rome--Sofos \cite{LRS} in the case $n=2$.  Much of the novelty in our approach lies in the use of higher order Dirichlet characters to study the situation $n>2$.  In particular, this result verifies the Loughran--Smeets conjecture \cite{LS} for these generalised Fermat equations.

An obvious corollary of \Cref{tMain} is an upper bound for the count of equations with a rational solution.

\begin{corollary}
Let $n > 1$ be an integer. Then as $B\to\infty$
\[
\# \set{(a, b, c) \in \ZZ^3 : \substack{\gcd(a, b, c) = 1, \\ \abs{a},\abs{b},\abs{c} \leq B, \\ a x^n + b y^n + c z^n = 0 \textnormal{ has a} \\ \textnormal{non-zero rational solution}}} \ll B^3(\log B)^{3\alpha_n-3},
\]
where the implied constant depends only on $n$, and $\alpha_n$ is as in \Cref{tMain}.
\end{corollary}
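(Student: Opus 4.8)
The plan is to deduce this immediately from \Cref{tMain} by comparing solution sets. First I would record the (trivial) necessity of local solubility for the existence of a rational point: if $(x_0,y_0,z_0)\in\QQ^3\setminus\{0\}$ satisfies $ax_0^n+by_0^n+cz_0^n=0$, then applying the diagonal embedding $\QQ\hookrightarrow\QQ_v$ for each place $v$ of $\QQ$ (that is, for $v=\RR$ and for $v=\QQ_p$ at every prime $p$) exhibits a non-zero $\QQ_v$-solution of the same equation, since a non-zero rational triple remains non-zero in each completion. Hence the set of $(a,b,c)$ counted by the corollary is a subset of the set of $(a,b,c)$ counted by \Cref{tMain}, and in particular the left-hand cardinality is bounded above by the right-hand one for every $B$.

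Next I would simply invoke \Cref{tMain} for the fixed integer $n$: it gives that the right-hand count is $\sim C_n B^3(\log B)^{3\alpha_n-3}$ as $B\to\infty$, so there is $B_0=B_0(n)$ with the right-hand count at most $2C_n B^3(\log B)^{3\alpha_n-3}$ for all $B\geq B_0$; combined with the containment above this proves the corollary for $B\geq B_0$. For the finitely many $B$ with, say, $3\leq B<B_0$ the count is trivially at most $(2B+1)^3$, which is absorbed by enlarging the implied constant; since both $C_n$ and $B_0(n)$ depend only on $n$, so does the final implied constant, as claimed. (The restriction to $B\geq 3$ is only to keep $(\log B)^{3\alpha_n-3}$ away from the singularity at $B=1$; no real issue arises, as $3\alpha_n-3<0$ for all $n>1$ so this factor stays bounded below on $[3,\infty)$.)

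As for the main obstacle: there genuinely is none — the full content of the corollary is \Cref{tMain} itself together with the inclusion ``has a rational point $\implies$ is everywhere locally soluble''. The only points requiring a moment's care are the compatibility of the ``non-zero'' qualifier across $\QQ$ and its completions (clear) and the bookkeeping of the implied constant for small $B$ (routine); no new analytic or geometric input is needed.
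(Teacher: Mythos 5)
Your argument is correct and is exactly the paper's (implicit) proof: the corollary is stated as an immediate consequence of \Cref{tMain}, using only the inclusion ``has a non-zero rational point $\implies$ everywhere locally soluble'' and the asymptotic upper bound furnished by the theorem. No further comment is needed.
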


This corollary verifies an analogue of a conjecture of Poonen--Voloch. In \cite{PV}*{Conjecture 2.2(i)} they predict that for integers $n>m+1$, among degree $n$ homogeneous polynomials in $\ZZ[X_0,\dots,X_m]$ with coefficients bounded by $B$ the proportion which have a rational point in $\PP^m$ is $o(1)$. In this language, we are considering $m=2$ and all $n>1$ (not just $n>3$), and since our power of $\log$ is always negative we obtain the analogue of their conjecture for diagonal equations.

\subsection*{The Leading Constant}
Recently, in \cite{LRS}, Loughran--Rome--Sofos have presented a refinement to the Loughran--Smeets conjecture which presents the conjectural leading constant.  Motivated by this, we present the constant $C_n$ from \Cref{tMain} explicitly as follows.

\begin{theorem}
\label{thm:leadingconstant}
The constant $C_n>0$ in \Cref{tMain} is given by
\[
C_n = (1 + \mathbf{1}_{\textnormal{even}}(n))\frac{\delta_\infty(n)}{\Gamma(\alpha_n)^3 }\prod_{p\textnormal{ prime}}\delta_p(n)\braces{1-\frac{1}{p}}^{3\alpha_n - 3}, 
\]
where: $\Gamma, \phi$ are Euler's gamma and phi; $\mathbf{1}_{\textnormal{even}}$ is the indicator function for even integers;
\begin{align*}
    \alpha_n&:=\frac{1}{\phi(n)}\sum_{r \in (\Z/n\Z)^{\times}}\frac{1}{\gcd(n,r-1)};
    \\\delta_\infty(n)&:=\begin{cases}8&\textup{if }n\textnormal{ is odd,}\\6&\textup{if }n\textnormal{ is even;}\end{cases}\\
    \delta_p(n)&:=\textnormal{Vol}_{\ZZ_p^3}\braces{\set{(a_1,a_2,a_3)\in\ZZ_p^3~:~\substack{p\nmid a_i\textnormal{ for some }i\textnormal{ and}\\a_1x^n+a_2y^n+a_3z^n=0\textnormal{ has}\\\textnormal{a non-zero solution in }\ZZ_p}}}.
\end{align*}
\end{theorem}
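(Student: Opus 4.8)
The plan is to extract $C_n$ directly from the proof of Theorem~\ref{tMain} rather than to compute it afresh. In the course of establishing the asymptotic $C_n B^3(\log B)^{3\alpha_n-3}$, the count is broken into local conditions at each place, and the main term arises from a sum over integer triples weighted by local densities. I would organise the argument as follows. First, I would recall from the proof of Theorem~\ref{tMain} the precise shape of the main term: after a sieve/Fourier step isolating the everywhere-locally soluble triples, one is left with a sum whose size is governed by a product of local factors $\delta_p(n)$ (the $\ZZ_p$-volumes of the soluble locus, as in the statement) together with an archimedean factor coming from the box $|a|,|b|,|c|\le B$ with $\gcd(a,b,c)=1$. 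The factor $(1-1/p)^{3\alpha_n-3}$ is the standard convergence-inducing normalisation attached to a power of $\log$ equal to $3\alpha_n-3$, exactly as in the Loughran--Rome--Sofos framework, so the Euler product $\prod_p \delta_p(n)(1-1/p)^{3\alpha_n-3}$ converges and represents the non-archimedean contribution.

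Next I would pin down the three remaining global constants. The factor $\Gamma(\alpha_n)^{-3}$ is produced by a Tauberian/Perron argument: the Dirichlet series underlying the count has a singularity of type $(s-1)^{-3\alpha_n}$ at $s=1$ (each of the three ``coordinates'' $a,b,c$ contributing a $(s-1)^{-\alpha_n}$ via the density of $n$-th-power-friendly integers, governed by the $\gcd(n,r-1)$ average $\alpha_n$), and the Selberg--Delange / Landau method converts such a singularity into a main term with a $\Gamma(\alpha_n)^{-1}$ per coordinate, hence $\Gamma(\alpha_n)^{-3}$ overall. The real-place factor $\delta_\infty(n)$ records the volume of $\{(a,b,c)\in[-1,1]^3 : ax^n+by^n+cz^n=0 \text{ soluble over }\RR\}$: for $n$ odd every triple is soluble (the equation always has a real solution, e.g.\ by solving for one variable), giving the full cube of volume $8$; for $n$ even, solubility over $\RR$ requires the $a,b,c$ not all to have the same sign, which excludes two of the eight sign-octants, leaving volume $6$. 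Finally, the prefactor $(1+\mathbf{1}_{\mathrm{even}}(n))$ comes from the relation between solutions counted projectively and the affine normalisation: for $n$ even, $x\mapsto -x$ (etc.) does not change $x^n$, so the natural parametrisation of soluble triples double-counts relative to the odd case; tracking this symmetry through the main term yields the extra factor of $2$ precisely when $n$ is even. (Equivalently, it can be absorbed into a careful definition of $\delta_\infty$, but the cleaner bookkeeping is to display it separately.)

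Assembling these pieces — the archimedean density $\delta_\infty(n)$, the Gamma factors $\Gamma(\alpha_n)^{-3}$ from the Selberg--Delange analysis, the convergent Euler product $\prod_p \delta_p(n)(1-1/p)^{3\alpha_n-3}$, and the parity prefactor — gives exactly the claimed formula for $C_n$, and positivity is clear since each $\delta_p(n)>0$ (the soluble locus has positive $p$-adic volume, e.g.\ any triple with $a_1x^n+a_2y^n+a_3z^n$ having a nonsingular $\ZZ_p$-point lifts by Hensel), $\delta_\infty(n)>0$, and the Euler product converges to a positive value. The main obstacle I anticipate is not any single computation but rather the careful matching of normalisations: ensuring that the $\gcd(a,b,c)=1$ condition, the projective-versus-affine counting, and the even-$n$ symmetries are each accounted for exactly once, so that the bookkeeping constants $(1+\mathbf{1}_{\mathrm{even}}(n))$ and $\delta_\infty(n)$ come out with the stated values and no spurious powers of $2$ or factors of $\phi(n)$ creep in. A secondary subtlety is justifying that the local densities $\delta_p(n)$ that appear are genuinely the naive volumes in the statement (with the ``$p\nmid a_i$ for some $i$'' condition) rather than some twisted variant — this should follow from the structure of the sieve in the proof of Theorem~\ref{tMain}, but it needs to be checked against how the $\gcd$-one condition is distributed over primes.
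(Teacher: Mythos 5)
There is a genuine gap, and its location is the factor $(1+\mathbf{1}_{\textnormal{even}}(n))$. You attribute it to a double-counting symmetry ($x\mapsto -x$ leaving $x^n$ invariant for $n$ even), but the objects being counted are coefficient triples $(a,b,c)$, not solutions, so no such double count occurs; that explanation would not survive being made precise. In the paper the factor of $2$ for even $n$ is a substantive arithmetic phenomenon: after expanding the local-solubility indicator into power residue symbols (\Cref{thm:Nf as character sum}) and killing the oscillating configurations by the large sieve and a Siegel--Walfisz input, \emph{two} non-oscillating configurations survive when $n$ is even, indexed by the large index sets $\cM_1$ (trivial characters) and $\cM_2$ (quadratic characters), and the $\cM_2$ contribution is shown to equal the $\cM_1$ contribution because the resulting product of Hilbert symbols $\prod_{p\notin S(n)}\left(-\tfrac{a_2}{a_1},-\tfrac{a_3}{a_1}\right)_p$ is identically $1$ by Hilbert reciprocity, using real solubility and admissibility at the bad primes (\Cref{thm:end of character sums}). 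Your sketch never engages with why higher-order and quadratic character terms either cancel or contribute, which is the core of the proof.

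A second, structural problem is that your plan to ``extract $C_n$ from the proof of \Cref{tMain}'' is circular here: in the paper \Cref{tMain} and \Cref{thm:leadingconstant} are one and the same statement, proved together as \Cref{thm:main summary}, so there is no prior main term ``packaged as a product of local densities'' to quote. The identification of the leading constant with $\delta_\infty(n)\Gamma(\alpha_n)^{-3}\prod_p\delta_p(n)(1-1/p)^{3\alpha_n-3}$ requires, beyond the Selberg--Delange step you correctly anticipate (which does produce the $\Gamma(\alpha_n)^{-3}$), the multiple-Dirichlet-series M\"obius inversion of \Cref{generalised Moebius}, the reassembly of $N(\bB)$ from $N_{\textnormal{f}}(\bB;\bs,\bb,M)$ by summing over the $n$-power scalings $\bs$ and the admissible residues $\bb \bmod M$ (where the $\phi(M)^3$ cancels and the bad-prime factors become the counts $m_i(p)$), and finally the Hensel computation of \Cref{lem:local_densities} showing that these Euler factors are exactly the naive volumes $\delta_p(n)$. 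You flag this matching as a ``secondary subtlety'' but give no argument for it; in the paper it occupies \Cref{ssRemoveCubefree} and part of \Cref{sLRS}, and it is where the clean form of the constant is actually established.
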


In \Cref{sLRS} we show that this is equal to the conjectured constant of Loughran--Rome--Sofos \cite{LRS} for these families of Fermat equations, hence verifying the Loughran--Rome--Sofos conjecture in this case.  In proving the result, we perform the relevant Hensel's lemma calculation which describes the local densities $\delta_p$ explicitly as follows.

\begin{proposition}
\label{intro:prop:explicitdensities}
The local densities $\delta_p(n)$ in \Cref{thm:leadingconstant} are given explicitly by
\[
\delta_p(n) = \frac{1 - p^{-3n}}{(1 - p^{-n})^3} \frac{m_1(p)}{p^{3(n + 2v_p(n))}} + \frac{3(1 - p^{-2n})}{(1 - p^{-n})^3} \frac{m_2(p)}{p^{3(n + 2v_p(n))}} + \frac{3}{(1 - p^{-n})^2} \frac{m_3(p)}{p^{3(n + 2v_p(n))}}
\]
where:
\begin{align*}
    m_1(p) &:=  \# \left\{(a_1, a_2, a_3) \in (\Z_p/ n^2 p^n\Z_p)^3: 
        \begin{array}{c}
             p \nmid a_1,a_2,a_3,   \\
             a_1 t_1^n + a_2 t_2^n + a_3 t_3^n \emph{ soluble in } \Z_p/n^2p^n\Z_p
        \end{array}\right\};
    \\m_2(p) &:= \# \left\{(a_1, a_2, a_3) \in (\Z_p/ n^2 p^n\Z_p)^3: 
        \begin{array}{c}
             0 < v_p(a_1) < n, p \nmid a_2,a_3,   \\
             a_1 t_1^n + a_2 t_2^n + a_3 t_3^n \emph{ soluble in } \Z_p/n^2p^n\Z_p
        \end{array}\right\};
    \\m_3(p) &:= \# \left\{(a_1, a_2, a_3) \in (\Z_p/ n^2 p^n\Z_p)^3: 
        \begin{array}{c}
              0 < v_p(a_1) = v_p(a_2) < n, p \nmid a_3,   \\
             a_1 t_1^n + a_2 t_2^n + a_3 t_3^n \emph{ soluble in } \Z_p/n^2p^n\Z_p
        \end{array}\right\}.
\end{align*}
Moreover, if $\frac{p+1}{\sqrt{p}}>(n-1)(n-2)$ and $p \nmid n$, then $\delta_p(n) \cdot \left(1 - \frac{1}{p}\right)^{-3}$ equals
\[
\frac{1 - p^{-3n}}{(1 - p^{-n})^3} + \f{3 (1 - p^{-2n})(p^{-1}-p^{-n})}{\gcd(n,p-1) (1 - p^{-1}) (1 - p^{-n})^3} + \f{3 (p^{-2}-p^{-2n})}{\gcd(n,p-1) (1 - p^{-n})^2 (1-p^{-2})}.
\]
\end{proposition}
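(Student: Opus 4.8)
The plan is to compute $\delta_p(n)$ by stratifying $\ZZ_p^3$ according to the valuations $v_p(a_1),v_p(a_2),v_p(a_3)$ and exploiting that local solubility is scale-invariant. The property ``$a_1x^n+a_2y^n+a_3z^n=0$ has a nonzero $\QQ_p$-point'' is unchanged under $(a_1,a_2,a_3)\mapsto(\lambda a_1,\lambda a_2,\lambda a_3)$ and under $(a_1,a_2,a_3)\mapsto(p^na_1,a_2,a_3)$, so writing $a_i=p^{e_i}u_i$ with $u_i\in\ZZ_p^\times$ it depends only on the residues $r_i:=e_i\bmod n\in\{0,\dots,n-1\}$ and on the units $u_i$; in particular it is constant on each valuation stratum $\{v_p(a_i)=e_i\ \forall i\}$, equal there to the solubility of the reduced form $p^{r_1}u_1x^n+p^{r_2}u_2y^n+p^{r_3}u_3z^n$. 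Thus $\delta_p(n)$ breaks up as a sum over strata, on each of which one only has to decide solubility of the reduced form and compute an elementary $p$-adic volume.

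To identify the strata that survive, I would run the classical valuation argument: a primitive $\ZZ_p$-zero of $a_1x^n+a_2y^n+a_3z^n$ forces $\min_i v_p(a_ix_i^n)$ to be attained at least twice, and since $v_p(a_ix_i^n)\equiv r_i\pmod n$, a triple with $p\nmid\gcd(a_1,a_2,a_3)$ can be soluble only when $\{r_1,r_2,r_3\}$ is $\{0,0,0\}$, $\{0,0,r\}$, or $\{0,r,r\}$ with $1\le r\le n-1$ --- precisely the three configurations behind $m_1,m_2,m_3$ (the factors $3$ recording the position of the distinguished coordinate). On each surviving stratum I would then settle solubility by Hensel's lemma at depth $N:=v_p(n^2p^n)=n+2v_p(n)$, which satisfies $N\ge 2v_p(n)+1$ and hence lifts a primitive solution mod $p^N$ (the derivative $nt^{n-1}$ has valuation exactly $v_p(n)$ at a unit): for $\{0,0,0\}$ the reduced form cuts out a smooth plane curve over $\ZZ/p^N$, so $\ZZ_p$-solubility is equivalent to primitive solubility modulo $p^N$; for $\{0,0,r\}$, reducing a primitive solution modulo $p$ forces a nontrivial $\FF_p$-zero of $\bar a_iy^n+\bar a_jz^n$ for the two unit positions $i,j$ (primitivity makes one of those two variables a unit, else the non-unit-coefficient term would be the unique term of least valuation), so $\ZZ_p$-solubility is equivalent to $-a_i/a_j\in(\ZZ_p^\times)^n$; and for $\{0,r,r\}$ a short argument with the two equal-valuation terms (the unit-coefficient variable being forced to vanish mod $p$) shows $\ZZ_p$-solubility is equivalent to $-a_i/a_j\in(\ZZ_p^\times)^n$ for the two positions $i,j$ of valuation $\equiv r$. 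In all cases solubility depends only on $(a_1,a_2,a_3)\bmod n^2p^n\ZZ_p$, which is what $m_1,m_2,m_3$ count.

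For the volume computation, fix a reduced pattern, a soluble residue, and the $r_i$. The volume of $\{v_p(a_i)\equiv r_i\bmod n,\ a_ip^{-v_p(a_i)}\equiv u_i\bmod p^N\}$ is $p^{-3N}$ times a product of geometric series $\tfrac{1}{1-p^{-n}}$, weighted by $p^{-r}$ on each non-unit coordinate; imposing $p\nmid\gcd(a_1,a_2,a_3)$ by inclusion--exclusion (removing the sub-stratum with every $v_p(a_i)\ge 1$) produces the numerator $1-p^{-3n}$ for $\{0,0,0\}$, $1-p^{-2n}$ for $\{0,0,r\}$, and no correction for $\{0,r,r\}$, while the sums $\sum_{r=1}^{n-1}p^{-r}$, $\sum_{r=1}^{n-1}p^{-2r}$ together with the counts of admissible $u_i$ are exactly what gets packaged into $m_2(p)$ and $m_3(p)$. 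Summing over soluble residues and over the three coordinate positions in the last two cases then gives the displayed three-term identity, with $m_i(p)/p^{3N}$ the normalised residue counts.

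Finally, for the closed form under the extra hypotheses $p\nmid n$ and $\tfrac{p+1}{\sqrt p}>(n-1)(n-2)$: since $p\nmid n$, Hensel for $t^n-c$ requires only a solution modulo $p$, so ``$-a_i/a_j\in(\ZZ_p^\times)^n$'' has density $\gcd(n,p-1)^{-1}$ among unit pairs, yielding $m_2(p)=\tfrac{p^{3N}(p^{-1}-p^{-n})(1-p^{-1})^2}{\gcd(n,p-1)}$ and $m_3(p)=\tfrac{p^{3N}(p^{-2}-p^{-2n})(1-p^{-1})^3}{(1-p^{-2})\gcd(n,p-1)}$; and $(n-1)(n-2)=2g$ with $g$ the genus of the smooth plane curve $a_1x^n+a_2y^n+a_3z^n=0$, so the Weil bound produces an $\FF_p$-point whenever $p\nmid a_1a_2a_3$, which lifts by Hensel, giving $m_1(p)=(\#(\ZZ/p^N)^\times)^3=p^{3N}(1-p^{-1})^3$. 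Plugging these in and multiplying by $(1-1/p)^{-3}$ gives the stated expression. I expect the main obstacle to be the careful, uniform Hensel step: making ``soluble in $\ZZ_p/n^2p^n\ZZ_p$'' genuinely equivalent to $\ZZ_p$-solubility (in particular ruling out spurious imprimitive residue solutions), and handling primes $p\mid n$, where the derivatives $nt^{n-1}$ fail to be units and one must track the precise power of $p$ needed; the volume bookkeeping, while lengthy, is routine.
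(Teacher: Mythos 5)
Your proposal is correct and follows essentially the same route as the paper's proof (Lemma \ref{lem:local_densities}): stratify $\ZZ_p^3\setminus p\ZZ_p^3$ by the valuation pattern modulo $n$, observe via the minimisation/Hensel criteria of \Cref{prop:Hensel_general} that only the patterns $\{0,0,0\}$, $\{0,0,r\}$, $\{0,r,r\}$ can be soluble and that solubility on each stratum depends only on the reduced triple modulo $n^2p^n$, sum the resulting geometric series to get the three prefactors, and for $p\notin S(n)$ evaluate $m_1,m_2,m_3$ via the Hasse--Weil bound and the density $\gcd(n,p-1)^{-1}$ of $n$-th powers. The subtlety you flag about imprimitive residue solutions is exactly the point the paper handles by phrasing everything through the minimised triple and the primitivity condition $\mathbf{t}\not\equiv(0,0,a)$ in \Cref{prop:Hensel_general}.
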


\noindent As an example we provide a computation of the constant in the case $n = 3$.

\begin{example}
Let us work what our theorem says for $n = 3$ explicitly. In this case $\alpha_3 = \frac{1}{2}(\frac{1}{3} +  1)  = \frac{2}{3}$ so the order of magnitude is $B^3 (\log B)^{-1}$.

We will now work out the local densities for the Euler product. Note that every prime number $p$ satisfies $\frac{p+1}{\sqrt{p}}>2$, and so by \Cref{intro:prop:explicitdensities}, we have
\[
\delta_p(3) = 
\begin{cases}
\frac{p^2-p+1}{(p-1)^2} & \textnormal{if } p \equiv 1 \bmod 3, \\
\frac{p^6+3p^5+6p^4+p^3+6p^2+3p+1}{(p-1)^2(p^2+p+1)^2} & \textnormal{if } p \equiv 2 \bmod 3.
\end{cases}
\]
It remains to determine $\delta_3(3)$.  Let us now compute the $m_i(3)$. Note that we have to check solubility modulo $3^5 = 243$. A direct computation shows that
\[
\delta_3(3) = \frac{757}{676} \cdot \frac{56}{243} + 3 \cdot \frac{189}{169} \cdot \frac{320}{6561} + 3 \cdot \frac{729}{676} \cdot \frac{80}{6561} = 0.4611 \cdots.
\]
We can then numerically compute the Euler product for the primes $p < 10000$. We find that it is equal to $1.212 \cdots$. As $n$ is odd and $\Gamma(\frac{2}{3}) = 1.354 \cdots$, we conclude that 
\[
C_3 = \frac{8}{\Gamma(\frac{2}{3})^3} \prod_p \delta_p(3) \left(1 - \frac{1}{p}\right)^{2 - 3} = 3.910 \cdots. 
\]
\end{example}

\subsection*{Organisation of paper}
\Cref{tMain} (and \Cref{thm:leadingconstant}) are presented together as \Cref{thm:main summary}, which can be seen as the culmination of the sections which precede it.  Throughout, we fix the exponent $n$ which appears in these.  In \Cref{sec:prerequisites} we present prerequisites on characterising local solubility, specifically we apply a form of Hensel's lemma to the problem to characterise the local solubility via congruence conditions.  Following this, in \Cref{sec:charactersums} we consider the counting problem of \Cref{tMain} with the additional stipulation that the $a_i$ are $n$-powerfree, and rephrase this count as a sum of products of power residue symbols (i.e. as a character sum).  

In \Cref{sEqui} we prove results analogous to the bilinear sieve and the Siegel--Walfisz theorems for our setting, and apply these to isolate the main term in the character sum.  Having done this, we are then left with the easier problem of determining the number of coprime triples $(a,b,c)$ of $n$-powerfree integers (i.e. we have accounted for the everywhere-locally soluble constraint and can remove this from the problem).  In \Cref{sLeading} we then solve this remaining problem using multiple zeta analysis, and then account for the $n$-powerfree condition to finally conclude in the form presented in \Cref{tMain}.  We conclude the article with \Cref{sLRS}, where we show that the leading term we obtain is precisely the one conjectured by Loughran--Rome--Sofos in \cite{LRS}.

\subsection*{Acknowledgements}
We are grateful to Tim Browning, Daniel Loughran and Carlo Pagano for preliminary discussions on this topic. This project was initiated and worked on during the Workshop on Arithmetic and Algebra of Rational Points (WAARP), hosted at the University of Bristol. The first author gratefully acknowledges the support of Dr. Max R\"ossler, the Walter Haefner Foundation and the ETH Z\"urich Foundation. The first author also acknowledges the support of the Dutch Research Council (NWO) through the Veni grant ``New methods in arithmetic statistics''. The second and fourth authors were supported by the University of Bristol and the Heilbronn Institute for Mathematical Research. The third author was supported by a PhD fellowship of FWO Vlaanderen.

\subsection*{Notation}
In the course of the argument, we must introduce a number of bookkeeping notations and rely on some standard notations.  For the ease of the reader, we have provided a reference for these here.  We typically use bold font notations to indicate vectors (or vectors of functions).  We fix an algebraic closure $\bar{\QQ}$ of the rational numbers.

Throughout the article, we fix an integer $n>1$, and a primitive $n$-th root of unity $\zeta_n\in\bar{\QQ}$.  For $d\mid n$ we write $\zeta_d:=\zeta_n^{n/d}$.  In particular, $\ll$ and big-$O$ notation refer to an asymptotic upper bound whose implicit constant may depend on $n$.

Further notations are presented below:

\begin{center}
\begin{tabular}{| R{3.2cm} | m{10.8cm} |} \hline
$\be_i$ & $i$-th standard basis vector, i.e. $i$-th of $(1,0,0), (0,1,0)$, or $(0,0,1)$ in $\ZZ^3$ \\ \hline
$\PP^r_F$ & $F$-points in $r$-space over $F$\\ \hline
$\ba$ & triple $\ba=(a_1,a_2,a_3)$\\ \hline
$\bb=\bb(\ba)$ & minimised triple associated to $\ba$, see \Cref{def:b(a)}\\ \hline
$\cC_{n,\ba}$ & projective curve $a_1x_1^n+a_2x_2^n+a_3x_3^n=0\subseteq \PP^2$, see \Cref{def:[a] and C_na}\\ \hline
$[a]_n$ & representative of $a\bmod n$ contained in $\set{0,\dots,n-1}$, see \Cref{def:[a] and C_na}\\ \hline
$S(n)$ & set of primes which are small relative to $n$, see \Cref{def:S(n)}\\ \hline
$\mathcal{D}(n)$ & subset of $\RR^3$ of coefficients which are $\RR$-soluble, see \Cref{lem:RealSolubility}\\ \hline
$M$& bad modulus relative to $n$, see equation \eqref{eHensel} and text beneath\\ \hline
$\mathbf{1}_P$ & indicator function that property $P$ holds\\ \hline
$N_{\text{f}}(\bB)$ & counting function of interest with $n$-powerfree constraint, see equation \eqref{eq:NfB}\\ \hline
$N_{\text{f}}(\bB;\bs)$ & as above with additional coprimality constraints from $\bs$, see equation \eqref{eq:NfBs}\\ \hline
$N_{\text{f}}(\bB;\bs,\bb,M)$ & counting function of study in \Cref{sec:charactersums,sEqui}, see equation \eqref{eq:NfBsbM}\\ \hline
$\cA$ & subset of $(\ZZ/n\ZZ)^3$, see \Cref{subsubsec:COV}\\ \hline
$\cB$ & subset of $(\ZZ/n\ZZ)^3$, see \Cref{subsubsec:COV}\\ \hline
$\cB_{\min}$ & subset of $\set{0,1}^3$, see \Cref{subsubsec:auxiliaryfunction}\\ \hline
$((x_\bv)_{\bv\in \cB}, (x_{i,\bullet})_{i=1}^3)$&change of variables from \Cref{lem:initialCOV}\\ \hline
$I_{K}$ & monoid of ideals of ring of integers of $K$\\ \hline
$f$& during \Cref{sec:charactersums,sEqui} $f$ is a bookkeeping function defined in \Cref{subsubsec:auxiliaryfunction}, during \Cref{sLeading} it denotes a completely multiplicative function as defined in \eqref{eq:completelymult}\\ \hline
$g$&multiplicative function, see \Cref{eq:mult function g}\\ \hline
$\tilde{g}$ & completely multiplicative function agreeing on primes with $g$\\ \hline
$\pi_i$ & $i$-th naive projection $(\ZZ/n\ZZ)^3\to\ZZ$, see \Cref{subsubsec:COV}\\ \hline
$\art{\cdot}{\cdot}_{\QQ(\zeta_d),d}$ & a fixed $d$-th power residue symbol\\ \hline
$F,\,G,\,H$ & multiple zeta functions, see \Cref{def:H(s)} and text beneath\\ \hline
$K$ & multiple zeta function, see \Cref{def:K(s)} \\ \hline
\end{tabular}
\end{center}

\noindent Notation in \Cref{sLRS} is assumed from the work of Loughran--Rome--Sofos \cite{LRS}.
\section{Local solubility}\label{sec:prerequisites}
Since we will be interested in the local solubility of generalised Fermat equations, we will now focus on characterising this behaviour.  

\subsection{Definitions and notation}
Throughout this section, we let $p$ be a prime number. 

\begin{definition}\label{def:[a] and C_na}
For $a\in\ZZ$, we write $\mn{a}\in\set{0,\dots,n-1}$ for the representative of $a \bmod n$. For each field $F$ and triple $\ba=(a_1, a_2, a_3)\in F^3$, we define the projective curve
\[
\cC_{n,\ba}: a_1t_1^n + a_2t_2^n + a_3t_3^n = 0 \subseteq \PP^2_F.
\]
\end{definition}

In the case of $\QQ_p$, we will associate to a triple of coefficients a corresponding minimised triple.

\begin{definition}\label{def:b(a)}
    Let $\ba=(a_1,a_2,a_3)\in (\QQ_p^\times)^3$ be a triple such that 
    $$
    \#\set{[v_p(a_i)]_n~:~i \in \{1, 2, 3\}} \leq 2.  
    $$
    Let $i < j$ be entries such that $[v_p(a_i)]_n=[v_p(a_j)]_n$. Then we define the associated minimised triple to be $\bb=\bb(\ba)\in\ZZ_p^3$ whose entries are
    \begin{align*}
        b_1&:=\frac{a_i}{p^{v_p(a_i)}}\\
        b_2&:=\frac{a_j}{p^{v_p(a_j)}}\\
        b_3&:=\frac{a_k}{p^{v_p(a_k)-\mn{v_p(a_k)-v_p(a_i)}}}.
    \end{align*}
\end{definition}

\begin{lemma}
\label{lem:Ca to Cb}
Let $\ba=(a_1,a_2,a_3)\in (\QQ_p^\times)^3$ be a triple such that $\#\set{[v_p(a_i)]_n~:~i \in \{1, 2, 3\}}\leq 2$, and let $\bb$ be the associated minimised triple. Then
\[
\cC_{n,\ba}\cong_{\QQ_p} \cC_{n,\bb}.
\]
In particular, $\cC_{n,\ba}(\QQ_p)\neq \emptyset$ if and only if $\cC_{n,\bb}(\QQ_p)\neq \emptyset$.
\end{lemma}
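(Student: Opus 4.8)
The plan is to exhibit $\cC_{n,\ba}\cong_{\QQ_p}\cC_{n,\bb}$ as a composition of elementary automorphisms of $\PP^2_{\QQ_p}$. First I would record two basic moves, both visibly valid over $\QQ_p$ (indeed over any field). \emph{(i) Common scaling:} multiplying the coefficient triple $\ba$ by a single $\lambda\in\QQ_p^\times$ does not change the zero locus at all, so $\cC_{n,\ba}=\cC_{n,\lambda\ba}$. \emph{(ii) Coordinate rescaling:} for each index $i$, multiplying the single coefficient $a_i$ by an arbitrary element of $(\QQ_p^\times)^n$ produces a curve isomorphic to $\cC_{n,\ba}$, the isomorphism being a diagonal automorphism of $\PP^2_{\QQ_p}$ that rescales the $i$-th homogeneous coordinate. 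Together these say that the $\QQ_p$-isomorphism type of $\cC_{n,\ba}$ depends only on the class of $\ba$ in $\bigl(\QQ_p^\times/(\QQ_p^\times)^n\bigr)^3$ up to simultaneous scaling, so it suffices to check that $\ba$ and $\bb$ have the same image there.

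For that check I would permute the homogeneous coordinates (an isomorphism of the two curves) so that the pair $i<j$ of \Cref{def:b(a)} becomes $(1,2)$ with $k=3$, and then apply move (i) with $\lambda=p^{-v_p(a_1)}$, turning $\ba$ into $\bigl(a_1 p^{-v_p(a_1)},\,a_2 p^{-v_p(a_1)},\,a_3 p^{-v_p(a_1)}\bigr)$, whose first entry is exactly $b_1$. For the second entry, the hypothesis $[v_p(a_1)]_n=[v_p(a_2)]_n$ gives $v_p(a_2)-v_p(a_1)=nm$ with $m\in\ZZ$, so $a_2 p^{-v_p(a_1)}=(p^m)^n b_2$ and move (ii) removes the factor $(p^m)^n$, leaving $b_2$. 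For the third entry I would write $v_p(a_3)-v_p(a_1)=[v_p(a_3)-v_p(a_1)]_n+n\ell$ with $\ell\in\ZZ$, observe that this makes $a_3 p^{-v_p(a_1)}=(p^\ell)^n b_3$ with $b_3$ precisely the quantity appearing in \Cref{def:b(a)}, and remove $(p^\ell)^n$ by move (ii). Composing the permutation, the common scaling and the three coordinate rescalings gives the isomorphism; the ``in particular'' clause then follows because an isomorphism of $\QQ_p$-varieties restricts to a bijection on $\QQ_p$-points.

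I do not expect a genuine obstacle: the whole argument is bookkeeping with $p$-adic valuations modulo $n$. The only two points needing care are confirming that every factor we strip off really lies in $(\QQ_p^\times)^n$ --- which is exactly where the hypothesis that $v_p(a_1),v_p(a_2),v_p(a_3)$ occupy at most two residues modulo $n$ is used, since it provides the pair $i<j$ --- and matching the new third coefficient with the slightly asymmetric formula for $b_3$; the exponent $v_p(a_k)-[v_p(a_k)-v_p(a_i)]_n$ is the one making that coefficient have valuation in $\{0,\dots,n-1\}$, consistent with the assertion $\bb\in\ZZ_p^3$ in \Cref{def:b(a)}.
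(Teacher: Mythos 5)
Your proposal is correct and follows essentially the same route as the paper's proof: scale the whole equation by $p^{-v_p(a_i)}$ and then absorb the remaining $n$-th power factors into diagonal rescalings of the homogeneous coordinates, with the valuation bookkeeping for $b_2$ and $b_3$ matching \Cref{def:b(a)} exactly. The only difference is presentational --- you isolate the two moves as general principles before applying them --- so there is nothing further to add.
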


\begin{proof}
Let $i < j$ be entries such that $[v_p(a_i)]_n = [v_p(a_j)]_n$ and let $k$ be the unique element of $\set{1, 2, 3} - \set{i, j}$.  We scale the equation by $p^{-v_p(a_i)}$, so that our triple $(a_1, a_2, a_3)$ becomes
\[
\braces{\frac{a_i}{p^{v_p(a_i)}},\ \frac{a_j}{p^{v_p(a_i)}}, \frac{a_k}{p^{v_p(a_i)}}}
\]
after potentially re-ordering. Now since the valuation of the middle entry is a multiple of $n$, we scale $t_2$ by the appropriate power to reach the triple
\[
\braces{\frac{a_i}{p^{v_p(a_i)}},\ \frac{a_j}{p^{v_p(a_j)}}, \frac{a_k}{p^{v_p(a_i)}}}.
\]
Note that the final term has valuation which is congruent to $[v_p(a_k)-v_p(a_i)]_n\bmod n$, and so scaling $t_3$ by an appropriate power of $p$ we reach the triple $\bb$.  
\end{proof}

\subsection{Hensel's Lemma}
We recall the statement of Hensel's lemma, in our specific circumstance.

\begin{lemma}[Hensel's lemma]
\label{lem:AdaptedHensel}
Let $(a_1, a_2, a_3) \in \ZZ_p^3$. Assume that there exists $(x_1, x_2, x_3) \in \ZZ_p^3$ such that
\[
v_p(a_1x_1^n + a_2x_2^n + a_3x_3^n) > 2(v_p(n) + \min_i\set{v_p(a_ix_i^{n-1})}).
\]
Then there exists a point $(t_1, t_2, t_3)\in\ZZ_p^3$ which represents a point $Q \in \cC_{n, \bf{a}}(\QQ_p)$ and such that $v_p(t_i-x_i) > v_p(n) + \min_i\set{v_p(a_ix_i^{n-1})}$.
\end{lemma}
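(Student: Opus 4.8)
The plan is to deduce the statement from the classical one-variable Hensel's lemma by varying a single, well-chosen coordinate; this makes the gradient bookkeeping transparent and also makes it easy to verify that the point we produce is genuinely a point of $\PP^2$.

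First I would fix an index $j\in\set{1,2,3}$ achieving $v_p(a_jx_j^{n-1})=\min_i\set{v_p(a_ix_i^{n-1})}$ and abbreviate $c:=v_p(n)+v_p(a_jx_j^{n-1})$, the quantity on the right of the hypothesis (and, after doubling, of the conclusion). Before proceeding I would dispose of the degenerate cases: if the minimum were $+\infty$, the hypothesis would require $v_p(a_1x_1^n+a_2x_2^n+a_3x_3^n)>+\infty$, which is impossible; so in fact $c<\infty$, $a_j\neq 0$, $x_j\neq 0$, and (since $n\geq 2$) $v_p(x_j)\leq\tfrac{1}{n-1}v_p(a_jx_j^{n-1})\leq c$. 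Next, freeze all coordinates but the $j$-th and set
\[
g(T):=a_jT^n+\sum_{i\neq j}a_ix_i^n\in\ZZ_p[T],
\]
so that $g(x_j)=a_1x_1^n+a_2x_2^n+a_3x_3^n$ and $g'(x_j)=na_jx_j^{n-1}$, whence $v_p(g'(x_j))=v_p(n)+v_p(a_jx_j^{n-1})=c$. The hypothesis now reads precisely $v_p(g(x_j))>2v_p(g'(x_j))$, so one-variable Hensel produces $t_j\in\ZZ_p$ with $g(t_j)=0$ and
\[
v_p(t_j-x_j)\geq v_p(g(x_j))-v_p(g'(x_j))>v_p(g'(x_j))=c.
\]
Putting $t_i:=x_i$ for $i\neq j$ gives $a_1t_1^n+a_2t_2^n+a_3t_3^n=g(t_j)=0$ and $v_p(t_i-x_i)>c$ for all $i$.

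The last thing to check — and the step I would be most careful with — is that $(t_1,t_2,t_3)$ represents an honest point of $\PP^2$, i.e.\ is not the zero vector. Since $v_p(t_j-x_j)>c\geq v_p(x_j)$, the ultrametric inequality forces $v_p(t_j)=v_p(x_j)<\infty$, so $t_j\neq 0$; hence $(t_1,t_2,t_3)$ represents a point $Q\in\cC_{n,\ba}(\QQ_p)$, as desired. One could equally invoke a multivariate Hensel's lemma, noting that the gradient of $a_1T_1^n+a_2T_2^n+a_3T_3^n$ at $(x_1,x_2,x_3)$ has valuation $v_p(n)+\min_i v_p(a_ix_i^{n-1})$, but the one-variable reduction above is just the concrete form of that argument and keeps the nonvanishing check immediate. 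I do not anticipate any real obstacle: the entire content is recognising that $v_p(n)+\min_i v_p(a_ix_i^{n-1})$ is the $p$-adic valuation of the relevant derivative, plus tracking the degenerate cases so that the output is a bona fide projective point.
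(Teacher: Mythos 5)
Your proof is correct and matches the paper's argument in substance: the paper simply cites the multivariate Hensel's lemma from Browning's book (Theorem 1.13 there), whose gradient has exactly the valuation $v_p(n)+\min_i v_p(a_ix_i^{n-1})$ you identify, and then checks nonvanishing of the lift. Your one-variable reduction along the minimising coordinate is just the explicit form of that citation, and your nonvanishing check via $v_p(t_j)=v_p(x_j)<\infty$ is the same observation the paper makes.
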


\begin{proof}
This is a consequence of Hensel's lemma, as presented in \cite{Browning}*{Theorem 1.13}. The fact that the corresponding lift provided by that theorem is not $(t_1, t_2, t_3) = (0, 0, 0)$ is clear from the final inequality since we cannot have $v_p(x_i) > (n - 1) v_p(x_i)$.
\end{proof}

Now we give the criteria for local solubility which we make use of in our arguments in this article.

\begin{proposition}
\label{prop:Hensel_general}
Let $\ba = (a_1, a_2, a_3)\in (\QQ_p^\times)^3$.  Then
\begin{itemize}
    \item If $\#\set{[v_p(a_i)]_n~:~i \in \{1, 2, 3\}} = 3$, then $\cC_{n,\ba}(\QQ_p) = \emptyset$.
    \item If $\#\set{[v_p(a_i)]_n~:~i \in \{1, 2, 3\}} \leq 2$, then write $\bb = \bb(a) = (b_1, b_2, b_3)$ for the minimised triple associated to $\ba$. Then $\cC_{n,\ba}(\QQ_p) \neq \emptyset$ if and only if there exists $\mathbf{t} = (t_1, t_2, t_3) \in \ZZ_p^3$ such that:
\begin{enumerate}
    \item $\mathbf{t}\not\equiv (0, 0, a) \bmod p$ for all $a \in \Z/p\Z$ and \label{enum:Hensel1}
    \item $b_1t_1^n + b_2t_2^n + b_3t_3^n \equiv 0 \bmod p^{2v_p(n) + 1}$.\label{enum:Hensel2}
\end{enumerate}
\end{itemize}
\end{proposition}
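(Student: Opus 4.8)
The plan is to handle the two bullet points separately. The first is a short $p$-adic valuation argument; the second reduces, via \Cref{lem:Ca to Cb}, to the minimised triple and then applies the sharpened form of Hensel's lemma recorded in \Cref{lem:AdaptedHensel}.

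\textbf{First bullet point.} I would argue by contradiction: suppose $\cC_{n,\ba}(\QQ_p)\neq\emptyset$ and choose a primitive integral representative $\mathbf{t}=(t_1,t_2,t_3)\in\ZZ_p^3$, so that not every $t_i$ lies in $p\ZZ_p$. One first rules out $t_i=0$: if two of the $t_i$ vanish the equation forces the third to vanish too, against primitivity; and if exactly one, say $t_k$, vanishes, then $a_it_i^n=-a_jt_j^n$ with $t_i,t_j\in\QQ_p^{\times}$ forces $v_p(a_i)\equiv v_p(a_j)\bmod n$, contradicting that the three classes $[v_p(a_\ell)]_n$ are distinct. Hence all $t_i\in\QQ_p^{\times}$, and $v_p(a_it_i^n)=v_p(a_i)+nv_p(t_i)\equiv[v_p(a_i)]_n\bmod n$; these valuations are pairwise incongruent mod $n$, hence pairwise distinct as integers, so $v_p(a_1t_1^n+a_2t_2^n+a_3t_3^n)$ equals their (finite) minimum and the sum cannot be $0$.

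\textbf{Second bullet point.} By \Cref{lem:Ca to Cb} it suffices to treat the minimised triple $\bb=(b_1,b_2,b_3)$ in place of $\ba$; recall from \Cref{def:b(a)} that $v_p(b_1)=v_p(b_2)=0$ while $0\le v_p(b_3)=[v_p(a_k)-v_p(a_i)]_n\le n-1$. For the forward implication I would take a primitive integral representative $\mathbf{t}$ of a point of $\cC_{n,\bb}(\QQ_p)$: then \ref{enum:Hensel2} holds since the form vanishes at $\mathbf{t}$, and for \ref{enum:Hensel1} one notes that $p\mid t_1$ and $p\mid t_2$ would give $v_p(b_1t_1^n),v_p(b_2t_2^n)\ge n$, hence $v_p(b_3t_3^n)\ge n$, and as $v_p(b_3)\le n-1$ this forces $v_p(t_3)\ge 1$, contradicting primitivity. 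For the converse, given $\mathbf{t}$ satisfying \ref{enum:Hensel1} and \ref{enum:Hensel2}: condition \ref{enum:Hensel1} says at least one of $t_1,t_2$ is a unit, so, as $v_p(b_1)=v_p(b_2)=0$, we get $\min_i v_p(b_it_i^{n-1})=0$; then \ref{enum:Hensel2} reads $v_p(b_1t_1^n+b_2t_2^n+b_3t_3^n)\ge 2v_p(n)+1>2(v_p(n)+\min_i v_p(b_it_i^{n-1}))$, which is exactly the hypothesis of \Cref{lem:AdaptedHensel}. That lemma then yields a representative $(t_1',t_2',t_3')$ of a point of $\cC_{n,\bb}(\QQ_p)$ with $v_p(t_i'-t_i)>v_p(n)\ge 0$, so in particular whichever of $t_1,t_2$ was a unit remains a unit; the point is therefore non-trivial and $\cC_{n,\bb}(\QQ_p)\neq\emptyset$.

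The valuation computations are routine; the only real subtlety is the bookkeeping in the converse, where one must recognise that condition \ref{enum:Hensel1} is exactly what forces $\min_i v_p(b_it_i^{n-1})=0$, so that the modulus $p^{2v_p(n)+1}$ appearing in \ref{enum:Hensel2} matches the Hensel threshold $2(v_p(n)+\min_i v_p(b_it_i^{n-1}))$ on the nose — and, separately, that the congruence in the conclusion of \Cref{lem:AdaptedHensel} is what guarantees the lifted point is not the origin.
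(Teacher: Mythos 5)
Your proof is correct and follows essentially the same route as the paper: the first bullet by the ultrametric valuation argument using the pairwise incongruence of the $[v_p(a_i)]_n$, and the second by reducing to the minimised triple via \Cref{lem:Ca to Cb} and matching condition \ref{enum:Hensel1} to the requirement $\min_i v_p(b_it_i^{n-1})=0$ so that \Cref{lem:AdaptedHensel} applies with condition \ref{enum:Hensel2}. The only differences are cosmetic: you treat the degenerate cases $t_i=0$ in the first bullet and the non-triviality of the Hensel lift more explicitly than the paper does.
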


\begin{proof}
We begin with the case that $\#\set{\mn{v_p(a_i)}:i\in\set{1, 2, 3}} = 3$. Assume that $(t_1, t_2, t_3)\in\ZZ_p^3$ represents a point on $\cC_{n, \bf{a}}(\QQ_p)$. Then since the three $\mn{v_p(a_i)}$ are distinct, we get
\[
v_p(a_1t_1^n + a_2t_2^n + a_3t_3^n) = \min\set{v_p(a_it_i^n)} < \infty,
\]
a contradiction.  

Henceforth we will assume that $\#\set{\mn{v_p(a_i)}:i\in\set{1, 2, 3}}\leq 2$. Then, by \Cref{lem:Ca to Cb}, $\cC_{n,\ba}(\QQ_p)\neq \emptyset$ if and only if $\cC_{n,\bb}(\QQ_p) \neq \emptyset$, and so we show that the two constraints proposed are equivalent to $\cC_{n,\bb}(\QQ_p)\neq \emptyset$. We first show that the forward implication holds.  If $\cC_{n,\bb}(\QQ_p)\neq\emptyset$, then certainly any $\QQ_p$-point can be normalised to a $\ZZ_p$-point $\bt = (t_1, t_2, t_3)$ such that $\bt \not\equiv (0, 0, 0) \bmod p$.  We must further show that $\bt \not\equiv (0, 0, a) \bmod p$ for any choice of $a\in \FF_p^\times$. Indeed, by the definition of a minimised triple, $v_p(b_1) = v_p(b_2) = 0$ and $v_p(b_3)\in\set{0,\dots,n-1}$, and so reducing modulo $p^n$ we would obtain $b_3t_3^n \equiv 0 \bmod p^n$. Hence $t_3 \not\equiv 0 \bmod p$ implies $b_3 \equiv 0 \bmod p^n$, which is a contradiction. Hence $\bt$ satisfies the constraints proposed. 

Assume now that the two constraints in the proposition statement hold, so we have $\bt \in \Z_p^{3}$ such that $\bt \not\equiv (0, 0, a) \bmod p$ and $b_1t_1^n + b_2t_2^n + b_3t_3^n \equiv 0\bmod p^{2v_p(n) + 1}$. By \Cref{lem:AdaptedHensel} and Constraint \Cref{enum:Hensel2}, if 
$$
\min\set{v_p\braces{b_it_i^{n-1}}~:~i\in\set{1, 2, 3}} = 0,
$$
then $\cC_{n,\bb}(\QQ_p)\neq\emptyset$, and so we will show that this holds. Indeed, assume to the contrary. By construction of $\bb$, we have $v_p(b_1) = v_p(b_2) = 0$ and $v_p(b_3) \in \set{0, \dots, n - 1}$. Moreover, by Constraint \Cref{enum:Hensel1}, at least $1$ entry of $\bt$ has valuation $0$.  Since we are assuming that $v_p(b_it_i^{n - 1})>0$ for all $i$, we must have that $v_p(t_3)=0$ and $v_p(t_1),v_p(t_2)>0$. But this contradicts our assumptions on $\bf{t}$. 
\end{proof}

In the generic case, the conditions above can be phrased simply.  We now give a notation for the set of badly behaved primes for whom the generic case need not apply.

\begin{definition}\label{def:S(n)}
We let $S(n)$ be the set of prime numbers given by
\[
S(n) := \set{p \mid n} \cup \set{p~:~\frac{p+1}{\sqrt{p}} \leq (n-1)(n-2)}.
\]
\end{definition}

For $p\not\in S(n)$, we can phrase the Hensel result more simply.

\begin{proposition}
\label{prop:Hensel_large_primes}
Let $p$ be a prime number, and $\ba = (a_1, a_2, a_3) \in (\QQ_p^{\times})^3$.  Assume that $p \not \in S(n)$, then $\cC_{n, \ba}(\QQ_p) \neq \emptyset$ if and only if one of the following is true:
\begin{enumerate}
\item $\#\set{\mn{v_p(a_i)}~:~i\in\set{1, 2, 3}} = 1$; or
\item $\#\set{\mn{v_p(a_i)}~:~i\in\set{1, 2, 3}} = 2$ and we have $-(a_i/a_j) p^{v_p(a_j) - v_p(a_i)} \in \FF_p^{\times n}$, where $i < j$ are the unique indices such that $\mn{v_p(a_i)} = \mn{v_p(a_j)}$.
\end{enumerate}
\end{proposition}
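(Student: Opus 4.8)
The plan is to deduce this from \Cref{prop:Hensel_general} by showing that, for $p \notin S(n)$, Constraints \ref{enum:Hensel1} and \ref{enum:Hensel2} there simplify dramatically. First I would dispose of the case $\#\set{\mn{v_p(a_i)}} = 3$, which is excluded by \Cref{prop:Hensel_general}; note that $p \notin S(n)$ forces $p \nmid n$, so $v_p(n) = 0$ and the congruence in Constraint \ref{enum:Hensel2} is simply $b_1t_1^n + b_2t_2^n + b_3t_3^n \equiv 0 \bmod p$. In case (1), where all the $\mn{v_p(a_i)}$ agree, the minimised triple $\bb$ has $v_p(b_1) = v_p(b_2) = v_p(b_3) = 0$; then the smooth projective curve $\cC_{n,\bb} \bmod p$ over $\FF_p$ has genus $(n-1)(n-2)/2$, and the Hasse--Weil bound gives $\#\cC_{n,\bb}(\FF_p) \geq p + 1 - (n-1)(n-2)\sqrt{p} > 0$ precisely because $p \notin S(n)$. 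Such an $\FF_p$-point is automatically not of the forbidden shape $(0,0,a)$ (since the reduced curve is smooth, no coordinate line meets it in a single point with multiplicity, and more simply a point with $t_1 \equiv t_2 \equiv 0$ would force $t_3 \equiv 0$), so it lifts by \Cref{prop:Hensel_general} and case (1) always yields solubility.

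Next, case (2): here $\mn{v_p(a_i)} = \mn{v_p(a_j)}$ for a unique pair $i<j$ and the third residue differs. The minimised triple has $v_p(b_1) = v_p(b_2) = 0$ and $v_p(b_3) = \mn{v_p(a_k) - v_p(a_i)} \in \set{1, \dots, n-1}$, so $p \mid b_3$. Reducing Constraint \ref{enum:Hensel2} modulo $p$, the $b_3 t_3^n$ term vanishes and we need $b_1 t_1^n + b_2 t_2^n \equiv 0 \bmod p$ with $(t_1, t_2) \not\equiv (0,0) \bmod p$ (Constraint \ref{enum:Hensel1} forbids $t_1 \equiv t_2 \equiv 0$). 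This is solvable iff $-b_2/b_1 \in \FF_p^{\times n}$: if such $(t_1, t_2)$ exists then neither coordinate is $0 \bmod p$ (as $p \nmid b_1, b_2$) and $-b_2/b_1 = (t_1/t_2)^n$; conversely any $n$-th root gives a solution with $t_3 = 0$, which also satisfies Constraint \ref{enum:Hensel1}. Finally I would translate $-b_2/b_1$ back into the original coefficients: unwinding \Cref{def:b(a)}, $b_1 = a_i/p^{v_p(a_i)}$ and $b_2 = a_j/p^{v_p(a_j)}$, so $-b_2/b_1 = -(a_j/a_i)\,p^{v_p(a_i) - v_p(a_j)}$; since the stated condition is $-(a_i/a_j)p^{v_p(a_j)-v_p(a_i)} \in \FF_p^{\times n}$, and the map $x \mapsto x^{-1}$ preserves $\FF_p^{\times n}$, these are equivalent.

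The one genuine subtlety, and the step I would be most careful about, is the Hasse--Weil input in case (1): one must confirm that $\cC_{n,\bb} \bmod p$ is a smooth plane curve over $\FF_p$ of degree $n$ — which needs $p \nmid n$ (guaranteed since $p \notin S(n) \supseteq \set{p \mid n}$) and $p \nmid b_1 b_2 b_3$ (guaranteed by minimisation in case (1)) — so that its genus is $\binom{n-1}{2}$ and the bound $\#\cC_{n,\bb}(\FF_p) \geq p + 1 - 2g\sqrt{p}$ applies; then the defining inequality $\frac{p+1}{\sqrt p} > (n-1)(n-2) = 2g$ of $p \notin S(n)$ is exactly what forces a rational point to exist. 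Everything else is elementary bookkeeping with valuations and the definition of $\bb(\ba)$. I would present the argument as: reduce to \Cref{prop:Hensel_general}, handle $v_p(n)=0$, then treat the two cases, using Hasse--Weil for (1) and the explicit $n$-th power criterion for (2).
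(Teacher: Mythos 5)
Your proposal is correct and follows essentially the same route as the paper: reduce to \Cref{prop:Hensel_general} (with $v_p(n)=0$ so the congruence is modulo $p$), use the Hasse--Weil bound on the smooth reduction of the minimised triple in case (1), and the $n$-th power criterion $-b_1/b_2\in\FF_p^{\times n}$ (equivalently its inverse, as you note) in case (2). Your extra care about the forbidden shape $(0,0,a)$ and the lifting step is sound and only makes explicit what the paper leaves implicit.
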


\begin{proof}
Let $\bb=\bb(\ba)$ be the associated minimised triple.  If $\#\set{\mn{v_p(a_i)}:i\in\set{1, 2, 3}} = 1$, then the isomorphic curve in $\PP^2_{\FF_p}$ cut out by 
\[
b_1 t_1^n + b_2 t_2^n + b_3 t_3^n = 0
\]
is a smooth genus $\frac{(n-1)(n-2)}{2}$ curve, and so has a point by the Hasse--Weil bound. If instead $\#\set{\mn{v_p(a_i)}:i\in\set{1, 2, 3}} = 2$, then Proposition \ref{prop:Hensel_general} gives that $\cC_{n, \ba}(\QQ_p)\neq\emptyset$ if and only if there is a point $(t_1, t_2) \in \FF_p^2\backslash\set{(0, 0)}$ such that $b_1t_1^n + b_2t_2^n \equiv 0\bmod p$. Since $b_1b_2\not\equiv 0\bmod p$, we rearrange to see that the existence of this point is equivalent to existence of $t_0 = (t_2/t_1)\in\FF_p^\times$ such that $t_0^n = -(b_1/b_2) = -(a_i/a_j)p^{v_p(a_j) - v_p(a_i)}$. 
\end{proof}
\section{The character sum}\label{sec:charactersums}
We let $\mathbf{B} = (B_1, B_2, B_3)$. Our main sum of interest is
\begin{equation}\label{eq:NfB}
N(\mathbf{B}) := \sum_{\substack{|a_1| \leq B_1 \\ |a_2| \leq B_2 \\ |a_3| \leq B_3 \\ \gcd(a_1, a_2, a_3) = 1}} \mathbf{1}_{a_1t_1^n + a_2t_2^n + a_3t_3^n = 0 \text{ E.L.S.}}
\end{equation}
Setting $\mathbf{a} = (a_1, a_2, a_3)$, we will write the coefficient bounds compactly as $\mathbf{a} \leq \mathbf{B}$.  It will be useful to first work in the case where the entries of $\ba$ are $n$-powerfree, together with some additional coprimality constraints.  For $\mathbf{s} = (s_1, s_2, s_3)$, we define
\begin{equation}\label{eq:NfBs}
N_{\text{f}}(\mathbf{B}, \mathbf{s}) := \sum_{\substack{\mathbf{a} \leq \mathbf{B} \\ \gcd(s_1a_1, s_2a_2, s_3a_3) = 1 \\ \mathbf{a} \ n \text{-powerfree}}} \mathbf{1}_{a_1t_1^n + a_2t_2^n + a_3t_3^n = 0 \text{ E.L.S.}}
\end{equation}
Our next few sections will be aimed at understanding $N_{\text{f}}(\mathbf{B}, \mathbf{s})$.  To accommodate local solubility at primes in $S(n)$, it is helpful to be able to partition the sum into finitely many congruences modulo some fixed $M$.  This will take the form of the further generalization
\begin{equation}\label{eq:NfBsbM}
N_{\text{f}}(\mathbf{B}; \mathbf{s}, \mathbf{b}, M) := \sum_{\substack{\mathbf{a} \leq \mathbf{B} \\ \gcd(s_1a_1, s_2a_2, s_3a_3) = 1 \\ \mathbf{a} \ n \text{-powerfree} \\ \mathbf{a} \equiv \mathbf{b} \bmod M}} \mathbf{1}_{a_1t_1^n + a_2t_2^n + a_3t_3^n = 0 \text{ E.L.S.}}
\end{equation}
for $\mathbf{b} = (b_1, b_2, b_3), \mathbf{s} = (s_1, s_2, s_3)$ and $M \in \Z_{\geq 1}$, where we think of $\mathbf{b}$ and $M$ as fixed and $\mathbf{s}$ as very small compared to $\mathbf{B}$. We will see how to get back from $N_{\text{f}}(\mathbf{B}; \mathbf{s}, \mathbf{b}, M)$ to $N(\mathbf{B})$ in Section \ref{sLeading}. 

From now on, we will always assume that $M$ is divisible by
\begin{align}
\label{eHensel}
n^2 \prod_{p\in S(n)} p^n,
\end{align}
and that $M$ is not divisible by any primes outside $S(n)$. If $a_1, a_2, a_3$ are $n$-powerfree, then the local solubility of $a_1t_1^n + a_2t_2^n + a_3t_3^n=0$ at primes in $S(n)$ is determined by $a_i \bmod M$ due to Proposition \ref{prop:Hensel_general} (since we have to scale $a_i$ by at most $p^{n - 1}$ to change variables to the associated minimised triple).

We say that $\mathbf{b}\in\braces{\ZZ/M\ZZ}^3$ is admissible if there exist $n$-powerfree coprime integers $a_1, a_2, a_3$ such that $a_i \equiv b_i \bmod M$ and the equation $a_1t_1^n + a_2t_2^n + a_3t^n$ is soluble modulo $M$.  Note that, by \Cref{prop:Hensel_general}, for $n$-powerfree triples $\ba$ we have that $a_1t_1^n+a_2t_2^n+a_3t_3^n$ is locally soluble at all primes in $S(n)$ if and only if $\ba \equiv \bb \bmod M$ for some admissible triple $\bb$. For now, we shall fix $\mathbf{b}$ and shall implicitly assume that it is admissible.

% \ross{admissible seems to depend on $s$ later on, think the definition here needs slightly adapted to be something like:  Then we say that $\mathbf{b}$ is $\bs$-admissible if there exist $n$-powerfree integers $a_1, a_2, a_3$ such that $\gcd(s_ia_i)=1$, $a_i \equiv b_i \bmod M$ and the equation $a_1t_1^n + a_2t_2^n + a_3t^n$ is soluble mod $M$ (equivalently, by \Cref{prop:Hensel_general}, locally soluble at all primes in $S(n)$).} \peter{I think later on we use exactly the same definition of admissible, so the text is good as written. It is true that the gcd condition with $s_1, s_2, s_3$ could force the sum to be empty, but I think that gcd condition is adequately treated as a function of $s_1, s_2, s_3$ in Section 5}

\subsection{Unfolding the indicator function}
\subsubsection{Change of variables}\label{subsubsec:COV}
Define $\mathcal{A} := (\Z/n\Z)^3 - \{(0, 0, 0)\}$. Given $\mathbf{v} = (v_1, v_2, v_3) \in \mathcal{A}$ and a vector $\mathbf{a} = (a_1, a_2, a_3)$ of $n$-powerfree integers, we split $\mathbf{a}$ according to the valuations of its coordinates
\[
x_{\mathbf{v}}=x_\bv(\ba) := \prod_{\substack{p \not \in S(n) \\ v_p(a_i) \equiv v_i \bmod n}} p,
\]
where the product runs over all primes $p$. This is a finite product as $\mathbf{v} \neq \mathbf{0}$.  We also define
\[
x_{i, \bullet} = x_{i,\bullet}(\ba) := \text{sgn}(a_i) \prod_{p \in S(n)} p^{v_p(a_i)}.
\]
These new variables prescribe a change of variables $\mathbf{a}\to (x_{\mathbf{v}})_{\mathbf{v}\in \mathcal{A}}$ since, writing $\pi_i: \mathcal{A} \rightarrow \Z$ for the projection map onto the $i$-th coordinate composed with the natural section of $[\cdot]_n$, we have the identity
\[
a_i = x_{i, \bullet} \prod_{\mathbf{v} \in \mathcal{A}} x_{\mathbf{v}}^{\pi_i(\mathbf{v})},
\]
where we used that the $a_i$ are free of $n$-th powers.

% Since $\gcd(a_1, a_2, a_3) = 1$, it follows that $x_{\mathbf{v}} = 1$ unless $\pi_i(\mathbf{v}) = 0$ for at least one $i \in \{1, 2, 3\}$.  
The new variables $x_{\mathbf{v}}$ are squarefree and pairwise coprime. We let $\mathcal{B}$ be the subset of $\mathcal{A}$ consisting of elements $(v_1, v_2, v_3)$ such that there exists $i \in \{1, 2, 3\}$ with $v_i = 0$ and such that there exists $j, k \in \{1, 2, 3\}$ with $v_j = v_k$.  Note that the coprimality of the $a_i$ and local solubility conditions from \Cref{prop:Hensel_large_primes} force $x_{\mathbf{v}} = 1$ for all $\mathbf{v} \in \cA - \cB$ when we range over the locally soluble coprime $(a_1,a_2,a_3)$, and so we restrict our index set for $x_{\bv}$ from $\cA$ to $\cB$.

We summarise the outcome of this change of variables as follows.

\begin{lemma}
\label{lem:initialCOV}
The change of variables $\ba\mapsto \big((x_{i,\bullet})_{i=1}^3, (x_\bv)_{\bv\in\cB}\big)$ above induces a bijection
\begin{gather*}
\set{\ba\in\ZZ^3~:~\substack{
    \ba\leq \bB;\\
    \ba\ n\textnormal{-powerfree};\\
    \gcd\braces{\set{s_ia_i~:~i=1,2,3}}=1;\\
    \ba \equiv \bb \bmod M;\\
    a_1t_1^n+a_2t_2^n+a_3t_3^n=0\textnormal{ E.L.S.}
}} \\
\updownarrow \\
\set{\big((x_{i,\bullet})_i, (x_\bv)_\bv\big)\in\ZZ_{\neq 0}^3\times \ZZ_{>0}^{\cB}~:~\substack{
    \mu^2\braces{\prod\limits_{\bv\in\cB}x_{\bv}}=1;\\
    p\mid x_{\bv}\implies p\not\in S(n);\\
    p\mid x_{i,\bullet}\implies p\in S(n)\textup{ and } v_p\braces{x_{i,\bullet}}<n;\\
    \left|x_{i,\bullet}\prod\limits_{\bv\in\cB}x_{\bv}^{\pi_i(\bv)}\right| \leq B_i\ \forall i;\\
    \gcd\braces{\set{s_ix_{i,\bullet}\prod\limits_{\bv\in\cB}x_\bv^{\pi_i(\bv)}~:~i=1,2,3}}=1;\\
    x_{i,\bullet}\prod\limits_{\bv\in\cB}x_{\bv}^{\pi_i(\bv)} \equiv b_i \bmod M\ \forall i;\\
    \sum\limits_{i=1}^3\braces{x_{i,\bullet}\prod\limits_{\bv\in\cB}x_{\bv}^{\pi_i(\bv)}}t_i^n=0\textnormal{ E.L.S.}
}}.
\end{gather*}
\end{lemma}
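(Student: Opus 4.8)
The plan is to produce an explicit two-sided inverse to the displayed map rather than to verify injectivity and surjectivity separately. Given a tuple $\big((x_{i,\bullet})_i,(x_\bv)_{\bv\in\cB}\big)$ in the target set I would reconstruct $\ba$ via $a_i := x_{i,\bullet}\prod_{\bv\in\cB}x_\bv^{\pi_i(\bv)}$; this choice is forced by the identity $a_i = x_{i,\bullet}\prod_{\bv\in\cA}x_\bv^{\pi_i(\bv)}$ recorded in \Cref{subsubsec:COV} together with the claim that $x_\bv=1$ for every $\bv\in\cA-\cB$. So the argument splits into three parts: the forward map lands in the target set; the reconstruction lands in the source set; the two composites are the identity.

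For the forward direction, fix $\ba$ in the source set (so in particular each $a_i\neq 0$, being $n$-powerfree). Decomposing $\ba$ by $p$-adic valuation patterns shows at once that the $x_\bv$ are squarefree and pairwise coprime — every prime $p\notin S(n)$ contributes to the single $x_\bv$ indexed by $\big(\mn{v_p(a_1)},\mn{v_p(a_2)},\mn{v_p(a_3)}\big)$, and nothing to the $x_{i,\bullet}$ — so $\prod_{\bv\in\cB}x_\bv$ is squarefree and positive, $x_{i,\bullet}\neq 0$, and the support statements $p\mid x_\bv\Rightarrow p\notin S(n)$ and $p\mid x_{i,\bullet}\Rightarrow p\in S(n),\ v_p(x_{i,\bullet})<n$ are immediate from the definitions and $n$-powerfreeness. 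The one genuinely substantive point is that $x_\bv=1$ whenever $\bv\in\cA-\cB$: if $\bv$ has no zero entry then any prime $p\mid x_\bv$ divides all three $a_i$, contradicting $\gcd(s_1a_1,s_2a_2,s_3a_3)=1$; and if the entries of $\bv$ are pairwise distinct then for $p\mid x_\bv$ one has $\#\set{\mn{v_p(a_i)}:i\in\set{1,2,3}}=3$, so $\cC_{n,\ba}(\QQ_p)=\emptyset$ by the first bullet of \Cref{prop:Hensel_general} (equivalently by \Cref{prop:Hensel_large_primes}), contradicting everywhere-local solubility; these two cases exhaust $\cA-\cB$. The remaining target conditions (the size bounds, the $\gcd$ condition with the $s_i$, the congruence $\bmod M$, and everywhere-local solubility) are then nothing but the source conditions transported along the identity $a_i=x_{i,\bullet}\prod_{\bv\in\cB}x_\bv^{\pi_i(\bv)}$.

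For the reverse direction, start from a target tuple and set $a_i$ as above. The only source condition not already listed among the target hypotheses is that $\ba$ is $n$-powerfree, and this I would check prime by prime: for $p\in S(n)$ we have $v_p(a_i)=v_p(x_{i,\bullet})<n$, while for $p\notin S(n)$ squarefreeness of $\prod_{\bv}x_\bv$ forces $p$ to divide at most one $x_\bv$, whence $v_p(a_i)$ is either $0$ or some $\pi_i(\bv)\in\set{0,\dots,n-1}$. Finally, for mutual inverseness: forward-then-reverse returns $x_{i,\bullet}\prod_{\bv\in\cB}x_\bv^{\pi_i(\bv)}=x_{i,\bullet}\prod_{\bv\in\cA}x_\bv^{\pi_i(\bv)}=a_i$ by the previous step; and reverse-then-forward recovers each $x_\bv$ and each $x_{i,\bullet}$ by tracking which primes contribute where, using that $\mn{\pi_i(\bv_0)}$ is the $i$-th coordinate of $\bv_0$ and that both $\text{sgn}(a_i)$ and the $S(n)$-part of $a_i$ come exactly from $x_{i,\bullet}$.

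I expect the only real obstacle to be the identification $x_\bv=1$ for $\bv\in\cA-\cB$ on everywhere-locally soluble coprime triples; that is the step where \Cref{prop:Hensel_general} (or \Cref{prop:Hensel_large_primes}) and coprimality do the work, and it is what justifies truncating the index set from $\cA$ to $\cB$. Everything else is careful bookkeeping confirming that the numerous constraints on the two sides match up under the substitution, so I would write the proof with this one point isolated and dispatch the rest by quoting the identity and the definitions.
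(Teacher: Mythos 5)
Your proposal is correct and follows essentially the same route as the paper, which justifies this lemma in the surrounding prose rather than a formal proof: the identity $a_i = x_{i,\bullet}\prod_{\bv\in\cA}x_\bv^{\pi_i(\bv)}$ from $n$-powerfreeness, squarefreeness and pairwise coprimality of the $x_\bv$, and the key observation that coprimality together with \Cref{prop:Hensel_large_primes} forces $x_\bv=1$ for $\bv\in\cA-\cB$. You have correctly isolated that last point (including the two-case exhaustion of $\cA-\cB$) as the only substantive step, and the remaining bookkeeping is as you describe.
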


\subsubsection{The auxiliary function \texorpdfstring{$f$}{f}}\label{subsubsec:auxiliaryfunction}
We equip $(\Z/n\Z)^3$ with an operator $\cdot$ of type $(\Z/n\Z)^3 \times (\Z/n\Z)^3 \rightarrow \Z/n\Z$ given by
\[
(v_1, v_2, v_3) \cdot (w_1, w_2, w_3) := v_1w_1 + v_2w_2 + v_3w_3.
\]
We now define a book-keeping function $f:\cB\to (\ZZ/n\ZZ)^3$. For $\bv\in\cB$, let $\set{i, j, k} = \set{1, 2, 3}$ be an identification such that $v_i=v_j$.  Then choose the image $f(\bv) := (w_1, w_2, w_3)$ such that $w_i=1$, $w_j=-1$, $w_k=0$.  Note that this defines each $f(\bv)$ up to a choice of sign (i.e. up to swapping $i$ and $j$).  That choice will be irrelevant in this article, but for the sake of concreteness we make the choice as follows.  Note that every element in $\mathcal{B}$ is a multiple of an element in the set
\[
\mathcal{B}_{\text{min}} := \{(1, 0, 0), (0, 1, 0), (0, 0, 1), (1, 1, 0), (1, 0, 1), (0, 1, 1)\}.
\]
In fact, given $\mathbf{v} \in \mathcal{B}$, there is a unique $\mathbf{v}_{\text{min}} \in \mathcal{B}_{\text{min}}$ such that $\mathbf{v}$ is a multiple of $\mathbf{v}_{\text{min}}$. We define $f: \mathcal{B} \rightarrow (\Z/n\Z)^3$ such that $f(\mathbf{v})=f(\mathbf{v}_{\min})$ and the values on $\cB_{\min}$ are stated below.
% \ross{Changed the values in rows 2 and 5.  It's nothing that effects the argument but just because it means that the characters we consider in the linked variables are always of the form $\chi_1\chi_2^{-1}$ which just looks nicer.}
\begin{table}[H]
\begin{center}
\begin{tabular}{|l|l|}
\hline
$\mathbf{v}_{\text{min}} \in \mathcal{B}_{\text{min}}$ & $f(\mathbf{v}_{\text{min}})$ \\ \hline
$(1, 0, 0)$ & $(0, 1, -1)$ \\ \hline
$(0, 1, 0)$ & $(-1, 0, 1)$ \\ \hline
$(0, 0, 1)$ & $(1, -1, 0)$ \\ \hline
$(1, 1, 0)$ & $(1, -1, 0)$ \\ \hline
$(1, 0, 1)$ & $(-1, 0, 1)$ \\ \hline
$(0, 1, 1)$ & $(0, 1, -1)$ \\ \hline
\end{tabular}
\caption{Table of values for $f$ on $\cB_{\min}$.}
\label{table1}
\end{center}
\end{table}

\begin{remark}
Write $\Delta := (1, 1, 1) \in (\Z/n\Z)^3$ for the diagonal element.  Note that the function $f$ satisfies
\[
f(\mathbf{v}) \cdot \Delta = 0, \quad f(\mathbf{v}) \cdot \mathbf{v} = 0, \quad \pi_i(f(\mathbf{v})) \in (\Z/n\Z)^\ast \textnormal{ for some } i.
\]
\end{remark}

\subsubsection{The solubility criteria}
We now characterise the local solubility of the Fermat equations of interest.  Since we account for solubility at $p\in S(n)$ by the congruence conditions imposed by $\bb,M$ in $N_{\text{f}}(\bB;\bs,\bb,M)$, it remains to characterise solubility at $p\not\in S(n)$ and over the reals.  We begin with the former.  For a (characteristic $0$) field $K$ containing the $n$-th roots of unity, we write $(\tfrac{\cdot}{\cdot})_{K,n}$ for the $n$-th power residue symbol in $K$.  We let $g$ be the multiplicative function defined on any prime power $p^r$, $r\geq 1$ as
\begin{equation}
\label{eq:mult function g}
g(p^r) = 
\begin{cases}
1 & \text{if } p \in S(n) \\
\max(\{k \mid n : p \equiv 1 \bmod k\})^{-1} & \text{if } p \not \in S(n),
\end{cases}
\end{equation}
and define $g(-1) = 1$.  We then phrase the condition that $(a_1,a_2,a_3)$ presents a $\QQ_p$-soluble equation at $p\not\in S(n)$ in terms of this function $g$ and the change of variables of \Cref{lem:initialCOV}.

We first repackage the Hensel's lemma result from \Cref{sec:prerequisites} in a convenient form.

\begin{lemma}
\label{lem:repackaged hensel}
Let $(a_1, a_2, a_3)$ be a triple of coprime $n$-powerfree integers. We denote by $\big((x_{m,\bullet})_m, (x_\bv)_\bv\big)$ the corresponding variables under \Cref{lem:initialCOV}. Let $p$ be a prime number.  

If $p \nmid x_{\bv}$ for all $\bv\in \cB$ and $p \not \in S(n)$, then the equation $a_1t_1^n + a_2t_2^n + a_3t_3^n = 0$ is soluble in $\QQ_p$.  Else, if $p \mid x_\bv$ for some $\bv\in \cB$, then $a_1t_1^n + a_2t_2^n + a_3t_3^n = 0$ is soluble in $\Q_p$ if and only if
\[
-\prod_{m = 1}^3 x_{m, \bullet}^{\pi_m(f(\mathbf{v}))} \prod_{\mathbf{w} \in \mathcal{B}} x_{\mathbf{w}}^{\mathbf{w} \cdot f(\mathbf{v})}
\]
is an $n$-th power modulo $p$. 
 % Furthermore, the indicator function of the above expression being a $k$-th power is equal to
% \[
% \frac{1}{k} \prod_{q^i \parallel k} \left(\sum_{j = 0}^i \sum_{\substack{I \subseteq \Z[\zeta_{q^j}] \\ N(I) = p}} \left(\frac{-\prod_{m = 1}^3 x_{m, \bullet}^{\pi_m(f(\mathbf{v}))} \prod_{\mathbf{v} \in \mathcal{B}} \prod_{u \in (\Z/n\Z)^\ast} x_{\mathbf{v}, u}^{\mathbf{v} \cdot f(\mathbf{v})}}{I}\right)_{\Q(\zeta_{q^j}), q^j}\right).
% \]
\end{lemma}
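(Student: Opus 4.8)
The statement is just a repackaging of Proposition~\ref{prop:Hensel_large_primes} in terms of the change of variables from Lemma~\ref{lem:initialCOV}, so the proof is essentially bookkeeping: translate "$p\mid x_{\bv}$ for some $\bv\in\cB$" into valuation data for the $a_i$, identify the minimised triple, and then recognise the Hasse--Weil/power-residue criterion of Proposition~\ref{prop:Hensel_large_primes} in the claimed product. First I would observe that since $p\notin S(n)$, the condition $p\nmid x_{\bv}$ for all $\bv\in\cB$ together with $p\nmid x_{m,\bullet}$ (automatic, since $p\notin S(n)$) means $p\nmid a_1,a_2,a_3$, hence $\#\{[v_p(a_i)]_n\}=1$ and Proposition~\ref{prop:Hensel_large_primes}(1) gives $\QQ_p$-solubility. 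This disposes of the first sentence.

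For the second sentence, suppose $p\mid x_{\bv}$ for some $\bv\in\cB$. By construction of the change of variables, $x_{\bv}$ is the product of primes $q\notin S(n)$ with $v_q(a_i)\equiv v_i\bmod n$ for all $i$; since the $x_{\bw}$ for distinct $\bw$ are pairwise coprime, $p$ divides exactly one of them, namely this $x_{\bv}$, and therefore $[v_p(a_i)]_n = [v_i]_n$ for each $i$. Because $\bv\in\cB$ there are indices $i,j,k$ with $v_i=v_j$ (so $[v_p(a_i)]_n=[v_p(a_j)]_n$) and some $v_\ell=0$; combined with the hypothesis that $\ba$ is coprime, one checks $\#\{[v_p(a_i)]_n\}=2$ exactly (it cannot be $1$, else $p$ divides all three $a_i$, nor $3$, by the membership $\bv\in\cB$). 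Thus we are in case~(2) of Proposition~\ref{prop:Hensel_large_primes}, and $\QQ_p$-solubility is equivalent to $-(a_i/a_j)p^{v_p(a_j)-v_p(a_i)}\in\FF_p^{\times n}$, where $i<j$ are the two indices with $[v_p(a_i)]_n=[v_p(a_j)]_n$.

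The remaining task is to show that $-(a_i/a_j)p^{v_p(a_j)-v_p(a_i)}$ is, modulo $p$-th powers and up to a unit that is automatically an $n$-th power, equal to $-\prod_m x_{m,\bullet}^{\pi_m(f(\bv))}\prod_{\bw} x_{\bw}^{\bw\cdot f(\bv)}$. I would substitute $a_m = x_{m,\bullet}\prod_{\bw\in\cB} x_{\bw}^{\pi_m(\bw)}$ into the ratio $-a_i/a_j$; using $f(\bv)=(w_1,w_2,w_3)$ with $w_i=1$, $w_j=-1$, $w_k=0$, the product $\prod_m a_m^{\pi_m(f(\bv))}$ collapses to exactly $a_i/a_j$ (interpreting $\pi_m$ and the section of $[\cdot]_n$ correctly), and expanding the $a_m$ gives $\prod_m x_{m,\bullet}^{\pi_m(f(\bv))}\cdot \prod_{\bw\in\cB} x_{\bw}^{\,\bw\cdot f(\bv)}$. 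The extra power-of-$p$ factor $p^{v_p(a_j)-v_p(a_i)}$ is absorbed because $v_p(x_{\bv})=1$ and $\bv\cdot f(\bv)=0$ (by the Remark after Table~\ref{table1}), so the $p$-part of $\prod_{\bw} x_{\bw}^{\bw\cdot f(\bv)}$ contributes $p^{\,\bv\cdot f(\bv)}=p^0$, while $v_p(a_j)-v_p(a_i)\equiv v_j - v_i = 0\bmod n$, so the discrepancy $p^{v_p(a_j)-v_p(a_i)}$ is a perfect $n$-th power in $\ZZ$; multiplying a candidate by an $n$-th power of an integer does not change whether it is an $n$-th power mod $p$. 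Hence the two quantities agree in $\FF_p^\times/(\FF_p^\times)^n$, completing the proof.

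**Main obstacle.** The only delicate point is the sign/indexing convention: one must check that the asymmetry in the definition of $f(\bv)$ (the choice of which of $i,j$ gets $+1$ versus $-1$) matches the convention "$i<j$" used in Proposition~\ref{prop:Hensel_large_primes}, and that swapping $i\leftrightarrow j$ only replaces the expression by its reciprocal — which is harmless, since $x$ is an $n$-th power mod $p$ iff $x^{-1}$ is. I would state this explicitly (it is exactly the "choice of sign is irrelevant" remark in Section~\ref{subsubsec:auxiliaryfunction}) and otherwise the argument is a direct computation.
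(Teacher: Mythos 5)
Your proof is correct and is exactly the intended argument: the paper's own proof consists of the single sentence that the claim is immediate from Proposition~\ref{prop:Hensel_large_primes}, and your write-up simply supplies the bookkeeping (the translation of $p\mid x_{\bv}$ into the valuation pattern of the $a_i$, the identification of the two cases of that proposition, and the verification that the displayed product agrees with $-(a_i/a_j)p^{v_p(a_j)-v_p(a_i)}$ in $\FF_p^\times/\FF_p^{\times n}$, with the sign ambiguity in $f$ harmless because inversion preserves $n$-th power classes). No discrepancy with the paper's approach.
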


\begin{proof}
The claim is immediate from \Cref{prop:Hensel_large_primes}.  
\end{proof}

\begin{lemma}
\label{lem:indicator function}
Let $(a_1,a_2,a_3)$ be a triple of coprime $n$-powerfree integers. Let $\big((x_{m,\bullet})_m, (x_\bv)_\bv\big)$ be the corresponding variables under \Cref{lem:initialCOV}.  Let $p$ be a prime number such that $p \mid x_{\mathbf{v}}$ for some $\mathbf{v} \in \mathcal{B}$.  The indicator function for $a_1t_1^n+a_2t_2^n+a_3t_3^n=0$ being soluble in $\QQ_p$ equals
\[
\mathbf{1}_{a_1t_1^n + a_2t_2^n + a_3t_3^n = 0 \textnormal{ soluble in }\QQ_p}=g(p) \sum_{d \mid n} \sum_{\substack{I \subseteq \Z[\zeta_d] \\ \Norm(I) = p}} \left(\frac{-\prod_{m = 1}^3 x_{m, \bullet}^{\pi_m(f(\mathbf{v}))} \prod_{\mathbf{w} \in \mathcal{B}} x_{\mathbf{w}}^{\mathbf{w} \cdot f(\mathbf{v})}}{I}\right)_{\Q(\zeta_d), d}.
\]
\end{lemma}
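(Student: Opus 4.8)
The plan is to convert the "is an $n$-th power modulo $p$" condition from \Cref{lem:repackaged hensel} into an orthogonality relation using power residue symbols. First I would note that since $p\mid x_{\mathbf v}$ for some $\mathbf v\in\cB$, in particular $p\notin S(n)$, so $g(p) = d_p^{-1}$ where $d_p := \max\{k\mid n : p\equiv 1\bmod k\}$. Write $\gamma := -\prod_{m=1}^3 x_{m,\bullet}^{\pi_m(f(\mathbf v))}\prod_{\mathbf w\in\cB}x_{\mathbf w}^{\mathbf w\cdot f(\mathbf v)}$ for the quantity appearing in the statement. Since the $x_{\mathbf v}$ are pairwise coprime and $p\mid x_{\mathbf v}$, the element $\gamma$ is coprime to $p$ (the only contribution divisible by $p$ would come from $x_{\mathbf v}^{\mathbf v\cdot f(\mathbf v)}$, but $\mathbf v\cdot f(\mathbf v) = 0$ by the remark following \Cref{table1}), so $\gamma$ is a well-defined element of $\FF_p^\times$ and the $n$-th power residue symbols below make sense.

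The key classical fact I would invoke is that $\FF_p^\times$ is cyclic of order $p-1$, so $\gamma\in\FF_p^\times$ is an $n$-th power if and only if it is a $d_p$-th power, and the number of $d_p$-th power characters of $\FF_p^\times$ that are trivial on $\gamma$ is exactly $d_p$ if $\gamma$ is a $d_p$-th power and $0$ otherwise; equivalently, by standard character orthogonality, $\mathbf 1_{\gamma\in\FF_p^{\times n}} = d_p^{-1}\sum_{\chi} \chi(\gamma)$ where $\chi$ ranges over characters of $\FF_p^\times$ of order dividing $d_p$. I would then reindex this sum over divisors $d\mid d_p$ (equivalently $d\mid n$ with $p\equiv 1\bmod d$) and, for each such $d$, over the primitive-order-$d$ part, matching each character against a $d$-th power residue symbol $\left(\tfrac{\cdot}{I}\right)_{\QQ(\zeta_d),d}$ for the primes $I$ of $\ZZ[\zeta_d]$ above $p$. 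The point is that for $d\mid n$ with $p\equiv 1\bmod d$, the prime $p$ splits completely in $\QQ(\zeta_d)$ into $\phi(d)$ primes of norm $p$; for each such $I$ the residue field is $\FF_p$ and $\left(\tfrac{\cdot}{I}\right)_{\QQ(\zeta_d),d}$ realises a character of $\FF_p^\times$ of exact order $d$, and as $I$ ranges over the primes above $p$ one recovers (with appropriate multiplicity accounting) exactly the characters of order dividing $d_p$. Since $\Norm(I) = p$ forces $p\equiv 1\bmod d$ automatically, the inner sum $\sum_{\Norm(I)=p, I\subseteq\ZZ[\zeta_d]}$ is empty unless $d\mid d_p$, so extending the outer sum to all $d\mid n$ is harmless and gives precisely the stated formula with prefactor $g(p) = d_p^{-1}$.

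The main obstacle — and the step requiring the most care — is the bookkeeping in the previous paragraph: verifying that summing $\sum_{d\mid n}\sum_{\Norm(I)=p}\left(\tfrac{\gamma}{I}\right)_{\QQ(\zeta_d),d}$ reproduces exactly $\sum_{\text{ord}(\chi)\mid d_p}\chi(\gamma)$ and not some other weighted combination. Concretely, for fixed $d\mid d_p$ there are $\phi(d)$ primes $I$ above $p$, and the symbols $\left(\tfrac{\cdot}{I}\right)_{\QQ(\zeta_d),d}$ as $I$ varies run through all $\phi(d)$ characters of $\FF_p^\times$ of exact order $d$ (this uses that the decomposition group acts simply transitively on these primes and permutes the symbols by precomposition with Galois, which permutes exact-order-$d$ characters transitively). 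Summing over $d\mid d_p$ then collects each character of order dividing $d_p$ exactly once, as needed. I would also double-check the edge contribution $d=1$: there is a unique $I=(p)$ of norm $p$ in $\ZZ = \ZZ[\zeta_1]$ and the symbol is identically $1$, contributing the trivial character, consistent with the count. Assembling these pieces and multiplying by $g(p) = d_p^{-1}$ yields the claimed identity.
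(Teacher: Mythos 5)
Your proposal is correct and follows essentially the same route as the paper: reduce via \Cref{lem:repackaged hensel} to the statement that the displayed sum is the indicator function of being an $n$-th power mod $p$, apply orthogonality for characters of $\FF_p^\times/\FF_p^{\times n}$ (noting $g(p)=\#(\FF_p^\times/\FF_p^{\times n})^{-1}$), and identify the exact-order-$d$ Dirichlet characters mod $p$ with the power residue symbols attached to the degree-one primes of $\ZZ[\zeta_d]$ above $p$. The extra checks you include (coprimality of the argument to $p$ via $\bv\cdot f(\bv)=0$, the Galois-transitivity bookkeeping, the $d=1$ case) are all consistent with, and slightly more detailed than, the paper's proof.
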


\begin{proof}
By \Cref{lem:repackaged hensel}, it is sufficient to show that the indicator function for an integer $a$ which is coprime to $p$ to be an $n$-th power mod $p$ is
\[
g(p)\sum_{d\mid n}\sum_{\substack{I\subseteq \ZZ[\zeta_d]\\\Norm(I)=p}}\art{a}{I}_{\QQ(\zeta_d),d},
\]
which we now do.  By character orthogonality, the indicator function we seek is equal to
\[
\frac{1}{\#\FF_p^\times/\FF_p^{\times n}}\sum_{\chi:\FF_p^\times/\FF_p^{\times n}\to \CC^\times}\chi(a)=g(p)\sum_{d\mid n}\sum_{\substack{\chi:\FF_p^\times\to \CC^\times\\\text{order }d}}\chi(a)
\]
where we have used that $g(p)=\frac{1}{\#\FF_p^\times/\FF_p^{\times n}}$.  We then conclude by observing the equality of the following two sets of functions on $\FF_p$
\[
\left\{\left(\frac{\cdot}{I}\right)_{\Q(\zeta_d), d} :\substack{ I \subseteq \Z[\zeta_d]\\ \Norm(I) = p}\right\}
=
\set{\chi~:~\substack{\text{Dirichlet character}\\\textnormal{modulo }p\\\textnormal{of exact order }d}}.
\]
\end{proof}

To conclude, we characterise solubility over the real numbers as follows.

\begin{lemma}
\label{lem:RealSolubility}
Let $(a_1,a_2,a_3)$ be a triple of coprime $n$-powerfree integers, and denote by $\big((x_{i,\bullet})_i, (x_\bv)_\bv\big)$ the corresponding variables under \Cref{lem:initialCOV}.  Then $a_1t_1^n+a_2t_2^n+a_3t_3^n=0$ is soluble over $\RR$ if and only if $(x_{i,\bullet})_i \in \mathcal{D}(n)$ where $\mathcal{D}(n)$ equals $\mathbb{R}^3$ if $n$ is odd and $\mathcal{D}(n)$ equals the subset $(y_1, y_2, y_3) \in \mathbb{R}^3$ with not all $y_i$ of equal sign if $n$ is even.
\end{lemma}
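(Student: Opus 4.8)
The plan is to reduce the statement to a question about the signs of $a_1,a_2,a_3$, and then to treat the parity of $n$ separately by hand; the whole argument is elementary, so I will be brief about the cases and careful only about the one bookkeeping point.

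First I would record that the real place only sees the sign vector. Under the change of variables of \Cref{lem:initialCOV} we have $a_i = x_{i,\bullet}\prod_{\bv\in\cB}x_{\bv}^{\pi_i(\bv)}$ with each $x_{\bv}$ a positive integer, so $\operatorname{sgn}(a_i)=\operatorname{sgn}(x_{i,\bullet})$ for $i=1,2,3$; moreover the target of the bijection in \Cref{lem:initialCOV} lies in $\ZZ_{\neq 0}^3\times\ZZ_{>0}^{\cB}$, so we may assume $a_1,a_2,a_3$ are all nonzero. Hence the condition $(x_{i,\bullet})_i\in\mathcal{D}(n)$ is equivalent to the same condition imposed on $(a_i)_i$, and it suffices to prove that $a_1t_1^n+a_2t_2^n+a_3t_3^n=0$ has a nonzero real solution if and only if either $n$ is odd, or $n$ is even and $a_1,a_2,a_3$ are not all of the same sign.

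For $n$ odd, the map $t\mapsto t^n$ is a bijection of $\RR$, so taking $t_3=0$, $t_2=1$ and $t_1$ the unique real $n$-th root of $-a_2/a_1$ gives a nonzero real solution irrespective of signs; thus $\mathcal{D}(n)=\RR^3$. For $n$ even, $t^n\geq 0$ for all $t\in\RR$, with equality only at $t=0$. If $a_1,a_2,a_3$ all have the same sign then $a_1t_1^n+a_2t_2^n+a_3t_3^n$ is a sum of three terms of that common sign, hence vanishes only at $t_1=t_2=t_3=0$, so $\cC_{n,\ba}(\RR)=\emptyset$. Conversely, if the signs are not all equal, then after permuting the coordinates (which only permutes the coordinates of $\mathcal{D}(n)$ as well) we may assume $a_1>0>a_2$, and then $(t_1,t_2,t_3)=\bigl(\lvert a_2\rvert^{1/n},\,a_1^{1/n},\,0\bigr)$ is a nonzero real point on $\cC_{n,\ba}$. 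This gives the stated description of $\mathcal{D}(n)$.

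There is no real obstacle here; the only point demanding any attention is the first step, namely verifying that none of the auxiliary variables $x_{\bv}$ carry a sign and that the $x_{i,\bullet}$ are nonzero, so that real solubility is genuinely a function of $(\operatorname{sgn} a_1,\operatorname{sgn} a_2,\operatorname{sgn} a_3)$ alone and the two explicit constructions above suffice.
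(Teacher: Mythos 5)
Your proposal is correct and follows the same route as the paper, whose entire proof is the observation that $\operatorname{sgn}(a_i)=\operatorname{sgn}(x_{i,\bullet})$ (leaving the elementary sign analysis for odd and even $n$ implicit). You have simply written out the details the paper declares ``clear,'' and they check out.
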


\begin{proof}
This is clear from the fact that the sign of $a_i$ is equal to the sign of $x_{i, \bullet}$.
\end{proof}

\subsection{Rewriting the sum}
We begin by re-expressing $N_{\text{f}}(\bB,\bs,\bb,M)$ using the characterisations of local solubility above.  Recall that we have chosen $M$ to be divisible by the expression in \Cref{eHensel}, and so since $\bb$ is chosen to be admissible then by \Cref{prop:Hensel_general} the condition that our triple $(a_1, a_2, a_3) \equiv \bb \bmod M$ ensures local solubility at primes in $S(n)$.  

We will now apply the change of variables from \Cref{lem:initialCOV}, and detect local solubility at $p\not\in S(n)$ and over the reals using \Cref{lem:indicator function} and \Cref{lem:RealSolubility}.  Hence $N_{\text{f}}(\mathbf{B}; \mathbf{s}, \mathbf{b}, M)$ equals
\[
\sum_{(x_{i, \bullet})_{1 \leq i \leq 3}} \sum_{(x_\mathbf{v})_{\mathbf{v} \in \mathcal{B}}}^\flat
\prod_{\mathbf{v} \in \mathcal{B}} \prod_{p \mid x_\mathbf{v}} \left(g(p) \sum_{d \mid n} \sum_{\substack{I \subseteq \Z[\zeta_d] \\ \Norm(I) = p}} \left(\frac{-\prod_{m = 1}^3 x_{m, \bullet}^{\pi_m(f(\mathbf{v}))} \prod_{\mathbf{w} \in \mathcal{B}} x_{\mathbf{w}}^{\mathbf{w} \cdot f(\mathbf{v})}}{I}\right)_{\Q(\zeta_d), d}\right),
\]
where $\sum^\flat$ denotes the summation conditions
\begin{gather*}
\abs{x_{i, \bullet} \prod_{\mathbf{v} \in \mathcal{B}} x_{\mathbf{v}}^{\pi_i(\mathbf{v})}} \leq B_i, \quad \gcd\braces{\set{s_i x_{i, \bullet} \prod_{\mathbf{v} \in \mathcal{B}} x_{\mathbf{v}}^{\pi_i(\mathbf{v})} : i \in \{1, 2, 3\}}} = 1, \\
p \mid x_{i, \bullet} \Rightarrow p \in S(n)\textnormal{ and }v_p(x_{i, \bullet})<n, \\
p \mid x_\mathbf{v} \Rightarrow p \not \in S(n), \quad \mu^2\left(\prod_{\mathbf{v} \in \mathcal{B}} x_\mathbf{v}\right) = 1, \\
x_{i, \bullet} \prod_{\mathbf{v} \in \mathcal{B}} x_{\mathbf{v}}^{\pi_i(\mathbf{v})} \equiv b_i \bmod M, \quad (x_{i, \bullet})_{1 \leq i \leq 3} \in \mathcal{D}(n).
\end{gather*}
We now observe that, for each fixed integer $a$, the function that maps $z$ to
\[
g(z) \sum_{\substack{(I_d)_{d \mid n} \\ I_d \subseteq \Z[\zeta_d] \\ \prod_{d \mid n} \Norm(I_d) = z}} \prod_{d \mid n} \left(\frac{a}{I_d}\right)_{\Q(\zeta_d), d}
\]
is multiplicative on squarefree coprime integers. Let $z = z_1 z_2$ with $z_1, z_2$ squarefree and coprime. Indeed, this follows from the following two observations
\begin{itemize}
\item if we let $D(z)$ be the set of Dirichlet characters modulo $z$ of order dividing $n$, then we have a bijection $D(z_1) \times D(z_2) \rightarrow D(z_1 z_2)$, given by $(\chi_1, \chi_2) \mapsto \chi_1 \chi_2$, thanks to the Chinese remainder theorem;
\item the sum over $(I_d)_{d \mid n}$ with $\prod_{d \mid n} \Norm(I_d) = z$ is exactly the sum over all Dirichlet characters modulo $z$ of order dividing $n$.
\end{itemize}
This allows us to rewrite $N_{\text{f}}(\mathbf{B}; \mathbf{s}, \mathbf{b}, M)$ as
\begin{align}
\label{eCharacterSum}
\sum_{(x_{i, \bullet})_{1 \leq i \leq 3}} \sum_{(x_\mathbf{v})_{\mathbf{v} \in \mathcal{B}}}^\flat \prod_{\mathbf{v} \in \mathcal{B}} \left(g(x_\bv) \hspace{-0.3cm} \sum_{\substack{(I_d)_{d \mid n} \\ I_d \subseteq \Z[\zeta_d] \\ \prod_{d \mid n} \Norm(I_d) = x_\mathbf{v}}} \prod_{d \mid n} \left(\frac{-\prod_{m = 1}^3 x_{m, \bullet}^{\pi_m(f(\mathbf{v}))} \prod_{\mathbf{w} \in \mathcal{B}} x_\mathbf{w}^{\mathbf{w} \cdot f(\mathbf{v})}}{I_d}\right)_{\Q(\zeta_d), d}\right).
\end{align}

We continue to manipulate equation (\ref{eCharacterSum}). We swap the product and the sum to obtain
\[
\sum_{(x_{i, \bullet})_{1 \leq i \leq 3}} \sum_{(x_\mathbf{v})_{\mathbf{v} \in \mathcal{B}}}^\flat \hspace{-0.3cm} \sum_{\substack{(I_{\mathbf{v}, d})_{\mathbf{v} \in \mathcal{B}, d \mid n} \\ I_{\mathbf{v}, d} \subseteq \Z[\zeta_d] \\ \prod_{d \mid n} \Norm(I_{\mathbf{v}, d}) = x_\mathbf{v}}} \hspace{-0.3cm} g\left(\prod_{\mathbf{v} \in \mathcal{B}} x_\mathbf{v}\right) \prod_{\mathbf{v} \in \mathcal{B}} \prod_{d \mid n} \left(\frac{-\prod_{m = 1}^3 x_{m, \bullet}^{\pi_m(f(\mathbf{v}))} \prod_{\mathbf{w} \in \mathcal{B}} x_\mathbf{w}^{\mathbf{w} \cdot f(\mathbf{v})}}{I_{\mathbf{v}, d}}\right)_{\Q(\zeta_d), d}.
\]
Since $\prod_{d \mid n} \Norm(I_{\mathbf{v}, d}) = x_\mathbf{v}$ for all $\bv$, the sum may be rewritten to allow the ideals to range freely without fixing the tuple $(x_\bv)_{\bv}$ in the outer sum.  Hence we summarise with the following theorem.

\begin{theorem}
\label{thm:Nf as character sum}
We have that
\begin{multline*}
N_{\textup{f}}(\mathbf{B}; \mathbf{s}, \mathbf{b}, M) = \sum_{(x_{i, \bullet})_{1 \leq i \leq 3}} \sum_{\substack{(I_{\mathbf{v}, d})_{\mathbf{v} \in \mathcal{B}, d \mid n} \\ I_{\mathbf{v}, d} \subseteq \Z[\zeta_d]}}^{\flat \flat} g\left(\prod_{\mathbf{v} \in \mathcal{B}} \prod_{d \mid n} \Norm(I_{\mathbf{v}, d})\right) \times \\ 
\prod_{\mathbf{v} \in \mathcal{B}} \prod_{d \mid n} \left(\frac{-\prod_{m = 1}^3 x_{m, \bullet}^{\pi_m(f(\mathbf{v}))} \prod_{\mathbf{w} \in \mathcal{B}} \prod_{e \mid n} \Norm(I_{\mathbf{w}, e})^{\mathbf{w} \cdot f(\mathbf{v})}}{I_{\mathbf{v}, d}}\right)_{\Q(\zeta_d), d},
\end{multline*}
where $\flat \flat$ imposes the summation conditions
\begin{gather*}
\left|x_{i, \bullet} \prod_{\mathbf{v} \in \mathcal{B}} \prod_{d \mid n} \Norm(I_{\mathbf{v}, d})^{\pi_i(\mathbf{v})}\right| \leq B_i, 
\quad \gcd(\{s_i x_{i, \bullet} \prod_{\mathbf{v} \in \mathcal{B}} \prod_{d \mid n} \Norm(I_{\mathbf{v}, d})^{\pi_i(\mathbf{v})} : i \in \{1, 2, 3\}\}) = 1, \\
p \mid x_{i, \bullet} \Rightarrow p \in S(n) \ \textup{ and } \ v_p(x_{i,\bullet})<n,\\
p \mid \Norm(I_{\mathbf{v}, d}) \Rightarrow p \not \in S(n), \quad \mu^2\left(\prod_{\mathbf{v} \in \mathcal{B}} \prod_{d \mid n} \Norm(I_{\mathbf{v}, d})\right) = 1, \\
x_{i, \bullet} \prod_{\mathbf{v} \in \mathcal{B}} \prod_{d \mid n} \Norm(I_{\mathbf{v}, d})^{\pi_i(\mathbf{v})} \equiv b_i \bmod M, \quad (x_{i, \bullet})_{1 \leq i \leq 3} \in \mathcal{D}(n).\\
% p \mid \Norm(I_{\mathbf{v}, d}) \Rightarrow p \not \in S(n), \quad p \mid x_{i, \bullet} \Rightarrow p \in S(n), \quad \mu^2\left(\prod_{\mathbf{v} \in \mathcal{B}} \prod_{d \mid n} \Norm(I_{\mathbf{v}, d})\right) = 1, \\
% (x_{i, \bullet})_{1 \leq i \leq 3} \in \mathcal{D}(n).
\end{gather*}
\end{theorem}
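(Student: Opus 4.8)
The statement consolidates the chain of elementary rewritings carried out in the discussion immediately preceding it, so the plan is simply to assemble these into the displayed identity while keeping careful track of the bookkeeping. First I would start from the definition \eqref{eq:NfBsbM} of $N_{\textup{f}}(\mathbf{B};\mathbf{s},\mathbf{b},M)$ and apply \Cref{lem:initialCOV} to replace the sum over $n$-powerfree triples $\mathbf{a}\equiv\mathbf{b}\bmod M$ by the corresponding sum over tuples $\big((x_{i,\bullet})_i,(x_{\mathbf{v}})_{\mathbf{v}\in\mathcal{B}}\big)$, transporting the summation conditions listed there. Since $M$ is divisible by the integer \eqref{eHensel} and $\mathbf{b}$ is admissible, \Cref{prop:Hensel_general} guarantees that the congruence $\mathbf{a}\equiv\mathbf{b}\bmod M$ already forces solubility at every prime of $S(n)$; so the only remaining content of the everywhere-local solubility indicator is solubility over $\RR$ and at primes $p\notin S(n)$.

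Next I would handle the real place with \Cref{lem:RealSolubility}, which converts $\RR$-solubility into the condition $(x_{i,\bullet})_i\in\mathcal{D}(n)$. For the finite places: by \Cref{lem:repackaged hensel} the equation is automatically $\QQ_p$-soluble for every $p\notin S(n)$ with $p\nmid\prod_{\mathbf{v}}x_{\mathbf{v}}$, so the everywhere-local indicator factors as $\prod_{\mathbf{v}\in\mathcal{B}}\prod_{p\mid x_{\mathbf{v}}}\mathbf{1}_{\textup{soluble at }p}$, and \Cref{lem:indicator function} rewrites each local factor as $g(p)\sum_{d\mid n}\sum_{\Norm(I)=p}\left(\tfrac{a_{\mathbf{v}}}{I}\right)_{\QQ(\zeta_d),d}$ with $a_{\mathbf{v}}=-\prod_{m}x_{m,\bullet}^{\pi_m(f(\mathbf{v}))}\prod_{\mathbf{w}\in\mathcal{B}}x_{\mathbf{w}}^{\mathbf{w}\cdot f(\mathbf{v})}$. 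It is worth noting here that $a_{\mathbf{v}}$ does not actually involve $x_{\mathbf{v}}$, because $f(\mathbf{v})\cdot\mathbf{v}=0$, so it is genuinely constant along the product over $p\mid x_{\mathbf{v}}$.

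Then I would expand each product $\prod_{p\mid x_{\mathbf{v}}}(\cdots)$ into a sum over ideal tuples using the multiplicativity on coprime squarefree arguments of the arithmetic function $z\mapsto g(z)\sum_{(I_d)_{d\mid n},\,\prod_d\Norm(I_d)=z}\prod_{d\mid n}\left(\tfrac{a_{\mathbf{v}}}{I_d}\right)_{\QQ(\zeta_d),d}$; this is the one genuinely substantive input, and it follows from the Chinese remainder bijection $D(z_1)\times D(z_2)\to D(z_1z_2)$ on Dirichlet characters of order dividing $n$, the identification of the sum over ideal tuples $(I_d)$ with $\prod_d\Norm(I_d)=z$ with the sum over such characters, and the multiplicativity of $g$ with $g(p)=1/\#(\FF_p^\times/\FF_p^{\times n})$. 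This replaces $\prod_{p\mid x_{\mathbf{v}}}(\cdots)$ by $g(x_{\mathbf{v}})\sum_{(I_{\mathbf{v},d}):\,\prod_d\Norm(I_{\mathbf{v},d})=x_{\mathbf{v}}}\prod_d\left(\tfrac{a_{\mathbf{v}}}{I_{\mathbf{v},d}}\right)_{\QQ(\zeta_d),d}$, i.e.\ expression \eqref{eCharacterSum}.

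Finally I would swap the product over $\mathbf{v}\in\mathcal{B}$ past the now finite sums over the $(I_{\mathbf{v},d})_{d\mid n}$ to obtain a single sum over $(I_{\mathbf{v},d})_{\mathbf{v}\in\mathcal{B},\,d\mid n}$ with $g$ evaluated at the total product of norms, and then note that the $x_{\mathbf{v}}$ have become redundant since $x_{\mathbf{v}}=\prod_{d\mid n}\Norm(I_{\mathbf{v},d})$; dropping them from the outer sum and substituting this expression everywhere turns each condition of $\sum^\flat$ --- the box bounds, the $\gcd$, the support conditions on the $x_{i,\bullet}$ and on the $\Norm(I_{\mathbf{v},d})$, the squarefreeness $\mu^2(\prod\Norm(I_{\mathbf{v},d}))=1$, the congruence mod $M$, and $(x_{i,\bullet})_i\in\mathcal{D}(n)$ --- into exactly the conditions labelled $\flat\flat$, which is the claimed formula. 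I expect the main obstacle to be none of the individual steps in isolation but the bookkeeping: verifying that the passage from $\sum^\flat$ to $\flat\flat$ is a faithful reindexing of the summation range, so that the squarefreeness and pairwise coprimality of the $x_{\mathbf{v}}$ match exactly the stated conditions on the ideal norms and no triple is double-counted or lost.
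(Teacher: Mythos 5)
Your proposal is correct and follows essentially the same route as the paper: change of variables via \Cref{lem:initialCOV}, detection of solubility at $S(n)$ by admissibility of $\mathbf{b}$, at $\infty$ by \Cref{lem:RealSolubility}, and at the remaining primes by \Cref{lem:indicator function}, then the CRT-based multiplicativity to expand into ideal sums and the final swap of product and sum with the $x_{\mathbf{v}}$ eliminated via $x_{\mathbf{v}}=\prod_{d\mid n}\Norm(I_{\mathbf{v},d})$. Your explicit observation that $f(\mathbf{v})\cdot\mathbf{v}=0$ keeps $x_{\mathbf{v}}$ out of the numerator of its own residue symbol is a worthwhile check that the paper leaves implicit.
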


Section \ref{sEqui} is devoted to extracting the main term from this character sum. For $n$ odd, the main term comes from terms with $I_{\mathbf{v}, d} = (1)$ for all $\mathbf{v} \in \mathcal{B}$ and all $d \neq 1$. For any other choice of $I_{\mathbf{v}, d}$, we will show that the sum oscillates. If $n$ is even, then the main term is more subtle. For now, let us just remark that one situation in which the sum oscillates is when there is at least one pair $\mathbf{v} \in \mathcal{B}$ with $d > 2$ and $I_{\mathbf{v}, d} \neq (1)$. 
\section{Isolating the main term}
\label{sEqui}
This section is dedicated to extracting the main term from the summation in \Cref{thm:Nf as character sum} by character sum methods. The essential idea is as follows. We have a sum of products of Hecke characters. Freezing all but two of the variable inputs, we note that as the remaining two ideals vary, we are summing an expression of the form
\[
\art{N(J)}{I}_{\QQ(\zeta_d),d} \art{N(I)}{J}_{\QQ(\zeta_e),e}
\]
for some fixed $d,e$ and with $I,J$ varying up to some norm bound (dependent on the fixed variables) in the appropriate monoid of ideals. We seek to exploit oscillation of this function to bound the contribution in this case. Certainly, if $d=e=1$ then this is a trivial character, and so there is no oscillation to exploit. Similarly, quadratic reciprocity also causes trivial instances (if $n$ is even), and so these cases will contribute to our main term. Otherwise, we exploit a bilinear sieve estimate if $I,J$ are comparably large to get cancellation, and if $I$ is notably larger than $J$, then we exploit a form of the Siegel--Walfisz theorem to do the same. This will allow us to reduce to the trivial cases, and so the main term.

\subsection{Analytic preliminaries}
We now present the analytic tools we make use of in order to isolate the main term from \Cref{thm:Nf as character sum}.  We begin by presenting some elementary bounds, before going on to give the forms of the bilinear sieve and Siegel--Walfisz theorem that we will use.

\subsubsection{Elementary bounds}
We will make use of the following elementary bounds.

\begin{lemma}
\label{lLargeGCD}
For $B_1,B_2,Y \geq 2$, we have
\[
\sum_{\substack{|a|\leq B_1\\|b| \leq B_2 \\ \gcd(a, b) > Y}} 1 \ll \frac{B_1B_2}{Y}.
\]
\end{lemma}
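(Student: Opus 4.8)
The plan is to bound the left-hand side by fixing the value of the greatest common divisor and summing over the multiples. Write $e = \gcd(a,b)$; then $e$ ranges over integers with $Y < e \leq \min(B_1, B_2)$, and for each such $e$ we must count pairs $(a,b)$ with $e \mid a$, $e \mid b$, $|a| \leq B_1$, $|b| \leq B_2$. Writing $a = e a'$, $b = e b'$, the number of admissible $a'$ is at most $2B_1/e + 1 \ll B_1/e$ (using $e \leq B_1$ and $B_1 \geq 2$), and similarly for $b'$, so the count of pairs for fixed $e$ is $\ll B_1 B_2 / e^2$.

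Summing over $e$, the left-hand side is
\[
\ll \sum_{e > Y} \frac{B_1 B_2}{e^2} \ll B_1 B_2 \sum_{e > Y} \frac{1}{e^2} \ll \frac{B_1 B_2}{Y},
\]
where the last bound is the standard tail estimate $\sum_{e > Y} e^{-2} \ll 1/Y$ (obtained by comparison with $\int_{Y}^{\infty} t^{-2}\,dt = 1/Y$, valid since $Y \geq 2 \geq 1$). This gives the claimed bound.

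There is essentially no obstacle here; the only points requiring a modicum of care are (i) replacing the ``$2B_i/e + 1$'' count by a clean multiple of $B_i/e$, which is where the hypotheses $e \leq B_i$ and $B_i \geq 2$ are used to absorb the additive constant, and (ii) noting that the sum over $e$ may be taken to start just above $Y$ rather than at $1$, so that the convergent tail $\sum_{e>Y} e^{-2}$ genuinely contributes the saving $1/Y$ rather than an absolute constant. Since the implied constants are absolute (independent of $n$), this is consistent with the big-$O$ convention fixed in the notation section.
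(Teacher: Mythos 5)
Your argument is correct and is essentially identical to the paper's proof: both fix the value of $\gcd(a,b)$, bound the number of pairs divisible by it by $\ll B_1B_2/e^2$ using $2B_i/e+1 \ll B_i/e$ for $e \leq B_i$, and then sum the convergent tail $\sum_{e>Y} e^{-2} \ll 1/Y$ by comparison with an integral. No issues.
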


\begin{proof}
Note that the lemma is trivial if $Y > \min\set{B_1,B_2}$, since the sum is empty in that case. So we may assume that $Y \leq \min\set{B_1,B_2}$. We fix the value of $\gcd(a, b)$ to bound the sum as
\begin{align*}
\sum_{\substack{|a|\leq B_1\\|b| \leq B_2 \\ \gcd(a, b) > Y}} 1
&= \sum_{Y < n \leq \min\set{B_1,B_2}} \sum_{\substack{|a|\leq B_1\\|b|\leq B_2\\\gcd(a, b)=n}} 1
= \sum_{Y < n \leq \min\set{B_1,B_2}} \sum_{\substack{|a|\leq \frac{B_1}{n}\\|b|\leq \frac{B_2}{n}\\\gcd(a, b)=1}} 1 \\
&\leq \sum_{Y < n \leq \min\set{B_1,B_2}} \prod_{i=1}^2\braces{\frac{2B_i}{n}+1}.
\end{align*}
Thanks to the bound $\frac{2B}{n} + 1 \leq \frac{3B}{n}$ for $n \leq B$, we then get that the above is at most $9B_1B_2 \sum_{Y < n \leq \min\set{B_1,B_2}} \frac{1}{n^2}$. Extending the sum to infinity and comparing with the integral $\int_{Y - 1}^\infty \frac{1}{x^2} dx$ gives the lemma.
\end{proof}

Given an integer $a \neq 0$, we define its squarefull part to be 
\[\text{sqfull}(a) = \prod_{\substack{p\mid a\\v_p(a)\geq 2}}p^{v_p(a)}.\]
For $a = 0$, we set $\text{sqfull}(a) := 0$.

\begin{lemma}
\label{lLargeSquare}
For $B,Y\geq 1$, we have
\[
\sum_{\substack{|a| \leq B \\ \textup{sqfull}(a) > Y}} 1 \ll \frac{B}{Y^{1/2}}.
\]
\end{lemma}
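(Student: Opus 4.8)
The statement to prove is the bound
\[
\sum_{\substack{|a| \leq B \\ \textup{sqfull}(a) > Y}} 1 \ll \frac{B}{Y^{1/2}},
\]
which is a standard divisor-type estimate. The plan is to stratify the sum according to the value of the squarefull part $s = \textup{sqfull}(a)$. Every squarefull integer $s$ can be written uniquely as $s = u^2 v^3$ with $v$ squarefree, and the number of squarefull integers up to $X$ is $O(X^{1/2})$; this is the fact that ultimately drives the bound.

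\textbf{Key steps.} First I would observe that if $\textup{sqfull}(a) = s$, then $s \mid a$ and moreover $a/s$ is coprime to every prime dividing $s$ (by definition of the squarefull part, the primes dividing $s$ are exactly those dividing $a$ to order $\geq 2$, and after removing $s$ the quotient is coprime to all of them; in particular $s$ itself is a squarefull integer). Hence for fixed $s$, the number of $a$ with $|a| \leq B$ and $\textup{sqfull}(a) = s$ is at most the number of multiples of $s$ in $[-B, B]$, which is $\leq \frac{2B}{s} + 1 \ll \frac{B}{s}$ (valid since $s \leq B$, else the inner count is empty). Summing over squarefull $s > Y$, this gives
\[
\sum_{\substack{|a| \leq B \\ \textup{sqfull}(a) > Y}} 1 \ll B \sum_{\substack{s > Y \\ s \textup{ squarefull}}} \frac{1}{s}.
\]
Then I would bound the tail sum $\sum_{s > Y,\, s \textup{ squarefull}} s^{-1}$ using partial summation together with the classical estimate $\#\{s \leq X : s \textup{ squarefull}\} \ll X^{1/2}$ (which follows from the parametrisation $s = u^2 v^3$, $v$ squarefree: the count is $\leq \sum_{v \leq X^{1/3}} (X/v^3)^{1/2} \ll X^{1/2}\sum_v v^{-3/2} \ll X^{1/2}$). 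Partial summation yields $\sum_{s > Y,\, s \textup{ sqfull}} s^{-1} \ll Y^{-1/2}$, and combining gives the claimed $\ll B/Y^{1/2}$.

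\textbf{Main obstacle.} There is no serious obstacle here; the only point requiring a little care is the bookkeeping around whether $s \leq B$ and handling the edge cases $Y \geq B$ (where the sum is empty and the bound is trivial) versus $Y < B$. One should also be slightly careful to note that not every squarefull $s$ actually occurs as $\textup{sqfull}(a)$ for some $a$ in range — but since we only need an upper bound, summing over all squarefull $s > Y$ is harmless. The estimate $\#\{s \leq X : s \textup{ squarefull}\} \ll X^{1/2}$ can either be cited as standard or proved in two lines via the $u^2v^3$ parametrisation as above.
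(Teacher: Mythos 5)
Your proposal is correct and follows essentially the same route as the paper: stratify by the value $s$ of the squarefull part, bound the count for fixed $s$ by the number of multiples of $s$ to get $\ll B/s$, and then show $\sum_{s>Y,\ s\ \text{squarefull}} s^{-1} \ll Y^{-1/2}$ via the $u^2v^3$ parametrisation. The only cosmetic difference is that the paper evaluates the tail sum by substituting $s=u^2v^3$ directly, whereas you route through the counting estimate $\#\{s\leq X : s \text{ squarefull}\}\ll X^{1/2}$ plus partial summation; these are interchangeable.
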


\begin{proof}
Note that we may once more assume that $Y \leq B$. We have
\[
\sum_{\substack{|a| \leq B \\ \textup{sqfull}(a) > Y}} 1 = 2\sum_{\substack{1 \leq a \leq B \\ \textup{sqfull}(a) > Y}} 1.
\]
We now rewrite the sum as
\[
\sum_{\substack{1 \leq a \leq B \\ \textup{sqfull}(a) > Y}} 1 = \sum_{\substack{Y < n \leq B \\ n \text{ squarefull}}} \sum_{\substack{1 \leq a \leq B \\ \textup{sqfull}(a) = n}} 1 \leq \sum_{\substack{Y < n \leq B \\ n \text{ squarefull}}} \sum_{\substack{1 \leq a \leq B \\ n \mid a}} 1 \leq \sum_{\substack{Y < n \leq B \\ n \text{ squarefull}}} \frac{B}{n}.
\]
Substituting $n=u^2v^3$ for integers $u,v$ such that $v$ is squarefree leads to the bound
\[\sum_{\substack{Y < n \leq B \\ n \text{ squarefull}}} \frac{1}{n}\leq \sum_{1\leq v\leq B^{1/3}}\frac{1}{v^3}\sum_{\sqrt{\frac{Y}{v^3}}< u\leq \sqrt{\frac{B}{v^3}}}\frac{1}{u^2}\ll Y^{-1/2}.\]
\end{proof}

\begin{lemma}
\label{lem:PowerOmegaBound}
    Let $a\in\ZZ_{\geq 1}$, then 
    \[\sum_{1 \leq d \leq X} \mu^2(d) a^{\omega(d)} \ll_a X\log(X)^{a-1}\]
    uniformly for $X > 2$.
\end{lemma}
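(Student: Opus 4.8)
The plan is to estimate $\sum_{d\leq X}\mu^2(d)a^{\omega(d)}$ by recognising that $\mu^2(d)a^{\omega(d)}$ is a nonnegative multiplicative function which, on squarefree $d$, is governed by the Dirichlet series $\sum_{d\geq 1}\mu^2(d)a^{\omega(d)}d^{-s}=\prod_p\bigl(1+a p^{-s}\bigr)$. Comparing this Euler product with $\zeta(s)^a$, whose Euler factor is $(1-p^{-s})^{-a}=1+a p^{-s}+O(p^{-2s})$, we see that the ratio $\prod_p(1+ap^{-s})(1-p^{-s})^a$ converges absolutely for $\Re(s)>1/2$. Thus $\sum_d\mu^2(d)a^{\omega(d)}d^{-s}=\zeta(s)^a H(s)$ with $H$ holomorphic and bounded in a neighbourhood of $s=1$, and the claimed bound is exactly the order of magnitude predicted by this factorisation (a pole of order $a$ at $s=1$ gives main term of size $X(\log X)^{a-1}$).

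Rather than invoke a Tauberian theorem, which would be overkill for an upper bound, I would give a short elementary induction on $a$. For $a=1$ the bound is the trivial $\sum_{d\leq X}\mu^2(d)\leq X$. For the inductive step, write $a=a'+1$ and use the Dirichlet-convolution identity: if $h_a$ denotes the multiplicative function with $h_a(p)=a$ and $h_a(p^k)=0$ for $k\geq 2$, then $h_a=h_{a'}*h_1$ is \emph{not} quite right on prime powers, so instead I would argue directly via the hyperbola-type bound
\[
\sum_{d\leq X}\mu^2(d)a^{\omega(d)}\leq \sum_{\substack{d\leq X\\ d=ef,\ \gcd(e,f)=1}}\mu^2(e)(a-1)^{\omega(e)}\mu^2(f),
\]
obtained from $a^{\omega(d)}=\sum_{e\mid d}(a-1)^{\omega(e)}$ for squarefree $d$ (this is the binomial identity $a^k=\sum_{j}\binom{k}{j}(a-1)^j$ applied with $k=\omega(d)$, since the number of squarefree divisors $e$ of $d$ with $\omega(e)=j$ is $\binom{\omega(d)}{j}$). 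Then split according to whether $e\leq \sqrt X$ or $f\leq\sqrt X$, bound the inner sum over $f$ (resp.\ $e$) trivially by $X/e$ (resp.\ $X/f$), and apply the inductive hypothesis together with partial summation and the elementary estimate $\sum_{e\leq Y}\mu^2(e)(a-1)^{\omega(e)}/e\ll_a (\log Y)^{a-1}$ (itself deduced from the case-$a'$ bound by partial summation). Collecting the two ranges gives $\ll_a X(\log X)^{a-2}\cdot\log X=X(\log X)^{a-1}$, completing the induction.

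The main obstacle is purely bookkeeping: making sure the coprimality condition $\gcd(e,f)=1$ in the convolution identity is handled cleanly (one can drop it for an upper bound since all terms are nonnegative, which is what I would do), and tracking that the implied constants depend only on $a$ through the finitely many steps of the induction. There is no analytic subtlety — the estimate is a standard consequence of $\mu^2(d)a^{\omega(d)}$ having average order a constant times $(\log X)^{a-1}$ — so the write-up should be short, and the induction on $a$ is the cleanest route that avoids quoting Selberg--Delange or a Tauberian theorem.
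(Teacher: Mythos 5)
Your argument is correct, and it is in substance the same mathematics as the paper's, just written out in full: the paper simply observes that for squarefree $d$ one has $\mu^2(d)a^{\omega(d)}=\mu^2(d)\tau_a(d)\le\tau_a(d)$ and cites the classical divisor-function bound $\sum_{d\le X}\tau_a(d)\ll_a X(\log X)^{a-1}$ from Iwaniec--Kowalski, whereas your induction via the identity $a^{\omega(d)}=\sum_{e\mid d}(a-1)^{\omega(e)}$ (valid for squarefree $d$) is precisely the standard proof of that cited bound, namely the convolution $\tau_a=\tau_{a-1}*1$ restricted to squarefree integers. All the steps check out: the binomial identity behind the convolution is right, dropping $\gcd(e,f)=1$ is harmless by nonnegativity, and the base case $a=1$ is trivial. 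One small simplification: the hyperbola split is not needed for an upper bound --- summing over $f$ first, bounding $\sum_{f\le X/e}\mu^2(f)\le X/e$, and invoking the partial-summation consequence $\sum_{e\le X}\mu^2(e)(a-1)^{\omega(e)}/e\ll_a(\log X)^{a-1}$ of the inductive hypothesis already yields $\ll_a X(\log X)^{a-1}$ in one line. So your route is a self-contained elementary proof where the paper prefers a citation; both are perfectly adequate here.
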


\begin{proof}
    Note that, for squarefree $d$, $a^{\omega(d)}$ is equal to the $a$-th divisor function $\tau_{a}(d)$.  The bound on this quantity is classical (see e.g. \cite{IK}*{Ch. 1, Exercise 2}).
\end{proof}

\subsubsection{Statement of the bilinear sieve}
We will require a version of the large sieve which holds over number fields. To state it, we need the following definition \cite{San23}*{Def.~4.19}. Given a number field we denote its monoid of ideals by $I_K$.

\begin{definition}\label{def:osc bilin char}
Let $K, L$ be number fields, $A > 0$ and $q \in \N$, an $(A,q)$-\emph{oscillating bilinear character} is a bilinear morphism $\gamma: I_K \times I_L \to \C$ such that for all $I \in I_K, J \in I_{L}$ the character $\gamma(I, \cdot)$, respectively $\gamma(\cdot, J)$
\begin{enumerate}
\item is either $0$ or a Hecke character of finite modulus $\mathfrak{m}$ such that $\Norm_L(\mathfrak{m}) \leq q \Norm_K(I)^{A}$, respectively $\Norm_K(\mathfrak{m}) \leq q \Norm_L(J)^{A}$,
\item and is non-principal if there exists a $p \nmid q$ such that $v_p(\Norm_K(I)) = 1$, respectively $v_p(\Norm_L(J)) = 1$.
\end{enumerate}
\end{definition}

The following large sieve inequality holds for such bilinear characters \cite{San23}*{Prop.~4.21}, in the notation of loc.~cit. we choose $U = I_K \times I_L$ since this is the only case we will need. A similar sieve result has been independently proven in \cite{KR}*{Prop.~4.3}.

\begin{proposition}
\label{prop:large_sieve}
Let $K, L$ be number fields. Let $a_{I}, b_{J}$ be complex numbers of absolute value at most $1$, where $I,J$ range over $I_K, I_L$ respectively. Let $\gamma: I_K \times I_L \to \C$ be an $(A, q)$-oscillating bilinear character. There exists $\delta > 0$ depending only on $K, L, A$ such that for all $M, N \geq 2$
\begin{equation*}
\mathop{\sum\sum}_{\substack{\Norm(I) \leq M \\ \Norm(J) \leq N}} a_{I} b_{J} \gamma(I, J) \ll_{K, L, A} qMN(\log M N)^{\delta^{-1}}(M^{-\delta} + N^{-\delta}).
\end{equation*}
\end{proposition}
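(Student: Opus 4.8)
The plan is to reduce, by Cauchy--Schwarz, to a mean-value estimate for a single oscillating sum, and then to prove that estimate by Poisson summation and reciprocity in the style of Heath-Brown's bound for real character sums.

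First I would note that the hypotheses are symmetric under interchanging $(K,a,M)$ with $(L,b,N)$, since conditions $(1)$ and $(2)$ of \Cref{def:osc bilin char} treat the two factors symmetrically; hence I may assume $N\le M$, so that $M^{-\delta}+N^{-\delta}\asymp N^{-\delta}$ and it suffices to gain a fixed power of $N$. If $N$ lies below a suitable fixed power of $\log MN$, the trivial bound $\#\{I:\Norm(I)\le M\}\cdot\#\{J:\Norm(J)\le N\}\ll_{K,L}MN$ already suffices, so I may assume $N$ is not too small. Setting $c_J:=\sum_{\Norm(I)\le M}a_I\,\gamma(I,J)$ and applying Cauchy--Schwarz in $J$ gives $\bigl|\sum_{I,J}a_Ib_J\gamma(I,J)\bigr|^2\le\bigl(\sum_{\Norm(J)\le N}|b_J|^2\bigr)\bigl(\sum_{\Norm(J)\le N}|c_J|^2\bigr)\ll_L N\,\Sigma$, where $\Sigma:=\sum_{\Norm(J)\le N}|c_J|^2$. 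So the proposition follows from a mean-value estimate of the shape $\Sigma\ll_{K,L,A}q^2M^2N^{1-2\delta}(\log MN)^{2/\delta}$.

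To get this, I would expand the square, $\Sigma=\sum_{\Norm(I_1),\Norm(I_2)\le M}a_{I_1}\overline{a_{I_2}}\sum_{\Norm(J)\le N}\psi_{I_1,I_2}(J)$, where $\psi_{I_1,I_2}:=\gamma(I_1,\cdot)\overline{\gamma(I_2,\cdot)}$ is a Hecke character of modulus $\mathfrak m$ with $\Norm_L(\mathfrak m)\le q^2M^{2A}$ (the term being zero unless both $\gamma(I_i,\cdot)$ are non-zero). The pairs for which $\psi_{I_1,I_2}$ is principal form a sparse set — condition $(2)$ of \Cref{def:osc bilin char} constrains $I_1,I_2$ to a relation governed by primes dividing $q$ and primes occurring to a power $\ge 2$, so standard squarefull-ideal counts bound their number by $q^{O(1)}M^{3/2+\varepsilon}$ — and, bounding the inner sum trivially by $\ll_L N$, these contribute acceptably to $\Sigma$. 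The crux is the off-diagonal part, where $\psi_{I_1,I_2}$ is \emph{non-principal} but has modulus far larger than the summation length $N$, so that neither P\'olya--Vinogradov nor the multiplicative large sieve over all characters of modulus $\le q^2M^{2A}$ yields any saving. Here I would follow Heath-Brown: apply Poisson summation over the ideal lattice of $L$ (summing over ideal classes of $L$ and a fundamental domain for $\mathcal{O}_L^\times$) to $\sum_{\Norm(J)\le N}\psi_{I_1,I_2}(J)$; the zero frequency dies by non-principality, and the remaining frequencies produce Gauss sums of $\psi_{I_1,I_2}$ of size $\asymp\Norm_L(\mathfrak m)^{1/2}$. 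Inserting the reciprocity law for the relevant power-residue symbol re-expresses these Gauss sums in terms of $\gamma$-values with the roles of modulus and argument exchanged, so that the resulting character sum has short modulus; then I would apply a Montgomery--Vaughan-type multiplicative large sieve over $K$ (iterating the reciprocity flip when $A$ is large) and sum over $I_1,I_2$ to extract square-root-type cancellation with only a power-of-$\log$ loss, tracking the dependence on $q$ and $A$ to land on the stated bound with $\delta=\delta(K,L,A)>0$.

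The hard part is precisely the mismatch between the conductor $\asymp qM^A$ of $\gamma(I,\cdot)$ and the length $N\le M$ of the inner sum: no off-the-shelf equidistribution input saves anything, and the heart of the argument is the Poisson-plus-reciprocity step that trades the long modulus for a short one. The remaining difficulties, while technical, are genuine: running Heath-Brown's $\QQ$-argument over arbitrary number fields (nontrivial class groups, infinite unit groups, Gauss sums and Poisson summation on $\mathcal{O}_K$, $\mathcal{O}_L$), handling by M\"obius inversion the coprimality-to-modulus conditions hidden inside $\psi_{I_1,I_2}(J)$, and assembling the errors into a clean $(\log MN)^{1/\delta}$ loss instead of an $(MN)^{\varepsilon}$ one.
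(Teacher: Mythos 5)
The paper does not prove this proposition: it is imported verbatim from \cite{San23}*{Prop.~4.21} (with \cite{KR}*{Prop.~4.3} noted as an independent proof), so your argument is necessarily a different route and has to be judged on its own terms. The skeleton --- symmetrising so that $N\le M$, discarding tiny $N$, Cauchy--Schwarz in $J$, and splitting $\sum_{I_1,I_2}a_{I_1}\overline{a_{I_2}}\sum_{\Norm(J)\le N}\psi_{I_1,I_2}(J)$ according to whether $\psi_{I_1,I_2}$ is principal --- is standard and correctly set up, and you rightly identify the crux: the modulus of $\psi_{I_1,I_2}$ can have norm up to $q^2M^{2A}$, far exceeding the length $N$ of the inner sum, so P\'olya--Vinogradov and the multiplicative large sieve are both useless there.

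The step you propose to resolve that crux is, however, a genuine gap. First, the proposition concerns an \emph{abstract} $(A,q)$-oscillating bilinear character: conditions (1) and (2) of \Cref{def:osc bilin char} only say that $\gamma(I,\cdot)$ is \emph{some} Hecke character with a modulus bound and a non-principality criterion. Poisson summation producing Gauss sums that can be ``flipped'' by a reciprocity law is a feature of power residue symbols, not of arbitrary Hecke characters, so the mechanism you describe cannot prove the statement in the stated generality (at best it addresses the specific $\gamma$ of \Cref{lem:power_residue_symbols_oscillate}). Second, even for power residue symbols, the endpoint you assert --- ``square-root-type cancellation with only a power-of-log loss'' after the flip --- is precisely the strength of Heath--Brown's \emph{quadratic} large sieve; the known $n$-th order analogues for $n\ge 3$ are strictly weaker (Heath--Brown's cubic large sieve carries an extra $(MN)^{2/3}$ term, now known to be unremovable), and no version of the strength you invoke exists for general $n$ over general number fields. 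A far weaker saving would suffice for the stated bound, but your sketch neither formulates nor establishes one: the entire difficulty is compressed into ``apply a Montgomery--Vaughan-type multiplicative large sieve (iterating the reciprocity flip when $A$ is large)'', which is not an argument. A smaller but real further issue: condition (2) controls non-principality of $\gamma(I,\cdot)$ for a \emph{single} $I$ and says nothing about the product $\gamma(I_1,\cdot)\overline{\gamma(I_2,\cdot)}$ (note that $I_1=I_2$ always lies in your ``principal'' set), so the asserted $q^{O(1)}M^{3/2+\varepsilon}$ count of principal pairs does not follow from the definition by ``standard squarefull-ideal counts''; one needs finer conductor information of the kind recorded in \Cref{lem:power_residue_symbols_oscillate}. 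Given that only a small $\delta=\delta(K,L,A)$ is claimed, the natural proof is much softer --- Cauchy--Schwarz applied in the direction (or after a multiplicative factorisation of the longer variable) that makes the post-Cauchy characters have modulus a small power of the summation length, followed by P\'olya--Vinogradov --- rather than any Gauss-sum machinery.
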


It remains to show that the products of power residue symbols we encounter are indeed oscillating bilinear characters.

\begin{lemma}
\label{lem:power_residue_symbols_oscillate}
Let $d, e \in \N$ such that $(d, e) \neq (1,1), (2,2)$ and $\varepsilon_d \in (\Z/d \Z)^{\times}$, $\varepsilon_e \in (\Z/e \Z)^{\times}$. Then there exists an $\alpha>0$ depending only on $d$ and $e$ such that
\[
\gamma: I_{\Q(\zeta_d)} \times I_{\Q(\zeta_e)} \to \C: (I, J) \mapsto \left( \frac{\Norm(J)}{I}\right)_{\Q(\zeta_d), d}^{\varepsilon_d} \left(\frac{\Norm(I)}{J}\right)_{\Q(\zeta_e), e}^{\varepsilon_e}
\]
is an $\braces{\alpha, (de)^{\alpha}}$-oscillating bilinear character. Moreover, if $p$ is a prime such that $p\mid \Norm(I)$ with multiplicity $1$ and $p\nmid de$, then $p$ divides the norm of the conductor of $\gamma(I,\cdot)$.
\end{lemma}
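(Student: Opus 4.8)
The plan is to verify the two defining conditions of an $(\alpha, (de)^\alpha)$-oscillating bilinear character directly, using known properties of power residue symbols and the reciprocity law. First I would note that $\gamma$ is clearly bilinear: both $\left(\frac{\cdot}{\cdot}\right)_{\Q(\zeta_d),d}$ and $\Norm$ are multiplicative in each argument, so $\gamma(I_1I_2, J) = \gamma(I_1,J)\gamma(I_2,J)$ and similarly in the second variable. Fixing $I \in I_{\Q(\zeta_d)}$, the map $J \mapsto \gamma(I,J)$ is $J \mapsto \left(\frac{\Norm(J)}{I}\right)_{\Q(\zeta_d),d}^{\varepsilon_d}\left(\frac{\Norm(I)}{J}\right)_{\Q(\zeta_e),e}^{\varepsilon_e}$. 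The second factor $\left(\frac{\Norm(I)}{J}\right)_{\Q(\zeta_e),e}^{\varepsilon_e}$ is (the $\varepsilon_e$-th power of) a power residue symbol and hence a Hecke character of $\Q(\zeta_e)$ of modulus dividing $e\Norm(I)$ (in fact dividing the radical), so its conductor has norm $\ll (e\Norm(I))^{[\Q(\zeta_e):\Q]}$. For the first factor, $\left(\frac{\Norm(J)}{I}\right)_{\Q(\zeta_d),d}$ depends on $J$ only through $\Norm(J) \bmod (\text{something dividing } d\cdot\Norm(I))$ by the periodicity of the power residue symbol in its upper argument; composing with the norm map $\Norm\colon I_{\Q(\zeta_e)} \to \Z$ gives a finite-order Hecke character of $\Q(\zeta_e)$ whose modulus again has norm polynomially bounded in $\Norm(I)$ (and $d$, $e$). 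Multiplying, condition (1) holds with some exponent $A = \alpha$ depending only on $d,e$ (e.g. $\alpha = \max\{[\Q(\zeta_d):\Q],[\Q(\zeta_e):\Q]\}$ suffices up to adjusting the constant $q = (de)^\alpha$); the symmetric statement for $\gamma(\cdot, J)$ is identical after swapping the roles of $d$ and $e$.

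The substantive point — and the main obstacle — is condition (2): non-principality of $\gamma(I,\cdot)$ whenever there is a prime $p \nmid de$ with $v_p(\Norm(I)) = 1$, which is also exactly the "moreover" clause. Here one cannot argue factor-by-factor, because the two factors of $\gamma$ could in principle conspire; one must use the reciprocity law to compare them. The strategy: pick such a prime $p \parallel \Norm(I)$ with $p \nmid de$, and write $I = \mathfrak{p} I'$ where $\mathfrak{p}$ is the prime of $\Q(\zeta_d)$ above $p$ appearing to exact multiplicity one in $I$ (the hypothesis $v_p(\Norm(I))=1$ forces $p$ to split with a single prime above it in the relevant inertia picture, with residue degree $1$ and multiplicity $1$). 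Since $p \nmid de$, the prime $p$ is unramified in both $\Q(\zeta_d)$ and $\Q(\zeta_e)$, and the power residue symbols at $p$ are "tame". One then shows that the local component of $\gamma(I,\cdot)$ at the prime(s) of $\Q(\zeta_e)$ above $p$ is nontrivial: the factor $\left(\frac{\Norm(I)}{J}\right)_{\Q(\zeta_e),e}^{\varepsilon_e}$ is ramified at primes above $p$ (as $v_p(\Norm(I))=1$ and $\varepsilon_e$ is a unit mod $e$, so this is a genuine order-$e'$ symbol for some $e' \mid e$, $e' > 1$ unless $e=1$), while the factor $\left(\frac{\Norm(J)}{I}\right)_{\Q(\zeta_d),d}^{\varepsilon_d}$, viewed as a character of $J$, is \emph{unramified} at primes above $p$ because it only depends on $\Norm(J) \bmod$ (modulus prime to $p$, after using $v_p(\Norm(I)) = 1$ to see the $p$-part of the conductor in the upper variable is controlled) — so no cancellation of ramification at $p$ can occur. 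If $e = 1$ then instead the first factor carries the ramification: $\left(\frac{\Norm(J)}{I}\right)_{\Q(\zeta_d),d}^{\varepsilon_d}$ as a function of $\Norm(J)$ is a character modulo (roughly) $\Norm(I)$, and because $p \parallel \Norm(I)$ this character is nontrivial on the $p$-part, i.e. ramified at $p$; reciprocity is what converts "ramified in the upper variable at $p$" into genuine nonprincipality of the resulting Hecke character of $\Q(\zeta_e) = \Q$. In all cases $\gamma(I,\cdot)$ acquires ramification at $p$, hence is non-principal, and $p$ divides the norm of its conductor, giving the "moreover" clause simultaneously.

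Concretely, I would organize the write-up as: (i) reduce to $d, e$ both dividing $n$ and observe $\gamma$ is well-defined and bilinear; (ii) record the standard fact that $\left(\frac{a}{\cdot}\right)_{K,m}$ is a Hecke character of conductor dividing $m\cdot(a)$ (radical), and that $\left(\frac{\cdot}{I}\right)_{K,m}$ depends on its upper argument only modulo a divisor of $m\cdot\Norm(I)$; combine with bounds $[\Q(\zeta_d):\Q], [\Q(\zeta_e):\Q] \le \phi(n)$ to get condition (1) with $\alpha := \phi(n)+1$ say and $q := (de)^\alpha$; (iii) prove condition (2)/the moreover clause via the local-ramification argument above, the only genuinely reciprocity-theoretic step, splitting into the cases $e > 1$ (ramification from the $\varepsilon_e$-symbol factor) and $e = 1$ (ramification from the $\varepsilon_d$-symbol factor pulled back along $\Norm$, using $(d,e)\neq(1,1)$ so $d > 1$, together with the analogous handling of the $d = 2 = e$ exclusion where quadratic reciprocity would otherwise trivialize things). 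The delicate bookkeeping is making sure that the prime $p$, assumed to divide $\Norm(I)$ to multiplicity exactly $1$, really does contribute ramification that is not cancelled — this is where one must be careful that the "other" factor is unramified at $p$, which follows precisely because its conductor divides a quantity whose $p$-valuation is zero (as $p \nmid de$ and $p$ enters $\Norm(I)$ only linearly). I expect this last verification to be the crux; everything else is bookkeeping with conductors and Galois degrees.
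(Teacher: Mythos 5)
Your approach to condition (1) matches the paper's. For non-principality, however, you take a genuinely different route from the paper (a local ramification argument at the primes above $p$, rather than the paper's explicit construction of a test ideal $J$ via Dirichlet/Chebotarev on which $\gamma(\mathfrak{p},J)\neq 1$), and as written it contains a real error. You claim that the first factor $\left(\frac{\Norm(J)}{I}\right)_{\Q(\zeta_d),d}^{\varepsilon_d}$, viewed as a Hecke character in $J$, is \emph{unramified} at primes above $p$. This is false whenever $d>1$: writing $I=\mathfrak{p}I'$ with $\Norm(\mathfrak{p})=p$, the map $x\mapsto\left(\frac{x}{\mathfrak{p}}\right)_{\Q(\zeta_d),d}$ restricted to $\Z$ is a Dirichlet character of conductor exactly $p$ and exact order $d$ (since $p\equiv 1\bmod d$), so its pullback along the surjective local norm maps is ramified at \emph{every} prime of $\Q(\zeta_e)$ above $p$, with inertia character of order $d$. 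Thus when $d,e>1$ both factors are ramified at $p$ and your "no cancellation of ramification can occur" is exactly the point that needs proof — and cancellation genuinely does occur for $(d,e)=(2,2)$ by quadratic reciprocity, which is why that case is excluded from the lemma.

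If you fix the ramification bookkeeping, your local argument does salvage the cases where exactly one of $d,e$ exceeds $1$, and also $d\neq e$ with both $>1$ (two inertia characters of distinct exact orders $d$ and $e$ on the cyclic tame quotient cannot multiply to the trivial character). But the diagonal case $d=e>2$ remains: there the two local inertia characters have the same order and could a priori be mutually inverse, and ruling this out is the substantive reciprocity-theoretic content. The paper handles it globally: choose by Chebotarev a rational prime $q$ whose Frobenius generates $\Gal(\Q(\zeta_e,\sqrt[e]{p})/\Q(\zeta_e))$, and observe that as $\mathfrak{q}$ ranges over the $\Gal(\Q(\zeta_e)/\Q)$-conjugates of a prime above $q$, the second factor $\left(\frac{p}{\sigma(\mathfrak{q})}\right)_{\Q(\zeta_e),e}^{\varepsilon_e}=\sigma(\zeta_e)^{\varepsilon_e}$ runs through conjugate primitive roots of unity while the first factor $\left(\frac{q}{\mathfrak{p}}\right)_{\Q(\zeta_d),d}^{\varepsilon_d}$ is fixed, so some conjugate gives $\gamma(\mathfrak{p},\sigma(\mathfrak{q}))\neq 1$. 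You need either this device or an explicit $d$-th power reciprocity computation to close the $d=e>2$ case; without it the proposal has a gap precisely where the excluded case $(2,2)$ warns you the difficulty lies.
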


\begin{proof}
For $I \in I_{\Q(\zeta_d)}$ we have that $\gamma(I, \cdot)$ is the product of a Hecke character modulo $\Norm(I) \in \Z[\zeta_e]$ and a Hecke character on the Galois group of $\QQ(\zeta_e, \sqrt[e]{\Norm(I)})/\QQ(\zeta_e)$. The norm of its conductor thus divides $(e \Norm(I))^{O_{e}(1)}$. The corresponding property for $J \in I_{\Q(\zeta_e)}$ and $\gamma(\cdot, J)$ holds by symmetry. It remains to show non-principality.

By symmetry and the same argument as in \cite{San23}*{Lemma~4.20} it suffices to show the following: for all prime ideals $\mathfrak{p} \subset \Z[\zeta_d]$ coprime to $de$ and with prime norm $p := \Norm(\mathfrak{p})$, there exists an ideal $J \subset \Z[\zeta_e]$ such that $\gamma(\mathfrak{p}, J) \neq 1$. 

If $e=1$ and $d>1$, the existence is clear: choose $J\in\ZZ$ coprime to $p$ whose image mod $p$ is not in $\FF_p^{\times d}$ (since $p$ totally splits in $\ZZ[\zeta_d]$, such $J$ exists). If $e=2$ and $d\neq 2$, then by Dirichlet's theorem we choose $J$ to be a prime number $J\equiv 1 \bmod 4$ whose image mod $p$ generates $\FF_p^\times/\FF_p^{\times 2d}$. By quadratic reciprocity
\[\gamma(\fp,J)=\art{J}{\fp}_{\QQ(\zeta_d),d}^{\varepsilon_d}\art{J}{p}_{2}=-\zeta_d\neq 1\]
for some primitive $d$-th root of unity $\zeta_d$.

If $e> 2$, then choose a rational prime $q$ whose Frobenius in $\Gal(\QQ(\zeta_e,\sqrt[e]{p})/\QQ)$ is a generator of $\Gal(\QQ(\zeta_e,\sqrt[e]{p})/\QQ(\zeta_e))$, which exists by Chebotarev's density theorem. Then, fixing a choice of prime ideal $\mathfrak{q} \subset \ZZ[\zeta_e]$ such that $\mathfrak{q}\mid q$, for each $\sigma\in \Gal(\QQ(\zeta_e)/\QQ)$ we have (for some fixed $e$-th root of unity $\zeta_e$)
\[
\gamma(\mathfrak{p},\sigma(\mathfrak{q}))=\braces{\frac{q}{\mathfrak{p}}}_{\QQ(\zeta_d),d}^{\epsilon_d}\braces{\frac{p}{\sigma(\mathfrak{q})}}_{\QQ(\zeta_e),e}^{\epsilon_e}=\braces{\frac{q}{\mathfrak{p}}}_{\QQ(\zeta_d),d}^{\epsilon_d}\sigma(\zeta_e)^{\epsilon_e}.
\]
In particular, there exists a choice of $\sigma$ (equivalently, a choice of prime ideal above $q$) such that this expression is not $1$.
\end{proof}

\subsubsection{Statement of Siegel--Walfisz}
We shall also require a variant of the Siegel--Walfisz theorem.

\begin{lemma}\label{lem:SiegelWalfisz}
Let $\chi:I_{\Z[\zeta_d]}\rightarrow \CC$ be a primitive, non-trivial Hecke character of conductor $f_{\chi}$. Then uniformly for $N(f_{\chi}) \ll_C (\log B)^C$, we have 
\begin{equation}\label{sum over split primes}
\sum_{\substack{\mathfrak{p} \subset \Z[\zeta_d] \textnormal{ split prime} \\ \Norm(\mathfrak{p}) \leq B}} \chi(\mathfrak{p}) \ll_C B (\log B)^{- C}.
\end{equation}
\end{lemma}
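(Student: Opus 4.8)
The plan is to reduce the statement over $\QQ(\zeta_d)$ to the classical Siegel–Walfisz theorem over $\QQ$ by passing to the Hecke $L$-function $L(s,\chi)$ attached to the primitive non-trivial character $\chi$. First I would note that, since we only sum over split primes $\mathfrak{p}$ (those of residue degree one), we have $\Norm(\mathfrak{p}) = p$ for a rational prime $p$, and each such $p$ contributes at most $[\QQ(\zeta_d):\QQ] = \phi(d)$ prime ideals. Thus the sum in \eqref{sum over split primes} is a sum of the shape $\sum_{p \le B} c_p$ with $|c_p| \le \phi(d)$, and the Dirichlet series $\sum_{\mathfrak{p}} \chi(\mathfrak{p}) \Norm(\mathfrak{p})^{-s}$ differs from $\log L(s,\chi)$ only by the prime-power terms and the non-split primes, which contribute an absolutely convergent series for $\Real(s) > 1/2$. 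Hence it suffices to obtain a zero-free region for $L(s,\chi)$ of the standard Siegel–Walfisz quality, i.e.\ $L(s,\chi) \neq 0$ for $\Real(s) > 1 - c/\log(\Norm(f_\chi)(|t|+2))$ away from a possible Siegel zero, with the Siegel zero handled by Siegel's (ineffective) bound.

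The key steps, in order, would be: (1) record that a Hecke character of the cyclotomic field $\QQ(\zeta_d)$ of bounded degree has an associated $L$-function which is entire (as $\chi$ is non-trivial) of bounded degree and analytic conductor $\asymp_d \Norm(f_\chi)(|t|+2)^{\phi(d)}$; (2) invoke the standard zero-free region for such $L$-functions together with Siegel's theorem, so that for $\Norm(f_\chi) \ll_C (\log B)^C$ the character $\chi$ has no zero with $\Real(s) \ge 1 - c_C/\log B$ (here the dependence of the admissible constant on $C$ is exactly what absorbs the possible Siegel zero, since we allow $\Norm(f_\chi)$ to grow like a power of $\log B$); (3) apply a contour-shift / Perron argument (or directly quote the prime ideal theorem in arithmetic progressions, e.g.\ in the form in Iwaniec–Kowalski) to deduce $\sum_{\Norm(\mathfrak{p}) \le B} \chi(\mathfrak{p}) \ll_C B \exp(-c\sqrt{\log B}) \ll_C B(\log B)^{-C}$; (4) restrict to split primes and discard the $O(\sqrt{B})$ contribution of higher-degree primes, which is negligible.

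The main obstacle is step (2): handling the potential Siegel zero of $L(s,\chi)$ while keeping the range $\Norm(f_\chi) \ll_C (\log B)^C$ uniform. This is precisely the reason the bound is stated with an ineffective implied constant depending on $C$ — one applies Siegel's theorem in the form $L(\sigma,\chi) \neq 0$ for $\sigma > 1 - c(\eps)\Norm(f_\chi)^{-\eps}$, takes $\eps = 1/(2C)$ say, and then $\Norm(f_\chi)^{\eps} \ll_C (\log B)^{1/2}$ gives a zero-free segment reaching back to $1 - c_C/\sqrt{\log B}$, which is more than enough. A secondary (routine) point is verifying that Siegel's theorem, classically stated for Dirichlet $L$-functions, applies to Hecke characters of $\QQ(\zeta_d)$: this follows either by factoring through the class-field-theoretic description (a Hecke character of $\QQ(\zeta_d)$ of bounded conductor corresponds to a finite set of Dirichlet characters after induction) or by citing the general form of Siegel's bound for Hecke $L$-functions of a fixed number field, which is available in the literature. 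With these inputs the argument is a standard exercise in analytic number theory.
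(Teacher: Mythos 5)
Your outline is correct, but it is worth knowing that the paper's entire proof of this lemma is the single line ``This follows from \cite{Goldstein}'' --- i.e.\ the authors treat the Siegel--Walfisz theorem for Hecke $L$-functions of a fixed number field as a black box, and what you have written is essentially the standard proof of that cited result (zero-free region plus Siegel's ineffective bound for the possible exceptional zero of a Hecke $L$-function over $\Q(\zeta_d)$, a Perron/contour-shift argument giving an error term $B\exp(-c\sqrt{\log B})$, and the trivial $O(\sqrt{B})$ discard of non-split primes). Your step (2) correctly identifies the one genuinely delicate point --- that Siegel's theorem must be available for Hecke characters of $\Q(\zeta_d)$ uniformly in the conductor, which is exactly what Goldstein (and, e.g., Fogels) provide --- and your accounting of how the range $\Norm(f_\chi) \ll_C (\log B)^C$ interacts with the exponent $\varepsilon$ in Siegel's bound is the right way to see why the ineffective constant depends on $C$. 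So there is no gap; the only difference is that you reprove the reference rather than invoke it.
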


\begin{proof}
This follows from \cite{Goldstein}.
\end{proof}

We then apply this with the LSD method to obtain the following bound.

\begin{lemma}
\label{lem:LSDGeneral}
Let $C>0$.  For every $B>1$, every $1 \leq \Delta \leq B^C$, and all non-principal Hecke characters $\chi: I_{\Z[\zeta_d]} \to \C$ with modulus $\fm$ and conductor $f_{\chi}\mid \fm$ such that $\Norm(f_{\chi}) \omega(\Norm(\mathfrak{m})) \leq (\log B)^C$ we have the bound
\begin{equation}
\label{eq:Siegel-Walfisz_older}
\sum_{\substack{I \in I_{\Z[\zeta_d]} \\ \Norm(I) \leq B}} \mu^2(\Norm(I) \cdot \Delta) \chi(I) \ll B (\log B)^{-C}
\end{equation}
where the implied constant depends only on $C$ and $d$.
\end{lemma}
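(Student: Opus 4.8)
The plan is to deduce this from the Siegel--Walfisz estimate of \Cref{lem:SiegelWalfisz} via the LSD (Landau--Selberg--Delange) method applied to the squarefree-detecting weight. First I would remove the squarefree constraint on $\Norm(I)\cdot\Delta$ by writing $\mu^2(m) = \sum_{e^2 \mid m}\mu(e)$, so that
\[
\sum_{\substack{I\in I_{\Z[\zeta_d]}\\\Norm(I)\leq B}}\mu^2(\Norm(I)\Delta)\chi(I) = \sum_{e\geq 1}\mu(e)\sum_{\substack{I\in I_{\Z[\zeta_d]}\\\Norm(I)\leq B\\ e^2\mid \Norm(I)\Delta}}\chi(I).
\]
Splitting $e = e_1 e_2$ with $e_1 = \gcd(e,\Delta^\infty)$-type factors absorbed into $\Delta$, the condition $e^2\mid \Norm(I)\Delta$ forces a divisor of $\Norm(I)$ of size roughly $e^2/\gcd(e^2,\Delta)$; truncating $e$ at a threshold like $(\log B)^{2C}$ costs an error of size $O(B(\log B)^{-C})$ by the elementary bound $\sum_{e > Y}B/e^2 \ll B/Y$ together with $\Delta \leq B^C$ (one checks the divisor of $\Norm(I)$ grows, so the inner count is $\ll B \cdot \tau(\cdots)/e^2$, and \Cref{lem:PowerOmegaBound} controls the divisor function on average). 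After this truncation it suffices to bound, for each fixed small squarefree-ish modulus parameter $k \ll (\log B)^{O_C(1)}$, a sum $\sum_{\Norm(I)\leq B,\, k\mid\Norm(I)}\chi(I)$.

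Next I would pass from ideals to a Dirichlet series / Perron argument, or more cleanly use the hyperbola/convolution identity expressing the indicator of "$I$ an ideal of $\Z[\zeta_d]$ with $k\mid\Norm(I)$" as a Dirichlet convolution whose building blocks are prime ideals. The key point: the Dirichlet series $\sum_I \chi(I)\Norm(I)^{-s}$ is the Hecke $L$-function $L(s,\chi)$, which since $\chi$ is non-principal is holomorphic and non-vanishing on $\Re(s)\geq 1$ in a Siegel--Walfisz-type zero-free region, with the quantitative input being exactly \Cref{lem:SiegelWalfisz} (equivalently a zero-free region of Littlewood type, valid because the conductor is $\ll(\log B)^C$). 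The LSD method then gives $\sum_{\Norm(I)\leq B}\chi(I) \ll B(\log B)^{-C'}$ for any $C'$, and incorporating the congruence $k\mid\Norm(I)$ only multiplies the modulus of the relevant character by a bounded (polylog) amount, which \Cref{lem:SiegelWalfisz} still tolerates; the dependence on $\omega(\Norm(\mathfrak m))$ in the hypothesis is precisely what lets us absorb the finitely many local factors at primes dividing $\mathfrak m$ without loss.

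The main obstacle is bookkeeping the squarefree sieve against the uniformity requirements: one must ensure that after expanding $\mu^2$ and truncating, every auxiliary Hecke character that appears still has conductor (times $\omega$ of its modulus) bounded by a power of $\log B$, so that \Cref{lem:SiegelWalfisz} applies with the constant $C$ replaced by a suitably larger $C'$ (chosen at the outset so that all accumulated losses — from the truncation error $O(B(\log B)^{-C})$, from summing $O((\log B)^{O(1)})$ terms, and from the LSD conversion — still leave a saving of $(\log B)^{-C}$). Concretely one fixes $C' = C + O(d)$ large enough, applies the above with $C'$, and collects: the divisor-function factors from \Cref{lem:PowerOmegaBound} contribute only $(\log B)^{O_d(1)}$, harmless against the gain. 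Everything else — Perron, contour shift into the zero-free region, trivial bounds on the tails — is routine, so I would state those steps and cite \cite{Goldstein} and the classical LSD framework rather than reprove them.
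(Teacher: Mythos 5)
Your overall strategy (squarefree sieve, then LSD/Siegel--Walfisz on the resulting Hecke character sums) is different from the paper's, and it has a genuine gap at the truncation step. The weight $\mu^2(\Norm(I)\cdot\Delta)$ is not merely ``$\Norm(I)$ squarefree'': it also encodes the condition $\gcd(\Norm(I),\Delta)=1$. When you expand $\mu^2(\Norm(I)\Delta)=\sum_{e^2\mid \Norm(I)\Delta}\mu(e)$, a value of $e$ supported on primes dividing the (necessarily squarefree) $\Delta$ forces only the divisor $e^2/\mathrm{rad}(e)\ge e$ on $\Norm(I)$, not $e^2$, so your claimed tail bound $\sum_{e>Y}B/e^2\ll B/Y$ does not apply. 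The correct tail involves $\sum_{e>Y} \mathrm{rad}(\gcd(e,\Delta^\infty))/e^2$, which for $e$ built from primes of $\Delta$ behaves like $\sum_{f\mid\Delta,\ f>Y}1/f$; since $\Delta$ may be as large as $B^C$ and hence have $\omega(\Delta)\asymp \log B/\log\log B$ small prime factors, Rankin's trick only yields $Y^{-\sigma}\prod_{p\mid\Delta}(1+p^{\sigma-1})$, and the product can exceed every fixed power of $\log B$. So with $Y$ a power of $\log B$ the discarded terms are not $O(B(\log B)^{-C})$ uniformly in $\Delta$, and the argument breaks precisely in the regime of uniformity the lemma is designed for. (The same loss of $2^{\omega(\Delta)}$ appears if you instead sieve the coprimality condition by M\"obius over divisors of $\Delta$.)

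The paper sidesteps this entirely by never separating the squarefree/coprimality weight from the character: it sets $h_\Delta(m)=\mu^2(m\Delta)\sum_{\Norm(I)=m}\chi(I)$, observes that this is a multiplicative function of $m$ bounded by $\tau_d$, and applies the LSD mean-value theorem for multiplicative functions (\cite{Kou}*{Rmk.~13.3}) directly to $\sum_{m\le B}h_\Delta(m)$. In that framework the entire dependence on $\Delta$ enters only through the prime-sum hypothesis $\sum_{p\le x}h_\Delta(p)\log p\ll x(\log x)^{-C}$, where $h_\Delta$ and $h_1$ differ at only $\omega(\Delta)$ primes, costing $O(\omega(\Delta)\log x)=O((\log x)^3)$ --- harmless. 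Your second half (non-principal Hecke $L$-function, Goldstein's Siegel--Walfisz input, tolerance of the polylog conductor and of $\omega(\Norm(\mf{m}))$ bad primes) matches the paper's verification of that hypothesis, but to make your route work you would need a substitute for the multiplicative-function mean-value theorem that does not lose a factor exponential in $\omega(\Delta)$.
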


\begin{proof}
    We may and will assume that $\Delta$ is squarefree, as the sum vanishes otherwise. Consider the function $h:\mathbb{N}\to\CC$ given by
    \[h_\Delta(m) = \mu^2(m \Delta) \sum_{\substack{I\in I_{\ZZ[\zeta_d]}\\\Norm(I)=m}}\chi(I).\]
    This is a multiplicative function by unique factorization of ideals in $\Z[\zeta_d]$ and the left-hand side of \eqref{eq:Siegel-Walfisz_older} is $\sum_{m \leq B} h_\Delta(m)$. We will apply a version of the LSD method \cite{Kou}*{Rmk.~13.3} with $\kappa = 0$, $k = d$, $\epsilon = 1$ and $Q = \exp((\log B)^{1/2})$. Then \cite{Kou}*{Rmk.~13.3} gives
    $$
    \sum_{m \leq B}h_\Delta(m) \ll_C B(\log B)^{-C}
    $$
    so it remains to check that the hypotheses \cite{Kou}*{(13.1), (13.2)} hold. For \cite{Kou}*{(13.2)}, we remark that $h_\Delta$ is multiplicative, supported on squarefree integers and satisfies $|h_\Delta(p)| \leq d$. Hence it follows that $|h_\Delta(m)| \leq \tau_d(m)$. Therefore it remains to check that
    \begin{equation}
        \sum_{p\leq x} h_\Delta(p)\log(p) \ll x \log(x)^{-C} \label{eq:Kou131}
    \end{equation}
    for $x \geq Q$, where the implied constant is uniform in $\Delta$ but depends on $C$ and $d$. Writing $h=h_1$, note that by our upper bound on $\Delta$ we have
    \[\abs{\sum_{p\leq x}h_\Delta(p)\log(p)-\sum_{p\leq x}h(p)\log(p)}\leq d\omega(\Delta)\log(x)\ll_{C,d} \log(x)^3.\]
    Hence it is sufficient to prove the inequality for $h$.  We begin by considering the sum of $h(p)$, without the log weighting. Let $\chi'$ be the primitive Hecke character determined by $\chi$.  Note that
    \[
        \left|\sum_{\substack{\mathfrak{p} \subset \Z[\zeta_d] \text{ split prime} \\ \Norm(\mathfrak{p}) \leq x}} \chi(\mathfrak{p}) - \sum_{\substack{\mathfrak{p} \subset \Z[\zeta_d] \text{ split prime} \\ \Norm(\mathfrak{p}) \leq x}} \chi'(\mathfrak{p})\right| \leq \sum_{\mathfrak{p} \mid  \mathfrak{m}} 1 = O(\omega(\Norm(\mathfrak{m}))).
    \]
    Using the assumption that $\Norm(f_\chi) \omega(N(\mf{m})) \ll_C (\log B)^C \ll_C (\log x)^{2C}$, we deduce from \Cref{lem:SiegelWalfisz} with $2C$ that
    \begin{equation}\label{eq:sum hp}
        \sum_{p \leq x} h(p) \ll_{C, d} \omega(N(\mf{m})) + x (\log x)^{-C} \ll_C x(\log x)^{-C}.
    \end{equation}
    Having established this bound for $\sum_{p \leq x} h(p)$, the lemma follows from partial summation.
\end{proof}

\noindent In our particular situation, we deduce the following.

\begin{lemma}
\label{lem:Siegel-Walfisz}
Let $C>0$.  For every $B>1$, every $1\leq\Delta\leq X^C$, and all Hecke characters $\chi: I_{\Z[\zeta_d]} \to \C$ with modulus $\fm$ and conductor $f_{\chi}\mid \fm$ such that $\Norm(f_{\chi}) \omega(\Norm(\mathfrak{m})) \leq (\log B)^C$ and there exists a prime $p \nmid d$ such that $p \mid \Norm(f_{\chi})$, we have the bound
\begin{equation}
\label{eq:Siegel-Walfisz}
\sum_{\substack{I \in I_{\Z[\zeta_d]} \\ \Norm(I) \leq B}} \mu^2(\Norm(I) \cdot \Delta) g(\Norm(I)) \chi(I) \ll_C B (\log B)^{-C}
\end{equation}
where $g$ is the multiplicative function from \Cref{eq:mult function g} and the implied constant depends only on $C$ and $d$.
\end{lemma}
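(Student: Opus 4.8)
The plan is to reduce \Cref{eq:Siegel-Walfisz} to \Cref{lem:LSDGeneral} by absorbing the multiplicative factor $g(\Norm(I))$. Recall that $g$ is multiplicative, supported away from $S(n)$ (where it is $1$), and for $p\not\in S(n)$ takes the value $g(p)=\gcd(n,p-1)^{-1}$, which depends only on the splitting type of $p$ in cyclotomic fields. The key observation is that $g(\Norm(I))$, as a function of the ideal $I\in I_{\ZZ[\zeta_d]}$ restricted to ideals of squarefree norm, can be written as a short linear combination of (principal or non-principal) Hecke characters of $\ZZ[\zeta_d]$ composed with the norm map -- more precisely, $g(p)=\frac{1}{\phi(n)/\gcd(n,p-1)}\cdot\frac{\phi(n)}{n}\cdot(\dots)$; concretely, for a rational prime $p\nmid n$ one has the identity $g(p)=\frac{1}{n}\sum_{\psi}\psi(p)$ where $\psi$ runs over Dirichlet characters mod $n$ with $\psi$ trivial exactly when $p\equiv 1\bmod n$...

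Let me restate this more carefully. For a prime $p\nmid n$ we have $\gcd(n,p-1)=\#\{k\mid n : p\equiv 1 \bmod k\}$-adjusted, and by character orthogonality on $(\Z/n\Z)^\times$,
\[
\frac{1}{\gcd(n,p-1)} = \frac{1}{\phi(n)}\sum_{\psi \in \widehat{(\Z/n\Z)^\times}} \psi(p),
\]
since the sum on the right is $\#\{\psi : \psi(p)=1\}/\phi(n) \cdot \gcd(n,p-1)\cdot\phi(n)^{-1}$... this needs the precise combinatorial identity, but the upshot is that $g(p)$ is a finite $\QQ$-linear combination $\sum_j c_j \psi_j(p)$ of Dirichlet characters $\psi_j$ modulo $n$ (evaluated on the rational prime $p=\Norm(\mathfrak p)$ below $I$), with coefficients $c_j$ and characters $\psi_j$ depending only on $n$. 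Pulling back along $\Norm\colon I_{\ZZ[\zeta_d]}\to\Z$ and the inclusion $\ZZ\hookrightarrow\ZZ[\zeta_d]$, each $\psi_j\circ\Norm$ is a Hecke character $\eta_j$ of $\ZZ[\zeta_d]$ of modulus dividing $(n)$. So on ideals of squarefree norm coprime to $n$ we have $g(\Norm(I))=\sum_j c_j\,\eta_j(I)$, and the finitely many primes in $S(n)\supseteq\{p\mid n\}$ where $g$ differs only affect a bounded Euler factor, which can be handled by a standard splitting of $\Delta$ or absorbed into a slightly enlarged modulus.

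Given this, expand: the left-hand side of \eqref{eq:Siegel-Walfisz} becomes $\sum_j c_j \sum_{\Norm(I)\leq B}\mu^2(\Norm(I)\Delta)\,\chi(I)\eta_j(I)$. Each inner sum is of the shape treated by \Cref{lem:LSDGeneral} applied to the Hecke character $\chi\eta_j$, \emph{provided} $\chi\eta_j$ is non-principal and its conductor/modulus still satisfy the size hypotheses. The conductor of $\eta_j$ divides $(n)$, so $\Norm(f_{\chi\eta_j})\ll_n \Norm(f_\chi)$ and likewise the modulus grows by a bounded factor, preserving the hypothesis $\Norm(f_{\chi\eta_j})\,\omega(\Norm(\mathfrak m\cdot(n)))\ll_C(\log B)^C$ up to adjusting $C$. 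Non-principality is the crucial point: the hypothesis of \Cref{lem:Siegel-Walfisz} gives a prime $p\nmid d$ with $p\mid\Norm(f_\chi)$; since $\eta_j$ is ramified only at primes dividing $n$, if additionally $p\nmid n$ then $\chi\eta_j$ remains ramified at $p$, hence non-principal. To guarantee $p\nmid n$ one may need to enlarge the requirement to "$p\nmid dn$" — but since $S(n)\supseteq\{p\mid n\}$ and $g$ is trivial there, the primes dividing $n$ contribute nothing to $g(\Norm(I))$ beyond the principal term; I would organise the argument so that the "interesting" ramified prime $p$ of $\chi$ can always be taken coprime to $n$, or else handle the finitely many exceptional $d,\mathfrak m$ by noting $\chi\eta_j$ is non-principal for \emph{all} $j$ in the sum (its restriction to the relevant ray class group is a nontrivial twist of $\chi$). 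For the one index $j$ where $\eta_j$ is principal, $\chi\eta_j=\chi$ is non-principal by hypothesis, so \Cref{lem:LSDGeneral} applies directly.

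Summing the $O(B(\log B)^{-C})$ bounds over the $O_n(1)$ values of $j$ with bounded coefficients $c_j$ gives the claim, after replacing $C$ by $C+\varepsilon$ at the start to absorb the constant-factor losses in the modulus bounds. \textbf{The main obstacle} I anticipate is the bookkeeping around non-principality of the twisted characters $\chi\eta_j$ and the interaction between the ramified prime of $\chi$ supplied by the hypothesis and the primes dividing $n$ where $g$ is re-normalised; everything else (the character decomposition of $g$, the conductor bounds, the final summation) is routine once that is set up cleanly. A secondary, purely cosmetic issue is verifying the exact constant in the orthogonality identity for $1/\gcd(n,p-1)$, which I would state as a small preliminary lemma.
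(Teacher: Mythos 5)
Your proposal is correct and is essentially the paper's own proof: the paper writes $g(\Norm(\mathfrak{p}))$ as a finite linear combination of Hecke characters $\psi$ of bounded modulus, observes that each twist $\psi\chi$ has modulus $d\mathfrak{m}$ and conductor dividing $d f_{\chi}$, deduces non-principality of every $\psi\chi$ from the hypothesised prime $p \mid \Norm(f_{\chi})$ with $p \nmid d$, and then applies \Cref{lem:LSDGeneral} to each twist and sums over the finitely many $\psi$. The non-principality bookkeeping you flag as the main obstacle is exactly the point the paper treats most briskly, by asserting that the twisting characters may be taken of modulus $d$ rather than $n$, so that $p \nmid d$ alone guarantees $p$ survives in the conductor of $\psi\chi$.
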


\begin{proof}
The value $g(\Norm(\mathfrak{p}))$ only depends on the value of $\Norm(\mathfrak{p})$ modulo $d$ so can be written as a linear combination of Hecke characters $\psi$ of modulus $d$. The character $\psi \chi$ has modulus $d \mathfrak{m}$ and its conductor $f_{\psi \chi}$ divides $d f_\chi$. The condition that there exists $p \mid \Norm(f_\chi)$ such that $p \nmid d$ ensures that $p\mid f_{\psi\chi}$, so in particular $\psi \chi$ is non-principal. Thus each $\psi\chi$ satisfies the hypotheses of \Cref{lem:LSDGeneral}, and so the claim is immediate by summing over the (finitely many, depending only on $d$) characters $\psi$.
\end{proof}

\subsection{Application of oscillation results}
We split our sum up into subsums depending on which variables are medium-sized (meaning larger than a power of $\log$ as-yet to be chosen).

\begin{definition}
 Let $X:=\max\set{B_1,B_2,B_3}$, and write
 \[\mathcal{I} := \set{(\mathbf{v}, d) : \mathbf{v} \in \set{\be_1, \be_2, \be_3}, d \mid n}.\]
 Then for each $\cJ\subseteq \cI$, let $N_{\textup{f}}^{\cJ}(\bB;\bs,\bb,M)$ be the subsum of $N_{\textup{f}}(\bB;\bs,\bb,M)$ such that
 \begin{itemize}
 \item for indices $(\bv,d)\not\in\cI$, we have $\Norm(I_{\bv,d})\leq \log(X)^{2C}$;
 \item for indices $(\bv,d)\in\cI\backslash\cJ$, we have $\Norm(I_{\bv,d})\leq\log(X)^A$;
 \item for indices $(\bv,d)\in\cJ$, we have $\Norm(I_{\bv,d})>\log(X)^A$.
 \end{itemize}
 Here $A,C>1$ are large constants to be chosen later with $A$ much larger than $C$. 
\end{definition}

We begin by expressing our sum in terms of these subsums.

\begin{definition}
    We say that $(\mathbf{v}, d), (\mathbf{w}, e) \in \cB\times \set{f \mid n}$ are \emph{unlinked} if at least one of the following three conditions hold:
    \begin{enumerate}
        \item both $(\mathbf{v}, d), (\mathbf{w}, e) \notin \mc{I}$;
        \item $\bw \cdot f(\bv) \equiv 0 \bmod d$ and $\bv \cdot f(\bw) \equiv 0 \bmod e$;
        \item $d,e$ are both even and $\bw \cdot f(\bv) \equiv d/2 \bmod d$ and $\bv \cdot f(\bw) \equiv e/2 \bmod e$.
    \end{enumerate}
We say that $(\mathbf{v}, d)$ and $(\mathbf{w}, e)$ are \emph{linked} if they are not unlinked.
\end{definition}

\begin{remark}\label{rem:unlinked in I}
By definition, if $i\neq j$ then $f(\be_i)\cdot\be_j= \pm1$.  In particular, $(\be_i,d)$ and $(\be_j,e)$ are unlinked precisely if $d=e=1$ or $d=e=2$.
\end{remark}

\begin{lemma}
\label{lem:LinkedVariablesOscillate}
Assume that $(\mathbf{v}, d), (\mathbf{w}, e) \in \cB\times \set{f \mid n}$ are linked. Then there exists $\alpha>0$ depending only on $d$ and $e$ such that
\[
\gamma: I_{\Q(\zeta_d)} \times I_{\Q(\zeta_e)} \to \C: (I_{\mathbf{v}, d}, I_{\mathbf{w}, e}) \mapsto \left(\frac{\Norm(I_{\mathbf{w}, e})}{I_{\mathbf{v}, d}}\right)_{\Q(\zeta_d), d}^{\mathbf{w} \cdot f(\mathbf{v})} \left(\frac{\Norm(I_{\mathbf{v}, d})}{I_{\mathbf{w}, e}}\right)_{\Q(\zeta_e), e}^{\mathbf{v} \cdot f(\mathbf{w})}
\]
is an $\braces{\alpha, (de)^{\alpha}}$-oscillating bilinear character. Moreover, if $p$ is a prime such that $p \mid \Norm(I_{\mathbf{v}, d})$ with multiplicity $1$ and $p \nmid de$, then $p$ divides the norm of the conductor of $\gamma(I_{\mathbf{v}, d}, \cdot)$.
\end{lemma}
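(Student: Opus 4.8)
The plan is to reduce this statement directly to \Cref{lem:power_residue_symbols_oscillate}, using the characterisation of linked/unlinked pairs to verify the hypotheses. First I would observe that $\gamma$ as defined is exactly the bilinear character of \Cref{lem:power_residue_symbols_oscillate} with the choices $\varepsilon_d := \mathbf{w}\cdot f(\mathbf{v}) \bmod d$ and $\varepsilon_e := \mathbf{v}\cdot f(\mathbf{w}) \bmod e$, provided those exponents are units in $\ZZ/d\ZZ$ and $\ZZ/e\ZZ$ respectively; so the bulk of the argument is checking that being linked forces a situation to which that lemma (or a trivial modification of it) applies.

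The main case analysis runs as follows. Suppose first that $d$ and $e$ are both odd. Then a power residue symbol $(\tfrac{\cdot}{I})_{\QQ(\zeta_d),d}$ raised to any exponent $k$ is still a $d'$-th power residue symbol for $d' = d/\gcd(d,k)$, and it is principal (as a function of the appropriate variable) precisely when $d \mid k$. Being linked rules out condition (2) in the definition of unlinked, so we cannot have both $\mathbf{w}\cdot f(\mathbf{v})\equiv 0\bmod d$ and $\mathbf{v}\cdot f(\mathbf{w})\equiv 0\bmod e$; condition (3) is vacuous in the odd case. Hence, after replacing $(d,e)$ by $(d',e')$ where $d' = d/\gcd(d,\mathbf{w}\cdot f(\mathbf{v}))$ etc., at least one of $d', e'$ exceeds $1$, and the exponents become units modulo $d'$ and $e'$; then \Cref{lem:power_residue_symbols_oscillate} applied to $(d',e')$ (which is $\neq (1,1),(2,2)$ since one of them is an odd number $>1$) gives the oscillating bilinear character property, and the final non-principality clause transfers verbatim because $p \mid \Norm(f_{\gamma(\cdot)})$ is the same statement for the reduced symbol. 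When $d$ or $e$ is even one must be slightly more careful: the quadratic symbol is the one truly problematic one, so I would reduce $d$ to the smallest modulus $d'$ for which the exponentiated symbol factors through the $d'$-th power residue symbol, and note that linkedness rules out both the all-zero ($(2)$) and the all-$d/2$ ($(3)$) configurations, which are exactly the two configurations that would collapse $(d',e')$ to $(1,1)$ or $(2,2)$; in every remaining configuration $(d',e')\neq(1,1),(2,2)$, the exponents are units mod $d',e'$, and \Cref{lem:power_residue_symbols_oscillate} applies. The conductor bounds and the uniformity of $\alpha$ are inherited directly since $d',e'$ range over finitely many values depending only on $d,e$.

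The step I expect to be the main obstacle is the bookkeeping around raising a power residue symbol to a power and tracking exactly which reduced modulus $d'$ one lands in, together with verifying that the two "trivial" cases excluded by linkedness — simultaneous vanishing modulo $d$ and $e$, and the simultaneous $d/2, e/2$ case — are precisely the ones that would make $(d',e')$ equal to $(1,1)$ or $(2,2)$ and hence fall outside the scope of \Cref{lem:power_residue_symbols_oscillate}. Once that correspondence is pinned down the rest is a direct invocation of the earlier lemma, so I would write the proof as: (i) define the reduced moduli and exponents; (ii) check via the linked hypothesis that $(d',e')\notin\{(1,1),(2,2)\}$ by ruling out each unlinked alternative; (iii) quote \Cref{lem:power_residue_symbols_oscillate} for $(d',e')$; (iv) note the conductor/non-principality statements pass through unchanged.

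\begin{proof}
Write $k := \mathbf{w}\cdot f(\mathbf{v}) \bmod d$ and $\ell := \mathbf{v}\cdot f(\mathbf{w}) \bmod e$, and set $d' := d/\gcd(d, k)$ and $e' := e/\gcd(e, \ell)$.  For any ideal $I\subseteq\ZZ[\zeta_d]$, the function $J\mapsto (\tfrac{\Norm(J)}{I})_{\QQ(\zeta_d),d}^{k}$ factors through the $d'$-th power residue symbol $(\tfrac{\Norm(\cdot)}{I})_{\QQ(\zeta_{d'}),d'}$ raised to an exponent which is a unit modulo $d'$, and likewise for the other symbol with $e'$ in place of $d'$.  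Thus $\gamma$ is, up to these unit exponents, the bilinear character of \Cref{lem:power_residue_symbols_oscillate} associated to the pair $(d', e')$.

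It remains to check that $(d', e') \neq (1,1), (2,2)$; the conclusion then follows from \Cref{lem:power_residue_symbols_oscillate} applied to $(d',e')$, with $\alpha$ depending only on the finitely many possible values of $(d',e')$ and hence only on $d, e$, and the non-principality clause transfers directly since $\gamma(I_{\mathbf{v},d},\cdot)$ has the same conductor as the associated $d'$-th power residue symbol character.

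Suppose for contradiction that $(d', e') = (1,1)$.  Then $d \mid k$ and $e \mid \ell$, i.e. $\mathbf{w}\cdot f(\mathbf{v}) \equiv 0 \bmod d$ and $\mathbf{v}\cdot f(\mathbf{w}) \equiv 0 \bmod e$, so $(\mathbf{v}, d)$ and $(\mathbf{w}, e)$ satisfy condition (2) of being unlinked, contradicting the hypothesis that they are linked.  Suppose instead that $(d', e') = (2,2)$.  Then $d' = 2$ forces $d$ even and $\gcd(d, k) = d/2$, which gives $k \equiv 0$ or $d/2 \bmod d$; since $d' = 2 \neq 1$ we must have $k \equiv d/2 \bmod d$, and symmetrically $e$ is even with $\ell \equiv e/2 \bmod e$.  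But this is precisely condition (3) of being unlinked, again contradicting the hypothesis.  Hence $(d', e') \neq (1,1), (2,2)$, as required.
\end{proof}
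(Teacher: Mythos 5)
Your proof is correct and follows essentially the same route as the paper: reduce to \Cref{lem:power_residue_symbols_oscillate} by passing to the moduli $d'=d/\gcd(d,\mathbf{w}\cdot f(\mathbf{v}))$, $e'=e/\gcd(e,\mathbf{v}\cdot f(\mathbf{w}))$ and observing that the linked hypothesis rules out exactly the cases $(d',e')\in\{(1,1),(2,2)\}$ via conditions (2) and (3) of unlinkedness. If anything, your explicit formula for $d'$ and the verification that the reduced exponent $k/\gcd(d,k)$ is a unit modulo $d'$ is slightly more careful than the paper's phrasing.
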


\begin{proof}
We wish to apply Lemma \ref{lem:power_residue_symbols_oscillate}. Define $d'$ to be the largest divisor of $d$ such that $\mathbf{w} \cdot f(\mathbf{v})$ becomes a unit modulo $d'$, and similarly define $e'$ to be the largest divisor of $e$ such that $\mathbf{v} \cdot f(\mathbf{w})$ is a unit modulo $e'$. Note that our definition of linked corresponds precisely to the condition that $(d', e') \not \in \{(1, 1), (2, 2)\}$. Therefore we may indeed apply Lemma \ref{lem:power_residue_symbols_oscillate} with $(d', e')$, and the current lemma follows.
\end{proof}

\begin{lemma}
\label{lem:splitting Nf as NfM}
We have
\[
N_{\textup{f}}(\bB;\bs,\bb,M) = \sum_{\cJ\subseteq \cI}N_{\textup{f}}^{\cJ}(\bB;\bs,\bb,M) + O\braces{\frac{B_1B_2B_3}{\log(X)^{C}}},
\]
where the implied constant is independent of $\bB$.
\end{lemma}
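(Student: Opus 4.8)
The plan is to decompose the sum in \Cref{thm:Nf as character sum} according to the sizes of the ideals $I_{\bv,d}$, and show that all terms not captured by the $N_{\textup{f}}^{\cJ}$ subsums contribute an acceptable error. First I would observe that, by definition of $N_{\textup{f}}^{\cJ}$, the sum $\sum_{\cJ\subseteq\cI}N_{\textup{f}}^{\cJ}(\bB;\bs,\bb,M)$ captures precisely those terms of $N_{\textup{f}}(\bB;\bs,\bb,M)$ in which $\Norm(I_{\bv,d})\leq\log(X)^{2C}$ for all $(\bv,d)\notin\cI$ and $\Norm(I_{\bv,d})\leq\log(X)^A$ for all $(\bv,d)\in\cI$ with a potential excess (the $(\bv,d)\in\cJ$). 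Wait — one must be careful: the decomposition over $\cJ\subseteq\cI$ is a genuine partition of the region $\{\Norm(I_{\bv,d})\leq\log(X)^{2C}\ \forall(\bv,d)\notin\cI\}$, since each $(\bv,d)\in\cI$ lands in $\cJ$ or its complement depending on whether $\Norm(I_{\bv,d})>\log(X)^A$. Hence $N_{\textup{f}}(\bB;\bs,\bb,M)-\sum_{\cJ\subseteq\cI}N_{\textup{f}}^{\cJ}(\bB;\bs,\bb,M)$ is exactly the subsum over those tuples for which $\Norm(I_{\bv,d})>\log(X)^{2C}$ for at least one index $(\bv,d)\notin\cI$; call this error sum $E$.

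Next I would bound $E$. Fix an index $(\bv_0,d_0)\notin\cI$ with $d_0>1$ (if $d_0=1$ the symbol is trivial and such indices can only occur with $\bv_0\in\cB_{\min}$ having two equal coordinates — but the point is that for $(\bv,d)\notin\cI$ we always have $d>1$ whenever $\bv\notin\{\be_1,\be_2,\be_3\}$; and for such $\bv\in\cB\setminus\cB_{\min}$ the relevant $d$ ranges still include $d>1$). For each such index, since $(\bv_0,d_0)\notin\cI$, I claim it is linked to some other index $(\bw,e)$ appearing in the product — indeed, by \Cref{rem:unlinked in I} and the structure of $\cB$, one can always find a companion index with which it is linked (the only unlinked pairs inside $\cI$ are the $d=e=1$ or $d=e=2$ pairs, and for a genuinely higher-order character there is an oscillating partner). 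I would then freeze all variables except $I_{\bv_0,d_0}$ and its linked partner $I_{\bw,e}$, apply the bilinear character $\gamma$ of \Cref{lem:LinkedVariablesOscillate}, and invoke the large sieve \Cref{prop:large_sieve}. The ideal $I_{\bv_0,d_0}$ ranges up to a norm bound of size roughly $X$, while its partner is at least $\log(X)^{2C}$; combined with the crude bound on the remaining frozen variables (using \Cref{lem:PowerOmegaBound} to handle the multiplicities of frozen squarefree ideals and to absorb the number of frozen tuples, which is polylogarithmic), \Cref{prop:large_sieve} yields a saving of a power of $\log(X)$ that can be made to exceed any fixed $C$ by taking $C$ (and $A$) large enough. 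Summing over the bounded number of indices $(\bv,d)$ and choices of linked partner gives $E\ll B_1B_2B_3/\log(X)^C$.

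The main obstacle is verifying that the bilinear-sieve input is legitimately applicable: namely that for every index $(\bv_0,d_0)\notin\cI$ one can \emph{genuinely} pair it with a linked index so that the resulting $\gamma$ is $(\alpha,(de)^\alpha)$-oscillating in the required range, and that the frozen coefficients $a_I,b_J$ (which bundle together the $g$-factor, the $x_{m,\bullet}$-dependent symbols, and the contributions of the small frozen ideals) are bounded by $1$ in absolute value after normalising — the $g$-values are at most $1$ by \eqref{eq:mult function g}, and the power residue symbols are roots of unity, so this is fine, but one must check the sum over frozen configurations does not overwhelm the $\log$-saving, which is where the constraint that $A$ is much larger than $C$ is used. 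A secondary point is bookkeeping the $x_{i,\bullet}$ sum, which has $O_n(1)$ terms up to the constraint $v_p(x_{i,\bullet})<n$ for $p\in S(n)$, so it contributes only an $n$-dependent constant and is harmless. Once these are in place, the estimate follows by choosing $A$, then $C$, appropriately and summing the $O_n(1)$ error contributions.
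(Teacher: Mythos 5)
Your first paragraph is correct and matches the paper: the subsums $N_{\textup{f}}^{\cJ}$ over $\cJ\subseteq\cI$ exactly partition the region where $\Norm(I_{\bv,d})\leq\log(X)^{2C}$ for all $(\bv,d)\notin\cI$, so the discrepancy is precisely the subsum where some index outside $\cI$ is large. Your treatment of that error sum, however, has a genuine gap. The indices $(\bv,d)\notin\cI$ are exactly those with $\bv\in\cB\setminus\set{\be_1,\be_2,\be_3}$, and this includes $(\bv,1)$ for every such $\bv$: your claim that ``for $(\bv,d)\notin\cI$ we always have $d>1$ whenever $\bv\notin\{\be_1,\be_2,\be_3\}$'' is false (e.g.\ $((1,1,0),1)\notin\cI$). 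For $d=1$ the power residue symbol attached to $I_{\bv,1}$ is trivial, so there is no oscillation in that variable to exploit, and the bilinear sieve cannot produce the saving. Even for $d>1$, nothing in the definition of the error region forces the linked partner $I_{\bw,e}$ to be large (it may well be the unit ideal), and \Cref{prop:large_sieve} only saves $M^{-\delta}+N^{-\delta}$, so a small partner range kills the argument; your assertion that the partner ``is at least $\log(X)^{2C}$'' has no basis.

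The correct (and much simpler) route, which is what the paper does, is elementary and uses no characters at all: if $\Norm(I_{\bv,d})>\log(X)^{2C}$ for some $\bv\in\cB\setminus\set{\be_1,\be_2,\be_3}$, then translating back to $\ba$ via \Cref{lem:initialCOV} the corresponding $x_{\bv}$ either enters some $a_i$ with exponent $\pi_i(\bv)\geq 2$ (forcing $\textup{sqfull}(a_i)>\log(X)^{2C}$) or divides two of the $a_i$ (forcing $\gcd(a_i,a_j)>\log(X)^{2C}$). Bounding the summand trivially by $1$ and applying \Cref{lLargeSquare} and \Cref{lLargeGCD} gives an error of $O\bigl(B_1B_2B_3\log(X)^{-C}\bigr)$; the exponent $2C$ in the definition of the cutoff is chosen precisely so that the $Z^{-1/2}$ saving of \Cref{lLargeSquare} lands on $\log(X)^{-C}$. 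You should replace the oscillation argument for this lemma by this counting argument; the large sieve and Siegel--Walfisz inputs are reserved for the later lemmas, where the large variables are indexed by $\cI$ and genuinely linked pairs are available.
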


\begin{proof}
The only difference between the main term on the right and the sum of interest on the left is the subsum where for some $\bv\in\cB\backslash\set{e_1, e_2, e_3}$ and $d \mid n$ we have $\Norm(I_{\bv,d})> \log(X)^{2C}$. Translating back to the $(a_1,a_2,a_3)$ variables via \Cref{lem:initialCOV}, and writing $Z:=\log(X)^{2C}$, we can trivially bound this subsum by
\[\sum_{\substack{\ba\leq \bB\\\exists i\textnormal{ s.t. }\\\textnormal{sqfull}(a_i)>Z}}1
+
\sum_{\substack{\ba\leq \bB\\\exists i,j\textnormal{ s.t. }\\\textnormal{gcd}(a_i,a_j)>Z}}1
\ll
\frac{B_1B_2B_3}{\sqrt{Z}},
\]
where the bound is by \Cref{lLargeSquare,lLargeGCD}.
\end{proof}

\subsubsection{Linked indices in \texorpdfstring{$\cJ\subseteq \cI$}{J contained in I}}
In the situations where two linked indices are contained in $\cJ$, we obtain cancellation via the large sieve.

\begin{lemma}
\label{lem:two medium linked}
Let $\cJ\subseteq \cI$ be a subset which contains two linked indices. Then there exists $\delta > 0$, independent of $A,C$, such that 
\[
N^{\cJ}_{\textup{f}}(\mathbf{B}; \mathbf{s}, \mathbf{b}, M) \ll B_1B_2B_3\log(X)^{\delta^{-1}-\delta A+C(\#\cB-3)\tau(n)+3n\tau(n)}.
\]
\end{lemma}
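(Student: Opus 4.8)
The plan is to start from the character sum in \Cref{thm:Nf as character sum}, restricted to the subsum $N^{\cJ}_{\textup{f}}(\mathbf{B};\mathbf{s},\mathbf{b},M)$, and exploit the oscillation guaranteed by \Cref{lem:LinkedVariablesOscillate}. Fix two linked indices $(\mathbf{v},d),(\mathbf{w},e)\in\cJ$. I would first isolate these two ideal variables $I_{\mathbf{v},d}, I_{\mathbf{w},e}$ and freeze all the remaining data: the sign/small-prime variables $x_{i,\bullet}$, and all ideals $I_{\mathbf{u},f}$ with $(\mathbf{u},f)\neq(\mathbf{v},d),(\mathbf{w},e)$. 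With everything else frozen, the inner double sum over $I_{\mathbf{v},d}$ (with $\Norm \le M_1$, say) and $I_{\mathbf{w},e}$ (with $\Norm\le M_2$) has the shape $\sum\sum a_I b_J\, \gamma(I,J)$, where $\gamma$ is the oscillating bilinear character of \Cref{lem:LinkedVariablesOscillate}, and the coefficients $a_I,b_J$ absorb the contribution of the other power residue symbols evaluated against $I_{\mathbf{v},d}$ and $I_{\mathbf{w},e}$ respectively, together with the parts of $g$ and the $\mu^2$ conditions attached to these two variables. The key point is that $|a_I|,|b_J|\le1$: $g$ is bounded by $1$, the power residue symbols have absolute value $1$ (or $0$ when a coprimality/$\mu^2$ constraint fails), so the hypotheses of \Cref{prop:large_sieve} are met with $q=(de)^\alpha$ and the constant $A$ of loc.\ cit.\ being $\alpha$.

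Next I would bound the effective ranges $M_1,M_2$. Because $(\mathbf{v},d),(\mathbf{w},e)\in\cJ\subseteq\cI$, both satisfy $\Norm(I_{\mathbf{v},d}),\Norm(I_{\mathbf{w},e})>\log(X)^A$ by definition of $N^{\cJ}_{\textup{f}}$; on the other hand, since $|x_{i,\bullet}\prod_{\mathbf{u},f}\Norm(I_{\mathbf{u},f})^{\pi_i(\mathbf{u})}|\le B_i\le X$ and $\pi_i(\mathbf{v})\geq1$ for some $i$ (as $\mathbf{v}\in\set{\be_1,\be_2,\be_3}$, so exactly one coordinate is nonzero), we get $M_1,M_2\le X$. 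Applying \Cref{prop:large_sieve} with $M=M_1$, $N=M_2$ then yields that the inner double sum is
\[
\ll q\,M_1M_2(\log M_1M_2)^{\delta^{-1}}(M_1^{-\delta}+M_2^{-\delta})\ll M_1M_2\,(\log X)^{\delta^{-1}}\log(X)^{-\delta A},
\]
using $M_i>\log(X)^A$ so that $M_i^{-\delta}<\log(X)^{-\delta A}$, and absorbing the fixed constant $q$. Here $\delta>0$ depends only on $d,e$ — hence, since there are only finitely many pairs $(d,e)$ with $d,e\mid n$, only on $n$ — and in particular is independent of $A,C$, as required.

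Finally I would put the frozen pieces back. Summing the bound above over all the frozen variables: the sign variables $x_{i,\bullet}$ contribute only the small primes in $S(n)$ with $v_p<n$, a bounded number of choices depending only on $n$; the frozen ideals $I_{\mathbf{u},f}$ with $(\mathbf{u},f)\not\in\cI$ have norm at most $\log(X)^{2C}$ and, being squarefree, the number of ideals of given norm $m$ over $\Q(\zeta_d)$ is $\ll\tau(n)^{\omega(m)}$-ish, so by \Cref{lem:PowerOmegaBound} each such sum costs at most $\log(X)^{C\cdot(\text{something})\tau(n)}$; the frozen ideals in $\cI\setminus\{(\mathbf{v},d),(\mathbf{w},e)\}$ have norm at most $\log(X)^A$ but — crucially — their product divides one of $a_1,a_2,a_3$, so rather than bounding them by $A$ I should bound them against the free range: after reassembling, the product $M_1M_2\cdot(\text{other ideal norms})$ of the $\pi_i$-powers telescopes into $\prod_i B_i = B_1B_2B_3$. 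Concretely, once all ideals are summed the combined size factor is $\ll B_1B_2B_3$, the exponent $C(\#\cB-3)\tau(n)$ records the cost of the $\cB\setminus\{\be_1,\be_2,\be_3\}$ ideals at scale $\log(X)^{2C}$ (each of the $\#\cB-3$ of them summed up to $\log(X)^{2C}$ with divisor-type weight of order $\tau(n)$, giving $\log(X)^{C\cdot\tau(n)}$ per variable after halving), and the $3n\tau(n)$ term records the cost of the remaining $\cI$-indices and the book-keeping over $d\mid n$ and $\mathbf{v}\in\{\be_1,\be_2,\be_3\}$. Collecting everything gives
\[
N^{\cJ}_{\textup{f}}(\mathbf{B};\mathbf{s},\mathbf{b},M)\ll B_1B_2B_3\,\log(X)^{\delta^{-1}-\delta A+C(\#\cB-3)\tau(n)+3n\tau(n)}.
\]

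\textbf{Main obstacle.} The delicate point is the bookkeeping in the last step: one must split off exactly two linked variables as the "bilinear" pair, verify that the leftover coefficients genuinely have modulus $\le1$ (in particular that no residue symbol involving $I_{\mathbf{v},d}$ against another \emph{medium} ideal has been wrongly absorbed into $a_I$ in a way that breaks bilinearity — it is fine, since those contribute further bounded multiplicative factors, but one must be careful that $\gamma$ really is the full bilinear interaction between the chosen pair), and then track how the exponent of $\log X$ accumulates across the roughly $\#\cB\cdot\tau(n)$ frozen ideal-sums and the finitely many sign choices, making sure the gain $-\delta A$ from the sieve survives all of these polylog losses. The conductor-control hypotheses of \Cref{def:osc bilin char} need checking too, but these are exactly what \Cref{lem:LinkedVariablesOscillate} supplies.
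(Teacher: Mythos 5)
Your proposal follows essentially the same route as the paper's proof: isolate the two linked indices (necessarily attached to distinct basis vectors), freeze all remaining variables, apply \Cref{prop:large_sieve} to the resulting bilinear sum via \Cref{lem:LinkedVariablesOscillate} with coefficients of modulus at most one and with the lower bound $\Norm(I) > \log(X)^A$ supplying the saving $\log(X)^{-\delta A}$, and then sum over the frozen variables using divisor-type bounds (\Cref{lem:PowerOmegaBound}) to reassemble $B_1B_2B_3$ with the stated logarithmic losses. The one point the paper treats a bit more explicitly is the joint condition coupling the two bilinear variables (coprimality of their norms coming from the $\mu^2$ constraint), which cannot literally be split into $a_I b_J$ but is instead encoded by the vanishing of the power residue symbols, in line with your remark that the coefficients may be $0$ when such constraints fail.
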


\begin{proof}
We may assume by symmetry that our linked indices are $(\be_1,d),(\be_2,e)\in\cJ$. We encode the coprimality of $\Norm(I_{\be_1, d})$ and $\Norm(I_{\be_2, e})$ via the power residue symbols being non-zero. It remains to bound the following sum
\begin{multline*}
\sum_{(x_{i, \bullet})_{1 \leq i \leq 3}} 
\sum_{\substack{(I_{\mathbf{u}, c})_{\mathbf{u} \in \mathcal{B}, c \mid n, (\mathbf{u}, c) \not\in\set{ (\be_1, d), (\be_2, e)}} \\ I_{\mathbf{u}, c} \subseteq \Z[\zeta_c]}} \\
\Bigg|
\sum_{\substack{I_{\be_1, d} \subseteq \Z[\zeta_d] \\ \Norm(I_{\be_1, d}) > (\log X)^A \\ \Norm(I_{\be_1, d}) \leq T_1 \\ \Norm(I_{\be_1, d}) \equiv c_1 \bmod M}}
\sum_{\substack{I_{\be_2, e} \subseteq \Z[\zeta_e] \\ \Norm(I_{\be_2, e}) > (\log X)^A \\ \Norm(I_{\be_2, e}) \leq T_2 \\ \Norm(I_{\be_2, e}) \equiv c_2 \bmod M}}
\alpha(I_{\be_1, d}) \beta(I_{\be_2, e})
\left(\frac{\Norm(I_{\be_2, e})}{I_{\be_1, d}}\right)_{\Q(\zeta_d), d}^{\be_2 \cdot f(\be_1)} \left(\frac{\Norm(I_{\be_1, d})}{I_{\be_2, e}}\right)_{\Q(\zeta_e), e}^{\be_1 \cdot f(\be_2)}
\Bigg|,
\end{multline*}
where
\begin{align*}
&T_1 := \frac{B_1}{\left|x_{1, \bullet} \prod\limits_{\substack{\mathbf{u} \in \mathcal{B}, c \mid n \\ (\mathbf{u}, c) \neq (\be_1, d)}} \Norm(I_{\mathbf{u}, c})^{\pi_1(\mathbf{u})}\right|} & T_2 := \frac{B_2}{\left|x_{2, \bullet} \prod\limits_{\substack{\mathbf{u} \in \mathcal{B}, c \mid n \\ (\mathbf{u}, c) \neq (\be_2, e)}} \Norm(I_{\mathbf{u}, c})^{\pi_2(\mathbf{u})}\right|}
\end{align*}
and where $\alpha(I_{\be_1, d})$ and $\beta(I_{\be_2, e})$ are complex numbers depending only on respectively $I_{\be_1, d}$ and $I_{\be_2, e}$ (and the variables in the outer sum). Here $c_1, c_2$ range over all residue classes modulo $M$ satisfying the conditions
\begin{align*}
&c_1 x_{1, \bullet} \prod_{\substack{\mathbf{u} \in \mathcal{B}, c \mid n \\ (\mathbf{u}, c) \neq (\be_1, d)}} \Norm(I_{\mathbf{u}, c})^{\pi_1(\mathbf{u})} \equiv b_1 \bmod M, & c_2 x_{2, \bullet} \prod\limits_{\substack{\mathbf{u} \in \mathcal{B}, c \mid n \\ (\mathbf{u}, c) \neq (\be_2, e)}} \Norm(I_{\mathbf{u}, c})^{\pi_2(\mathbf{u})} \equiv b_2 \bmod M.
\end{align*}
Observe that $T_1$ and $T_2$ only depend on $B_1$ and $B_2$ and the variables in the outer sum. We may assume that $T_1, T_2 \geq (\log X)^A$ since the sum is otherwise equal to $0$. By Lemma \ref{lem:LinkedVariablesOscillate} we may apply Proposition \ref{prop:large_sieve} to the inner sum over $I_{\be_1, d}$ and $I_{\be_2, e}$. This shows that the inner sum is $\ll_n T_1 T_2 (\log X)^{\delta^{-1} - \delta A}$ for some $\delta > 0$. 

Writing $\cI' = \cI\backslash\set{(\be_1,d),(\be_2,e)}$, we sum over the remaining terms and obtain 
\begin{align*}
N^{\cJ}_{\textup{f}}(\mathbf{B}; \mathbf{s}, \mathbf{b}, M)
&\ll_n B_1B_2\log(X)^{\delta^{-1}-\delta A+C(\#\cB-3)\tau(n)}\sum_{\substack{(I_{\mathbf{u},c})_{(\mathbf{u},c)\in\cI'}\\\prod_{c\mid n}\Norm(I_{e_j,c})\leq B_j}}\braces{\prod\limits_{\substack{(\mathbf{u},c)\in\cI'\\\mathbf{u}\in\set{\be_1,\be_2}}}\Norm(I_{\mathbf{u},c})}^{-1},\\
&\leq B_1B_2\log(X)^{\delta^{-1}-\delta A + C(\#\cB-3)\tau(n)}\sum_{\substack{N_1\leq B_1\\N_2\leq B_2\\N_3\leq B_3}}\frac{\mu^2(N_1N_2N_3) (n\tau(n))^{\omega(N_1N_2)}}{N_1N_2},\\
% &\ll B_1B_2B_3\log(X)^{\delta^{-1}-\delta A+C}\sum_{\substack{N_1\leq B_1,\\N_2\leq B_2\\\textnormal{coprime, squarefree}}}\frac{(n\tau(n))^{\omega(N_1N_2)}}{N_1N_2}\\
&\ll B_1B_2B_3\log(X)^{\delta^{-1}-\delta A+C(\#\cB-3)\tau(n)+3n\tau(n)},
\end{align*}
where in the final inequality we have applied \Cref{lem:PowerOmegaBound} and partial summation on the sums over $N_1$ and $N_2$.  
\end{proof}

In particular, choosing $A$ substantially larger than $C$ (depending on $\delta,n$) will allow us to draw these terms into the error.  We will consider $A$ to be substantially larger than $C$ and the $B_i$ to be of similar sizes, in the following sense.

\begin{assumption}
\label{ass:A big for C and Bi similar size}
Assume that 
\begin{itemize}
% \item $\min\set{B_1,B_2,B_3}\geq \log\braces{X}^{2C+A\#\cI(\backslash\cM)}$, and
\item $\log\braces{\min\set{B_1,B_2,B_3}}\geq \log(X)^\epsilon$ for some $\epsilon>0$; and
\item $-C\geq \delta^{-1}-\delta A+C(\#\cB-3)\tau(n)+3n\tau(n)$ where $\delta$ is as in \Cref{lem:two medium linked}.
\end{itemize}
\end{assumption}

We have reduced to considering $\cJ\subseteq \cI$ which do not contain any pairs of linked indices (i.e. $\cJ$ which are unlinked). In fact we shall only be interested in such sets which contain all three basic vectors at least once, since failing to include one will lead to a negligible contribution to the main term in \Cref{lem:splitting Nf as NfM}.  We summarise with the following lemma.

\begin{lemma}
\label{lem:reduce to NfM1 and 2 after large sieve}
Under \Cref{ass:A big for C and Bi similar size}, writing $\cJ_d:=\set{(\be_i,d):i=1,2,3}$, we have
\[
N_{\textup{f}}(\bB;\bs,\bb,M)=\sum_{\substack{d\mid n\\d\in\set{1,2}}}N_{\textup{f}}^{{\cJ_d}}(\bB;\bs,\bb,M)+O\braces{\frac{B_1B_2B_3}{\log(X)^C}}.
\]
\end{lemma}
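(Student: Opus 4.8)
The plan is to start from the decomposition of Lemma~\ref{lem:splitting Nf as NfM}, which writes $N_{\textup{f}}(\bB;\bs,\bb,M)$ as $\sum_{\cJ\subseteq\cI}N_{\textup{f}}^{\cJ}(\bB;\bs,\bb,M)$ up to an error of size $O(B_1B_2B_3/\log(X)^C)$, and then discard all but a few of the $2^{\#\cI}$ summands on the right. First I would throw away every $\cJ$ that contains two linked indices: by Lemma~\ref{lem:two medium linked} each such $N_{\textup{f}}^{\cJ}$ is $\ll B_1B_2B_3\log(X)^{\delta^{-1}-\delta A+C(\#\cB-3)\tau(n)+3n\tau(n)}$, and by the second bullet of Assumption~\ref{ass:A big for C and Bi similar size} this exponent is $\le -C$, so each such term is absorbed into the error $O(B_1B_2B_3/\log(X)^C)$ (there are only finitely many subsets $\cJ$, a number depending only on $n$, so the sum of these errors is still of the same order). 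This leaves only the \emph{unlinked} $\cJ\subseteq\cI$.

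Next I would argue that among the unlinked $\cJ$, only those containing all three basis vectors contribute to the main term. By Remark~\ref{rem:unlinked in I}, for $i\neq j$ the indices $(\be_i,d)$ and $(\be_j,e)$ are unlinked exactly when $d=e=1$ or $d=e=2$; hence an unlinked subset $\cJ\subseteq\cI$ can only contain indices $(\be_i,d)$ for a single fixed value $d\in\{1,2\}$ (with $d\mid n$), i.e.\ $\cJ\subseteq\cJ_d$ for some $d\in\{1,2\}$, $d\mid n$. It remains to show that if $\cJ\subsetneq\cJ_d$ — so some basis vector $\be_i$ is \emph{missing} from $\cJ$, meaning $\Norm(I_{\be_i,c})\le\log(X)^A$ for all $c\mid n$ — then $N_{\textup{f}}^{\cJ}$ is negligible. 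For such a $\cJ$, translating back to the $(a_1,a_2,a_3)$ variables via Lemma~\ref{lem:initialCOV}, the $i$-th coordinate $a_i=x_{i,\bullet}\prod_{\bv}x_{\bv}^{\pi_i(\bv)}$ has $|a_i|$ bounded by a fixed power of $\log X$ (since $x_{i,\bullet}$ has bounded support in $S(n)$ with exponents $<n$, the $x_{\bv}$ for $\bv\notin\{\be_1,\be_2,\be_3\}$ are $\le\log(X)^{2C}$, and the $x_{\be_j,c}$ for all $c$ are $\le\log(X)^A$); bounding the character sum trivially by the number of such triples gives at most $\ll \log(X)^{O_{A,C,n}(1)}\cdot B_jB_k$ for the other two indices $\{j,k\}$, which is $o(B_1B_2B_3/\log(X)^C)$ once we use the first bullet of Assumption~\ref{ass:A big for C and Bi similar size} to guarantee $\min\{B_1,B_2,B_3\}$ is a genuine power of $B$ and hence dominates any fixed power of $\log X$. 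Finally, for $\cJ=\cJ_d$ with $d\in\{1,2\}$, $d\mid n$, we keep the term $N_{\textup{f}}^{\cJ_d}$ as stated; when $n$ is odd only $d=1$ occurs, matching the indexing $d\mid n$, $d\in\{1,2\}$ in the statement.

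Collecting: $N_{\textup{f}}(\bB;\bs,\bb,M)=\sum_{\substack{d\mid n,\ d\in\{1,2\}}}N_{\textup{f}}^{\cJ_d}(\bB;\bs,\bb,M)$ plus a sum of finitely many error terms, each $O(B_1B_2B_3/\log(X)^C)$, which is the claim. The main obstacle — really the only nontrivial input — is already packaged in Lemma~\ref{lem:two medium linked} and Assumption~\ref{ass:A big for C and Bi similar size}; the remaining work is the bookkeeping of checking that every discarded $\cJ$ either contains a linked pair or omits a basis vector, together with the elementary trivial bound for the latter case. One subtle point to be careful about is making sure that when $\cJ=\cJ_2$ but $2\nmid n$ this term simply does not arise (the index set $\cI$ only involves $d\mid n$), so the sum on the right is correctly restricted to $d\mid n$ with $d\in\{1,2\}$, exactly as written.
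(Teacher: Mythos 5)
Your argument is essentially the paper's own proof: you combine \Cref{lem:splitting Nf as NfM}, then \Cref{lem:two medium linked} with the second bullet of \Cref{ass:A big for C and Bi similar size} to discard every $\cJ$ containing a linked pair, the trivial bound plus the first bullet to discard sets omitting a basis vector, and \Cref{rem:unlinked in I} to identify the survivors as $\cJ_1$ and (for even $n$) $\cJ_2$. The only slip is the intermediate assertion that every unlinked $\cJ\subseteq\cI$ is contained in some $\cJ_d$ with $d\in\set{1,2}$: this fails for unlinked sets involving a single basis vector, e.g.\ $\cJ=\set{(\be_1,1),(\be_1,n)}$, because \Cref{rem:unlinked in I} only constrains pairs $(\be_i,d),(\be_j,e)$ with $i\neq j$; but such sets necessarily miss a basis vector and are disposed of by your own trivial-bound step, so the repair is simply to run the case split in the paper's order (first remove unlinked $\cJ$ omitting some $\be_i$, then note that the remaining unlinked sets containing all three basis vectors must equal $\cJ_1$ or $\cJ_2$).
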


\begin{proof}
By \Cref{lem:splitting Nf as NfM}, within the proposed error we can reduce the left hand side to $\sum_{\cJ\subseteq\cI}N_{\text{f}}^\cJ(\bB;\bs,\bb,M)$.  By \Cref{lem:two medium linked}, and our assumption on $A$ being substantially larger than $C$, we reduce our sum to being over unlinked subsets
\[\sum_{\cJ\subseteq\cI}N_{\text{f}}^\cJ(\bB;\bs,\bb,M)
=\sum_{\substack{\cJ\subseteq \cI\\\cJ\textnormal{ unlinked}}}N_{\textup{f}}^{\cJ}(\bB;\bs,\bb,M)+O\braces{\frac{B_1B_2B_3}{\log(X)^{C}}}.
\]
Now consider an unlinked set $\cJ\subseteq \cI$ such that there is some $i\in\set{1,2,3}$ satisfying $(\be_i,d)\not\in\cJ$ for all $d\mid n$. By symmetry assume that $i = 1$, which implies in our original variables (see \Cref{lem:Ca to Cb}) that $|a_1|$ is at most $C'(n) \log(X)^{C(\#\cB-3)\tau(n)+A\#(\cI\backslash\cJ)}$ with $C'(n)$ a function of $n$ only. Then the trivial bound, together with the first part of \Cref{ass:A big for C and Bi similar size}, gives
\begin{align*}
N_{\textup{f}}^\cJ(\bB;\bs,\bb,M) \ll_n B_2B_3\log(X)^{C(\#\cB-3)\tau(n)+A\#(\cI\backslash\cJ)} &\leq 
\frac{B_2B_3 \log(B_1)^{\frac{C(1+(\#\cB-3)\tau(n)) + A\#(\cI\backslash\cJ)}{\epsilon}}}{\log(X)^C}
\\&\ll_{A, C, \epsilon} \frac{B_1B_2B_3}{\log(X)^C}.
\end{align*}
Hence we can reduce the main term to summing over $\cJ$ which are unlinked and contain each basis vector $\be_i$ at least once.  Now, by definition of unlinked indices (see \Cref{rem:unlinked in I}), we must then have that $\cJ=\cJ_1$ or (if $n$ is even) $\cJ=\cJ_2$.  Thus the lemma holds.
\end{proof}

\subsection{Large index sets}
We have now reduced to the case that the only `large' variables are those indexed by elements in one of the $\cJ_d$.  The remaining variables are all still at least active, but are at most a power of $\log(X)$.  Our remaining goal is to reduce as many of these active variables to be $1$ as possible.  To do this, it will be convenient to understand what the maximal unlinked sets are which contain one of these $\cJ_d$.

\begin{definition}
 A \emph{large index set} is a maximal unlinked subset $\cM\subseteq \cB\times\set{d\mid n}$ which contains one of the sets $\cJ_d\subseteq \cI$ from \Cref{lem:reduce to NfM1 and 2 after large sieve}.  In other words, it is a subset which simultaneously satisfies: 
 \begin{itemize}
 \item every pair $(\bv,d),(\bw,e)\in\cM$ is unlinked, and $\cM$ is maximal with respect to inclusion;
 \item there exists $d\in\set{1,2}$ such that $\cJ_{d}\subseteq \cM$.
 \end{itemize}
\end{definition}
\noindent Such subsets are classified by the following lemma.  In particular, there is exactly one containing each $\cJ_d$.

\begin{lemma}
 \label{lem:unlinked sets}
 ${\cM}_1:=\set{(\bv,1)~:~\bv\in\cB}$ is a large index set. Moreover, if $n$ is odd then this is the only large index set, else if $n$ is even then there is one more large index set given by
 \[{\cM}_2:=\set{(\bv,d_\bv)~:~\bv\in\cB},\]
 where $d_\bv=\begin{cases}
 1&\textnormal{if }\bv\equiv \mathbf{0} \bmod 2;\\
 2&\textnormal{else.}
 \end{cases}$
\end{lemma}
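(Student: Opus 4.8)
The plan is to translate the notion of "unlinked" into a concrete combinatorial condition on the pairs $(\bv,d)$ and then enumerate the maximal such sets containing some $\cJ_d$. First I would record the basic observation that a large index set $\cM$ is a union of fibres: for each $\bv\in\cB$ it can contain at most one pair $(\bv,d)$, because distinct pairs $(\bv,d)$, $(\bv,e)$ with the same $\bv$ satisfy $\bw\cdot f(\bv)$ evaluated at $\bw=\bv$ equal to $f(\bv)\cdot\bv=0$ (by the remark after Table~\ref{table1}), so the unlinkedness would force $d=e$ via condition (2) of the definition — more precisely, $(\bv,d)$ and $(\bv,e)$ are linked unless $d=e$ (for $d\neq e$ one of $0\bmod d$, $0\bmod e$, $d/2\bmod d$, $e/2\bmod e$ fails since $\bv\cdot f(\bv)=0$). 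Hence writing $\cM=\set{(\bv,d_\bv):\bv\in\cB}$ is forced once we know the set is large (using also that maximality forces every $\bv\in\cB$ to appear: we may always adjoin $(\bv,1)$ without creating a linked pair, since $d_\bw\cdot f(\cdot)\equiv 0$ conditions are trivially met when one of the moduli is $1$).

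Next I would check that $\cM_1=\set{(\bv,1):\bv\in\cB}$ is unlinked (immediate: every pair is unlinked by condition (2) since congruences modulo $1$ are vacuous) and contains $\cJ_1$, so it is a large index set; and it is maximal by the fibre observation above. For the even case I would verify that $\cM_2$ is unlinked by a direct check of the three types of pairs $(\bv,d_\bv),(\bw,d_\bw)$: if $d_\bv=d_\bw=1$ it is automatic; if exactly one, say $d_\bw=2$, then we need $\bw\cdot f(\bv)\equiv 0\bmod 1$ (vacuous) and $\bv\cdot f(\bw)\equiv 0\bmod 2$, which holds because $\bv\equiv\mathbf 0\bmod 2$ so every coordinate of $\bv$ is even; if $d_\bv=d_\bw=2$, so both $\bv,\bw$ are odd (not $\equiv\mathbf 0 \bmod 2$), we need $\bw\cdot f(\bv)\equiv 0$ or $\equiv 1\bmod 2$ — but that is automatic since it is an integer mod $2$ — and similarly for $\bv\cdot f(\bw)$; so condition (2) or (3) always applies. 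Then $\cJ_2\subseteq\cM_2$ since each $\be_i$ is odd, and maximality again follows from the fibre observation.

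The remaining point is uniqueness: I must show that for a fixed $d\in\set{1,2}$ there is exactly one large index set containing $\cJ_d$, and (when $n$ is odd) that $\cM_1$ is the only large index set at all. For $d=1$: given a large $\cM$ with $\cJ_1\subseteq\cM$, I claim $d_\bv=1$ for all $\bv$. Indeed, each $\bv\in\cB$ is a multiple of some $\bv_{\min}\in\cB_{\min}$, and $f(\bv)=f(\bv_{\min})$ equals one of $(0,1,-1),(-1,0,1),(1,-1,0)$; in each case $f(\bv)=\pm(\be_j-\be_k)$ for some $j\neq k$, so $\be_j\cdot f(\bv)=\pm1$ and $\bv\cdot f(\be_j)=\pm v_j - (\cdots)$ — the key is that $(\be_j,1)\in\cJ_1\subseteq\cM$ forces $(\bv,d_\bv)$ to be unlinked from $(\be_j,1)$, which since $d_\bj=1$ makes condition (2)/(3) vacuous on the $\bmod\,1$ side, so we only get a constraint $\bv\cdot f(\be_j)\equiv 0$ or $d_\bv/2\bmod d_\bv$; but we need to push $d_\bv$ itself down to $1$, which comes instead from comparing with some other $(\be_i,1)$ with $\be_i\cdot f(\bv)$ a unit mod $d_\bv$ — such an $i$ exists since $\pi_i(f(\bv))$ is a unit for some $i$ (the remark after Table~\ref{table1}), and then unlinkedness with $(\be_i,1)\in\cM$ forces $\be_i\cdot f(\bv)\equiv 0$ or $d_\bv/2 \bmod d_\bv$, i.e. $d_\bv\in\set{1,2}$, and if $d_\bv=2$ we'd need $\be_i\cdot f(\bv)$ to be a multiple of $1$ which is fine — so I actually get $d_\bv\mid 2$, and then a second comparison pins it. This is the fiddly part. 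For $d=2$: a symmetric argument shows $d_\bv=2$ whenever $\bv$ is odd and $d_\bv\in\set{1,2}$ is forced with $d_\bv=1$ when $\bv$ is even (using that $\be_i\cdot f(\bv)$ is a unit mod whatever $d_\bv$ we try, combined with $\be_i$ being odd), giving exactly $\cM_2$. Finally, when $n$ is odd, every $d\mid n$ appearing must be odd, and the same unit-coordinate argument forces all $d_\bv=1$, so $\cM_1$ is the unique large index set.

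\textbf{Main obstacle.} The genuinely delicate step is the uniqueness argument — showing $d_\bv$ is forced — because it requires combining, for a given $\bv$, the constraints coming from \emph{two} different basis pairs $(\be_i,d),(\be_j,d)\in\cJ_d$: one chosen so that $\pi_i(f(\bv))$ is a unit (which controls the modulus $d_\bv$ via condition (2)/(3) applied with modulus $d_\bv$ on one side), and the interplay with the parity conditions defining $\cM_2$. Everything else is either a vacuous-congruence observation or a direct finite check over $\cB_{\min}$, which one can simply tabulate.
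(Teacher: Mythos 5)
Your plan has the right skeleton (check $\cM_1,\cM_2$ are unlinked and contain $\cJ_1,\cJ_2$, then show any large index set is contained in one of them), but the key combinatorial steps are justified incorrectly, and in one place backwards. The ``fibre observation'' is wrong as argued: for two pairs $(\bv,d),(\bv,e)$ over the same $\bv$ the relevant products are $\bv\cdot f(\bv)=0$, and $0\equiv 0$ modulo every $d$ and $e$, so condition (2) of the definition of unlinked is \emph{satisfied}; hence $(\bv,d)$ and $(\bv,e)$ are always unlinked (and if $\bv\notin\set{\be_1,\be_2,\be_3}$ they are unlinked already by condition (1)). The ``at most one pair per $\bv$'' fact is true for large index sets, but only because of unlinkedness against the elements $(\be_i,d)\in\cJ_d\subseteq\cM$ — which is exactly the computation your shortcut tries to bypass — so neither maximality of $\cM_1,\cM_2$ nor the shape $\cM=\set{(\bv,d_\bv)}$ follows from it. Similarly, the claim that one ``may always adjoin $(\bv,1)$ without creating a linked pair'' is false: against $(\be_i,2)$, unlinkedness of $(\bv,1)$ requires $\bv\cdot f(\be_i)\equiv 0\bmod 2$ (condition (3) cannot apply since the modulus $1$ is odd), and this fails whenever $\bv\not\equiv\mathbf{0}\bmod 2$ — which is precisely why odd $\bv$ must carry $d_\bv=2$ in $\cM_2$.

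The later steps inherit these problems. Unlinkedness of $\cM_2$ in the case $d_\bv=d_\bw=2$ is not ``automatic'': conditions (2) and (3) are conjunctions, so the requirement is the matching congruence $\bw\cdot f(\bv)\equiv\bv\cdot f(\bw)\bmod 2$, not merely that each product is $0$ or $1$ modulo $2$; when one of the two is a basis vector this has to be verified (it holds for odd $\bv$ by inspecting Table \ref{table1}), and when neither is a basis vector it is condition (1), not a congruence, that saves you. Finally the uniqueness argument is incomplete and misapplies the definition: against $(\be_i,1)$ condition (3) is unavailable, so unlinkedness of $(\bv,d_\bv)$ with $(\be_i,1)$ is exactly $\pi_i(f(\bv))\equiv 0\bmod d_\bv$, and choosing $i$ with $\pi_i(f(\bv))=\pm 1$ gives $d_\bv=1$ immediately — your version only reaches $d_\bv\in\set{1,2}$ and the promised ``second comparison'' is never carried out; likewise the parity dichotomy in the $\cJ_2$ case (even $\bv$ forces $d_\bv=1$, odd $\bv$ forces $d_\bv=2$) is asserted rather than proved and needs the explicit check against all three $(\be_i,2)$ via Table \ref{table1}. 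The paper's proof does exactly these two things: it first pins down $\cM\cap\cI$ to be $\cJ_1$ or $\cJ_2$ using $\be_j\cdot f(\be_i)=\pm 1$ for $i\neq j$, and then classifies, by the explicit congruences above, which $(\bv,d)$ are unlinked with that whole set; you would need to restore both computations for the proposal to close.
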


\begin{proof}
    Let $\mc{M}$ be a large index set, and suppose that $n$ is odd. We recall that if at least one of $(\bv,d), (\bw,e)$ is in $\mc{I}$, then they are unlinked if     \begin{equation}\label{oddunlinked}
        \bw\cdot f(\bv) \equiv 0 \bmod d, \qquad \bv\cdot f(\bw) \equiv 0 \bmod e.
    \end{equation}
Suppose that $(\be_i,d_i), (\be_j,d_j) \in \mc{M}$ for $i,j$ distinct. Then
$\be_i\cdot f(\be_j) = \be_j\cdot f(\be_i) = \pm 1$, and so (\ref{oddunlinked}) implies that $d_i = d_j = 1$. Since a large index set must feature every basis vector $\be_1, \be_2, \be_3$, we have    \begin{equation}\label{mcapi}
        \mc{M} \cap \mc{I} = \{(\be_1,1),(\be_2,1),(\be_3,1)\}.
    \end{equation}
    Suppose that  $\mc{M}\backslash \mc{I}$ contains the element $(\bv,d)$. Then $(\bv,d)$ is unlinked with $(\be_i, 1)$ for all $i$, which means that $\be_i\cdot f(\bv) \equiv 0 \bmod{d}$. Since the image $f(\mc{B})$ lies in $\{0,\pm 1\}^3\backslash\{(0,0,0)\}$ it follows that $d=1$. Therefore, $\mc{M} \subseteq \mc{M}_1$. The property (\ref{oddunlinked}) is trivially satisfied for every pair of elements in $\mc{M}_1$, hence $\mc{M}_1$ is unlinked. We conclude by maximality that $\mc{M} = \mc{M}_1$ is the only large index set. 

    Now suppose that $n$ is even, and $(\be_i, d_i), (\be_j,d_j) \in \mc{M}$ with $i,j$ distinct. Then either (\ref{oddunlinked}) holds, which implies that $d_i=d_j=1$, or $d_i, d_j$ are both even and 
\begin{equation}\label{evenunlinked}
        \be_j \cdot f(\be_i) \equiv d_i/2 \bmod d_i, \qquad \be_i \cdot f(\be_j) \equiv d_j/2 \bmod d_j,
    \end{equation}
which implies that $d_i=d_j=2$. If (\ref{mcapi}) holds, we may argue as above to deduce that $\mc{M} = \mc{M}_1$. The remaining case to consider is 
\begin{equation}\label{mcapieven}
        \mc{M} \cap \mc{I} = \{(\be_1,2),(\be_2,2),(\be_3,2)\}.
    \end{equation}
    Since $f(\mc{B})\subseteq \{0,\pm 1\}^3\backslash\{(0,0,0)\}$, we see that if $(\bv,d) \in \mc{M}\backslash\mc{I}$ then $d=1$ or $d=2$. We have 
    \begin{align*}
        (\mc{M}\cap \mc{I}) \cup \{(\bv,1)\} \text{ is unlinked } &\iff f(\be_i) \cdot \bv \equiv 0 \bmod{2} \text{ for all } i \\
        &\iff v_j - v_k \equiv 0 \bmod{2} \text{ for all } j,k \\
        &\iff \bv \equiv \bm{0} \bmod{2},
    \end{align*}
    where the last equivalence uses $\bv \in \cB$. For the case $d=2$ we have 
     \begin{align*}
        (\mc{M}\cap \mc{I}) \cup \{(\bv,2)\} \text{ is unlinked } &\iff f(\bv) \cdot \be_i \equiv f(\be_i)\cdot \bv \bmod 2 \text{ for all } i \\
    &\iff f(\bv)\cdot \be_i \equiv v_j - v_k  \bmod 2  \text{ for all } i,j,k \text{ distinct}\\
        &\iff \bv \not\equiv \bm{0} \bmod 2,
    \end{align*}
    with the last equivalence following from inspection of Table \ref{table1}. We conclude that $\mc{M}_2$ is unlinked, and that $\mc{M} \subseteq \mc{M}_2$, and so by maximality we have $\mc{M}= \mc{M}_2$. 
\end{proof}

Having reduced to understanding $N_{\textup{f}}^{\cJ}$ for $\cJ=\cJ_d$ we now study those subsums.  In particular, we will show that the variables not indexed by the related large index set can be reduced to being trivial by applying the Siegel--Walfisz result. 

\begin{definition}
Let $e\mid n$ with $e\in\set{1,2}$. Then define $N_{\textup{f, main}}^{\cM_e}(\bB;\bs,\bb,M)$ to be the subsum of $N_{\textup{f}}^{\cJ_e}(\bB;\bs,\bb,M)$ where for every $(\bv,d)\not\in\cM_e$, $I_{\bv,d}$ is the trivial ideal. That is,
\begin{multline*}
N_{\textup{f, main}}^{\cM_e}(\bB;\bs,\bb,M) = \sum_{(x_{i, \bullet})_{1 \leq i \leq 3}} \sum_{\substack{(I_{\mathbf{v}, d})_{\mathbf{v} \in \mathcal{B}, d \mid n} \\ I_{\mathbf{v}, d} \subseteq \Z[\zeta_d]}}^{\sharp} g\left(\prod_{\mathbf{v} \in \mathcal{B}} \prod_{d \mid n} \Norm(I_{\mathbf{v}, d})\right) \times \\ 
\prod_{\mathbf{v} \in \mathcal{B}} \prod_{d \mid n} \left(\frac{-\prod_{m = 1}^3 x_{m, \bullet}^{\pi_m(f(\mathbf{v}))} \prod_{\mathbf{w} \in \mathcal{B}} \prod_{e \mid n} \Norm(I_{\mathbf{w}, e})^{\mathbf{w} \cdot f(\mathbf{v})}}{I_{\mathbf{v}, d}}\right)_{\Q(\zeta_d), d}.
\end{multline*}
where $\sharp$ imposes the summation conditions
\begin{gather*}
|x_{i, \bullet} \prod_{\mathbf{v} \in \mathcal{B}} \prod_{d \mid n} \Norm(I_{\mathbf{v}, d})^{\pi_i(\mathbf{v})}| \leq B_i, \quad \gcd(\{s_i x_{i, \bullet} \prod_{\mathbf{v} \in \mathcal{B}} \prod_{d \mid n} \Norm(I_{\mathbf{v}, d})^{\pi_i(\mathbf{v})} : i \in \{1, 2, 3\}\}) = 1 \\
x_{i, \bullet} \prod_{\mathbf{v} \in \mathcal{B}} \prod_{d \mid n} \Norm(I_{\mathbf{v}, d})^{\pi_i(\mathbf{v})} \ n \text{-powerfree}, \quad x_{i, \bullet} \prod_{\mathbf{v} \in \mathcal{B}} \prod_{d \mid n} \Norm(I_{\mathbf{v}, d})^{\pi_i(\mathbf{v})} \equiv b_i \bmod M \\
p \mid \Norm(I_{\mathbf{v}, d}) \Rightarrow p \not \in S(n), \quad p \mid x_{i, \bullet} \Rightarrow p \in S(n), \quad \mu^2\left(\prod_{\mathbf{v} \in \mathcal{B}} \prod_{d \mid n} \Norm(I_{\mathbf{v}, d})\right) = 1 \\
\bv \in \set{\be_1,\be_2,\be_3} \Rightarrow \Norm(I_{\bv,e}) > \log(X)^A,\quad \bv \not \in \set{\be_1,\be_2,\be_3} \Rightarrow \Norm(I_{\bv,e}) \leq \log(X)^{2C} \\
(x_{i, \bullet})_{1 \leq i \leq 3} \in \mathcal{D}(n), \quad (\bv, d) \not \in \mathcal{M}_e \Rightarrow I_{\bv, d} = (1).
\end{gather*}
\end{definition}

\begin{lemma}
\label{lem:reduction to Nfmain}
Let $d\mid n$ with $d\in\set{1,2}$, and $\cM_d$ be as in \Cref{lem:unlinked sets}. Assume \Cref{ass:A big for C and Bi similar size}. Then for all $\eta>0$ we have
\[
N_{\textup{f}}^{\cJ_d}(\bB;\bs,\bb,M)=N^{\cM_d}_{\textup{f, main}}(\mathbf{B}; \mathbf{s}, \mathbf{b}, M) + O\braces{\frac{B_1B_2B_3}{\log(X)^{\eta}}},
\]
where the implied constant is independent of $\bB$.
\end{lemma}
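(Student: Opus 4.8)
The plan is to bound the difference between $N_{\textup{f}}^{\cJ_d}(\bB;\bs,\bb,M)$ and $N^{\cM_d}_{\textup{f, main}}(\bB;\bs,\bb,M)$ by summing over those terms in which at least one index $(\bv,c)\notin\cM_d$ has $I_{\bv,c}\neq(1)$, and showing this contribution is $O(B_1B_2B_3\log(X)^{-\eta})$ for every $\eta>0$. First I would fix such a ``bad'' index $(\bv,c)\notin\cM_d$ with $\Norm(I_{\bv,c})>1$; note that in the range defining $N_{\textup{f}}^{\cJ_d}$ we automatically have $\Norm(I_{\bv,c})\leq\log(X)^{2C}$ (since $\bv\notin\set{\be_1,\be_2,\be_3}$, because the only members of $\cI$ allowed to be large lie in $\cJ_d\subseteq\cM_d$). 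So the bad variable is itself only a power of $\log X$. By \Cref{lem:unlinked sets}, $(\bv,c)\notin\cM_d$ means $(\bv,c)$ is linked to one of $(\be_1,d),(\be_2,d),(\be_3,d)$, say $(\be_i,d)$ after relabelling; since $(\be_i,d)\in\cJ_d$, the corresponding ideal $I_{\be_i,d}$ is a \emph{large} variable, with norm exceeding $\log(X)^A$ and ranging up to a quantity $T_i$ comparable to $B_i$ divided by the small variables.

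The core of the argument is then to freeze every variable except $I_{\be_i,d}$ and extract oscillation from the $I_{\be_i,d}$-sum. After freezing, the summand is (up to a bounded factor $g(\cdot)$ of the frozen variables and a unimodular constant) of the form $\mu^2(\Norm(I_{\be_i,d})\cdot\Delta)\,g(\Norm(I_{\be_i,d}))\,\chi(I_{\be_i,d})$, where $\chi$ collects all the power residue symbols $\art{\cdot}{I_{\be_i,d}}_{\Q(\zeta_d),d}$ and $\art{\Norm(I_{\be_i,d})}{\cdot}_{\Q(\zeta_c),c}^{\bv\cdot f(\be_i)}$ attached to the frozen ideals; here $\Delta$ absorbs the product of the other frozen norms and is bounded by a fixed power of $\log X$, so $\Delta\leq X^{C'}$ say. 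Because $(\bv,c)$ is linked to $(\be_i,d)$ and $I_{\bv,c}\neq(1)$, \Cref{lem:LinkedVariablesOscillate} (its ``moreover'' clause) guarantees that $\chi$ is a non-principal Hecke character whose conductor is divisible by a prime $p\nmid d$ (namely a prime of multiplicity one dividing $\Norm(I_{\bv,c})$, which exists after possibly further splitting the bad ideal — I would reduce to the case $\Norm(I_{\bv,c})$ prime by multiplicativity, or invoke that some such prime divides the conductor). Moreover the conductor of $\chi$ and $\omega$ of its modulus are bounded by a power of $\log X$ since all frozen norms are. Hence \Cref{lem:Siegel-Walfisz} applies with a constant $C$ as large as we wish, giving
\[
\sum_{\substack{I_{\be_i,d}\subseteq\Z[\zeta_d]\\ \Norm(I_{\be_i,d})\leq T_i}}\mu^2(\Norm(I_{\be_i,d})\Delta)\,g(\Norm(I_{\be_i,d}))\,\chi(I_{\be_i,d})\ll T_i(\log X)^{-C_0}
\]
for any $C_0$, provided $T_i\geq(\log X)^{A}$, which holds since $(\be_i,d)\in\cJ_d$; the congruence condition $\Norm(I_{\be_i,d})\equiv c_i\bmod M$ and the constraint $\Norm(I_{\be_i,d})>\log(X)^A$ are incorporated by splitting into finitely many residue classes and by Perron/partial summation in the usual way.

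Finally I would sum the resulting bound $T_i(\log X)^{-C_0}$ over all the frozen variables. The outer sum over the remaining ideals and the $x_{m,\bullet}$ is handled exactly as in the proof of \Cref{lem:two medium linked}: trivially bounding the power residue symbols by $1$, controlling the number of divisors of each norm by $(n\tau(n))^{\omega(\cdot)}$ via \Cref{lem:PowerOmegaBound}, and using $T_i\prod\Norm(I_{\mathbf{u},c})^{\pi_i(\mathbf{u})}\leq B_i$ together with partial summation so that the sum over the frozen norms costs only a further power of $\log X$. Choosing $C_0$ large enough relative to $\eta$, $\#\cB$, $\tau(n)$ and $n$ absorbs this loss, and summing over the (finitely many) choices of bad index $(\bv,c)$ and of $(\be_i,d)$ to which it is linked yields the claimed error term $O(B_1B_2B_3\log(X)^{-\eta})$. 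The main obstacle is bookkeeping: one must check that after freezing, the character $\chi$ genuinely satisfies the conductor-divisibility and conductor-size hypotheses of \Cref{lem:Siegel-Walfisz} uniformly over the frozen data (in particular that the bad ideal $I_{\bv,c}$ contributes a prime of multiplicity one to the conductor, which is where squarefreeness of the $x_\bv$ and the ``moreover'' in \Cref{lem:LinkedVariablesOscillate} are essential), and that the congruence-to-$c_i$ and lower-cutoff constraints on the large variable do not destroy the Siegel--Walfisz cancellation.
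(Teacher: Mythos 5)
Your overall strategy is the paper's: fix an index $(\bw,e)\notin\cM_d$ with $I_{\bw,e}\neq(1)$ (necessarily of norm at most a power of $\log X$), pair it with a basis index $(\be_i,d)\in\cJ_d$ to which it is linked, freeze all other variables, observe via the ``moreover'' clause of \Cref{lem:LinkedVariablesOscillate} (equivalently \Cref{lem:power_residue_symbols_oscillate}, using squarefreeness of the norms and coprimality to $S(n)$) that the resulting character in the large variable is non-principal with a prime $p\nmid d$ dividing its conductor, apply \Cref{lem:Siegel-Walfisz} with a large exponent, and sum trivially over the frozen data, using \Cref{ass:A big for C and Bi similar size} to pass from $\log T_i$ to $\log X$. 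Two of your justifications are off, one of them materially. The minor one: the bad index need not satisfy $\bv\notin\{\be_1,\be_2,\be_3\}$ — it can be $(\be_j,c)$ with $c\neq d$, which lies in $\cI\setminus\cJ_d$ and is only constrained by $\Norm(I_{\be_j,c})\leq\log(X)^{A}$, not $\log(X)^{2C}$; this changes nothing since a power of $\log X$ is all you use.

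The material issue is your verification of the hypotheses of \Cref{lem:Siegel-Walfisz}: you claim the conductor and $\omega$ of the modulus of $\chi$ are $\leq(\log X)^{O(1)}$ ``since all frozen norms are''. That premise is false: the other two indices of $\cJ_d$ are also frozen, and their norms range up to $B_j$. Since you (correctly) build $\chi$ out of \emph{all} residue symbols depending on $I_{\be_i,d}$, for $d=2$ (so $n$ even) the symbols pairing the active variable $z_i$ with the frozen large variables $z_j$ appear, namely $\bigl(\tfrac{z_j}{z_i}\bigr)^{\be_j\cdot f(\be_i)}\bigl(\tfrac{z_i}{z_j}\bigr)^{\be_i\cdot f(\be_j)}$, and a priori these could force the conductor of $\chi$ to be divisible by primes of size up to $B_j$, violating the hypothesis $\Norm(f_\chi)\,\omega(\Norm(\fm))\leq(\log B)^C$. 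The repair is exactly what ``unlinked'' encodes for $\cJ_2$: here the exponents are $\pm1$ of opposite parity pattern, so by quadratic reciprocity this product is a character in $z_i$ of conductor dividing $8$ (the $z_i,z_j$ are odd, as $2\in S(n)$ when $n$ is even), so the large frozen variables contribute nothing beyond a bounded modulus; for $d=1$ the corresponding symbols are trivial and there is no issue. Without this observation (which is also implicit in the paper's write-up, whose displayed inner sum keeps only the factors involving the bad pair), your application of \Cref{lem:Siegel-Walfisz} in the case $n$ even, $d=2$ is not justified. With it, and with $\Delta\leq X^{O(1)}$ (which, as you note, is all \Cref{lem:Siegel-Walfisz} requires — the frozen large norms prevent $\Delta$ from being $\log$-sized, but that is harmless), the rest of your argument goes through as in the paper.
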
  

\begin{proof}
Let $d$ be as in the lemma statement. Let $(\bw,e)\not\in\cM_d$ be an index outside of our large set, and consider the subsum where $I_{(\bw,e)}\neq (1)$. Let $\bv\in\set{\be_1,\be_2,\be_3}$ be such that $\bv\cdot f(\bw)\neq 0$ (and so $\bw\cdot f(\bv)\neq 0$), and by symmetry assume that $\bv=\be_1$. We will show that this subsum is negligible. Indeed, by the triangle inequality and our assumptions on $A$ and $C$ it is sufficient to bound the following sum
\begin{multline*}
\sum_{(x_{i, \bullet})_{1 \leq i \leq 3}} 
\sum_{\substack{(I_{\mathbf{u}, c})_{\mathbf{u} \in \mathcal{B}, c \mid n, (\mathbf{u}, c) \neq (\mathbf{v}, d), (\mathbf{u}, c) \neq (\mathbf{w}, e)} \\ I_{\mathbf{u}, c} \subseteq \Z[\zeta_c]}} \\
\sum_{\substack{I_{\mathbf{w}, e} \subseteq \Z[\zeta_e] \\ 1<\Norm(I_{\mathbf{w}, e}) \leq (\log X)^A \\ \Norm(I_{\mathbf{w}, e}) \equiv c_2 \bmod M}}
\Bigg|
\sum_{\substack{I_{\mathbf{v}, d} \subseteq \ZZ \\ (\log X)^A < \Norm(I_{\mathbf{v}, d}) \leq T_1 \\ \Norm(I_{\mathbf{v}, d}) \equiv c_1 \bmod M\\\gcd(\Norm(I_{\bv,d}), \prod \Norm(I_{\bu,c}))=1}}
g(I_{\mathbf{v}, d})
\left(\frac{\Norm(I_{\mathbf{w}, e})^{\mathbf{w} \cdot f(\mathbf{v})}}{I_{\mathbf{v}, d}}\right)
\left(\frac{\Norm(I_{\mathbf{v}, d})^{\mathbf{v} \cdot f(\mathbf{w})}}{I_{\mathbf{w}, e}}\right)_{\Q(\zeta_e), e}
\Bigg|,
\end{multline*}
where we used that $d \in \{1, 2\}$ (so $\Q(\zeta_d) = \Q$) and where
\begin{align*}
&T_1 := \frac{B_1}{\left|x_{1, \bullet} \prod\limits_{\substack{\mathbf{u} \in \mathcal{B}, c \mid n \\ (\mathbf{u}, c) \neq (\mathbf{v}, d)}} \Norm(I_{\mathbf{u}, c})^{\pi_1(\mathbf{u})}\right|}.
\end{align*}
Here $c_1, c_2$ range over all residue classes modulo $M$ satisfying the conditions
\begin{align*}
&c_1 x_{1, \bullet} \prod_{\substack{\mathbf{u} \in \mathcal{B}, c \mid n \\ (\mathbf{u}, c) \neq (\mathbf{v}, d)}} \Norm(I_{\mathbf{u}, c})^{\pi_1(\mathbf{u})} \equiv b_1 \bmod M, & c_2 x_{2, \bullet} \prod\limits_{\substack{\mathbf{u} \in \mathcal{B}, c \mid n \\ (\mathbf{u}, c) \neq (\mathbf{w}, e)}} \Norm(I_{\mathbf{u}, c})^{\pi_2(\mathbf{u})} \equiv b_2 \bmod M.
\end{align*}
Note that by our summation conditions and assumptions on $A,C$, we have
\begin{equation}
\label{eq:T1BdForSWArg}
\frac{B_1}{\log(X)^{\beta A}}\ll T_1\ll B_1
\end{equation}
for some $\beta>0$ depending only on $n$. 

We will now bound the inner sum. By \Cref{lem:power_residue_symbols_oscillate}, since $\bv\cdot f(\bw)\neq 0$ and $(d,e)\neq (1,1),(2,2)$ the map $\chi_{I_{\bw,e}}:I_\QQ\to \CC$ given by 
\[
I_{\bv,d} \mapsto
\left(\frac{\Norm(I_{\mathbf{w}, e})^{\mathbf{w} \cdot f(\mathbf{v})}}{I_{\mathbf{v}, d}}\right)
\left(\frac{\Norm(I_{\mathbf{v}, d})^{\mathbf{v} \cdot f(\mathbf{w})}}{I_{\mathbf{w}, e}}\right)_{\Q(\zeta_e), e},
\]
is a Hecke character whose conductor $\mathfrak{q}$ has norm at most $\braces{e\Norm(I_{\bw,e})}^{O_{e}(1)}$ and whose modulus $\mathfrak{m}$ is supported on the primes dividing $de \Norm(I_{\bw,e})$. In particular, for suitably large $\eta>0$ (in terms of $n$)
\[
\Norm(\mathfrak{q})\omega(\Norm(\mathfrak{m}))\leq \log(X)^{\eta A}.
\]
Moreover, by loc. cit., since $I_{\bw,e}\neq (1)$ and $\Norm(I_{\bw,e})$ is not divisible by $p\mid n$, the conductor of $\chi_{I_{\bw,e}}$ is divisible by at least one prime coprime to $de$. Hence by \Cref{lem:Siegel-Walfisz}, uniformly for $I_{\bw,e}\neq(1)$ with $\Norm(I_{\bw,e})\leq \log(X)^A$, the inner sum is $\ll T_1\log(T_1)^{-\eta A}$. Note now that by \Cref{eq:T1BdForSWArg} and \Cref{ass:A big for C and Bi similar size}, for all $\gamma > 0$
\[
T_1 \gg \frac{B_1}{\log(X)^{A\beta}} \geq \frac{B_1}{\log(B_1)^{A\beta/\epsilon}} \gg_{\epsilon,n,A} B_1^{1/2}.
\]
In particular, $\log(T_1)^{-\eta A}\ll \log(B_1)^{-\eta A}\ll \log(X)^{-\eta A\epsilon}$, and so the inner sum is bounded by $\ll T_1\log(T_1)^{-\eta A}\ll B_1\log(X)^{-\eta}$. Repeating this process for all $(\bw,e)$ finishes the proof.
\end{proof}

\subsection{The main term}
To conclude, we have isolated the main term of $N_{\textup{f}}(\bB;\bs,\bb,M)$ in terms of the newly introduced sums $N_{\textup{f, main}}^{\cM_d}(\bB;\bs,\bb,M)$. In fact, since $d\in\set{1,2}$ and the ideals in $\ZZ[\zeta_d]=\ZZ$ correspond to positive integers, we can simplify these to 
\begin{multline*}
N_{\textup{f, main}}^{\cM_d}(\bB;\bs,\bb,M) 
= \sum_{(x_{i, \bullet})_{1 \leq i \leq 3}} 
\sum_{\substack{(z_\bv)_{\bv \in \cB} \\ z_\bv\in\ZZ_{>0}}}^{\sharp\sharp}
g\left(\prod_{\mathbf{v} \in \mathcal{B}} z_{\bv}\right) \times \\
\braces{\prod_{\bv\in\cB}\braces{\frac{-\prod_{m=1}^3x_{m,\bullet}^{\pi_m(f(\bv))}}{z_{\bv}}}_{e_{\bv}}}\braces{\prod_{\bv,\bw\in\cB}\braces{\frac{z_{\bw}}{z_{\bv}}}^{\bw\cdot f(\bv)}_{e_{\bv}}},
\end{multline*}
where the summation conditions in $\sharp\sharp$ are:
\begin{gather*}
\abs{x_{i, \bullet} \prod_{\bv\in\cB} z_{\bv}^{\pi_i(\mathbf{v})}} \leq B_i, 
\quad \gcd(\{s_i x_{i, \bullet} \prod_{\mathbf{v} \in \mathcal{B}} z_{\bv}^{\pi_i(\bv)} : i \in \{1, 2, 3\}\}) = 1
\\ x_{i, \bullet} \prod_{\bv\in\cB} z_\bv^{\pi_i(\mathbf{v})} \ n \text{-powerfree}, 
\quad x_{i, \bullet} \prod_{\mathbf{v} \in \mathcal{B}} z_{\bv}^{\pi_i(\mathbf{v})} \equiv b_i \bmod M
\\ p \mid z_{\bv} \Rightarrow p \not \in S(n), 
\quad p \mid x_{i, \bullet} \Rightarrow p \in S(n), 
\quad \mu^2\left(\prod_{\bv\in\cB} z_{\bv}\right) = 1,
\quad (x_{i, \bullet})_{1 \leq i \leq 3} \in \mathcal{D}(n)
\\ \bv\in\set{\be_1,\be_2,\be_3}\Rightarrow z_{\bv}>\log(X)^A,
\quad\bv\not\in\set{\be_1,\be_2,\be_3}\Rightarrow z_{\bv}\leq\log(X)^{2C}
\end{gather*}
and where 
$$
e_{\bv} = \begin{cases} 1 &\text{if } d = 1 \\ d_{\bv} &\text{if } d = 2. \end{cases}
$$
We record the conclusion of our work with character sums below.

\begin{proposition}
\label{prop:main term isolated}
Under \Cref{ass:A big for C and Bi similar size}, we have for every $C > 0$
\[
N_{\textup{f}}(\bB;\bs,\bb,M) = \sum_{\substack{d\mid n\\d\in\set{1,2}}} N_{\textup{f, main}}^{\cM_d}(\bB;\bs,\bb,M) + O_C\braces{\frac{B_1B_2B_3}{\log(X)^{C}}},
\]
where $N_{\textup{f, main}}^{\cM_d}$ is as above.
\end{proposition}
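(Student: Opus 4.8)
The plan is to assemble the proposition directly from the two reduction lemmas proved above, together with the notational simplification of $N_{\textup{f, main}}^{\cM_d}$ recorded just before the statement. There is no new analytic content here: the oscillation estimates have already been spent inside \Cref{lem:reduce to NfM1 and 2 after large sieve} (through the large sieve, \Cref{prop:large_sieve}) and \Cref{lem:reduction to Nfmain} (through the Siegel--Walfisz bound, \Cref{lem:Siegel-Walfisz}), so the remaining step is purely organisational.

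First I would apply \Cref{lem:reduce to NfM1 and 2 after large sieve}, which under \Cref{ass:A big for C and Bi similar size} gives
\[
N_{\textup{f}}(\bB;\bs,\bb,M) = \sum_{\substack{d \mid n \\ d \in \{1,2\}}} N_{\textup{f}}^{\cJ_d}(\bB;\bs,\bb,M) + O\!\left(\frac{B_1B_2B_3}{\log(X)^{C}}\right),
\]
reducing matters to each $N_{\textup{f}}^{\cJ_d}$ separately. For each of the (at most two) values $d$ occurring in this sum I would then invoke \Cref{lem:reduction to Nfmain} with the free parameter $\eta$ set equal to $C$ --- legitimate since that lemma holds for every $\eta>0$ --- replacing $N_{\textup{f}}^{\cJ_d}(\bB;\bs,\bb,M)$ by $N_{\textup{f, main}}^{\cM_d}(\bB;\bs,\bb,M)$ at the cost of an error $O_C(B_1B_2B_3/\log(X)^{C})$. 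At most three error terms of this shape arise in total (one from \Cref{lem:reduce to NfM1 and 2 after large sieve} and one for each $d$), so they collapse into a single $O_C(B_1B_2B_3/\log(X)^{C})$, which gives the asserted identity with $N_{\textup{f, main}}^{\cM_d}$ taken in the ideal-indexed form of \Cref{lem:reduction to Nfmain}.

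It then remains only to check that this ideal-indexed form agrees with the simplified expression displayed immediately above --- the identification already carried out in the surrounding text. Since $d\in\{1,2\}$ forces $\ZZ[\zeta_d]=\ZZ$, the nonzero integral ideals $I_{\bv,d}$ correspond bijectively to positive integers $z_\bv:=\Norm(I_{\bv,d})$, the symbols $\bigl(\tfrac{\cdot}{I_{\bv,d}}\bigr)_{\Q(\zeta_d),d}$ become $\bigl(\tfrac{\cdot}{z_\bv}\bigr)_{e_\bv}$ with $e_\bv$ read off from $\cM_1$ or $\cM_2$ (\Cref{lem:unlinked sets}), and the summation conditions $\sharp$ translate verbatim into $\sharp\sharp$ once one imposes $z_\bv=1$ whenever $(\bv,d)\notin\cM_d$. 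I anticipate no real obstacle: the only point requiring attention is the consistency of hypotheses and parameters --- that \Cref{ass:A big for C and Bi similar size} is precisely what both lemmas demand, and that one and the same constant $C$ can serve both as the error exponent in \Cref{lem:reduce to NfM1 and 2 after large sieve} and as $\eta$ in \Cref{lem:reduction to Nfmain}.
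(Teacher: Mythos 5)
Your proposal is correct and matches the paper's own proof, which simply cites \Cref{lem:reduce to NfM1 and 2 after large sieve} and \Cref{lem:reduction to Nfmain} and combines the error terms exactly as you describe (with $\eta=C$). The additional verification that the ideal-indexed form agrees with the simplified $z_\bv$-expression is also handled in the paper, though in the surrounding text rather than inside the proof of the proposition itself.
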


\begin{proof}
Immediate consequence of \Cref{lem:reduce to NfM1 and 2 after large sieve} and \Cref{lem:reduction to Nfmain}.
\end{proof}

The section now concludes be reverting the change of variables of \Cref{lem:initialCOV} to present our work so far in terms of the simpler variables $\ba$, showing that we have now accounted for the everywhere-locally soluble constraint.

\begin{theorem}
\label{thm:end of character sums}
Under \Cref{ass:A big for C and Bi similar size}, for every $C>0$,
\[
N_{\textup{f}}(\mathbf{B}; \mathbf{s}, \mathbf{b}, M) = (1+\mathbf{1}_{\textup{even}}(n))\sum_{\ss{\ba \in \ZZ^3\\ |a_i|\leq B_i \forall i}}^{\dag} g\l(\prod_{i=1}^3 a_i\r)+
O_C\braces{\frac{B_1B_2B_3}{\log(X)^{C}}},
\]
where $\dag$ imposes the summation conditions
\begin{align}
    &\gcd(\{s_ia_i: i \in \{1,2,3\}\}) = 1,\label{gcd cond}\\
    &\ba \equiv \bb \bmod{M},\label{cong cond on a}\\
    &\ba\ n\textnormal{-powerfree and }(v_p(a_i))_{i=1}^3\in  \cB\cup\set{(0,0,0)},\label{factorisation shape cond}\\
    &\ba \in \mc{D}(n) \subseteq \RR^3,\label{Dn cond}
\end{align}
and $g$ is the multiplicative function defined in \eqref{eq:mult function g}, which is given on prime powers $p^r$ by 
    \begin{equation*}
g(p^r) = 
\begin{cases}
1 & \textup{if } p \in S(n) \\
\max(\{k \mid n : p \equiv 1 \bmod k\})^{-1} & \textup{if } p \not \in S(n),
\end{cases}
\end{equation*}
and satisfies $g(-1)=1$.
\end{theorem}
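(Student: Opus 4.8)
The plan is to combine the two main-term contributions $N_{\textup{f, main}}^{\cM_1}$ and (when $n$ is even) $N_{\textup{f, main}}^{\cM_2}$ from \Cref{prop:main term isolated}, and in each case to revert the change of variables of \Cref{lem:initialCOV} to re-express everything in terms of the triple $\ba$. The key observation will be that, once we are summing over the large index set $\cM_d$, \emph{all} of the power residue symbols appearing in $N_{\textup{f, main}}^{\cM_d}$ become trivial, so that the character sum collapses to a weighted count with weight $g(\prod_i a_i)$ subject to congruence, sign, coprimality, and $n$-powerfree constraints.

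First I would treat $\cM_1$. Here every ideal $I_{\bv,d}$ with $d\neq 1$ is forced to be trivial by the summation condition $(\bv,d)\notin\cM_1\Rightarrow I_{\bv,d}=(1)$, so only the $d=1$ symbols $(\tfrac{\cdot}{\cdot})_1$ survive; these are identically $1$. Reverting \Cref{lem:initialCOV} (recording $a_i = x_{i,\bullet}\prod_{\bv}z_\bv^{\pi_i(\bv)}$), the conditions $\sharp\sharp$ translate exactly into: $\gcd(\{s_ia_i\})=1$, $\ba\equiv\bb\bmod M$, $\ba$ $n$-powerfree with valuation-vector shape in $\cB\cup\{(0,0,0)\}$, and $\ba\in\cD(n)$, i.e. conditions \eqref{gcd cond}--\eqref{Dn cond}. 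One subtlety to address is that $\sharp\sharp$ additionally imposes size thresholds $z_\bv>\log(X)^A$ for $\bv\in\{\be_1,\be_2,\be_3\}$ and $z_\bv\leq\log(X)^{2C}$ otherwise; I would argue that removing these thresholds changes the sum by $O(B_1B_2B_3/\log(X)^C)$ by the trivial bound together with \Cref{lLargeSquare}, \Cref{lLargeGCD} (as in the proof of \Cref{lem:splitting Nf as NfM}) and, for the lower thresholds, by noting that the contribution where some $|a_i|$ is at most a fixed power of $\log X$ is negligible under the first part of \Cref{ass:A big for C and Bi similar size}, exactly as in \Cref{lem:reduce to NfM1 and 2 after large sieve}. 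After removing the thresholds the $\cM_1$-sum is precisely $\sum^{\dag}_{\ba} g(\prod_i a_i)$.

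Next, for $n$ even, I would show the $\cM_2$-sum equals the \emph{same} expression $\sum^{\dag}_{\ba}g(\prod_i a_i)$. The point is that for $(\bv,d)\in\cM_2$ the exponent $d=d_\bv\in\{1,2\}$, and by the computations in the proof of \Cref{lem:unlinked sets} (specifically the congruences $\bw\cdot f(\bv)\equiv \tfrac{d_\bv}{2}\bmod d_\bv$ when $d_\bv=2$, resp. $\equiv 0$ when $d_\bv=1$) every quadratic symbol $(\tfrac{z_\bw}{z_\bv})_2^{\bw\cdot f(\bv)}$ occurring is a square, hence trivial, and likewise $(\tfrac{-\prod_m x_{m,\bullet}^{\pi_m(f(\bv))}}{z_\bv})_{e_\bv}$ is trivial using $f(\bv)\cdot\Delta=0$ and the parity bookkeeping; alternatively one invokes quadratic reciprocity to cancel the cross terms pairwise. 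Thus the $\cM_2$-sum also collapses to a $g$-weighted count, and the threshold-removal argument of the previous paragraph applies verbatim, giving $\sum^{\dag}_\ba g(\prod_i a_i)$ again. Summing the two contributions yields the factor $1+\mathbf{1}_{\textup{even}}(n)$.

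The main obstacle is the careful verification that all power residue symbols genuinely trivialise on each large index set — in particular handling the mixed term $(\tfrac{-\prod_m x_{m,\bullet}^{\pi_m(f(\bv))}}{z_\bv})_{e_\bv}$, which mixes the $x_{i,\bullet}$ (supported on $S(n)$) with the $z_\bv$ (supported outside $S(n)$), and confirming that the parity/divisibility constraints recorded in Table~\ref{table1} and in \Cref{lem:unlinked sets} are exactly what is needed. Everything else is bookkeeping: translating $\sharp\sharp$ back through \Cref{lem:initialCOV} and absorbing the auxiliary size windows into the error term via the elementary bounds of \Cref{lLargeSquare,lLargeGCD} and \Cref{ass:A big for C and Bi similar size}.
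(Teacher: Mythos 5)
Your overall route matches the paper's: apply \Cref{prop:main term isolated}, revert the change of variables of \Cref{lem:initialCOV}, and absorb the auxiliary size windows into the error via \Cref{lLargeSquare,lLargeGCD} and \Cref{ass:A big for C and Bi similar size}. The $\cM_1$ case and the threshold removal are fine. The gap is in your treatment of the $\cM_2$-sum for even $n$: the individual symbols there do \emph{not} trivialise. By the definition of unlinked (condition (3)), a pair with $d_\bv=d_\bw=2$ has $\bw\cdot f(\bv)\equiv d_\bv/2=1\bmod 2$, i.e.\ the exponent is \emph{odd}, so $\left(\frac{z_\bw}{z_\bv}\right)_2^{\bw\cdot f(\bv)}=\left(\frac{z_\bw}{z_\bv}\right)_2$ is a genuine Legendre symbol, not a square; this already happens for the three large variables $z_{\be_i}$, which range over essentially all of $[1,B_i]$. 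Your fallback via quadratic reciprocity also fails as stated: pairing gives $\left(\frac{z_\bw}{z_\bv}\right)_2\left(\frac{z_\bv}{z_\bw}\right)_2=(-1)^{\frac{z_\bv-1}{2}\cdot\frac{z_\bw-1}{2}}$, which is $-1$ whenever both arguments are $3\bmod 4$, and the diagonal factors $\left(\frac{-\prod_m x_{m,\bullet}^{\pi_m(f(\bv))}}{z_\bv}\right)_2$, which involve $-1$ and the $x_{m,\bullet}$, are not addressed by any pairwise cancellation.

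What actually makes the product equal to $1$ is a global argument using the local data at the bad places. The paper identifies the entire product of symbols, via \Cref{lem:indicator function} applied with exponent $2$, with $\prod_{p\notin S(n)}\left(-\frac{a_2}{a_1},-\frac{a_3}{a_1}\right)_p$ where $a_i=x_{i,\bullet}\prod_{\bw}z_\bw^{\pi_i(\bw)}$; admissibility of $\bb$ forces the Hilbert symbols at $p\in S(n)$ to equal $1$, the condition $(x_{i,\bullet})_i\in\mathcal{D}(n)$ forces the symbol at $\infty$ to equal $1$, and Hilbert reciprocity then yields $\prod_{p\notin S(n)}\left(-\frac{a_2}{a_1},-\frac{a_3}{a_1}\right)_p=1$. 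So the triviality is not a formal consequence of Table \ref{table1}; it genuinely requires the congruence condition $\ba\equiv\bb\bmod M$ and the real condition $\ba\in\mathcal{D}(n)$, which is precisely why these conditions must appear in $\dag$. You need to replace your ``each symbol is a square'' step with this reciprocity argument.
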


\begin{proof}
    We make choices of $A,C$ according to \Cref{ass:A big for C and Bi similar size}, and then apply \Cref{prop:main term isolated}.  This reduces us to studying the sums $N_{\text{f, main}}^{\cM_d}$.  We begin by showing that $N_{\text{f, main}}^{\cM_1}(\bB;\bs,\bb,M)=N_{\text{f, main}}^{\cM_2}(\bB;\bs,\bb,M)$, which will explain the factor of $1+\mathbf{1}_{\text{even}}(n)$.  To see this, recall that
\begin{multline*}
N_{\textup{f, main}}^{\cM_2}(\bB;\bs,\bb,M) 
= \sum_{(x_{i, \bullet})_{1 \leq i \leq 3}} 
\sum_{\substack{(z_\bv)_{\bv \in \cB} \\ z_\bv\in\ZZ_{>0}}}^{\sharp\sharp}
g\left(\prod_{\mathbf{v} \in \mathcal{B}} z_{\bv}\right) \times \\
\braces{\prod_{\bv\in\cB}\braces{\frac{-\prod_{m=1}^3x_{m,\bullet}^{\pi_m(f(\bv))}}{z_{\bv}}}_{d_{\bv}}}\braces{\prod_{\bv,\bw\in\cB}\braces{\frac{z_{\bw}}{z_{\bv}}}^{\bw\cdot f(\bv)}_{d_{\bv}}}.
\end{multline*}
We shall show that the product of the Legendre symbols is constantly equal to $1$. To this end, observe that
\begin{align*}
\braces{\prod_{\bv\in\cB}\braces{\frac{-\prod_{m=1}^3x_{m,\bullet}^{\pi_m(f(\bv))}}{z_{\bv}}}_{d_{\bv}}}\braces{\prod_{\bv,\bw\in\cB}\braces{\frac{z_{\bw}}{z_{\bv}}}^{\bw\cdot f(\bv)}_{d_{\bv}}} &= \prod_{\bv \in \cB} \left(\frac{-\prod_{i = 1}^3 a_i^{\pi_i(f(\bv))}}{z_{\bv}}\right)_{d_{\bv}} \\
&= \prod_{p \not \in S(n)} \left(-\frac{a_2}{a_1}, -\frac{a_3}{a_1}\right)_p,
\end{align*}
where we have set $a_i = x_{i, \bullet} \prod_{\bw\in\cB} z_{\bw}^{\pi_i(\bw)}$ and where we used Lemma \ref{lem:indicator function} (with the $n$ of that lemma equal to $2$) in the last equality. Note that admissibility of $\bb$ ensures that $\left(-\frac{a_2}{a_1}, -\frac{a_3}{a_1}\right)_p=1$ for all $p\in S(n)$, and since $(a_i)_{1 \leq i \leq 3} \in \mathcal{D}(n)$ we obtain $\left(-\frac{a_2}{a_1}, -\frac{a_3}{a_1}\right)_\infty=1$.  Hence the claim follows from Hilbert reciprocity.

Now we simply revert the change of variables of \Cref{lem:initialCOV} to show
\[
N_{\textup{f, main}}^{\cM_1}(\bB;\bs,\bb,M) 
= \sum_{(x_{i, \bullet})_{1 \leq i \leq 3}} 
\sum_{\substack{(z_\bv)_{\bv \in \cB} \\ z_\bv\in\ZZ_{>0}}}^{\sharp\sharp}
g\left(\prod_{\mathbf{v} \in \mathcal{B}} z_{\bv}\right).
\] 
Note that $g(m)=g(\prod_{p\mid m}p)$, and the conditions of $\sharp\sharp$ convert directly to those in $\dag$ to almost obtain the claim, but with additional bounds on squarefull parts of $a_i$ and pairwise $\gcd$s.  As in \Cref{lem:splitting Nf as NfM}, we apply the elementary estimates from the beginning of the section to discard these constraints and obtain the claim.
\end{proof}
\section{Leading term analysis}
\label{sLeading}
Via \Cref{thm:end of character sums}, we have reduced the problem of studying $N_{\textup{f}}(\mathbf{B}; \mathbf{s}, \mathbf{b}, M)$ to understanding the summation
\[
\sum_{\ss{\ba \in \ZZ^3\\ |a_i|\leq B_i \forall i}}^{\dag} g\l(\prod_{i=1}^3 a_i\r),
\]
where the conditions of $\dag$ and the multiplicative function $g$ are presented in \Cref{thm:end of character sums}. In this section we will obtain the asymptotic leading term for this sum, and so obtain Theorems \ref{tMain} and \ref{thm:leadingconstant}.

\subsection{Evaluating the first main sum}
Let $M_i = M/\gcd(b_i, M), b_i' = b_i/\gcd(b_i, M)$ and $a_i' = a_i/\gcd(b_i,M)$. The condition $a_i\equiv b_i \bmod{M}$ is equivalent to $a_i'\in \ZZ$ and $a_i' \equiv b_i' \bmod{M_i}$. The change of variables from $a_i$ to $a_i'$ alters the height condition to $|a_i'| \leq B_i/\gcd(b_i,M)$ for all $i$ and (\ref{gcd cond}) to 
\begin{equation}
\gcd(\{s_i\gcd(b_i,M)a_i': i \in \{1,2,3\}\}) = 1.\label{new gcd cond}
\end{equation}
In view of (\ref{new gcd cond}), we define the pair $(\mathbf{s}, \mathbf{b})$ to be admissible if $\mathbf{b}$ is admissible and 
$$
\gcd(\{s_i\gcd(b_i,M): i \in \{1,2,3\}\}) = 1. 
$$
Henceforth we assume that $(\mathbf{s}, \mathbf{b})$ is admissible, since otherwise $N_{\textup{f}, \text{main}}^{\cM_1}(\mathbf{B}; \mathbf{s}, \mathbf{b}, M) =0$.  

We detect the condition $a_i' \equiv b_i' \bmod{M_i}$ with Dirichlet characters $\chi_i$ modulo $M_i$. We define
\begin{equation*}
\bm{\chi}(\ba) := \prod_{i=1}^3 \chi_i(a_i). 
\end{equation*}
Define a function $h = h_{\bm{\chi}}:\N^3 \rightarrow \Z$ by
\begin{equation}
    h(\ba) =\bm{\chi}(\ba)g\left(\prod_{i=1}^3 a_i\right)\mathbf{1}_{\{(\ref{new gcd cond}), (\ref{factorisation shape cond}) \textrm{ hold}\}}(\ba), 
\end{equation}
where $\mathbf{1}$ denotes the usual indicator function. Note that $h$ is multiplicative in the sense that if two triples $(a_i)_{1 \leq i \leq 3}, (\widetilde{a_i})_{1\leq i \leq 3}$ are pairwise coprime, then 
$$
h((a_i\widetilde{a_i})_{1\leq i \leq 3}) = h((a_i)_{1 \leq i \leq 3})h((\widetilde{a_i})_{1 \leq i \leq 3}). 
$$
Let $\bm{\eps} \in \{\pm 1\}^3$ be such that $\epsilon_i = \op{sgn}(a_i)$.  Replacing $a_i'$ with $\epsilon_ia_i'$, we obtain 
\begin{equation}
\label{nicer form for NfMain}    
N_{\textup{f}, \text{main}}^{\cM_1}(\mathbf{B}; \mathbf{s}, \mathbf{b}, M) =  \sum_{\ss{\chi_i \bmod{M_i} \\ 1 \leq i \leq 3}} \prod_{i = 1}^3 \frac{\overline{\chi_i(b_i')}}{\phi(M_i)} \sum_{\bm{\eps} \in \{\pm 1\}^3 \cap \mc{D}(n)} \bm{\chi}(\bm{\eps}) \sum_{\ss{\ba' \in \N^3\\ a_i'\leq B_i/\gcd(b_i,M) \forall i}}h(\ba').\\ 
\end{equation}
The outer sums in (\ref{nicer form for NfMain}) are finite, so we focus on finding an asymptotic formula for the sum 
\[
\sum_{\ss{\ba\in \N^3\\ a_i\leq B_i/\gcd(b_i,M) \forall i}}h(\ba').
\]
This is achieved by first applying a generalisation of M\"{o}bius inversion to reduce to studying partial sums of a simpler multiplicative function $f$, which can then be understood via the LSD method. Specifically, we define
\begin{equation}\label{eq:completelymult}
f(\ba) = \bm{\chi}(\ba) \widetilde{g}\l(\prod_{i=1}^3 a_i\r),
\end{equation}
where $\widetilde{g}$ is the completely multiplicative function such that $\widetilde{g}(p) = g(p)$ for all primes $p$.

\subsubsection{Application of M\"{o}bius inversion}
In what follows, we write $\ba$ in place of $\ba'$ to ease notation. Note that $|h(\ba)| \leq 1$ for all $\ba \in \N^{3}$. Therefore, the sums
\begin{equation}
\label{def:H(s)}
H(\bm{s}) := \sum_{\ba \in \N^3} \f{h(\ba)}{\ba^{\bm{s}}}, \qquad F(\bm{s}) := \sum_{\ba \in \N^3} \f{f(\ba)}{\ba^{\bm{s}}}
\end{equation}
are absolutely convergent in the region 
$$ 
\mc{H}_1 := \{\bm{s} \in \C^{3}: \Re(s_i) > 1 \textrm{ for all } i\},
$$
where we use multi-index notation $\ba^{\bm{s}} = \prod_{i=1}^3 a_i^{s_i}$. Define $G(\bm{s}) = H(\bm{s})F(\bm{s})^{-1}$.

\begin{lemma}
\label{lem: analytic function G}
The function $G(\bm{s})$ extends to an analytic function in the region 
$$ 
\mc{H}_{1/2} := \{\bm{s} \in \C^{3}: \Re(s_i) > 1/2 \textup{ for all } i\}.$$
\end{lemma}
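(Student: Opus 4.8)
The plan is to pass to Euler products. Both $h$ and $f$ are multiplicative on pairwise coprime triples, so on the region $\mc{H}_1$ of absolute convergence we may write $H(\bm{s}) = \prod_p H_p(\bm{s})$ and $F(\bm{s}) = \prod_p F_p(\bm{s})$, where the factor at $p$ runs over exponent vectors $\bk = (k_1, k_2, k_3) \in \Z_{\ge 0}^3$ via $a_i = p^{k_i}$. Since each $F_p$ is a nonzero finite value on $\mc{H}_1$, this gives $G(\bm{s}) = \prod_p G_p(\bm{s})$ there, with $G_p := H_p F_p^{-1}$, and it suffices to prove that $\prod_p G_p(\bm{s})$ converges locally uniformly on $\mc{H}_{1/2}$ to an analytic function, each $G_p$ being analytic there.

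I would then compute the local factors. As $\widetilde{g}$ is completely multiplicative with $\widetilde{g}(p)=g(p)$, the series defining $F_p$ factors as a product of geometric series:
\[
F_p(\bm{s}) = \prod_{i=1}^3 \bigl(1 - \chi_i(p)\,g(p)\,p^{-s_i}\bigr)^{-1},
\]
so $F_p(\bm{s})^{-1}$ is a polynomial in $p^{-s_1},p^{-s_2},p^{-s_3}$, and on $\mc{H}_{1/2}$ both $|F_p|$ and $|F_p^{-1}|$ are $O(1)$ (bounded by $(1-p^{-\sigma})^{-3}$ and $(1+p^{-\sigma})^{3}$ respectively, where $\sigma := \min_i\Re(s_i) > 1/2$). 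For every $p \notin S(n)$ coprime to $s_1s_2s_3$ — all but finitely many $p$ — the conditions (\ref{new gcd cond}) and (\ref{factorisation shape cond}) localise to the single requirement $\bk \in \cB \cup \{(0,0,0)\}$; indeed the coprimality condition is then automatic, since every vector of $\cB$ has a zero coordinate, and $g(p^{k_1+k_2+k_3}) = g(p)$ whenever $\bk \neq \bm{0}$. Hence
\[
H_p(\bm{s}) = 1 + g(p)\sum_{\bk \in \cB}\ \prod_{i=1}^3 \bigl(\chi_i(p)\,p^{-s_i}\bigr)^{k_i}.
\]

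The one genuine point is that $H_p$ and $F_p$ agree to first order. Writing $t_i := \chi_i(p) p^{-s_i}$, so $|t_i| \le p^{-\sigma}$, both local factors have constant term $1$ and linear part $g(p)(t_1+t_2+t_3)$: for $H_p$ this comes exactly from the basis vectors $\be_1,\be_2,\be_3 \in \cB$, and for $F_p$ from expanding the product. Every other monomial in $H_p - 1$ and in $F_p - 1$ has total $t$-degree at least $2$, and since $0 < g(p) \le 1$ one obtains $H_p(\bm{s}) - F_p(\bm{s}) = O(p^{-2\sigma})$, whence
\[
G_p(\bm{s}) = H_p(\bm{s})\,F_p(\bm{s})^{-1} = 1 + \bigl(H_p(\bm{s}) - F_p(\bm{s})\bigr)\,F_p(\bm{s})^{-1} = 1 + O\bigl(p^{-2\sigma}\bigr),
\]
with implied constants depending only on $n$ (through $|\cB|$ and $n$). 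As $2\sigma > 1$ on $\mc{H}_{1/2}$, the product $\prod_p G_p$ over these primes converges absolutely and locally uniformly there, and multiplying by the finitely many exceptional factors $G_p = H_p F_p^{-1}$ — each a Dirichlet polynomial times a polynomial, hence entire — yields an analytic function on $\mc{H}_{1/2}$ that restricts to $G$ on $\mc{H}_1$. Beyond the linear-term cancellation, the work is routine: verifying the localisation of the coprimality and $n$-powerfree/shape conditions at the good primes, and checking that the primes in $S(n)$ and those dividing $M s_1 s_2 s_3$ are finitely many and contribute only harmless entire factors. The main obstacle is thus essentially conceptual — spotting that $\cB \supseteq \{\be_1,\be_2,\be_3\}$ forces $H_p$ and $F_p$ to have identical linear parts — rather than technical.
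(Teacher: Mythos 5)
Your proposal is correct and follows essentially the same route as the paper's proof: both compare the Euler factors of $H$ and $F$, observe that (away from finitely many primes) they share the constant term $1$ and the linear part $g(p)\sum_i \chi_i(p)p^{-s_i}$, and conclude $G_p(\bm{s}) = 1 + O(p^{-2\sigma})$ so that the product converges absolutely on $\mc{H}_{1/2}$. You merely make the local factors (geometric-series form of $F_p$, the role of $\cB \supseteq \{\be_1,\be_2,\be_3\}$ in $H_p$) more explicit than the paper does.
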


\begin{proof}
    Define $h_1(p) = h((p,1,1)), h_2(p) = h(1,p,1)$, and $h_3(p) = h(1,1,p)$, and similarly for $f_i$. We also define $\bm{p} = (p, p, p) \in \N^3$. Using multiplicativity of $f$ and $h$, we have 
    \begin{align*}
        G(\bm{s}) &= \prod_p\l(\sum_{\br \in (\Z_{\geq 0})^{3}}\f{h(\bm{p}^{\br})}{\bm{p}^{\br \cdot \bm{s}}}\r)\l(\sum_{\br \in (\Z_{\geq 0})^{3}}\f{f(\bm{p}^{\br})}{\bm{p}^{\br \cdot \bm{s}}}\r)^{-1}\\
        &= \prod_p\l(1 + \sum_{i=1}^3\f{h_i(p)}{p^{s_i}}+ O(p^{-2\min(\Re(s_i))})\r)\l(1+ \sum_{i=1}^3\f{f_i(p)}{p^{s_i}} + O(p^{-2\min(\Re(s_i))})\r)^{-1}.
    \end{align*}
For $\bm{s} \in \mc{H}_{1/2}$, we have $O(p^{-2\min(\Re(s_i))}) = O(p^{-1-\delta})$ for some $\delta > 0$, and so this error term only alters the Euler product by a non-zero constant factor. Moreover, away from finitely many primes, we have $h_i(p) = f_i(p) = g(p)$. We conclude that the Euler product defining $G(\bm{s})$ is absolutely convergent in the region $\mc{H}_{1/2}$. 
\end{proof} 

Define a function $K:\N^3\rightarrow \C$ such that the formula
\begin{equation}
\label{def:K(s)}
G(\bm{s}) = \sum_{\ba \in \N^3}\f{K(\ba)f(\ba)}{\ba^{\bm{s}}}
\end{equation}
holds for all $\bm{s} \in \C^{3}$; this condition uniquely defines $K$ by the existence and uniqueness of multiple Dirichlet series, together with the fact that $f(\ba) = 0$ only if $h(\ba) = 0$.

We have the following generalisation of M\"{o}bius inversion:

\begin{lemma}
\label{generalised Moebius}
Let $h, K$ be functions as above. Then for all $\ba \in \N^3$, we have
\begin{equation}
h(\ba) = f(\ba) \sum_{\bw \mid \ba} K(\bw).
\end{equation}
\end{lemma}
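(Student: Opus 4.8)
The plan is to read the identity as the statement that $h$ is the Dirichlet convolution, over the monoid $\N^3$, of $f$ with the function $Kf$, and to deduce this formally from the defining relation $H(\bm s) = G(\bm s) F(\bm s)$ once absolute convergence in a common region is in place. Concretely, I would compare coefficients in the multiple Dirichlet series identity $H = G\cdot F$.

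First I would record the key structural input: $f$ is completely multiplicative as a function on all of $\N^3$ (not merely on pairwise coprime triples). Indeed, for any $\ba, \bm b \in \N^3$, writing $\ba\bm b := (a_ib_i)_i$, we have $\bm\chi(\ba\bm b) = \prod_i \chi_i(a_ib_i) = \bm\chi(\ba)\bm\chi(\bm b)$ since each $\chi_i$ is a Dirichlet character, and $\widetilde g\bigl(\prod_i a_ib_i\bigr) = \widetilde g\bigl(\prod_i a_i\bigr)\widetilde g\bigl(\prod_i b_i\bigr)$ since $\widetilde g$ is completely multiplicative; hence $f(\ba\bm b) = f(\ba)f(\bm b)$. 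Next I would note that $F(\bm s) = \sum_{\bm d\in\N^3} f(\bm d)\bm d^{-\bm s}$ converges absolutely on $\mc H_1$, and that by the Euler product computation carried out in the proof of \Cref{lem: analytic function G} the series $G(\bm s) = \sum_{\bw\in\N^3} K(\bw)f(\bw)\bw^{-\bm s}$ from \eqref{def:K(s)} converges absolutely on $\mc H_{1/2} \supseteq \mc H_1$.

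With both series absolutely convergent on $\mc H_1$, their product is again an absolutely convergent multiple Dirichlet series, whose $\ba$-th coefficient is the Dirichlet convolution
\[
\sum_{\bw\bm d = \ba} K(\bw)\,f(\bw)\,f(\bm d) \;=\; f(\ba)\sum_{\bw \mid \ba} K(\bw),
\]
where the equality uses complete multiplicativity of $f$ to replace $f(\bw)f(\bm d)$ by $f(\bw\bm d) = f(\ba)$ whenever $\bw\bm d = \ba$. On the other hand this product equals $H(\bm s) = \sum_{\ba\in\N^3} h(\ba)\bm \ba^{-\bm s}$, by the definition $G = H F^{-1}$. Since two multiple Dirichlet series that agree on the nonempty open set $\mc H_1$ must have the same coefficients, we conclude $h(\ba) = f(\ba)\sum_{\bw\mid\ba} K(\bw)$ for every $\ba\in\N^3$.

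I do not expect a genuine obstacle here; the only delicate points are bookkeeping ones, namely checking that $f$ is completely multiplicative on all of $\N^3$ (so that the convolution collapses as above) and that $F$, $G$, $H$ share the region $\mc H_1$ of absolute convergence (so that termwise multiplication and comparison of coefficients of multiple Dirichlet series are legitimate). Both follow directly from the definitions together with \Cref{lem: analytic function G}.
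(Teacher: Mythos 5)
Your proof is correct and follows essentially the same route as the paper: both arguments identify the multiple Dirichlet series of $f(\ba)\sum_{\bw\mid\ba}K(\bw)$ with $G(\bm s)F(\bm s)=H(\bm s)$ and then compare coefficients, using complete multiplicativity of $f$ to collapse the convolution. The only cosmetic difference is that the paper passes through the Euler product factorisation while you multiply the two absolutely convergent Dirichlet series directly.
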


\begin{proof}
We have 
\begin{align*}
    \sum_{\ba \in \N^3}\f{f(\ba)}{\ba^{\bm{s}}}\l(\sum_{\bw \mid \ba}K(\bw)\r) &= \prod_{p} \l(\sum_{\br \in (\Z_{\geq 0})^{3}} \f{f(\bm{p}^{\br})}{p^{\br \cdot \bm{s}}}\sum_{\ss{\bt \in (\Z_{\geq 0})^{3}\\ t_i \leq r_i \forall i}} K(\bm{p}^{\bt})\r) \\
    &=\prod_{p}\l(\sum_{\bt \in (\Z_{\geq 0})^{3}}\f{K(\bm{p}^{\bm{t}})f(\bm{p}^{\bm{t}})}{p^{\bt\cdot \bm{s}}}\r)\prod_p \l(\sum_{\bu \in (\Z_{\geq 0})^{3}} \f{f(\bm{p}^{\bm{u}})}{p^{\bu\cdot \bm{s}}}\r)\\
    &=\l(\sum_{\ba \in (\Z_{\geq 0})^{3}}\f{K(\ba)f(\ba)}{\ba^{\bm{s}}}\r)\l(\sum_{\ba \in (\Z_{\geq 0})^{3}}\f{f(\ba)}{\ba^{\bm{s}}}\r)\\
    &= G(\bm{s}) F(\bm{s})\\
    &= H(\bm{s}).
\end{align*}
Recalling the definition for $H(\bm{s})$ from (\ref{def:H(s)}), we deduce that
$$
\sum_{\ba \in \N^3}\f{f(\ba)\sum_{\bw \mid \ba}K(\bw)}{\ba^{\bm{s}}}= H(\bm{s}) = \sum_{\ba \in \N^3}\f{h(\ba)}{\ba^{\bm{s}}}.
$$
By the uniqueness of multiple Dirichlet series, equality of the above sums in the region $\mc{H}_{1}$ implies equality of the individual coefficients for any fixed $\ba \in \N^3$. 
\end{proof}

Let $\bw=(w_1,w_2,w_3)\in\mathbb{N}^3$ and let
$$
\mc{R}_{\bm{B}, \bw} = \{\ba \in \N^3: |a_i|\leq B_i/w_i\gcd(b_i,M) \text{ for all } i\}
$$
and $\mc{R}_{\bm{B}} = \mc{R}_{\bm{B}, (1,1,1)}$. Using Lemma \ref{generalised Moebius} and the multiplicativity of $f$, we have 
\begin{align}
\label{moebius on h}
\sum_{\ba \in \mc{R}_{\bm{B}}} h(\ba) &= \sum_{\ba \in \mc{R}_{\bm{B}}}f(\ba)\sum_{\bw\mid\ba} K(\bw) = \sum_{\bw \in \mc{R}_{\bm{B}}} K(\bw)\sum_{\ba \in \mc{R}_{\bm{B},\bw}} f(\bw*\ba) \nonumber \\
&= \sum_{\bw \in \mc{R}_{\bm{B}}} K(\bw)f(\bw)\sum_{\ba \in \mc{R}_{\bm{B},\bw}} f(\ba),
\end{align}
where the multiplication $\bw*\ba$ is defined coordinatewise.

\subsubsection{Partial sums of \texorpdfstring{$f(\bz)$}{f(z)}}
We first consider a one-dimensional analogue $f(a):=\chi(a)\widetilde{g}(a)$ for a Dirichlet character $\chi$ modulo $M$.

\begin{lemma}
\label{sum of f(p)}
For all $A, Y\geq 1$, we have 
$$
\sum_{p\leq Y}f(p) = \f{\alpha_{\chi} Y}{\log Y} + O_A\l(\f{Y}{(\log Y)^A}\r)
$$
where 
$$
\alpha_{\chi}= \f{1}{\phi(M)}\sum_{r \in (\Z/M\Z)^{\times}} \f{\chi(r)}{\gcd(n, r-1)}.
$$
\end{lemma}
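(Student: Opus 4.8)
The plan is to observe that, away from a finite set of primes, the restriction of $f$ to primes is a periodic function of $p$ modulo $M$, and then to apply the prime number theorem in arithmetic progressions with a power-of-$\log$ saving.

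First I would unwind the definitions. Since $\widetilde{g}$ agrees with $g$ on primes, $f(p) = \chi(p)g(p)$; and for $p \notin S(n)$ we have $g(p) = \max\{k \mid n : k \mid p-1\}^{-1} = \gcd(n,p-1)^{-1}$, which depends only on $p \bmod n$. Our standing hypothesis that $n^2 \mid M$ forces $n \mid M$, so $\gcd(n,r-1)$ is well-defined for $r \in (\Z/M\Z)^{\times}$, and $f(p) = \chi(p)/\gcd(n,p-1)$ depends only on $p \bmod M$ for every prime $p \notin S(n)$. The primes in $S(n)$ are finite in number (and include all $p \mid M$), so they contribute $O(1)$ to $\sum_{p \le Y} f(p)$, which is absorbed into the target error term. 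Hence, up to an $O(1)$,
\[
\sum_{p \le Y} f(p) = \sum_{r \in (\Z/M\Z)^{\times}} \frac{\chi(r)}{\gcd(n,r-1)} \, \pi(Y; M, r),
\]
where $\pi(Y; M, r)$ denotes the number of primes $p \le Y$ with $p \equiv r \bmod M$.

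Next, since $M$ depends only on $n$ and is therefore a fixed modulus, I would invoke the classical prime number theorem in arithmetic progressions (the Siegel--Walfisz theorem, though already the de la Vall\'ee Poussin error term suffices for a fixed modulus): for every $A \ge 1$ and every $r \in (\Z/M\Z)^{\times}$,
\[
\pi(Y; M, r) = \frac{\mathrm{li}(Y)}{\phi(M)} + O_A\!\left(\frac{Y}{(\log Y)^{A}}\right).
\]
Summing over the finitely many $r$, substituting $\mathrm{li}(Y) = Y/\log Y + O(Y/(\log Y)^{2})$, and recognising the definition of $\alpha_{\chi}$, I obtain
\[
\sum_{p \le Y} f(p) = \frac{\mathrm{li}(Y)}{\phi(M)} \sum_{r \in (\Z/M\Z)^{\times}} \frac{\chi(r)}{\gcd(n,r-1)} + O_A\!\left(\frac{Y}{(\log Y)^{A}}\right) = \frac{\alpha_{\chi} Y}{\log Y} + O_A\!\left(\frac{Y}{(\log Y)^{A}}\right),
\]
which is the asserted formula; for small $Y$ the estimate is vacuous and may be absorbed into the implied constant, so one may assume $Y$ large throughout.

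There is no genuine obstacle here, as the only analytic ingredient is the prime number theorem for the fixed modulus $M$. The one point demanding care is the bookkeeping: verifying that $n \mid M$ makes $f$ truly $M$-periodic on primes, and that the primes of $S(n)$ contribute only $O(1)$, after which the identification of the leading constant is a direct unwinding of the definition of $\alpha_{\chi}$.
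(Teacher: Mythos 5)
Your proposal is correct and follows essentially the same route as the paper: both use that $n\mid M$ makes $f(p)=\chi(p)/\gcd(n,p-1)$ depend only on $p\bmod M$ away from the finitely many primes of $S(n)$, then split into residue classes modulo $M$ and apply the prime number theorem in arithmetic progressions for the fixed modulus $M$ to identify the constant $\alpha_{\chi}$. The paper merely organises the bookkeeping by first summing over the values $l=\gcd(n,p-1)$, which is a cosmetic difference.
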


\begin{proof}
    Recall that $n\mid M$. 
    For $r \in (\Z/M\Z)^{\times}$, we have 
    $$ \max\{k\mid n: r \equiv 1 \bmod k\} = \gcd(n,r-1).$$
    Dividing up into residue classes modulo $M$ and applying the prime number theorem in arithmetic progressions, we obtain
    \begin{align*}
        \sum_{p\leq Y}f(p) &= \sum_{l\mid n}l^{-1}\sum_{\ss{p\leq Y, p\notin S(n), p \nmid w\\ l = \max\{k\mid n: p \equiv 1 \bmod{k}\}}}\chi(p)\\
        &=\sum_{l\mid n}l^{-1}\sum_{\ss{r \in (\Z/M\Z)^{\times}\\l= \max\{k\mid n: r \equiv 1 \bmod k\}}}\chi(r)\#\{p \leq Y: p \notin S(n), p \nmid w, p \equiv r \bmod{n}\}\\
        &= \l(\f{Y}{\log Y} + O_A\l(\f{Y}{(\log Y)^A}\r)\r)\f{1}{\phi(M)}\sum_{l\mid n}l^{-1}\sum_{\ss{r \in (\Z/M\Z)^{\times}\\ \gcd(n,r-1)=l}}\chi(r)\\
        &=\l(\f{Y}{\log Y} + O_A\l(\f{Y}{(\log Y)^A}\r)\r)\alpha_{\chi}.
    \end{align*}
\end{proof}

\begin{corollary}
\label{sum of f(p)log(p)}
    For all $Y \geq 1$, we have 
    \[
    \sum_{p \leq Y} f(p)\log(p) = \alpha_{\chi}Y + O\l(\f{Y}{\log Y}\r).
    \]
\end{corollary}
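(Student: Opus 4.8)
The plan is to deduce this directly from \Cref{sum of f(p)} by partial summation. I would set $S(Y) := \sum_{p \le Y} f(p)$, so that \Cref{sum of f(p)} provides $S(Y) = \alpha_\chi Y/\log Y + O_A\!\left(Y/(\log Y)^A\right)$ for every $A \ge 1$, and in particular the Chebyshev-type bound $S(Y) \ll Y/\log Y$ uniformly for $Y \ge 2$. Abel summation with the weight $t \mapsto \log t$ then gives
\[
\sum_{p \le Y} f(p)\log p = S(Y)\log Y - \int_{2}^{Y} \frac{S(t)}{t}\,dt.
\]

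For the boundary term, \Cref{sum of f(p)} yields $S(Y)\log Y = \alpha_\chi Y + O_A\!\left(Y/(\log Y)^{A-1}\right)$, which already produces the main term $\alpha_\chi Y$ together with an error that is $O(Y/\log Y)$ once we take, say, $A = 2$. For the integral, I would bound it trivially using $S(t) \ll t/\log t$: this gives $\int_2^Y S(t)/t\,dt \ll \int_2^Y \mathrm{d}t/\log t \ll Y/\log Y$, the last step being the standard estimate for the logarithmic integral (the integrand is bounded near $t=2$ since $\log t$ is bounded away from $0$ there, and the bulk of the integral is of size $Y/\log Y$). Combining the two contributions yields $\sum_{p\le Y} f(p)\log p = \alpha_\chi Y + O(Y/\log Y)$, which is the claim.

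There is essentially no serious obstacle here; the only points that need a little care are choosing the parameter $A$ in \Cref{sum of f(p)} large enough (e.g.\ $A=2$) so the boundary error is absorbed, and noting that the weight $\log t$ causes no trouble at the lower endpoint. As an alternative one could avoid partial summation and simply re-run the proof of \Cref{sum of f(p)}: split $p$ into residue classes $r \bmod M$, apply the prime number theorem in arithmetic progressions in the form $\sum_{p \le Y,\, p \equiv r \bmod M} \log p = Y/\phi(M) + O_A\!\left(Y/(\log Y)^A\right)$, and collect the weights $\ell^{-1}$ exactly as in that lemma; this reproduces the same asymptotic with the same error term.
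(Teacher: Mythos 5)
Your proposal is correct and follows essentially the same route as the paper: Abel summation applied to $S(t)=\sum_{p\le t}f(p)$, with the main term coming from the boundary term $S(Y)\log Y$ and the integral bounded via $\mathrm{Li}(Y)\ll Y/\log Y$. Your remark that one should take $A=2$ (so the boundary error $O_A\bigl(Y(\log Y)^{1-A}\bigr)$ is $O(Y/\log Y)$) is in fact slightly more careful than the paper's stated choice of $A=1$.
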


\begin{proof}
    Applying Lemma \ref{sum of f(p)} and partial summation, we obtain
    \begin{align*}
        \sum_{p \leq Y}f(p)\log(p) &= \sum_{p \leq Y}f(p)\log Y - \int_{2}^Y \sum_{p\leq t}f(p)t^{-1}\op{d}t\\
        &=\alpha_{\chi}Y - \int_{2}^Y \l(\f{\alpha_{\chi}}{\log t} + O_A\l(\f{1}{(\log t)^{A}}\r)\r)\op{d}t + O_A\l(\f{Y}{(\log Y)^A}\r)\\
        &=\alpha_{\chi}Y + O\l(\f{Y}{\log Y}\r),
    \end{align*}
    the last line following from choosing $A=1$ and applying the bound $\mathrm{Li}(Y) \ll Y/\log Y$ on the logarithmic integral. 
\end{proof}

\begin{corollary}
\label{sum of f(y)}
    Let $\alpha_{\chi}$ be as in Lemma \ref{sum of f(p)}. Then for all sufficiently small $\epsilon > 0$
    \[
    \sum_{y \leq Y}f(y) = c_{\chi}Y(\log Y)^{\alpha_{\chi} - 1} + O_{\eps}(Y(\log Y)^{-1+\epsilon}), 
    \]
    where 
    \[
    c_{\chi} = \f{1}{\Gamma(\alpha_{\chi})}\prod_{p}\l(\f{1}{1-p^{-1}f(p)}\r)\l(1-\f{1}{p}\r)^{\alpha_{\chi}}.
    \]
\end{corollary}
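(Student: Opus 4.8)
The plan is to obtain this as a routine application of the Landau--Selberg--Delange (LSD) method, in precisely the form already used in the proof of \Cref{lem:LSDGeneral}, but now with a nonzero mean-value parameter $\kappa := \alpha_\chi$. Recall that $f$ is completely multiplicative with $f(p) = \chi(p)g(p)$, so $|f(p)| \leq 1$ and hence $|f(a)| \leq 1$ for all $a$; this pointwise bound verifies the size hypothesis \cite{Kou}*{(13.2)}. For the arithmetic hypothesis \cite{Kou}*{(13.1)} one needs
\[
\sum_{p \leq Y} f(p)\log p = \alpha_\chi Y + O_A\braces{Y(\log Y)^{-A}} \qquad \text{for every } A \geq 1,
\]
which follows from Lemma \ref{sum of f(p)} by exactly the partial summation carried out in the proof of Corollary \ref{sum of f(p)log(p)}, except that one now retains the full power-of-log saving (the $\mathrm{Li}(Y)$-type secondary term produced there cancels against the weight $\log Y$). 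In particular $\alpha_\chi$ really is the mean value of $f$ over the primes, so it is the correct value of $\kappa$.

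Given these two inputs, \cite{Kou}*{Rmk.~13.3} applied with $\kappa = \alpha_\chi$ produces the one-term Selberg--Delange asymptotic
\[
\sum_{y \leq Y} f(y) = \frac{G_\chi(1)}{\Gamma(\alpha_\chi)}\,Y(\log Y)^{\alpha_\chi - 1} + O\braces{Y(\log Y)^{\Re(\alpha_\chi) - 2}},
\]
the error being this small because the arithmetic input of Lemma \ref{sum of f(p)} saves an arbitrary power of $\log Y$, so that beyond the main term only the next term of the expansion contributes; here $G_\chi(s) := \braces{\sum_{a \geq 1} f(a)a^{-s}}\zeta(s)^{-\alpha_\chi}$ is the holomorphic factor supplied by \cite{Kou}*{Rmk.~13.3}. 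Concretely $\sum_{a \geq 1} f(a)a^{-s} = \prod_p (1 - f(p)p^{-s})^{-1}$, and since $f(p)$ depends only on $p \bmod M$ away from finitely many primes, this Euler product factors through Dirichlet $L$-functions $L(s,\psi)$ with $\psi$ modulo $M$, to which the classical zero-free region applies, giving holomorphy of $G_\chi$ in a region $\Re(s) > 1 - c$. Matching Euler products at $s = 1$ yields $G_\chi(1) = \prod_p (1 - p^{-1}f(p))^{-1}(1 - p^{-1})^{\alpha_\chi}$, so the leading constant is precisely $c_\chi = G_\chi(1)/\Gamma(\alpha_\chi)$ as stated; this product converges because its $p$-th factor equals $1 + (f(p) - \alpha_\chi)p^{-1} + O(p^{-2})$ and $\sum_p (f(p) - \alpha_\chi)p^{-1}$ converges (by Lemma \ref{sum of f(p)} and the prime number theorem in arithmetic progressions, via partial summation), and no factor vanishes since $|f(p)| \leq 1 < p$.

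It remains to read off the statement. Each summand defining $\alpha_\chi$ has absolute value $\gcd(n, r-1)^{-1} \leq 1$, so $|\alpha_\chi| \leq 1$, and in particular $\Re(\alpha_\chi) \leq 1$; hence the error term above is $O(Y(\log Y)^{-1}) \subseteq O_\epsilon(Y(\log Y)^{-1+\epsilon})$, as required. Moreover $\alpha_\chi$ is never a negative integer --- the only value compatible with $|\alpha_\chi| \leq 1$ would be $-1$, which would force $\chi(r) = -1$ and $\gcd(n, r-1) = 1$ for all $r \in (\Z/M\Z)^{\times}$, contradicting $\chi(1) = 1$ --- so $\Gamma(\alpha_\chi)^{-1}$ is finite, and in the degenerate case $\alpha_\chi = 0$ the main term vanishes and the claim is simply the bound $\sum_{y \leq Y} f(y) \ll Y(\log Y)^{-1+\epsilon}$, which is the $\kappa = 0$ instance of \cite{Kou}*{Rmk.~13.3} already used in \Cref{lem:LSDGeneral}. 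I do not expect a genuine obstacle in this corollary: the one substantive input is the prime sum of Lemma \ref{sum of f(p)}, which is in hand, and the only point requiring a little care is that \cite{Kou}*{Rmk.~13.3} be invoked with a complex mean-value parameter $\kappa = \alpha_\chi \in \C$; should a reference phrased for complex $\kappa$ be preferred, one instead applies the general Selberg--Delange theorem from \cite{Kou}.
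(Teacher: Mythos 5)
Your proof is correct and takes essentially the same route as the paper: both are direct applications of the Landau--Selberg--Delange method driven by the prime-sum input of Lemma \ref{sum of f(p)} (equivalently Corollary \ref{sum of f(p)log(p)}), the only difference being that you invoke \cite{Kou}*{Rmk.~13.3} with $\kappa=\alpha_\chi$ while the paper cites \cite{LSDmethod}*{Theorem 1.1} with $A=1$, $J=0$, $k=0$. Your extra care over the complex parameter, the identification of $c_\chi=G_\chi(1)/\Gamma(\alpha_\chi)$, and the degenerate cases $\alpha_\chi\in\{0,-1\}$ is consistent with, and more detailed than, the paper's one-line argument.
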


\begin{proof}
    The result follows from Lemma \ref{sum of f(p)} and an application of the LSD method \cite{LSDmethod}*{Theorem 1.1}. Specifically, Corollary \ref{sum of f(p)log(p)} verifies the hypothesis \cite{LSDmethod}*{Equation (1.2)} holds, and in the notation of \cite{LSDmethod}*{Theorem 1.1}, we choose $A = 1$, $J = 0$ and $k = 0$.
\end{proof}

We now proceed with the estimation of $\sum_{\ba \in \RBw} f(\ba)$. For the remainder of this section, we let $\delta>0$ be a sufficiently small positive parameter depending only on $n$, which for convenience may take different values at different points in the argument. Let $f_{i}(a_i) = \chi_{i}(a_i)\widetilde{g}(a_i)$ so that $f(\ba) = \prod_{i=1}^3 f_{i}(a_i)$. Let $w_i' = w_i\gcd(b_i,M)$ and $Y_{i,\bw} = B_i/w_i'$. Then 
\begin{align*}
    \sum_{\ba \in \mc{R}_{\bm{B},\bw}}f(\ba) &= \prod_{i=1}^3 \sum_{\ss{a_i \in \N\\ |a_i| \leq Y_{i,\bw}\, \forall{i}}}f_i(a_i)\\
    &=\prod_{i=1}^3 c_{\chi_i}Y_{i,\bw}(\log Y_{i,\bw})^{\alpha_{\chi_i}-1}\l(1 + O\l(Y_{i,\bw}\r)^{-\delta}\r).
\end{align*}
Applying this estimate in (\ref{moebius on h}), we obtain 
\begin{equation}
\label{partial sum of h}
    \sum_{\ba \in \mc{R}_{\bm{B}}}h(\ba) = \prod_{i=1}^3 \f{c_{\chi_i}B_i(\log B_i)^{\alpha_{\chi_i}-1}}{\gcd(b_i,M)}\sum_{\bw \in \mc{R}_{\bm{B}}}\f{K(\bw)f(\bw)}{w_1w_2w_3}\l(\prod_{i=1}^3\f{\log(B_i/w_i')^{\alpha_{\chi_i-1}}}{(\log B_i)^{\alpha_{\chi_i}-1}}\r).
\end{equation}
We extract the main term by restricting the $\bw$-sum in (\ref{partial sum of h}) to the smaller region $\bw \in \mc{R}_{\log(\bm{B})}$, where $\log(\bm{B}) = (\log(B_1), \log(B_2), \log(B_3))$. In this region, we have 
\[
\l(\prod_{i=1}^3\f{\log(B_i/w_i')^{\alpha_{\chi_i-1}}}{(\log B_i)^{\alpha_{\chi_i}-1}}\r) = 1 + O\l(\prod_{i=1}^3\l(\f{\log \log B_i}{\log B_i}\r)^{\alpha_{\chi_i}-1}\r) = 1 + O(E_{\delta}(\bm{B})),
\]
where $E_{\delta}(\bm{B}) = (\log (\min_{1\leq i \leq 3}(B_i)))^{-\delta}$. Consequently, 
\[
\sum_{\bw \in \mc{R}_{\log(\bm{B})}}\f{K(\bw)f(\bw)}{w_1w_2w_3}\l(\prod_{i=1}^3\f{\log(B_i/w_i')^{\alpha_{\chi_i-1}}}{(\log B_i)^{\alpha_{\chi_i}-1}}\r) = \sum_{\bw \in \mc{R}_{\log(\bm{B})}}\f{K(\bw)f(\bw)}{w_1w_2w_3}(1+O(E_{\delta}(\bm{B}))).
\]
When $\bw \in \mc{R}_{\bm{B}}\backslash \mc{R}_{\log(\bm{B})}$, we use the trivial bound $\log(B_i/w_i')/\log(B_i) \leq 1$ in (\ref{partial sum of h}). Therefore, 
\[
\begin{split}
\sum_{\ba \in \mc{R}_{\bm{B}}}h(\ba) = &\l(\prod_{i=1}^3 \f{c_{\chi_i}B_i(\log B_i)^{\alpha_{\chi_i}-1}}{\gcd(b_i,M)}\r)(1+O(E_{\delta}(\bm{B})))\\
&\times \l(\sum_{\bw \in \N^3}\f{K(\bw)f(\bw)}{w_1w_2w_3} + O\l(\sum_{\ss{\bw \in \N^3\\ w_i > \log(B_i) \textrm{ for some }i }}\f{K(\bw)f(\bw)}{w_1w_2w_3}\r)\r).
\end{split}
\]
Recalling the definition of $G(\bs)$ and its absolute convergence in the region $\mc{H}_{1/2}$ proved in Lemma \ref{lem: analytic function G}, we conclude that 
\[
\sum_{\ba \in \mc{R}_{\bm{B}}}h(\ba) = G(1,1,1)\prod_{i=1}^3 \f{c_{\chi_i}B_i(\log B_i)^{\alpha_{\chi_i}-1}}{\gcd(b_i,M)}(1+O(E_{\delta}(\bm{B}))).
\]
Below we write $G=G_{\bm{\chi}}$ to make the dependence of $G$ on $\bm{\chi}$ clear, and let $\bm{\chi}_{0} = (\chi_{1, 0}, \chi_{2, 0},\chi_{3, 0})$, where $\chi_{i, 0}$ is the principal character modulo $M_i$. 

Among Dirichlet characters modulo $M_i$, the value of $\alpha_{\chi_i}$ is largest when $\chi_i = \chi_{i, 0}$. Combined with the fact that $G_{\bm{\chi}_{0}}(1,1,1) >0$, it follows that the contributions from non-principal $\chi_i$ produce a smaller power of $\log B_i$ and can be placed into the error term. Moreover, observe that $\alpha_{\chi_{i,0}}$, and hence also $c_{\chi_{i, 0}}$, do not depend on $i$; consequently we may denote them by $\alpha$ and $c$ respectively. Going back to (\ref{nicer form for NfMain}), we have $\bm{\chi}_{0}(\bm{\eps}) = \bm{\chi}_{i,0}(b_i') = 1$, and hence 
\[
N_{\textup{f}}^{\text{main}}(\mathbf{B}; \mathbf{s}, \mathbf{b}, M) = C_n B_1B_2B_3(\log B_1\log B_2\log B_3)^{\alpha - 1}(1 +E_{\delta}(\bm{B})),
\]
where
\begin{equation}\label{def: alpha}
\alpha = \f{1}{\phi(M)}\sum_{r \in (\Z/M\Z)^{\times}}\f{1}{\gcd(n,r-1)} =  \f{1}{\phi(n)}\sum_{r \in (\Z/n\Z)^{\times}}\f{1}{\gcd(n,r-1)}
\end{equation}
and 
\[
 C_n = \f{c^3G_{\bm{\chi}_{0}}(1,1,1)\#\{\{\pm 1\}^3 \cap \mc{D}(n)\}}{\prod_{i=1}^3\phi(M_i)\gcd(b_i,M)}.
\]
Note that $\#\{\{\pm 1\}^3 \cap \mc{D}(n)\}$ is equal to $8$ if $n$ is odd and $6$ if $n$ is even. 
\subsection{Euler product expansion}

We have 
\[
c^3 = \f{1}{\Gamma(\alpha)^3}\prod_p \l(\f{1}{1-p^{-1}f(p)}\r)^3\l(1-\f{1}{p}\r)^{3\alpha}
\]
and 
\[
G_{\bm{\chi}_{0}}(1,1,1) = H(1,1,1)F(1,1,1)^{-1} = \prod_p\l(\sum_{\br \in (\ZZ_{\geq 0})^3}\f{h(\bm{p}^{\br})}{\bm{p}^{\br}}\r)\prod_p\l(\sum_{\br \in (\ZZ_{\geq 0})^3}\f{f(\bm{p}^{\br})}{\bm{p}^{\br}}\r)^{-1}.
\]
Observing that 
\[
\l(\f{1}{1-p^{-1}f(p)}\r)^3 = \l(\sum_{r \geq 0}\f{f(p^r)}{p^r}\r)^3 = \sum_{\br \in (\ZZ_{\geq 0})^3} \f{f(\bm{p}^{\br})}{\bm{p}^{\br}},
\]
we see that
\[
c^3G_{\bm{\chi}_0}(1,1,1) = \f{1}{\Gamma(\alpha)^3}\prod_p \l(1-\f{1}{p}\r)^{3\alpha}\prod_p\l(\sum_{\br \in (\ZZ_{\geq 0})^3} \f{h(\bm{p}^{\br})}{\bm{p}^{\br}}\r).
\]
Suppose that $p \in S(n)$. Then $h(\bm{p}^{\br}) = \bm{\chi}(\bm{p}^{\br})$. Since $\mathbf{b}$ is admissible, we have the equality $\gcd(b_1,b_2,b_3,M) = 1$, which implies that $p\mid M_i$ for some $i$. Therefore $\bm{\chi}_0(\bm{p}^{\br}) = 0$ unless $\br = (0,0,0)$. Recalling the assumption that $(\mathbf{s}, \mathbf{b})$ is admissible, it follows that
\[
\sum_{\br \in (\ZZ_{\geq 0})^3} \f{h(\bm{p}^{\br})}{\bm{p}^{\br}} = 1.
\]
It remains to consider the case $p \notin S(n)$. Here, for admissible $(\mathbf{s}, \mathbf{b})$ we always have $\bm{\chi}_0(\bm{p}^{\br}) = 1$, and so $h(\bm{p}^{\br})$ is either equal to $0$ or $g(p)$, and equals $0$ unless $\br \in \mc{B}$. Therefore, 
\[
\prod_p\l(\sum_{\br \in (\ZZ_{\geq 0})^3} \f{h(\bm{p}^{\br})}{\bm{p}^{\br}}\r) = 1+\sum_{t\geq 1}\f{A(p,t)g(p)}{p^t}, 
\]
where
\[
A(p,t) = \#\{(r_1, r_2, r_3) \in \mc{B}: r_1+r_2+r_3 = t, \gcd(s_1p^{r_1},s_2p^{r_2},s_3p^{r_3}) = 1\}.
\]
Since the set $\mc{B}$ consists of elements of the form $(a,0,0), (a,a,0)$ for $1\leq a <n$ up to permutation of the coordinates, we have the following cases, where $\bm{1}$ denotes the indicator function:
\begin{enumerate}[label=(\roman*)]
    \item If $p\nmid s_1s_2s_3$, then $A(p,t) = 3(\bm{1}_{1\leq t <n}(t) + \bm{1}_{\ss{1\leq t <2n\\ t \textrm{ even }}}(t))$.
    \item If $p$ divides exactly one of $s_1,s_2,s_3$, then $A(p,t) = 3(\bm{1}_{1\leq t <n}(t)) + 2(\bm{1}_{\ss{1\leq t <2n\\ t \textrm{ even }}}(t))$.
    \item If $p$ divides exactly two of $s_1,s_2,s_3$, then $A(p,t) = 2(\bm{1}_{1\leq t <n}(t)) + \bm{1}_{\ss{1\leq t <2n\\ t \textrm{ even }}}(t)$.
    \item If $p$ divides all of $s_1,s_2,s_3$, then $A(p,t) = 0$.
\end{enumerate}
We recall that $g(p) = \gcd(n, p-1)^{-1}$ for any $p \notin S(n)$. Therefore, for any prime $p \nmid s_1s_2s_3$, we have 
\begin{align*}
\sum_{\br \in (\ZZ_{\geq 0 })^3} \f{h(\bm{p}^{\br})}{\bm{p}^{\br}} &= 1+\f{3}{\gcd(n,p-1)}\l(\sum_{1\leq t <n} p^{-t} + \sum_{1\leq t <n} p^{-2t}\r) \\
&= 1+\f{3}{\gcd(n, p-1)}\l(\f{p^{-1}-p^{-n}}{1-p^{-1}} + \f{p^{-2}-p^{-2n}}{1-p^{-2}}\r),
\end{align*}
and similarly for the other cases. Hence the Euler factor for primes $p\notin S(n), p\nmid s_1s_2s_3$ is 
\[
\l(1-\f{1}{p}\r)^{3\alpha}\l(1 + \f{3}{\gcd(n, p-1)}\l(\f{p^{-1}-p^{-n}}{1-p^{-1}} + \f{p^{-2}-p^{-2n}}{1-p^{-2}}\r)\r).
\]
We summarise with the following theorem.

\begin{theorem}
\label{thm: Main term wrap-up}
For any admissible pair $(\mathbf{s}, \mathbf{b})$, there exists $\delta >0$, depending only on $n$, and a constant $C_n > 0$ such that 
\[
N_{\textup{f}}^{\textup{main}}(\mathbf{B}; \mathbf{s}, \mathbf{b}, M) = C_n B_1B_2B_3(\log B_1\log B_2\log B_3)^{\alpha - 1}(1 +E_{\delta}(\bm{B})),
\]
where $\alpha$ is given in (\ref{def: alpha}) and $E_{\delta}(\bm{B}) \ll \log(\min(B_i))^{-\delta}$. The constant $C_n$ is given explicitly by the formula 
\[
C_n = \f{\delta_{\infty}}{\Gamma(\alpha)^3\prod_{i = 1}^3\phi(M_i)\gcd(b_i,M)}  \prod_p C_{n, p},
\]
where $M_i = M/\gcd(b_i,M)$, 
\begin{equation}\label{def delta infinity}
\delta_{\infty}= \begin{cases}
    8, &\textup{ if }n\textup{ is odd},\\
6, &\textup{ if }n\textup{ is even},
\end{cases}
\end{equation}
and 
\[
C_{n, p} := \l(1-\f{1}{p}\r)^{3 \alpha} \l(1 + \f{1}{\gcd(n,p-1)} \l(\f{j_1(p^{-1}-p^{-n})}{1-p^{-1}} + \f{j_2(p^{-2}-p^{-2n})}{1-p^{-2}}\r)\r),
\]
with
\[
(j_1,j_2) := 
\begin{cases}
(0,0), &\textup{ if }p \in S(n) \textup{ or }p \textup{ divides all of }s_1,s_2,s_3,\\
(2,1), &\textup{ if }p \notin S(n) \textup{ and }p \textup{ divides exactly two of }s_1,s_2,s_3,\\
(3,2), &\textup{ if }p \notin S(n) \textup{ and }p \textup{ divides exactly one of }s_1,s_2,s_3,\\
(3,3), &\textup{ if }p \notin S(n) \textup{ and }p \nmid s_1s_2s_3.    
\end{cases}
\]
\end{theorem}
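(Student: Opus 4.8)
The plan is to assemble the pieces established in this section into the stated asymptotic and the explicit constant. First I would reduce $N_{\textup{f}}^{\textup{main}}(\mathbf{B};\mathbf{s},\mathbf{b},M)$ to the shape \eqref{nicer form for NfMain}: the substitution $a_i=\gcd(b_i,M)a_i'$ turns the congruence $a_i\equiv b_i\bmod M$ into $a_i'\equiv b_i'\bmod M_i$, the further substitution $a_i'\mapsto\epsilon_i a_i'$ records the sign $\epsilon_i$, and detecting $a_i'\equiv b_i'\bmod M_i$ with Dirichlet characters $\chi_i$ modulo $M_i$ produces the finite outer sums over $\bm{\chi}$ and over $\bm{\eps}\in\{\pm1\}^3\cap\mc{D}(n)$ together with the inner sum $\sum_{\ba\in\RB}h_{\bm{\chi}}(\ba)$; both sides vanish if $(\mathbf{s},\mathbf{b})$ is not admissible. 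It then suffices to produce, for each fixed $\bm{\chi}$, an asymptotic for $\sum_{\ba\in\RB}h_{\bm{\chi}}(\ba)$.

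For this inner sum I would invoke the generalised M\"obius inversion of \Cref{generalised Moebius} together with the multiplicativity of $f$ to reach \eqref{moebius on h}, namely $\sum_{\ba\in\RB}h_{\bm{\chi}}(\ba)=\sum_{\bw\in\RB}K(\bw)f(\bw)\sum_{\ba\in\RBw}f(\ba)$. The inner factor splits over the three coordinates, and each one-dimensional partial sum $\sum_{a\leq Y}f_i(a)$ is controlled by the LSD method in \Cref{sum of f(y)}, whose hypothesis is supplied by \Cref{sum of f(p)log(p)}; this gives $\sum_{\ba\in\RBw}f(\ba)=\prod_{i=1}^3 c_{\chi_i}Y_{i,\bw}(\log Y_{i,\bw})^{\alpha_{\chi_i}-1}\bigl(1+O(Y_{i,\bw}^{-\delta})\bigr)$. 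Substituting into \eqref{moebius on h} gives \eqref{partial sum of h}; I would then truncate the $\bw$-sum to $\bw\in\mc{R}_{\log(\bm{B})}$, where the ratios $\log(B_i/w_i')/\log(B_i)$ equal $1+O(E_\delta(\bm{B}))$, and bound the remaining tail with $w_i>\log B_i$ using the absolute convergence of $G_{\bm{\chi}}$ on $\mc{H}_{1/2}$ from \Cref{lem: analytic function G}. This yields $\sum_{\ba\in\RB}h_{\bm{\chi}}(\ba)=G_{\bm{\chi}}(1,1,1)\prod_{i=1}^3\f{c_{\chi_i}B_i(\log B_i)^{\alpha_{\chi_i}-1}}{\gcd(b_i,M)}\bigl(1+O(E_\delta(\bm{B}))\bigr)$. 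Since $\Re(\alpha_{\chi_i})$ is maximised at the principal character $\chi_{i,0}$ (with maximal value $\alpha$ of \eqref{def: alpha}) and $G_{\bm{\chi}_0}(1,1,1)>0$, every non-principal $\bm{\chi}$ contributes a strictly smaller power of $\log$ and is absorbed into the error; for $\bm{\chi}=\bm{\chi}_0$ one has $\overline{\chi_{i,0}(b_i')}=1$ (since $\gcd(b_i',M_i)=1$ by construction of $M_i,b_i'$) and $\bm{\chi}_0(\bm{\eps})=1$, so the $\bm{\eps}$-sum contributes the factor $\delta_\infty=\#\bigl(\{\pm1\}^3\cap\mc{D}(n)\bigr)$ of \eqref{def delta infinity}, and $c_{\chi_{i,0}}=c$ is independent of $i$. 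Collecting constants yields the asymptotic with $C_n=\f{\delta_\infty\,c^3 G_{\bm{\chi}_0}(1,1,1)}{\prod_{i=1}^3\phi(M_i)\gcd(b_i,M)}$.

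Finally I would turn $c^3 G_{\bm{\chi}_0}(1,1,1)$ into the advertised Euler product. Writing $c^3$ and $G_{\bm{\chi}_0}(1,1,1)=H(1,1,1)F(1,1,1)^{-1}$ as products over primes and using $\bigl(\sum_{r\geq0}\f{f(p^r)}{p^r}\bigr)^3=\sum_{\br\in(\Z_{\geq0})^3}\f{f(\bm{p}^{\br})}{\bm{p}^{\br}}$ to cancel the $F$-factors leaves $c^3 G_{\bm{\chi}_0}(1,1,1)=\Gamma(\alpha)^{-3}\prod_p(1-1/p)^{3\alpha}\bigl(\sum_{\br}\f{h(\bm{p}^{\br})}{\bm{p}^{\br}}\bigr)$. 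For $p\in S(n)$, admissibility of $\mathbf{b}$ forces $p\mid M_i$ for some $i$, so only $\br=\mathbf{0}$ survives and the Euler factor is $(1-1/p)^{3\alpha}$; for $p\notin S(n)$ one has $h(\bm{p}^{\br})\in\{0,g(p)\}$ with support on $\br\in\mc{B}$, so summing the geometric series over the shapes $(a,0,0)$ and $(a,a,0)$ with $1\leq a<n$ (up to permuting coordinates), split into the four cases according to how many of $s_1,s_2,s_3$ are divisible by $p$ --- which fixes the coprimality count $A(p,t)$ --- and using $g(p)=\gcd(n,p-1)^{-1}$, reproduces exactly $C_{n,p}$; positivity $C_n>0$ is then immediate. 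The main obstacle I anticipate is purely one of bookkeeping: checking that the three error sources --- the LSD error $O(Y_{i,\bw}^{-\delta})$, the truncation of the $\bw$-sum (handled via the convergence of $G$ on $\mc{H}_{1/2}$), and the gap between $\alpha$ and the next-largest $\Re(\alpha_{\chi_i})$ --- combine into a single bound $E_\delta(\bm{B})\ll\log(\min_i B_i)^{-\delta}$, uniformly over admissible pairs $(\mathbf{s},\mathbf{b})$.
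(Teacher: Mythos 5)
Your proposal is correct and follows essentially the same route as the paper: Theorem \ref{thm: Main term wrap-up} is the wrap-up of the preceding subsection, and you reproduce exactly that chain --- reduction to \eqref{nicer form for NfMain} via the substitution $a_i=\gcd(b_i,M)a_i'$ and character detection, the M\"obius inversion \eqref{moebius on h} with the one-dimensional LSD input of \Cref{sum of f(y)}, truncation of the $\bw$-sum using \Cref{lem: analytic function G}, dominance of the principal characters, and the identical Euler-product manipulation of $c^3G_{\bm{\chi}_0}(1,1,1)$ split into $p\in S(n)$ and the four $A(p,t)$ cases for $p\notin S(n)$. No gaps beyond the error-term bookkeeping you already flag, which the paper handles the same way.
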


\subsection{Returning to the original sum}
\label{ssRemoveCubefree}
Let $\mathbf{B} = (B, B, B)$. We have the identities
\begin{align*}
N(\mathbf{B}) &= \sum_{\substack{\mathbf{s} \in \N^3 \\ \gcd(s_1, s_2, s_3) = 1}} \hspace{-0.3cm} N_{\text{f}}\left(\left(\frac{B}{s_1^n}, \frac{B}{s_2^n}, \frac{B}{s_3^n}\right); \mathbf{s}\right) 
&= \sum_{\substack{\mathbf{s} \in \N^3}} \sum_{\ss{\mathbf{b} \bmod M \\
(\mathbf{s}, \mathbf{b})\text{ admissible}}} \hspace{-0.3cm} N_{\text{f}}\left(\left(\frac{B}{s_1^n}, \frac{B}{s_2^n}, \frac{B}{s_3^n}\right); \mathbf{s}, \mathbf{b}, M\right).
\end{align*}
Using the trivial bound 
$$
N_{\text{f}}((B_1, B_2, B_3); \mathbf{s}, \mathbf{b}, M) \ll B_1B_2B_3,
$$
we may restrict $s_1, s_2, s_3$ to $(\log B)^C$ for some constant $C > 0$ depending only on $n$. We will now apply Theorem \ref{thm: Main term wrap-up}. Since $s_1, s_2, s_3$ are no longer fixed for us, we shall write $C_{\bs, \bb, n}$ for the leading constant $C_n$ from Theorem \ref{thm: Main term wrap-up} to keep track of the dependence on $\bs$ and $\bb$. Then we get
\begin{multline*}
\sum_{\substack{\mathbf{s} \in \N^3}} \sum_{\ss{\mathbf{b} \bmod M \\
(\mathbf{s}, \mathbf{b})\text{ admissible}}} N_{\text{f}}\left(\left(\frac{B}{s_1^n}, \frac{B}{s_2^n}, \frac{B}{s_3^n}\right); \mathbf{s}, \mathbf{b}, M\right) = \\ \sum_{\substack{0 < s_1, s_2, s_3 \leq (\log B)^C }} \sum_{\ss{\mathbf{b} \bmod M \\
(\bs, \bb)\text{ admissible}}} \frac{C_{\bs, \bb, n}  B^3 (\log B)^{3\alpha - 3}}{s_1^n s_2^n s_3^n} (1 + O((\log B)^{-\delta})).
\end{multline*}
After having done this, we may complete the sum over $s_1, s_2, s_3$ to infinity again with an acceptable error. It thus remains to calculate
$$
\sum_{\substack{\bs \in \N^3 }} \frac{1}{s_1^n s_2^n s_3^n} \sum_{\mathbf{b} \bmod M \text{ admissible}} C_{\bs, \bb, n}.
$$
Writing $C_{\bs, n, p}$ for the factors in the Euler product from Theorem \ref{thm: Main term wrap-up}, we may factorise the constant $C_{\bs, \bb, n}$ into its corresponding pieces
$$
\frac{\delta_\infty}{\Gamma(\alpha)^3} \sum_{\mathbf{s} \in \N^3} \frac{\prod_p C_{\bs, n, p}}{s_1^n s_2^n s_3^n} \sum_{\ss{\mathbf{b} \bmod M \\
(\bs, \bb)\text{ admissible}}} \l(\f{1}{\prod_{i = 1}^3\phi(M_i)\gcd(b_i,M)} \r),
$$
where $\delta_\infty$ is given in (\ref{def delta infinity}). For all $p\in S(n)$, we have $v_p(M) \geq n$. Since admissible $\bb$ must be equivalent modulo $M$ to some $\ba$ with each $a_i$ being $n$-powerfree, this implies that $v_p(b_i) < n$ for all $p \in S(n)$ and all $i$. Using multiplicativity of $\phi$ together with the fact that $\phi(p^r) = p^{r-1}\phi(p)$ for all $r\geq 1$, we deduce that 
\[
\l(\f{1}{\prod_{i = 1}^3\phi(M_i)\gcd(b_i,M)} \r) = \f{1}{\phi(M)^3}.
\]
Thus we get
$$
\frac{\delta_{\infty}}{\Gamma(\alpha)^3 \phi(M)^3} \sum_{\substack{\bs \in \N^3}} \frac{\prod_p C_{\bs, n, p}}{s_1^n s_2^n s_3^n} \sum_{\substack{\mathbf{b} \bmod M\\ (\bs,\bb) \textrm{ admissible}}} 1.
$$
By multiplicativity we may split the sum over $s_1, s_2, s_3$ into pieces, so that the original sum
$$
\sum_{\substack{\bs \in \N^3}} \frac{\prod_p C_{\bs, n, p}}{s_1^n s_2^n s_3^n} \sum_{\substack{\mathbf{b} \bmod M\\ (\bs,\bb) \textrm{ admissible}}} 1
$$
equals
\begin{align}
\label{eGoodBadSplit}
\left(\sum_{\substack{\bs \in \N^3 \\ \gcd(s_1, s_2, s_3) = 1 \\ p \mid s_1s_2s_3 \Rightarrow p \not \in S(n)}} \frac{\prod_p C_{\bs, n, p}}{s_1^n s_2^n s_3^n}\right) \times \left(\sum_{\substack{\mathbf{b} \bmod M \text{ admissible} }} \sum_{\substack{\bs \in \N^3 \\ \gcd(s_1b_1, s_2b_2, s_3b_3,M) = 1 \\ p \mid s_1s_2s_3 \Rightarrow p \in S(n)}} \frac{1}{s_1^n s_2^n s_3^n}\right).
\end{align}
We will evaluate the two sums individually, starting with the first sum. 

\subsubsection{The Euler product for large primes}
We will now treat the first sum in equation \eqref{eGoodBadSplit}, i.e.
\begin{align}
\label{eGoodSplit}
\sum_{\substack{\bs \in \N^3 \\ \gcd(s_1, s_2, s_3) = 1 \\ p \mid s_1s_2s_3 \Rightarrow p \not \in S(n)}} \frac{\prod_p C_{\bs, n, p}}{s_1^n s_2^n s_3^n}.
\end{align}
Defining
\begin{align*}
E^{\text{univ}}_{n, p} &:= 1 + \f{1}{\gcd(n,p-1)} \l(\f{3(p^{-1}-p^{-n})}{1-p^{-1}} + \f{3(p^{-2}-p^{-2n})}{1-p^{-2}}\r), \\
C^{\text{univ}}_n &:= \prod_p E^{\text{univ}}_{n, p} \l(1-\f{1}{p}\r)^{3 \alpha}, \\
E'_{\bs, n, p} &:= 1 + \f{1}{\gcd(n,p-1)} \l(\f{j_1(p^{-1}-p^{-n})}{1-p^{-1}} + \f{j_2(p^{-2}-p^{-2n})}{1-p^{-2}}\r),
\end{align*}
where $j_1, j_2$ depend on $\bs$ as in Theorem \ref{thm: Main term wrap-up}, we obtain
$$
\sum_{\substack{\bs \in \N^3 \\ \gcd(s_1, s_2, s_3) = 1 \\ p \mid s_1s_2s_3 \Rightarrow p \not \in S(n)}} \frac{\prod_p C_{\bs, n, p}}{s_1^n s_2^n s_3^n} = C^{\text{univ}}_n \sum_{\substack{\bs \in \N^3 \\ \gcd(s_1, s_2, s_3) = 1 \\ p \mid s_1s_2s_3 \Rightarrow p \not \in S(n)}} \frac{1}{s_1^n s_2^n s_3^n} \prod_{\substack{p \not \in S(n) \\ p \mid s_1s_2s_3}} \frac{E'_{\mathbf{s}, n, p}}{E^{\text{univ}}_{n, p}}.
$$
Then we calculate
$$
\sum_{\substack{\bs \in \N^3 \\ \gcd(s_1, s_2, s_3) = 1 \\ p \mid s_1s_2s_3 \Rightarrow p \not \in S(n)}} \frac{1}{s_1^n s_2^n s_3^n} \prod_{\substack{p \not \in S(n) \\ p \mid s_1s_2s_3}} \frac{E'_{\mathbf{s}, n, p}}{E^{\text{univ}}_{n, p}} = \prod_{p \not \in S(n)} \left(\sum_{\substack{(i, j, k) \in \Z_{\geq 0} \\ \min(i, j, k) = 0}} \frac{E'_{(p^{i}, p^{j}, p^{k}), n, p}}{E^{\text{univ}}_{n, p} p^{n(i + j + k)}}\right)
$$
by multiplicativity. The sum over all $(i, j, k)$ splits as a disjoint union over three cases
\begin{itemize}
\item[(i)] $(i, j, k) = (0, 0, 0)$,
\item[(ii)] $i > 0$ and $j = k = 0$ (and the permuted versions),
\item[(iii)] $i, j > 0$ and $k = 0$ (and the permuted versions).
\end{itemize}
For each case we finally calculate that the contribution is
\begin{align*}
\begin{cases}
1 &\text{in case (i)}, \\
\frac{3}{p^n - 1} \times \frac{1 + \f{1}{\gcd(n,p-1)} \l(\f{3(p^{-1}-p^{-n})}{1-p^{-1}} + \f{2(p^{-2}-p^{-2n})}{1-p^{-2}}\r)}{E^{\text{univ}}_{n, p}} &\text{in case (ii)}, \\
\frac{3}{(p^n - 1)^2} \times \frac{1 + \f{1}{\gcd(n,p-1)} \l(\f{2(p^{-1}-p^{-n})}{1-p^{-1}} + \f{1(p^{-2}-p^{-2n})}{1-p^{-2}}\r)}{E^{\text{univ}}_{n, p}} &\text{in case (iii)}.
\end{cases}
\end{align*}
To summarise our computations, we introduce the notations
\begin{gather*}
F_1(n, p) := \frac{1 - p^{-3n}}{(1 - p^{-n})^3} \\ 
F_2(n, p) := \f{3 (1 - p^{-2n})(p^{-1}-p^{-n})}{\gcd(n,p-1) (1 - p^{-1}) (1 - p^{-n})^3} \\ 
F_3(n, p) := \f{3 (p^{-2}-p^{-2n})}{\gcd(n,p-1) (1 - p^{-n})^2 (1-p^{-2})}.
\end{gather*}
Adding up the contributions from the three cases, we have shown that equation \eqref{eGoodSplit} equals
$$
\sum_{\substack{\bs \in \N^3 \\ \gcd(s_1, s_2, s_3) = 1 \\ p \mid s_1s_2s_3 \Rightarrow p \not \in S(n)}} \hspace{-0.3cm} \frac{\prod_p C_{\bs, n, p}}{s_1^n s_2^n s_3^n} = \prod_{p \in S(n)} \left(1 - \frac{1}{p}\right)^{3 \alpha} \prod_{p \not \in S(n)} \left(F_1(n, p) + F_2(n, p) + F_3(n, p)\right) \left(1 - \frac{1}{p}\right)^{3 \alpha}.
$$

\subsubsection{The Euler product for small primes}
Recall that the second sum from \eqref{eGoodBadSplit} is
$$
\sum_{\substack{\mathbf{b} \bmod M \text{ admissible} }} \sum_{\substack{\bs \in \N^3 \\ \gcd(s_1b_1, s_2b_2, s_3b_3,M) = 1\\ p \mid s_1s_2s_3 \Rightarrow p \in S(n)}} \frac{1}{s_1^n s_2^n s_3^n}.
$$
Applying the Chinese remainder theorem, we see that this equals
$$
\prod_{p \in S(n)} \left(\sum_{\substack{\mathbf{b} \bmod p^{v_p(M)} \text{ admissible}}} \sum_{\substack{i, j, k \geq 0 \\ \gcd(p^i b_1, p^j b_2, p^k b_3, p^{v_p(M)}) = 1}} \frac{1}{p^{n(i + j + k)}}\right).
$$
Partitioning $\mathbf{b}$ according to the $p$-adic valuations and using the identities \eqref{eGeo1}, \eqref{eGeo2}, \eqref{eGeo3} simplifies the product to
$$
\prod_{p \in S(n)} \left(\frac{1 - p^{-3n}}{(1 - p^{-n})^3} m_1(p) + \frac{3(1 - p^{-2n})}{(1 - p^{-n})^3} m_2(p) + \frac{3}{(1 - p^{-n})^2} m_3(p)\right),
$$
where $m_1(p), m_2(p), m_3(p)$ are 
\begin{equation*}
    \begin{split}
        m_1(p) &:= \# \left\{(a_1, a_2, a_3) \in (\Z_p/ n^2 p^n\Z_p)^3: \begin{array}{c}
             p \nmid a_1,a_2,a_3,   \\
             a_1 t_1^n + a_2 t_2^n + a_3 t_3^n \text{ soluble in } \Z_p/n^2p^n\Z_p
        \end{array}\right\} \\
        m_2(p) &:= \# \left\{(a_1, a_2, a_3) \in (\Z_p/ n^2 p^n\Z_p)^3: \begin{array}{c}
             0 < v_p(a_1) < n, p \nmid a_2,a_3,   \\
             a_1 t_1^n + a_2 t_2^n + a_3 t_3^n \text{ soluble in } \Z_p/n^2p^n\Z_p
        \end{array}\right\}  \\
        m_3(p) &:= \# \left\{(a_1, a_2, a_3) \in (\Z_p/ n^2 p^n\Z_p)^3: \begin{array}{c}
              0 < v_p(a_1) = v_p(a_2) < n, p \nmid a_3,   \\
             a_1 t_1^n + a_2 t_2^n + a_3 t_3^n \text{ soluble in } \Z_p/n^2p^n\Z_p
        \end{array}\right\}.
    \end{split}
\end{equation*}
We will revisit the quantities $m_1(p), m_2(p), m_3(p)$ once more in Lemma \ref{lem:local_densities}.

\subsection{Conclusion}
We summarise our work in the following theorem. 

\begin{theorem}\label{thm:main summary}
We have
$$
N(\mathbf{B}) \sim \frac{(1 + \mathbf{1}_{n \equiv 0 \bmod 2}) \delta_{\infty} B^3 (\log B)^{3\alpha - 3}}{\Gamma(\alpha)^3 \phi(M)^3} \times E_1 \times E_2
$$
with
$$
E_1 := \prod_{p \in S(n)} \left(\frac{1 - p^{-3n}}{(1 - p^{-n})^3} m_1(p) + \frac{3(1 - p^{-2n})}{(1 - p^{-n})^3} m_2(p) + \frac{3}{(1 - p^{-n})^2} m_3(p)\right) \left(1 - \frac{1}{p}\right)^{3 \alpha}
$$
and
$$
E_2 := \prod_{p \not \in S(n)} \left(F_1(n, p) + F_2(n, p) + F_3(n, p)\right) \left(1 - \frac{1}{p}\right)^{3 \alpha}.
$$
\end{theorem}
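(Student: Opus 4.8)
The plan is to assemble the computations of the preceding subsections into the claimed asymptotic. I would start from the decomposition
\[
N(\mathbf B)=\sum_{\substack{\bs\in\N^{3}\\\gcd(s_1,s_2,s_3)=1}}N_{\mathrm f}\!\Bigl(\Bigl(\tfrac{B}{s_1^{n}},\tfrac{B}{s_2^{n}},\tfrac{B}{s_3^{n}}\Bigr);\bs\Bigr)=\sum_{\bs\in\N^{3}}\ \sum_{\substack{\bb\bmod M\\(\bs,\bb)\text{ admissible}}}N_{\mathrm f}\!\Bigl(\Bigl(\tfrac{B}{s_1^{n}},\tfrac{B}{s_2^{n}},\tfrac{B}{s_3^{n}}\Bigr);\bs,\bb,M\Bigr),
\]
obtained by writing $a_i=s_i^{n}a_i'$ with $a_i'$ being $n$-powerfree and $\gcd(s_1,s_2,s_3)=1$, and then splitting into residue classes $\bb\bmod M$ so that solubility at the primes of $S(n)$ is encoded in admissibility via \Cref{prop:Hensel_general}. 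The trivial bound $N_{\mathrm f}((B_1,B_2,B_3);\bs,\bb,M)\ll B_1B_2B_3$ then lets me truncate to $s_1,s_2,s_3\le(\log B)^{C}$ for a suitable $C=C(n)$, the tail being $\ll B^{3}(\log B)^{-C(n-1)}$.

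Next I would apply \Cref{prop:main term isolated}, \Cref{thm:end of character sums} and \Cref{thm: Main term wrap-up} to each of the $O((\log B)^{3C})$ surviving terms. This produces an asymptotic
\[
N_{\mathrm f}\!\Bigl(\Bigl(\tfrac{B}{s_1^{n}},\tfrac{B}{s_2^{n}},\tfrac{B}{s_3^{n}}\Bigr);\bs,\bb,M\Bigr)=\widetilde{C}_{\bs,\bb,n}\,\frac{B^{3}(\log B)^{3\alpha-3}}{s_1^{n}s_2^{n}s_3^{n}}\bigl(1+O((\log B)^{-\delta})\bigr),
\]
with $\delta=\delta(n)>0$ and implied constant uniform over $(\bs,\bb)$, where $\widetilde{C}_{\bs,\bb,n}$ combines the factor $1+\mathbf{1}_{\mathrm{even}}(n)$ of \Cref{thm:end of character sums} (coming from $N_{\mathrm f}^{\mathcal M_1}=N_{\mathrm f}^{\mathcal M_2}$) with the explicit leading constant of \Cref{thm: Main term wrap-up}. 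Summing over the truncated ranges and completing the $\bs$-sum back to $\N^{3}$ with acceptable error reduces $N(\mathbf B)$, up to the factor $(1+\mathbf{1}_{\mathrm{even}}(n))B^{3}(\log B)^{3\alpha-3}$, to evaluating $\sum_{\bs\in\N^{3}}(s_1s_2s_3)^{-n}\sum_{(\bs,\bb)\text{ adm.}}C_{\bs,\bb,n}$ with $C_{\bs,\bb,n}$ the constant of \Cref{thm: Main term wrap-up}. Inserting its explicit shape and using $v_p(b_i)<n=v_p(M)$ for $p\in S(n)$ together with multiplicativity of $\phi$ to get $\prod_{i=1}^{3}\phi(M_i)\gcd(b_i,M)=\phi(M)^{3}$ for admissible $\bb$, this becomes $\dfrac{\delta_\infty}{\Gamma(\alpha)^{3}\phi(M)^{3}}\sum_{\bs}(s_1s_2s_3)^{-n}\prod_pC_{\bs,n,p}\cdot\#\{\bb\bmod M:(\bs,\bb)\text{ adm.}\}$, with $\delta_\infty$ as in \eqref{def delta infinity}.

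Finally I would split this double sum as in \eqref{eGoodBadSplit} according to whether the primes dividing $s_1s_2s_3$ lie in $S(n)$, which factors it by the Chinese remainder theorem. The factor supported outside $S(n)$ is \eqref{eGoodSplit}, evaluated in the subsection ``The Euler product for large primes'' as $\prod_{p\in S(n)}(1-\tfrac1p)^{3\alpha}$ times $E_2$; the factor supported on $S(n)$ is evaluated in ``The Euler product for small primes'' as $E_1$ divided by $\prod_{p\in S(n)}(1-\tfrac1p)^{3\alpha}$. The cancelling powers $\prod_{p\in S(n)}(1-\tfrac1p)^{\pm3\alpha}$ leave precisely $E_1\times E_2$, so collecting everything gives the stated formula. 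The main points needing care are the uniformity of the error term in \Cref{thm: Main term wrap-up} as $(\bs,\bb)$ range over the truncated set — so that adding up polynomially many such estimates still yields a genuine power-of-$\log$ saving — and the bookkeeping of the constants $\delta_\infty$, $1+\mathbf{1}_{\mathrm{even}}(n)$ and the powers of $\phi(M)$; everything else is the formal reorganisation of the Euler products already carried out above.
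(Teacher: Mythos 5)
Your proposal follows the paper's own argument essentially step for step: the same decomposition of $N(\mathbf B)$ over $\bs$ and admissible $\bb \bmod M$, the same truncation of the $\bs$-sum via the trivial bound, the same application of \Cref{thm: Main term wrap-up} with the $\phi(M)^3$ simplification, and the same factorisation \eqref{eGoodBadSplit} into the large-prime and small-prime Euler products whose $\prod_{p\in S(n)}(1-\tfrac1p)^{3\alpha}$ factors cancel to leave $E_1\times E_2$. This matches the paper's proof, so no further comment is needed.
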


\noindent This is immediately verified to be equivalent to \Cref{tMain}.
\section{Comparison with Loughran--Rome--Sofos}
\label{sLRS}
In this last section we will show that Theorems \ref{tMain} and \ref{thm:leadingconstant} agree with the prediction of Loughran--Rome--Sofos. Let $\mathcal{X}_n \subset \Proj^2_{a_1, a_2, a_3} \times \Proj^2_{t_1, t_2, t_3}$ be the projective variety defined by the universal generalised Fermat equation
\begin{equation}
\label{eq:universal fermat equation}
a_1 t_1^n + a_2 t_2^n + a_3 t_3^n = 0.
\end{equation}
The Jacobian criterion shows that this variety is smooth and thus geometrically integral.  Let $f_n: \mathcal{X}_n \to \Proj^2_{a_1, a_2, a_3}$ be the projection on the first factor. The generic fiber of $f_n$ is the geometrically integral plane curve over $\Q(a_1/a_3, a_2/a_3)$ given by dividing \eqref{eq:universal fermat equation} by $a_3$. The assumptions at the start of \cite{LRS}*{\S3} are thus satisfied for $f_n$, and so \cite{LRS}*{Conjecture 3.8, see also \S3.7.1} makes the following prediction.

\begin{conjecture}[]\label{conj:LRS for our setting}
Let $n > 1$ be an integer, and write $X:=\PP^2_{a_1,a_2,a_3}$. Then as $B\to\infty$
\[
\# \set{[a_1:a_2:a_3]\in\PP^2_\QQ : \substack{H([a_1:a_2:a_3]) \leq B, \\ a_1t_1^n+a_2t_2^n+a_3t_3^n=0\textnormal{ has a} \\\textnormal{non-zero solution everywhere-locally}}}\sim \widetilde{C}_n B (\log B)^{-\Delta(f_n)},
\]
where the constant $\widetilde{C}_n > 0$ is given explicitly by
\[\widetilde{C}_n:=\frac{\alpha^*(X)\abs{\Br_{\textnormal{sub}}(X,f_n)/\Br(\QQ)}\tau_{f_n}\braces{\braces{\prod_{v}f_n(\cX_n(\QQ_v))}^{\Br_{\textnormal{sub}}(X,f_n)}}}
{\prod_{D\in X^{(1)}}\Gamma(\delta_D(f_n))}\prod_{D\in X^{(1)}}\eta(D)^{1-\delta_D(f_n)},\]
and the remaining notations are defined in \cite{LRS}.
\end{conjecture}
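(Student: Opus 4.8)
The plan is to unwind each ingredient of the Loughran--Rome--Sofos formula for $f_n:\mathcal{X}_n\to\PP^2$ and match it, term by term, with the constant produced in \Cref{thm:main summary}. First I would reconcile the two counting problems: a point $[a_1:a_2:a_3]\in\PP^2(\QQ)$ corresponds to exactly two primitive integer triples $\pm(a_1,a_2,a_3)$ with $\max_i|a_i|$ controlled by the naive height, everywhere-local solubility of the fibre is invariant under this correspondence, and the anticanonical height on $\PP^2$ (used implicitly in \Cref{conj:LRS for our setting}) is the cube of the naive one. Hence, after bookkeeping the resulting factor $2$ and elementary powers of $3$, \Cref{conj:LRS for our setting} is equivalent to a statement about $N(\bB)$ with $\bB=(B,B,B)$, so it suffices to check equality of leading constants.

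Next I would compute the geometric invariants of $f_n$. The discriminant locus of $f_n$ is exactly $D_1\cup D_2\cup D_3$ with $D_i=\{a_i=0\}\subset\PP^2$, since the plane curve $a_1t_1^n+a_2t_2^n+a_3t_3^n=0$ is smooth precisely when $a_1a_2a_3\neq0$; thus $\delta_D(f_n)=1$ for every other prime divisor $D$. Over the generic point of $D_1$ the fibre is the union of the $n$ distinct lines through $[1:0:0]$ defined by $t_2^n=-(a_3/a_2)t_3^n$, whose splitting field over $\QQ(a_2/a_3)$ has Galois group $\ZZ/n\rtimes(\ZZ/n)^{\times}$ of order $n\phi(n)$; this group acts on the $n$ lines through the affine transformations $k\mapsto rk+b$ of $\ZZ/n\ZZ$ (with $b\in\ZZ/n\ZZ$, $r\in(\ZZ/n\ZZ)^{\times}$), and such a transformation fixes a line precisely when $\gcd(n,r-1)\mid b$, so the proportion of elements fixing a line of multiplicity one is $\frac{1}{n\phi(n)}\sum_{r\in(\ZZ/n\ZZ)^{\times}}\frac{n}{\gcd(n,r-1)}=\alpha_n$. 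Therefore $\delta_{D_i}(f_n)=\alpha_n$ for $i=1,2,3$, which gives $\Delta(f_n)=\sum_D(1-\delta_D(f_n))=3-3\alpha_n$ (matching the exponent of $\log B$) and $\prod_{D}\Gamma(\delta_D(f_n))^{-1}=\Gamma(\alpha_n)^{-3}$. I would then identify $\Br_{\textnormal{sub}}(X,f_n)/\Br(\QQ)$: trivial for $n$ odd, and for $n$ even generated by the quaternion class $(-a_2/a_1,-a_3/a_1)$ (equivalently, the obstruction to the associated conic $a_1X^2+a_2Y^2+a_3Z^2=0$), so that $|\Br_{\textnormal{sub}}(X,f_n)/\Br(\QQ)|=1+\mathbf{1}_{\textup{even}}(n)$; this is exactly where Hilbert reciprocity enters, as in the proof of \Cref{thm:end of character sums}, showing that the associated Brauer--Manin condition is vacuous on the everywhere-locally soluble locus.

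Finally I would evaluate the Tamagawa-type measure $\tau_{f_n}$ of the Brauer--Manin set. Since the obstruction is vacuous, this measure coincides with that of the full adelic product $\prod_v f_n(\mathcal{X}_n(\QQ_v))$, which factors as an archimedean density times a product of $p$-adic densities; by \Cref{lem:RealSolubility} the archimedean factor is $\#(\{\pm1\}^3\cap\mathcal{D}(n))=\delta_\infty(n)$, and after inserting the LRS convergence factors $\eta(D)^{1-\delta_D(f_n)}$ together with $\alpha^*(\PP^2)$, the factor at a finite prime $p$ becomes $\delta_p(n)(1-1/p)^{3\alpha_n-3}$, which matches \Cref{thm:leadingconstant}; here I would also invoke \Cref{intro:prop:explicitdensities} to make the comparison through the explicit expressions in the $m_i(p)$. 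Assembling these computations and comparing with \Cref{thm:main summary} then yields the equality $\widetilde{C}_n=\tfrac{1}{2}3^{\Delta(f_n)}C_n$ that the height normalisation demands. The main obstacle is precisely this last step: carefully reconciling the abstract normalisations of \cite{LRS}*{\S3} --- the choice of local measures, the convergence factors $\eta(D)$ and the constant $\alpha^*(X)$ --- with the concrete Euler product $\tfrac{(1+\mathbf{1}_{\textup{even}}(n))\delta_\infty(n)}{\Gamma(\alpha_n)^3}\prod_p\delta_p(n)(1-1/p)^{3\alpha_n-3}$, and verifying that the divisors of $n$ contribute no further vertical Brauer classes beyond the quadratic one.
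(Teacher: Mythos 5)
Your proposal is correct and follows essentially the same route as the paper's Section~\ref{sLRS}: the same computation of $\delta_{\{a_i=0\}}(f_n)=\alpha_n$ via the affine action $k\mapsto rk+b$ on the $n$ components of the degenerate fibre, the same identification of $\Br_{\textnormal{sub}}$ with the quaternion class killed by Hilbert reciprocity, the same local Tamagawa densities, and the same final bookkeeping $\widetilde{C}_n=\tfrac{1}{2}3^{\Delta(f_n)}C_n$ reconciling the anticanonical height and the $1$-to-$2$ correspondence. The only cosmetic imprecision is that the archimedean Tamagawa factor is $\tfrac{3}{2}\delta_\infty(n)$ rather than $\delta_\infty(n)$ (the $\tfrac32$ coming from the Leray-measure normalisation on $\PP^2$), which you implicitly absorb into the normalisation step you flag as the main obstacle.
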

The rest of this section will be concerned with verifying that our results Theorems \ref{tMain} and \ref{thm:leadingconstant} prove this prediction.

\subsection{Computing terms}
We now determine the various invariants present in \Cref{conj:LRS for our setting}.

\subsubsection{Delta invariants}
The first step is to compute $\delta_D(f_n)$, as defined in \cite{LRS}*{Def.~3.2}, for all divisors $D \subset \Proj^2$. 
\begin{lemma}\label{lem:computation of delta_D}
We have 
\[\delta_D(f_n)=\begin{cases}
    \alpha_n&\textnormal{if }D=\set{a_i=0}\textnormal{ for some }i\\
    1&\textnormal{else}
\end{cases}\]
where 
\[
\alpha_n := \frac{1}{\varphi(n)} \sum_{r \in (\Z / n \Z)^{\times}} \frac{1}{\gcd(n, r - 1)}.
\]
\end{lemma}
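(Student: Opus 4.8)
The plan is to read $\delta_D(f_n)$ off directly from its definition in \cite{LRS}*{Def.~3.2}, i.e.\ as the proportion of those elements of a finite quotient of $\Gamma_{\kappa(\eta_D)}$ through which the Galois action on the multiplicity-one components of the fibre of $f_n$ over the generic point $\eta_D$ of $D$ factors that stabilise at least one such component, after describing these fibres explicitly.

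First I would deal with all $D$ other than the three coordinate lines. By the Jacobian criterion the fibre of $f_n$ over $[a_1:a_2:a_3]$ is smooth unless some $a_i=0$, so the discriminant locus of $f_n$ is exactly $\{a_1=0\}\cup\{a_2=0\}\cup\{a_3=0\}$. If $D$ is not one of these lines, all $a_i$ are units in $\kappa(\eta_D)$ and the fibre over $\eta_D$ is a smooth plane curve of degree $n$, hence geometrically integral; it therefore has a unique component, of multiplicity one, fixed by the whole Galois group, and $\delta_D(f_n)=1$.

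It remains to treat $D=\{a_3=0\}$; the cases $\{a_1=0\}$ and $\{a_2=0\}$ then follow from the automorphism of $\mathcal{X}_n$ over $\PP^2$ simultaneously permuting the $a_i$ and the $t_i$. Here $\kappa(\eta_D)=\QQ(u)$ with $u=a_2/a_1$, and the fibre $t_1^n+ut_2^n=0$ in $\PP^2$ is, over a separable closure, the reduced union of the $n$ hyperplanes $H_j:\ t_1=\zeta_n^{\,j}\omega t_2$ ($j\in\ZZ/n\ZZ$), each of multiplicity one, where $\omega$ is a fixed $n$-th root of $-u$. The Galois action factors through $G:=\Gal(L/\QQ(u))$ with $L=\QQ(u)(\zeta_n,\omega)$. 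I would show $[\QQ(\zeta_n,u):\QQ(u)]=\phi(n)$ since $u$ is transcendental, and $[L:\QQ(\zeta_n,u)]=n$ because $-u$ has a simple zero at the place $u=0$, so it is not a $p$-th power for any prime $p\mid n$ (nor, when $4\mid n$, of the form $-4(\,\cdot\,)^4$) in $\QQ(\zeta_n,u)$, whence $x^n+u$ is irreducible by the classical criterion for $x^n-a$. Therefore $|G|=n\phi(n)$ and $G\cong\ZZ/n\ZZ\rtimes(\ZZ/n\ZZ)^\times$; tracing the $G$-action on $\{H_j\}$ shows that $\tau^b\sigma_r$, where $\tau:\omega\mapsto\zeta_n\omega$ and $\sigma_r:\zeta_n\mapsto\zeta_n^r$, $\omega\mapsto\omega$, acts on $\ZZ/n\ZZ$ by $j\mapsto rj+b$, the affine action.

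Finally, by that definition $\delta_D(f_n)=\tfrac{1}{n\phi(n)}\#\{(b,r):\exists\,j\in\ZZ/n\ZZ,\ rj+b=j\}$. For each $r$ the admissible $b$ are precisely those in $(r-1)\ZZ/n\ZZ$, a set of cardinality $n/\gcd(n,r-1)$, so $\delta_D(f_n)=\tfrac{1}{\phi(n)}\sum_{r\in(\ZZ/n\ZZ)^\times}\tfrac{1}{\gcd(n,r-1)}=\alpha_n$, as claimed. The part needing real care is the middle step — pinning down $G$ (especially the degree-$n$ Kummer computation and the affine form of the $G$-action on components) and matching it against the precise wording of \cite{LRS}*{Def.~3.2}; the rest is a short computation.
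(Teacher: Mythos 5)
Your proposal is correct and follows essentially the same route as the paper: reduce to $D=\{a_3=0\}$ by symmetry, split the generic fibre into $n$ lines indexed by $\Z/n\Z$, identify the Galois action with the affine action $j\mapsto rj+b$ of $\Z/n\Z\rtimes(\Z/n\Z)^{\times}$, and count the pairs $(b,r)$ fixing some index to get $\alpha_n$. The only difference is that you spell out the Kummer degree computation $[\Q(\zeta_n,u,\omega):\Q(\zeta_n,u)]=n$ (via the valuation at $u=0$ and the irreducibility criterion for $x^n-a$), which the paper asserts without comment.
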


\begin{proof}
The Jacobian criterion shows that $f_n$ is smooth over the locus $\{a_1 a_2 a_3 \neq 0\}$ so the only curves $D \subset \Proj^2$ for which $\delta_D(f_n) \neq 1$ are $\{a_1 = 0\}, \{a_2 = 0\}$ and $\{a_3 = 0\}$.  By symmetry it suffices to compute $\delta_{\{a_3 = 0\}}(f_n)$. Let $A = -\frac{a_1}{a_2}$, the fiber $f_n^{-1}(\{a_3 = 0\})$ over the generic point of $\{a_3 = 0\}$ is the $\Q(A)$ variety defined by the equation $A t_1^n = t_2^n$.

Let $\zeta_n \in \overline{\Q}$ be a primitive $n$-th root of unity and $\sqrt[n]{A} \in \overline{\Q(A)}$ an $n$-th root of $A$. Over the Galois extension $\Q(\zeta_n, \sqrt[n]{A})/\Q(A)$ the variety $f_n^{-1}(\{a_3 = 0\})$ decomposes into the $n$ geometrically irreducible components 
\[
D_x := \{ \zeta^x \sqrt[n]{A} t_1 = t_2 \}
\] 
for $x \in \Z / n\Z$. 

Note that $\Gal(\QQ(\zeta_n,\sqrt[n]{A})/\QQ(A))=\set{\gamma_{r,q}:q\in\ZZ/n\ZZ\ r\in\ZZ/n\ZZ^\times}$ where $\gamma_{r,q}$ is the unique automorphism extending the map
\begin{align*}
    \sqrt[n]{A}&\mapsto \zeta_n^q\sqrt[n]{A}\\
    \zeta_n&\mapsto \zeta_n^r.
\end{align*}
Clearly, $\gamma_{r,q}$ sends $D_x$ to $D_{r x + q}$. Unfolding the definition of the $\delta$-invariant we see that
\[
\delta_{\{a_3 = 0\}}(f_n) = \frac{\#\{(q,r) \in \ZZ / n \ZZ \times (\ZZ / n \ZZ)^{\times} : \text{there exists } x \in \ZZ / n \ZZ \text{ such that } rx + q = x \}}{n \varphi(n)}.
\]
The equation $r x + q = x$ is equivalent to $(r - 1) x = -q$ and thus has a solution modulo $n$ if and only if $\gcd(n, r - 1)$ divides $q$. We thus have
\[
\delta_{\{a_3 = 0\}}(f_n) = \frac{1}{n \varphi(n)} \sum_{r \in (\Z / n \Z)^{\times}} \sum_{\substack{q \in \Z / n \Z \\ \gcd(n, r - 1)| q}} 1 = \frac{1}{\varphi(n)} \sum_{r \in (\Z / n \Z)^{\times}} \frac{1}{\gcd(n, r - 1)}=\alpha_n,
\]
as desired.
\end{proof}

An immediate corollary from the definition \cite{LRS}*{Def.~3.2} of $\Delta(f_n)$ is then the following.

\begin{corollary}
\label{cor:computation_of_Delta}
We have
\[
\Delta(f_n) := \sum_{i=1}^3\braces{1-\delta_{\{a_i=0\}}(f_n)} = 3(1 - \alpha_n).
\]
\end{corollary}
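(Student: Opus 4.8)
The plan is to deduce the corollary directly from \Cref{lem:computation of delta_D}. First I would recall that, by \cite{LRS}*{Def.~3.2}, the invariant $\Delta(f_n)$ is the sum $\sum_{D\in X^{(1)}}\braces{1-\delta_D(f_n)}$ taken over \emph{all} codimension one points $D$ of $X=\PP^2$. By \Cref{lem:computation of delta_D} we have $\delta_D(f_n)=1$ whenever $D$ is not one of the three coordinate lines $\set{a_i=0}$, and $\delta_{\set{a_i=0}}(f_n)=\alpha_n$ for each $i\in\set{1,2,3}$. Hence every term of the defining sum vanishes except the three coming from the coordinate lines, each of which contributes $1-\alpha_n$; summing gives $\Delta(f_n)=3(1-\alpha_n)$, which is precisely the asserted identity (the displayed middle expression being this same sum after discarding the vanishing terms).

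I expect no real obstacle here: all the substance is contained in \Cref{lem:computation of delta_D}, and the remaining step is the elementary observation that a sum of $1-\delta_D(f_n)$ over all prime divisors of $\PP^2$ collapses to a finite sum supported on the locus where $f_n$ fails to be smooth. The only point meriting a word of care is to confirm that no divisor other than $\set{a_1=0},\set{a_2=0},\set{a_3=0}$ contributes; this is already established inside the proof of \Cref{lem:computation of delta_D}, where the Jacobian criterion shows $f_n$ is smooth over $\set{a_1a_2a_3\neq 0}$.
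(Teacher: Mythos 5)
Your proposal is correct and matches the paper, which treats this as an immediate consequence of \Cref{lem:computation of delta_D} together with the definition of $\Delta(f_n)$ in \cite{LRS}*{Def.~3.2}: the sum over all prime divisors of $\PP^2$ collapses to the three coordinate lines, each contributing $1-\alpha_n$. No further comment is needed.
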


\subsubsection{Subordinate Brauer groups and obstructions}
An important invariant of $f_n$ appearing in the conjecture is the \emph{subordinate Brauer group} $\Br_{\text{sub}}(\Proj^2, f_n)$, see \cite{LRS}*{Def.~2.1}. The following lemma computes this group.

\begin{lemma}
\label{lem:subordinate_Brauer_group}
Let $f_n$ be as above.
\begin{itemize}
\item If $n$ is odd, then $\Br_{\emph{sub}}(\Proj^2, f_n) = \Br \Q$. 
\item If $n$ is even, then $\Br_{\emph{sub}}(\Proj^2, f_n)/ \Br \Q \cong \Z/ 2 \Z$ and it is generated by the quaternion algebra $(-a_1/a_3, -a_2/a_3)$.
\end{itemize}
\end{lemma}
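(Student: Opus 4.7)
The plan is to compute $\Br_{\textup{sub}}(\Proj^2, f_n)/\Br \Q$ by analysing residues at the singular divisors of $f_n$. By the Jacobian criterion, $f_n$ is smooth away from the three coordinate lines $\{a_i = 0\}$, so by the definition of the subordinate Brauer group in \cite{LRS}, every subordinate class $\alpha \in \Br k(\Proj^2)$ has trivial residue outside these divisors, and at each $D_i := \{a_i = 0\}$ the residue $\partial_{D_i}\alpha$ must become trivial when pulled back to the residue field of some irreducible component of the geometric fibre $f_n^{-1}(D_i)$. By symmetry it suffices to treat $D := \{a_3 = 0\}$, whose residue field is $\Q(A)$ with $A = -a_1/a_2$.

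From the proof of Lemma~\ref{lem:computation of delta_D}, the geometric components $D_x$ ($x \in \Z/n\Z$) of $f_n^{-1}(D)$ are defined over $\Q(A)(\zeta_n, \sqrt[n]{A})$; the stabilizer of $D_0$ in $G := \Gal(\Q(A)(\zeta_n, \sqrt[n]{A})/\Q(A))$ is $H := \{\gamma_{r,0} : r \in (\Z/n\Z)^\times\} \cong (\Z/n\Z)^\times$; and $G$ has multiplication rule $\gamma_{r_1, q_1}\gamma_{r_2, q_2} = \gamma_{r_1 r_2,\, q_1 + r_1 q_2}$. Hence the allowed residues at $D$ are exactly the characters of $G$ vanishing on $H$. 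The commutator identity $[\gamma_{r, 0}, \gamma_{1, t}] = \gamma_{1, t(r-1)}$ shows that $[G, G]$ is contained in the normal subgroup $\Z/n\Z$ of $G$ and is generated there by $\{r - 1 : r \in (\Z/n\Z)^\times\}$. For $n$ odd, the choice $r = -1$ gives $-2$, a unit modulo $n$, so $[G, G] = \Z/n\Z$ and $G^{\textup{ab}} \cong (\Z/n\Z)^\times$ onto which $H$ maps isomorphically; only the trivial character is allowed. For $n$ even every $r - 1$ is even, so $[G, G] = 2\Z/n\Z$ and $G^{\textup{ab}} \cong (\Z/n\Z)^\times \times \Z/2\Z$ with $H$ embedding into the first factor; there is then exactly one non-trivial allowed character, cutting out the quadratic subextension $\Q(A)(\sqrt{A})/\Q(A)$ and corresponding to the class of $A = -a_1/a_2$ in $\Q(A)^\times / \Q(A)^{\times 2}$.

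For $n$ odd the Faddeev exact sequence for $\Br k(\Proj^2)$ now forces every subordinate class into $\Br\Q$. For $n$ even, I will verify by a tame symbol calculation that the quaternion algebra $\beta := (-a_1/a_3, -a_2/a_3)$ has residue at $\{a_3 = 0\}$ equal to the class of $-a_2/a_1 \equiv -a_1/a_2 \pmod{\textup{squares}}$, matching the unique non-trivial allowed character; by symmetry the analogous identities hold at $\{a_1 = 0\}$ and $\{a_2 = 0\}$. Thus $\beta \in \Br_{\textup{sub}}(\Proj^2, f_n)$, and combining Faddeev with the fact that each allowed residue subgroup at $D_i$ has order at most $2$ shows that $\beta$ generates $\Br_{\textup{sub}}(\Proj^2, f_n)/\Br \Q$.

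The main obstacle will be the commutator computation in the semidirect product $G$ and the explicit identification of $G^{\textup{ab}}$ together with the image of $H$ therein; once these are settled, the remaining tame symbol and Faddeev arguments are routine.
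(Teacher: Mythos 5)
Your proof is correct, and it takes a genuinely different (more computational) route than the paper's. The paper applies \cite{LRS}*{Lemma 2.5} directly and then observes at the level of field theory that $\Q(a_j/a_k)(\sqrt[n]{-a_j/a_k})/\Q(a_j/a_k)$ has no nontrivial cyclic subextensions when $n$ is odd, and exactly one, namely the quadratic one, when $n$ is even; from there the even case is dispatched by noting $\Br_{\text{sub}}(\Proj^2,f_n) = \Br_{\text{sub}}(\Proj^2,f_2)$ and citing the $n=2$ computation in \cite{LRS}*{Lemma 4.11}. You instead make the allowed-residue group explicit by computing $[G,G]$ as the ideal generated by $\set{r-1 : r\in(\Z/n\Z)^\times}$ in $\Z/n\Z$ (so $[G,G]=\Z/n\Z$ for $n$ odd and $2\Z/n\Z$ for $n$ even), identify the image of $H$ in $G^{\textup{ab}}$, and then close the argument with a tame-symbol check that $(-a_1/a_3,-a_2/a_3)$ realises the unique nontrivial allowed residue at each $D_i$ together with Faddeev. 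Both are sound: your version is fully self-contained and makes the group theory behind "no nontrivial cyclic subextension" transparent; the paper's version is shorter by leaning on the already-established $n=2$ case. One small point worth spelling out in a final write-up is why a residue character vanishing on $\Gal(\overline{\Q(A)}/\Q(A)(\sqrt[n]{A}))$ necessarily factors through $G=\Gal(\Q(A)(\zeta_n,\sqrt[n]{A})/\Q(A))$: the normal closure of that subgroup contains $\Gal(\overline{\Q(A)}/\Q(A)(\zeta_n,\sqrt[n]{A}))$, so the character descends, and the statement "allowed residues are characters of $G$ killed on $H$" follows. This is implicit in your argument but deserves a sentence.
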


\begin{proof}
Let $U := \Proj^2 \setminus \{a_1 a_2 a_3 = 0\}$. The map $\mathcal{X}_n \times_{\Proj^2} U \to U$ has geometrically integral fibers so $\Br_{\text{sub}}(\Proj^2, f_n) \subset \Br U$ by Grothendieck purity, see \cite{Col21}*{Cor.~3.7.3}. Let $b \in \Br U$, by \cite{LRS}*{Lemma 2.5} one has $b \in \Br_{\text{sub}}(\Proj^2, f_n)$ if and only if for all $\{i, j, k\} = \{1, 2, 3\}$ we have
\[
\partial_{\{a_i = 0\}}(b) \in \ker\braces{H^1(\Q(a_j/a_k), \Q/\Z) \to H^1\braces{\Q\braces{\sqrt[n]{-a_j/a_k}}, \Q/\Z}}. 
\]
In other words, the character $\partial_{\{a_i = 0\}}(b) \in H^1(\Q(a_j/a_k), \Q/\Z)$ defines a cyclic extension contained in $\Q(\sqrt[n]{-a_j/a_k})$.

If $n$ is odd, then $\Q(\sqrt[n]{-a_j/a_k})$ contains no cyclic extensions of $\Q(a_j/a_k)$ so $\partial_{\{a_i = 0\}}(b) = 0$ for all $i$. This is equivalent to $b \in \Br \Proj^2$ by Grothendieck purity (\cite{Col21}*{Cor.~3.7.3}) and $\Br \Proj^2 = \Br \Q$ (\cite{Col21}*{Thm.~5.1.3}).

If $n$ is even, then the only cyclic extension contained in $\Q(\sqrt[n]{-a_j/a_k})$ is $\Q(\sqrt[2]{-a_j/a_k})$. We deduce that $\Br_{\text{sub}}(\Proj^2, f_n) = \Br_{\text{sub}}(\Proj^2, f_2)$. The lemma then follows from the computation of $\Br_{\text{sub}}(\Proj^2, f_2)$ in \cite{LRS}*{Lemma 4.11}.
\end{proof}

To compute the leading constant in general one needs to account for the subordinate Brauer--Manin obstruction as in \cite{LRS}*{\S3.7}. But for $f_n$ there is no subordinate Brauer--Manin obstruction.

\begin{corollary}
\label{cor:subordinate_obstruction}
There is no subordinate Brauer--Manin obstruction for $f_n$.  That is,
\[\prod_{v}f_n(\cX_n(\QQ_v))=\braces{\prod_{v}f_n(\cX_n(\QQ_v))}^{\Br_{\textnormal{sub}}(X,f_n)}.\]
\end{corollary}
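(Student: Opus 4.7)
The plan is to prove the stronger pointwise statement that for every place $v$ and every local point $a_v \in f_n(\mathcal{X}_n(\QQ_v))$, every class in $\Br_{\text{sub}}(X,f_n)$ evaluates to $0 \in \Br(\QQ_v)$. This immediately yields the corollary since the subordinate Brauer--Manin pairing then vanishes identically on $\prod_v f_n(\mathcal{X}_n(\QQ_v))$.

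The odd case is automatic by \Cref{lem:subordinate_Brauer_group}, since $\Br_{\text{sub}}(X,f_n) = \Br\QQ$ and constants evaluate trivially. For $n = 2m$ even, that lemma reduces the check to the quaternion algebra class $A = (-a_1/a_3, -a_2/a_3) \bmod \Br\QQ$. The key geometric reduction is as follows: the $m$-th power map on $\QQ_v$-points $\mathcal{X}_n(\QQ_v) \to \mathcal{X}_2(\QQ_v)$ sending $(a_1,a_2,a_3; t_1, t_2, t_3)$ to $(a_1,a_2,a_3; t_1^m, t_2^m, t_3^m)$ is well-defined (any $t_i \neq 0$ gives $t_i^m \neq 0$) and commutes with the projections $f_n$ and $f_2$. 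Hence $f_n(\mathcal{X}_n(\QQ_v)) \subseteq f_2(\mathcal{X}_2(\QQ_v))$, and since by \Cref{lem:subordinate_Brauer_group} the class $A$ already represents $\Br_{\text{sub}}(X,f_2)/\Br\QQ$, it suffices to prove its vanishing at every local point of $f_2(\mathcal{X}_2(\QQ_v))$.

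For a point $a_v = [a_1:a_2:a_3] \in f_2(\mathcal{X}_2(\QQ_v))$ with all $a_i \neq 0$, a nontrivial $\QQ_v$-solution of $a_1 u_1^2 + a_2 u_2^2 + a_3 u_3^2 = 0$ (say with $u_3 \neq 0$) yields the identity $(-a_1/a_3)(u_1/u_3)^2 + (-a_2/a_3)(u_2/u_3)^2 = 1$, exhibiting $1$ as a value of the norm form of $A$ and so splitting $A$ over $\QQ_v$. For boundary points in $\{a_1 a_2 a_3 = 0\}$, the subordinate property of $A$ guarantees that the local Brauer evaluation extends continuously from the smooth locus, so the vanishing persists. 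Alternatively, one can work directly with $f_n^* A \in \Br \mathcal{X}_n$ and its evaluation on $\mathcal{X}_n(\QQ_v)$, via the universal identity $(-a_1/a_3)(t_1/t_3)^{2m} + (-a_2/a_3)(t_2/t_3)^{2m} = 1$ (valid on the chart $a_3 t_3 \neq 0$) together with symmetric identities on the other standard charts.

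The main obstacle I expect lies in this final step: formalising the continuous extension of the Brauer evaluation at points where some $a_i = 0$, or equivalently verifying that the local chart arguments for $f_n^*A$ patch together consistently across the preimages of the divisors $\{a_i = 0\}$. This is handled in the LRS framework of \cite{LRS}*{\S3.7}, and the verification is the same one used in their treatment of the $n = 2$ case (see \cite{LRS}*{Lemma 4.11}), to which our problem has now been reduced.
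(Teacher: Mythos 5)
Your proposal is correct and takes essentially the same route as the paper: the odd case is vacuous, and for even $n$ the whole content is that $\QQ_v$-solubility of $a_1t_1^n+a_2t_2^n+a_3t_3^n=0$ forces solubility of the conic $a_1x_1^2+a_2x_2^2+a_3x_3^2=0$, whence the Hilbert symbol $(-a_1/a_3,-a_2/a_3)_v$ vanishes at every place, which is exactly the paper's computation. The additional apparatus you introduce (the $m$-th power map to $\cX_2$, the norm-form identity with the unnecessary restriction $u_3\neq 0$, and the continuity discussion along $\{a_1a_2a_3=0\}$, which is handled by how the subordinate pairing is set up in \cite{LRS}) is only a repackaging of that same argument.
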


\begin{proof}
There is nothing to prove when $n$ is odd so we may assume that $n$ is even, in this case let $b := (-a_1/a_3, -a_2/a_3)$ be the generator of $\Br_{\text{sub}}(\Proj^2, f_n)/ \Br \Q$. Let $v$ be a place of $\Q$ and $P_v = [a_{1}:a_{2}:a_{3}] \in f_n(\mathcal{X}_n(\Q_v))$.

The equation $a_1 t_1^n + a_2 t_2^n + a_3 t_3^n = 0$ thus has a solution in $\Q_v$ and since $n$ is even this implies that $a_1 x_1^2 + a_2 x_2^2+ a_3 x_3^2 = 0$ has a solution in $\Q_v$. It follows that 
$$
\text{inv}_v(b(P_v)) = (-a_1/a_3, -a_2/a_3)_v = 0. 
$$
Since the above holds for all $v$, we deduce that $f_n(\mathcal{X}_n(\A_{\Q}))^{\Br_{\text{sub}}(\Proj^2, f_n)} = f_n(\mathcal{X}_n(\A_{\Q}))$.
\end{proof}

\subsubsection{Local Tamagawa measures}
It remains to compute the Tamagawa volume. For each place $v$ let $\tau_v$ be the local Tamagawa measure on $\Proj^2(\Q_v)$ and $\mu_v$ the usual Haar measure on $\Q_v^3$ as in \cite{LRS}*{\S4.1}. Recall the definitions of $\delta_p(n)$ and $\delta_\infty(n)$ in \Cref{thm:leadingconstant}.

\begin{lemma} 
\label{lem:tamagawa_local_densities}
The following facts are true.
    \begin{enumerate} 
        \item $\tau_\infty(f_n(\mathcal{X}_n(\R))) = \frac{3}{2} \delta_\infty(n)$
        \item If $p$ is prime, then $\tau_p(f_n(\mathcal{X}_n(\Q_p))) = (1 - p^{-1})^{-1} \delta_p(n)$.
    \end{enumerate}
\end{lemma}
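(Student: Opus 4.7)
The strategy is to explicitly relate each local Tamagawa measure $\tau_v$ on $\PP^2(\Q_v)$ to the Haar measure $\mu_v$ on $\Q_v^3$ via the quotient map $\pi_v : \Q_v^3 \setminus \{0\} \to \PP^2(\Q_v)$, and then directly match the resulting volumes to $\delta_v(n)$. For a measurable $W \subseteq \PP^2(\Q_v)$, define
\[
C_p(W) := \{\ba \in \Z_p^3 : \max_i |a_i|_p = 1,\ [\ba] \in W\}, \quad C_\infty(W) := \{\ba \in \R^3 : \max_i |a_i| \leq 1,\ [\ba] \in W\}.
\]
The plan is first to establish
\[
\tau_p(W) = (1-p^{-1})^{-1}\mu_p(C_p(W)), \qquad \tau_\infty(W) = \tfrac{3}{2}\mu_\infty(C_\infty(W)),
\]
and then to verify that $\mu_v(C_v(f_n(\mathcal{X}_n(\Q_v)))) = \delta_v(n)$.

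For the non-archimedean case, $C_p(W)$ is a fundamental domain in $\Z_p^3 \setminus p\Z_p^3$ (the primitive integral vectors, which surject onto $\PP^2(\Q_p)$) for the scaling action of $\Z_p^\times$; the fibres of $\pi_p$ restricted here are precisely $\Z_p^\times$-orbits. Since $\mu_p(\Z_p^\times) = 1 - p^{-1}$, the formula follows from the standard description of $\tau_p$ as the pushforward of the normalised Haar measure coming from the model $\mathcal{O}(3)$ of the anticanonical line bundle. For the archimedean case, parametrise $\R^3 \setminus \{0\} \cong (0, \infty) \times S$ via $(t, s) \mapsto t \cdot s$, where $S := \{s \in \R^3 : \max_i |s_i| = 1\}$. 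In these coordinates the Lebesgue measure decomposes as $t^2\,dt\,d\sigma(s)$ for an induced measure $\sigma$ on $S$, so integration of $t^2$ over $[0,1]$ produces a factor $1/3$. Meanwhile, $\pi_\infty|_S : S \to \PP^2(\R)$ is a 2-to-1 map (since $[s] = [-s]$), so $\tau_\infty$ identifies with half the pushforward of $\sigma$, contributing a further factor $1/2$. Combining these yields the factor $3/2$ relating $\tau_\infty(W)$ to $\mu_\infty(C_\infty(W))$.

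It remains to match $\mu_v(C_v(f_n(\mathcal{X}_n(\Q_v))))$ with $\delta_v(n)$. For $v = p$, $[\ba] \in f_n(\mathcal{X}_n(\Q_p))$ precisely when $a_1t_1^n + a_2t_2^n + a_3t_3^n = 0$ has a non-trivial $\Q_p$-solution; clearing denominators this is equivalent to a non-trivial $\Z_p$-solution, so $\mu_p(C_p(f_n(\mathcal{X}_n(\Q_p))))$ coincides with the definition of $\delta_p(n)$ in \Cref{thm:leadingconstant}. For $v = \infty$, \Cref{lem:RealSolubility} (applied with constant coefficients) gives $C_\infty(f_n(\mathcal{X}_n(\R))) = [-1,1]^3 \cap \mathcal{D}(n)$, whose Lebesgue measure is $8$ when $n$ is odd (as then $\mathcal{D}(n) = \R^3$) and $8 - 2 = 6$ when $n$ is even (removing the two closed octants where all three coordinates share a sign), matching $\delta_\infty(n)$. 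The principal obstacle is unpacking the Loughran--Rome--Sofos definition of $\tau_v$ carefully enough to justify the formulae in the second paragraph, since these depend on the chosen model of $\mathcal{O}(3)$ on $\PP^2$ and the associated local normalisation conventions; once those are settled, the remainder is a direct computation.
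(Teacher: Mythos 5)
Your proposal is correct in its conclusions and in the matching of regions with the definitions of $\delta_p(n)$ and $\delta_\infty(n)$, but it takes a substantially longer route than the paper. The paper simply invokes \cite{LRS}*{Prop.\ 4.1}, which already supplies exactly the two formulae you try to re-derive in your second paragraph, namely
\[
\tau_p(W) = (1-p^{-1})^{-1}\mu_p\bigl(C_p(W)\bigr), \qquad
\tau_\infty(W) = \tfrac{3}{2}\,\mu_\infty\bigl(C_\infty(W)\bigr),
\]
so the entire content of your second paragraph is replaced in the paper by a one-line citation. Your re-derivation is an honest attempt at a self-contained argument, and the constants you obtain agree, but you correctly flag the weak point yourself: you never actually unwind the Loughran--Rome--Sofos normalisation of $\tau_v$ to check that your integral/quotient description matches their definition (which involves a model of $\mathcal{O}(3)$, a choice of adelic metric, and the associated local density). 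Until that is pinned down, the second paragraph is a plausibility argument rather than a proof; the paper avoids the issue entirely. One further small imprecision: you describe $C_p(W)$ as a ``fundamental domain'' in $\Z_p^3\setminus p\Z_p^3$ for the $\Z_p^\times$-action. That is not accurate --- $C_p(W)$ is the full $\Z_p^\times$-saturated preimage of $W$ among primitive vectors, not a section. What you want to say is that the restriction of $\pi_p$ to $\Z_p^3\setminus p\Z_p^3$ is a principal $\Z_p^\times$-bundle onto $\PP^2(\Q_p)$ with fibre measure $\mu_p(\Z_p^\times)=1-p^{-1}$, so pushing forward $\mu_p$ and dividing by $1-p^{-1}$ gives $\tau_p$. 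The matching $\mu_v\bigl(C_v(f_n(\mathcal{X}_n(\Q_v)))\bigr) = \delta_v(n)$ in your last paragraph is exactly what the paper does and is correct (including the $8$ versus $6$ computation at $\infty$).
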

\begin{proof}
By \cite{LRS}*{Prop. 4.1}, we have
\begin{align*}
\tau_{\infty}(f_n(\mathcal{X}_n(\R))) &= \frac{3}{2} \mu_{\infty}(\{(a_1, a_2, a_3) \in [-1, 1]^3 : a_1 t_1^n + a_2 t_2^n + a_3 t_3^n = 0 \text{ soluble in } \R\}) \\
&= 
\begin{cases}
\frac{3}{2}\mu_{\infty}([-1, 1]^3) = \frac{3}{2} \cdot 8 = 12 &\text{ if $n$ is odd} \\ 
\frac{3}{2}\mu_{\infty}\left([-1, 1]^3 \setminus \left([-1,0)^3 \cup (0, 1]^3 \right)\right) = \frac{3}{2} \cdot 6 = 9 &\text{ if $n$ is even.}
\end{cases}
\end{align*}
If $p$ is a prime, then let
\begin{equation}
\label{eq:locally_soluble_region}
U := \{(a_1, a_2, a_3) \in \Z_p^3 \setminus p \Z_p^3: a_1 t_1^n + a_2 t_2^n + a_3 t_3^n = 0 \text{ soluble in } \Q_p\}.
\end{equation}
By \cite{LRS}*{Prop. 4.1}, we also have $\tau_{p}(f_n(\mathcal{X}_n(\Q_p))) = (1 - p^{-1})^{-1} \mu_p(U)$ and $\mu_p(U) = \delta_p(n)$ by definition.
\end{proof}

Let us now prove Proposition \ref{intro:prop:explicitdensities}.

\begin{lemma}
\label{lem:local_densities}
The following facts hold.
\begin{enumerate}
\item Let $p$ be a prime and define
\begin{equation*}
    \begin{split}
        m_1 &:= \# \left\{(a_1, a_2, a_3) \in (\Z_p/ n^2 p^n\Z_p)^3: \begin{array}{c}
             p \nmid a_1,a_2,a_3,   \\
             a_1 t_1^n + a_2 t_2^n + a_3 t_3^n \emph{ soluble in } \Z_p/n^2p^n\Z_p
        \end{array}\right\} \\
        m_2 &:= \# \left\{(a_1, a_2, a_3) \in (\Z_p/ n^2 p^n\Z_p)^3: \begin{array}{c}
             0 < v_p(a_1) < n, p \nmid a_2,a_3,   \\
             a_1 t_1^n + a_2 t_2^n + a_3 t_3^n \emph{ soluble in } \Z_p/n^2p^n\Z_p
        \end{array}\right\}  \\
        m_3 &:= \# \left\{(a_1, a_2, a_3) \in (\Z_p/ n^2 p^n\Z_p)^3: \begin{array}{c}
              0 < v_p(a_1) = v_p(a_2) < n, p \nmid a_3,   \\
             a_1 t_1^n + a_2 t_2^n + a_3 t_3^n \emph{ soluble in } \Z_p/n^2p^n\Z_p
        \end{array}\right\}.
    \end{split}
\end{equation*}
We then have
\begin{equation}
\label{eq:Local_Tamagawa_measure}
    p^{6v_p(n) + 3n}\delta_p(n) = \frac{1 - p^{-3n}}{(1 - p^{-n})^3}m_1 + \frac{3(1 - p^{-2n})}{(1 - p^{-n})^3}m_2 + \frac{3}{(1 - p^{-n})^2} m_3.
\end{equation}

\item If $p \not \in S(n)$, then $\delta_p(n)$ equals
\begin{multline*}
\frac{(1 - p^{-1})^3(1 - p^{-3n})}{(1 - p^{-n})^3} + \frac{3(1 - p^{-2n})(1 - p^{-1})^2(p^{-1} - p^{-n})}{\gcd(n, p - 1)(1 - p^{-n})^3} + \\
\frac{3(1 - p^{-1})^3(p^{-2} - p^{-2n})}{\gcd(n, p - 1)(1 - p^{-n})^2(1 - p^{-2})}.
\end{multline*}
\end{enumerate}
\end{lemma}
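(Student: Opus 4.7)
The plan is to prove part (1) by partitioning the set $U := \set{\ba \in \Z_p^3 \setminus p\Z_p^3 : \mc{C}_{n,\ba}(\Q_p) \neq \emptyset}$, whose $p$-adic volume is $\delta_p(n)$, according to the reduced valuation pattern $(r_1, r_2, r_3) := (\mn{v_p(a_1)}, \mn{v_p(a_2)}, \mn{v_p(a_3)}) \in (\Z/n\Z)^3$. By \Cref{prop:Hensel_general} solubility forces $\#\set{r_1, r_2, r_3} \leq 2$, and since $\ba \notin p\Z_p^3$ at least one $r_i$ equals $0$. Up to symmetry the allowed patterns split into three types: (A) all $r_i = 0$; (B) exactly one $r_i \neq 0$ with the other two equal to $0$ (three choices of index); (C) exactly one $r_i = 0$ with the other two equal to a common $r \in \set{1, \ldots, n-1}$ (three choices of index). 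These will produce the three summands in the claimed identity, with symmetry multiplicities $1, 3, 3$ respectively.

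Within each case I would write $a_i = \bar{a}_i p^{nk_i}$ with $k_i \in \Z_{\geq 0}$ and $v_p(\bar{a}_i) = r_i$. The substitution $t_i \mapsto t_i p^{-k_i}$ shows that solubility depends only on the reduced triple $\bar{\ba}$, while the push-forward of Haar measure scales by $p^{-n(k_1+k_2+k_3)}$. The condition $\ba \notin p\Z_p^3$ forces the vanishing of some $k_i$ with $r_i = 0$: for (A) at least one of $k_1, k_2, k_3$; for (B) with $r_1 \neq 0$ at least one of $k_2, k_3$ (with $k_1$ free); for (C) with $r_3 = 0$ we need $k_3 = 0$ (with $k_1, k_2$ free). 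By \Cref{prop:Hensel_general} solubility of $\bar{\ba}$ is determined modulo $n^2 p^n$, so the measure of soluble reduced triples in each type equals $m_i/p^{3(2v_p(n)+n)}$ for $i = 1, 2, 3$ respectively.

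Computing the resulting sums over valid $(k_1, k_2, k_3)$ by inclusion-exclusion applied to the geometric series $\sum_{k_1, k_2, k_3 \geq 0} p^{-n(k_1+k_2+k_3)} = (1-p^{-n})^{-3}$ yields
\[
\frac{1-p^{-3n}}{(1-p^{-n})^3}, \qquad \frac{1-p^{-2n}}{(1-p^{-n})^3}, \qquad \frac{1}{(1-p^{-n})^2}
\]
for types (A), (B), (C) respectively. Multiplying each by the corresponding $m_i/p^{3(2v_p(n)+n)}$, the symmetry factors $1, 3, 3$, and finally by $p^{3(2v_p(n)+n)} = p^{6v_p(n)+3n}$ yields part (1).

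For part (2), the plan is to use \Cref{prop:Hensel_large_primes} to compute each $m_i$ explicitly when $p \notin S(n)$; note that $p \nmid n$ so $n^2 p^n = p^n$ in $\Z_p$. The count $m_1$ has a single valuation class so solubility is automatic by the Hasse--Weil bound, giving $m_1 = \phi(p^n)^3 = p^{3n}(1-p^{-1})^3$. For $m_2$, the two-class case imposes $-a_2/a_3 \in \FF_p^{\times n}$, so combining the count $p^{n-1} - 1 = p^n(p^{-1}-p^{-n})$ of admissible $a_1$ with the total $\phi(p^n)^2$ of unit pairs $(a_2, a_3)$ and the $n$-th power density $\gcd(n, p-1)^{-1}$ gives the closed form for $m_2$. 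For $m_3$, I would parameterise $a_1 = u_1 p^r, a_2 = u_2 p^r$ with $r \in \set{1, \ldots, n-1}$, apply the condition $-a_1/a_2 \in \FF_p^{\times n}$, and sum the geometric series $\sum_{r=1}^{n-1} p^{-2r} = (p^{-2}-p^{-2n})/(1-p^{-2})$ to produce $m_3$. Substituting these three values into the identity of part (1) and simplifying yields the claimed expression. The main obstacle is careful bookkeeping through the valuation parameterisations and geometric sums; no new ideas beyond the Hensel characterisation are required.
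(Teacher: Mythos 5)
Your proposal is correct and matches the paper's proof essentially step for step: the partition by reduced valuation pattern (the paper's $U_{\textnormal{main}}, V_i, W_i$), the $1:3:3$ symmetry, the $a_i = \bar a_i p^{nk_i}$ parameterisation and geometric series, and for part (2) the unit/power-residue counts for $m_1, m_2, m_3$ via \Cref{prop:Hensel_large_primes}. The only cosmetic difference is that you correctly write the solubility criterion as $-a_2/a_3 \in \FF_p^{\times n}$ (resp.\ $-a_1/a_2$), where the paper drops the sign; since $-1$ acts as a bijection on $\FF_p^\times$ this does not change any count.
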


\begin{proof}
Let $p$ be a prime, recall the definition \eqref{eq:locally_soluble_region}  of $U$ and the fact that $\delta_p(n) = \mu_p(U)$. We will split $U$ into the following pieces, where $i \in \{1,2,3\}$
\begin{equation*}
    \begin{split}
    U_{\text{main}} &:=  \{(a_1, a_2, a_3) \in U: v_p(a_1) \equiv v_p(a_2) \equiv v_p(a_3) \equiv 0 \bmod{n}\} \\
    V_i &:= \{(a_1, a_2, a_3) \in U: v_p(a_j) \equiv 0 \bmod{n} \text{ for } j \neq i \text{ and } v_p(a_i) \not \equiv 0 \bmod{n} \} \\
    W_i &:= \{(a_1, a_2, a_3) \in U: v_p(a_j) \not \equiv 0 \bmod{n} \text{ for } j \neq i \text{ and } v_p(a_i) \equiv 0 \bmod{n} \}.
    \end{split}
\end{equation*}
It is clear that $U = U_{\text{main}} \coprod V_1 \coprod V_2 \coprod V_3 \coprod W_1 \coprod W_2 \coprod W_3$. Symmetry considerations give the identities $\mu_p(V_1) = \mu_p(V_2) = \mu_p(V_3)$ and $\mu_p(W_1) = \mu_p(W_2) = \mu_p(W_3)$. It follows that 
$$
\mu_p(U) = \mu_p(U_{\text{main}}) + 3\mu_p(V_1) + 3\mu_p(W_3).
$$
For $a_1, a_2, a_3 \in \Z_p / n^2 p^n\Z_p$ and $k_1, k_2, k_3 \in \Z_{\geq 0}$, we introduce the set
\[
U(a_1, a_2, a_3 ; k_1, k_2, k_3) := \{(b_1, b_2, b_3) \in \Z_p^3 : v_p(b_i) \geq n k_i, b_i/p^{n k_i} \equiv a_i \bmod{n^2 p^n}\}.
\]
It follows from Proposition \ref{prop:Hensel_general} that if $v_p(a_i) < n$ and $(b_1, b_2, b_3) \in U(a_1, a_2, a_3 ; k_1, k_2, k_3)$, then $b_1 t_1^n + b_2 t_2^n + b_3 t_3^n = 0$ is soluble in $\Z_p$ if and only if $a_1 t_1^n + a_2 t_2^n + a_3 t_3^n = 0$ is soluble in $\Z_p/ n^2 p^n \Z_p$. We can thus cover $U_{\text{main}}, V_1, W_3$ as follows
\begin{equation*}
    \begin{split}
        U_{\text{main}} &= \coprod_{\substack{a_1, a_2, a_3 \in \Z_p / n^2 p^n\Z_p, p \nmid a_1, a_2,a_3 \\ a_1 t_1^n + a_2 t_2^n + a_3 t_3^n \text{ soluble in } \Z_p/n^2p^n\Z_p}} \coprod_{\substack{k_1, k_2, k_3  = 0 \\ \min(k_1, k_2, k_3) = 0}}^{\infty} U(a_1, a_2, a_3 ; k_1, k_2, k_3) \\
        V_1 &= \coprod_{\substack{a_1, a_2, a_3 \in \Z_p / n^2 p^n\Z_p, 0 < v_p(a_1) < n, p \nmid a_2,a_3 \\ a_1 t_1^n + a_2 t_2^n + a_3 t_3^n \text{ soluble in } \Z_p/n^2p^n\Z_p}} \coprod_{\substack{k_1, k_2, k_3  = 0 \\ \min(k_2, k_3) = 0}}^{\infty} U(a_1, a_2, a_3 ; k_1, k_2, k_3) \\
        W_3 &= \coprod_{\substack{a_1, a_2, a_3 \in \Z_p / n^2 p^n\Z_p, 0 < v_p(a_1) = v_p(a_2) < n, p \nmid a_3 \\ a_1 t_1^n + a_2 t_2^n + a_3 t_3^n \text{ soluble in } \Z_p/n^2p^n\Z_p}} \coprod_{\substack{k_1, k_2  = 0}}^{\infty} U(a_1, a_2, a_3 ; k_1, k_2, 0)
    \end{split}
\end{equation*}
The set $U(a_1, a_2, a_3 ; k_1, k_2, k_3)$ is a shift of $(p^{n k_1 }, p^{n k_2}, p^{n k_3})n^2 p^n \Z_p^3$ so has Haar measure $p^{- nk_1 - nk_2 - nk_3 - 3n - 6v_p(n)}$. The measure of a disjoint union is the sum of the measures so part 1 follows by evaluating the sum over $k_1,k_2,k_3$ using the following geometric sum identities
\begin{align}
    \label{eGeo1}
    \sum_{\substack{k_1, k_2, k_3  = 0 \\ \min(k_1, k_2, k_3) = 0}}^{\infty} p^{- nk_1 - nk_2 - nk_3} &= (1 - p^{-3n})\sum_{\substack{k_1, k_2, k_3  = 0}}^{\infty} p^{- nk_1 - nk_2 - nk_3} = \frac{1 - p^{-3n}}{(1 - p^{-n})^{3}} \\
    \label{eGeo2}
    \sum_{\substack{k_1, k_2, k_3  = 0 \\ \min(k_2, k_3) = 0}}^{\infty} p^{- nk_1 - nk_2 - nk_3} &= (1 - p^{-2n})\sum_{\substack{k_1, k_2, k_3  = 0}}^{\infty} p^{- nk_1 - nk_2 - nk_3} = \frac{1 - p^{-2n}}{(1 - p^{-n})^{3}} \\
    \label{eGeo3}
    \sum_{k_1, k_2 = 0}^{\infty} p^{- nk_1 - nk_2} &= \frac{1}{(1 - p^{-n})^2}.
\end{align}
If $p \not \in S(n)$, then we can compute $m_1, m_2$ and $m_3$ using Proposition \ref{prop:Hensel_large_primes}.
\begin{enumerate}
\item To compute $m_1$ we note that if $p \nmid a_1, a_2, a_3$, then $a_1 t_1^n + a_2 t_2^n + a_3 t_3^n = 0$ is always soluble in $\Z_p/p^n\Z_p$ (note that $p \not \in S(n)$ implies $n^2 p^n \Z_p = p^n \Z_p$). This implies that $m_1 = p^{3n}(1 - p^{-1})^3$.
\item If $0 < v_p(a_1) < n$ and $a_2, a_3 \in (\Z_p/ p^{n} \Z_p)^{\times}$, then $a_1 t_1^n + a_2 t_2^n + a_3 t_3^n = 0$ is soluble in $\Z_p/p^n\Z_p$ if and only if $a_2/a_3$ is an $n$-th power modulo $p$. There are thus $p^{n-1} - 1$ choices for $a_1$ and for every $a_2$ there are $\frac{p^n - p^{n-1}}{\gcd(n, p-1)}$ possibilities for $a_3$. This shows that $m_2 = \frac{p^{3n}}{\gcd(n, p - 1)}(1 - p^{-1})^2(p^{-1} - p^{-n})$.
\item In the last case $0 < v_p(a_1) = v_p(a_2) = \ell < n$ and $p \nmid a_3$. This is soluble in $\Z_p/p^n\Z_p$ if and only if $a_1/a_2$ is an $n$-th power modulo $p$. Analogous arguments to the previous case show that for any fixed $\ell$ the number of such triples $(a_1, a_2, a_3)$ is $\frac{p^{3n - 2\ell}}{\gcd(n, p - 1)}(1 - p^{-1})^3$. Summing over the possible $\ell$ shows that $m_3 = \frac{p^{3n}}{\gcd(n, p - 1)}(1 - p^{-1})^3(p^{-2} - p^{-2n})(1 - p^{-2})^{-1}$.
\end{enumerate}
Substituting these values into \eqref{eq:Local_Tamagawa_measure} shows that $\delta_p(n)$ is equal to 
\[
\frac{(1 - p^{-1})^3(1 - p^{-3n})}{(1 - p^{-n})^3} + \frac{3(1 - p^{-2n})(1 - p^{-1})^2(p^{-1} - p^{-n})}{\gcd(n, p - 1)(1 - p^{-n})^3} + \frac{3(1 - p^{-1})^3(p^{-2} - p^{-2n})}{\gcd(n, p - 1)(1 - p^{-n})^2(1 - p^{-2})},
\]
as desired.
\end{proof}

\subsection{\texorpdfstring{Verification of \Cref{conj:LRS for our setting}}{Verification of conjecture}}
We have now computed all of the invariants needed for \Cref{conj:LRS for our setting}.

\begin{theorem}
\label{thm:we prove LRS}
\Cref{conj:LRS for our setting} is true.
\end{theorem}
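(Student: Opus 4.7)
The strategy is to expand $\widetilde{C}_n$ using the invariants computed in Lemmas \ref{lem:computation of delta_D}, \ref{lem:subordinate_Brauer_group}, \ref{lem:tamagawa_local_densities} and Corollary \ref{cor:subordinate_obstruction}, and verify that the result matches Theorem \ref{thm:main summary} once one passes from integer triples to points of $\PP^2(\QQ)$. The count of $[a_1:a_2:a_3] \in \PP^2(\QQ)$ with $H \leq B$ is related to the coprime integer count $N((B,B,B))$ by the factor of $2$ coming from the $\pm 1$ ambiguity in primitive representatives, up to the height-normalisation convention of \cite{LRS}. The exponent of $\log B$ already agrees: Corollary \ref{cor:computation_of_Delta} gives $\Delta(f_n) = 3(1 - \alpha_n)$, hence $-\Delta(f_n) = 3\alpha_n - 3$ as in Theorem \ref{thm:main summary}.

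For the leading constant I would read off each factor of $\widetilde{C}_n$ in turn. By Lemma \ref{lem:subordinate_Brauer_group}, $|\Br_{\textnormal{sub}}(\PP^2, f_n)/\Br(\QQ)|$ equals $1$ when $n$ is odd and $2$ when $n$ is even, producing the prefactor $1 + \mathbf{1}_{\textnormal{even}}(n)$ of Theorem \ref{thm:leadingconstant}. By Lemma \ref{lem:computation of delta_D}, the $\delta_D$-invariant is trivial away from the three coordinate hyperplanes and equals $\alpha_n$ on each of them, so $\prod_{D \in X^{(1)}} \Gamma(\delta_D(f_n)) = \Gamma(\alpha_n)^3$. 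By Corollary \ref{cor:subordinate_obstruction}, the subordinate Brauer--Manin locus is the full adelic image, so $\tau_{f_n}$ decomposes as a product of local volumes, which Lemma \ref{lem:tamagawa_local_densities} evaluates explicitly as $\tfrac{3}{2}\delta_\infty(n)$ archimedeanly and $(1-p^{-1})^{-1}\delta_p(n)$ at each prime.

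It remains to match the remaining pieces: $\alpha^*(\PP^2)$, the Tamagawa convergence factors hidden inside $\tau_{f_n}$, and $\prod_{D \in X^{(1)}} \eta(D)^{1 - \delta_D(f_n)}$. The Tamagawa measure of \cite{LRS} is normalised with local factors $L_p^*(1,\Pic(\PP^2))^{-1} = (1 - p^{-1})$ ensuring absolute convergence of the $p$-adic Euler product; together with the three $\eta(D)^{1-\alpha_n}$ contributions from the coordinate hyperplanes and the global normalisation $\alpha^*(\PP^2)$, these should combine to produce precisely the factors $(1 - p^{-1})^{3\alpha_n - 3}$ occurring in $C_n$. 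Unfolding the definitions of $\eta(D)$ and $\alpha^*(X)$ in \cite{LRS} for $X = \PP^2$ with its three torus-invariant prime divisors then lets one evaluate each of these explicitly and complete the comparison.

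The main obstacle is purely bookkeeping: tracing each $(1-p^{-1})$-factor through its various sources --- Tamagawa convergence, Peyre's constant, the boundary $\eta$-factors, the projective normalisation, and the $\tfrac{3}{2}$ appearing archimedeanly in Lemma \ref{lem:tamagawa_local_densities} --- and confirming that everything collapses to reproduce $C_n$ exactly. All ingredients have been determined in the earlier subsections, so the verification is a mechanical calculation rather than a conceptual issue.
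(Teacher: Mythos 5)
Your proposal follows the same route as the paper: expand $\widetilde{C}_n$ using Lemmas \ref{lem:computation of delta_D}, \ref{lem:subordinate_Brauer_group}, \ref{lem:tamagawa_local_densities} and Corollary \ref{cor:subordinate_obstruction}, and reconcile it with \Cref{thm:main summary} after accounting for the passage from $\PP^2(\QQ)$ to coprime integer triples and the anticanonical-height normalisation. All the inputs you cite are correct, and your identification of the $\Gamma(\alpha_n)^3$, the $1+\mathbf{1}_{\textnormal{even}}(n)$ factor, the exponent $-\Delta(f_n)=3\alpha_n-3$, and the local Tamagawa volumes all match the paper.

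The one place where your description is misleading and where the real content of the final verification lies: the factors $\alpha^*(\PP^2)$, $\eta(D)^{1-\alpha_n}$, and the Tamagawa convergence factors do \emph{not} all collapse into the Euler product $\prod_p(1-p^{-1})^{3\alpha_n-3}$. The local convergence factors $\lambda_p^{-1}=(1-p^{-1})^{3\alpha_n-2}$ combine with the $(1-p^{-1})^{-1}$ from $\tau_p$ to give exactly $(1-p^{-1})^{3\alpha_n-3}$, but $\alpha^*(\PP^2)=\tfrac13$, the three boundary contributions $\eta(\{a_i=0\})^{1-\alpha_n}=3^{1-\alpha_n}$, and the archimedean $\tfrac32$ are \emph{global} numerical factors; after also inserting the $\tfrac12$ from $\PP^2(\QQ)$ one finds $\widetilde{C}_n=\tfrac{3^{3(1-\alpha_n)}}{2}C_n$. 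The residual $3^{3(1-\alpha_n)}$ is nontrivial and is exactly absorbed by the logarithm when one rescales $B\mapsto B^3$ to pass from anticanonical height $\max|a_i|^3\le B$ to the naive box $\max|a_i|\le B$, since $(\log B^3)^{3\alpha_n-3}=3^{3\alpha_n-3}(\log B)^{3\alpha_n-3}$. If you do not track this $3^{3(1-\alpha_n)}$ cancellation explicitly, the constants appear to be off by a power of $3$, so it is not purely mechanical bookkeeping but the one genuinely delicate step in the comparison.
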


\begin{proof} 
Let us first note that the conjecture is stated for points $[a_1: a_2: a_3] \in \Proj^2(\Q)$, which are in a $1$ to $2$ relation with coprime triples $(a_1, a_2, a_3) \in \Z^3$.  Moreover, the conjecture is stated in terms of the anticanonical height, which is $\max(|a_1|, |a_2|, |a_3|)^3$, and not the standard height. The left-hand side in \Cref{conj:LRS for our setting} is thus $\frac{1}{2} N(B^{\frac{1}{3}}, B^{\frac{1}{3}}, B^{\frac{1}{3}})$.   Moreover, by \Cref{cor:computation_of_Delta}, the conjectural logarithmic power is $-\Delta(f_n)=-3(1-\alpha_n)$ which agrees with \Cref{tMain}.  Hence we need only check the conjectural leading constant $\widetilde{C}_n$ against \Cref{thm:leadingconstant}.

Applying Lemmas \ref{lem:computation of delta_D}, \ref{lem:subordinate_Brauer_group}, \ref{cor:subordinate_obstruction}, and \ref{lem:tamagawa_local_densities}, we obtain
\[\widetilde{C}_n=\frac{\alpha^*(\PP^2)\braces{1+\mathbf{1}_{\textnormal{even}}(n)}\tau_{f_n}\braces{\prod_{v}f_n(\cX_n(\QQ_v))}}
{\Gamma(\alpha_n)^3}\prod_{i=1}^3\eta\braces{\set{a_i=0}}^{1-\alpha_n}.\]
Note that, by \cite{LRS}*{Equation (3.2) or proof of Lemma 4.5}, we have $\alpha^*(\PP^2)=\alpha(\PP^2)=1/3$, where $\alpha$ is Peyre's constant.  Moreover, by \cite{LRS}*{Lemma 3.5} and the remark following it, for each $i$ we have $\eta(\set{a_i=0})=\eta(\mathcal{O}_{\PP^2}(1))=3$.  Hence we have
\[\widetilde{C}_n=\frac{\braces{1+\mathbf{1}_{\textnormal{even}}(n)}3^{2-3\alpha_n}}
{\Gamma(\alpha_n)^3}\tau_{f_n}\braces{\prod_{v}f_n(\cX_n(\QQ_v))}.\]

It remains to compute the Tamagawa measure.  As in \cite{LRS}*{Equation (3.6)} we express this as $\tau_{f_n}=\prod_v\tau_v(f_n(\cX(\QQ_v)))\lambda_v^{-1}$, where the $\lambda_v$ are convergence factors used to normalise the Tamagawa measure.  By \cite{LRS}*{Equation (3.5) and subsequent equation}, since $\set{a_i=0}$ is geometrically irreducible, these are given by $\lambda_\infty=1$ and for primes $p$ we get $\lambda_p=\braces{1-\frac{1}{p}}^{2-3\alpha_n}$.
We then apply \Cref{lem:tamagawa_local_densities} to conclude that
\[\tau_{f_n}\braces{\prod_vf_v\braces{\cX_n(\QQ_v)}}=\frac{3}{2}\delta_\infty(n)\prod_{p}\delta_p(n)\braces{1-\frac{1}{p}}^{3\alpha_n-3}.\]
Hence we have determined that
\[\widetilde{C}_n=\frac{\braces{1+\mathbf{1}_{\textnormal{even}}(n)}\delta_\infty(n)3^{3(1-\alpha_n)}}
{2\Gamma(\alpha_n)^3}\prod_{p}\delta_p(n)\braces{1-\frac{1}{p}}^{3\alpha_n-3}=\frac{3^{3(1-\alpha_n)}}{2}C_n,\]
where $C_n$ is the leading constant of \Cref{thm:leadingconstant}.  Hence the conjecture predicts 
$$
\tfrac{1}{2}N(B^{1/3},B^{1/3},B^{1/3})\sim \frac{3^{3(1-\alpha_n)}}{2}C_n B\log(B)^{-3(1-\alpha_n)}.  
$$
Multiplying by $2$ and performing the change of variables $B\mapsto B^3$ we then obtain the prediction $N(B,B,B)\sim C_n B^3\log(B)^{3\alpha_n-3}$, which is true by \Cref{tMain}.
\end{proof}

\bibliography{bibliography}

@book {IK,
    AUTHOR = {Iwaniec, Henryk and Kowalski, Emmanuel},
     TITLE = {Analytic number theory},
    SERIES = {American Mathematical Society Colloquium Publications},
    VOLUME = {53},
 PUBLISHER = {American Mathematical Society, Providence, RI},
      YEAR = {2004},
     PAGES = {xii+615},
      ISBN = {0-8218-3633-1},
   MRCLASS = {11-02 (11Fxx 11Lxx 11Mxx 11Nxx)},
  MRNUMBER = {2061214},
MRREVIEWER = {K.\ Soundararajan},
       DOI = {10.1090/coll/053},
       URL = {https://doi.org/10.1090/coll/053},
}

@article {BCDY,
    AUTHOR = {Bennett, Michael A. and Chen, Imin and Dahmen, Sander R. and
              Yazdani, Soroosh},
     TITLE = {Generalized {F}ermat equations: a miscellany},
   JOURNAL = {Int. J. Number Theory},
  FJOURNAL = {International Journal of Number Theory},
    VOLUME = {11},
      YEAR = {2015},
    NUMBER = {1},
     PAGES = {1--28},
      ISSN = {1793-0421,1793-7310},
   MRCLASS = {11D41 (11D61 11G05 14G05)},
  MRNUMBER = {3280939},
MRREVIEWER = {Dimitrios\ Poulakis},
       DOI = {10.1142/S179304211530001X},
       URL = {https://doi.org/10.1142/S179304211530001X},
}

@book {Browning,
    AUTHOR = {Browning, Tim},
     TITLE = {Cubic forms and the circle method},
    SERIES = {Progress in Mathematics},
    VOLUME = {343},
 PUBLISHER = {Birkh\"{a}user/Springer, Cham},
      YEAR = {[2021] \copyright 2021},
     PAGES = {xiv+166},
      ISBN = {978-3-030-86871-0; 978-3-030-86872-7},
   MRCLASS = {11P55 (11-02 11D25 11D57 11E76 11G25 11P05)},
  MRNUMBER = {4387070},
MRREVIEWER = {Rainer\ Dietmann},
       DOI = {10.1007/978-3-030-86872-7},
       URL = {https://doi.org/10.1007/978-3-030-86872-7},
}

@incollection {BD,
    AUTHOR = {Browning, T. D. and Dietmann, R.},
     TITLE = {Solubility of {F}ermat equations},
 BOOKTITLE = {Quadratic forms---algebra, arithmetic, and geometry},
    SERIES = {Contemp. Math.},
    VOLUME = {493},
     PAGES = {99--106},
 PUBLISHER = {Amer. Math. Soc., Providence, RI},
      YEAR = {2009},
      ISBN = {978-0-8218-4648-3},
   MRCLASS = {11D45 (11D41 11G35 11G50 14G05)},
  MRNUMBER = {2537095},
MRREVIEWER = {Robert\ Juricevic},
       DOI = {10.1090/conm/493/09666},
       URL = {https://doi.org/10.1090/conm/493/09666},
}

@article {BLS,
    AUTHOR = {Browning, Tim and Lyczak, Julian and Sarapin, Roman},
     TITLE = {Local solubility for a family of quadrics over a split quadric
              surface},
   JOURNAL = {Involve},
  FJOURNAL = {Involve. A Journal of Mathematics},
    VOLUME = {16},
      YEAR = {2023},
    NUMBER = {2},
     PAGES = {331--342},
      ISSN = {1944-4176,1944-4184},
   MRCLASS = {14G05 (11N36 14D10)},
  MRNUMBER = {4597250},
       DOI = {10.2140/involve.2023.16.331},
       URL = {https://doi.org/10.2140/involve.2023.16.331},
}

@book {Col21,
    AUTHOR = {Colliot-Th\'{e}l\`ene, Jean-Louis and Skorobogatov, Alexei N.},
     TITLE = {The {B}rauer-{G}rothendieck group},
    SERIES = {Ergebnisse der Mathematik und ihrer Grenzgebiete. 3. Folge. A
              Series of Modern Surveys in Mathematics [Results in
              Mathematics and Related Areas. 3rd Series. A Series of Modern
              Surveys in Mathematics]},
    VOLUME = {71},
 PUBLISHER = {Springer, Cham},
      YEAR = {[2021] \copyright 2021},
     PAGES = {xv+453},
      ISBN = {978-3-030-74247-8; 978-3-030-74248-5},
   MRCLASS = {14F22 (14E08 14G05 14G12 14K05)},
  MRNUMBER = {4304038},
MRREVIEWER = {Thomas\ Benedict\ Williams},
       DOI = {10.1007/978-3-030-74248-5},
       URL = {https://doi.org/10.1007/978-3-030-74248-5},
}

@article {Darmon,
    AUTHOR = {Darmon, H.},
     TITLE = {Faltings plus epsilon, {W}iles plus epsilon, and the
              generalized {F}ermat equation},
   JOURNAL = {C. R. Math. Rep. Acad. Sci. Canada},
  FJOURNAL = {La Soci\'{e}t\'{e} Royale du Canada. L'Acad\'{e}mie des
              Sciences. Comptes Rendus Math\'{e}matiques (Mathematical
              Reports)},
    VOLUME = {19},
      YEAR = {1997},
    NUMBER = {1},
     PAGES = {3--14},
      ISSN = {0706-1994},
   MRCLASS = {11D41 (11G18 14G25)},
  MRNUMBER = {1479291},
MRREVIEWER = {Andrew\ Bremner},
}

@article {DG,
    AUTHOR = {Darmon, Henri and Granville, Andrew},
     TITLE = {On the equations {$z^m=F(x,y)$} and {$Ax^p+By^q=Cz^r$}},
   JOURNAL = {Bull. London Math. Soc.},
  FJOURNAL = {The Bulletin of the London Mathematical Society},
    VOLUME = {27},
      YEAR = {1995},
    NUMBER = {6},
     PAGES = {513--543},
      ISSN = {0024-6093,1469-2120},
   MRCLASS = {11D41},
  MRNUMBER = {1348707},
MRREVIEWER = {Nigel\ Boston},
       DOI = {10.1112/blms/27.6.513},
       URL = {https://doi.org/10.1112/blms/27.6.513},
}

@article {Ellenberg,
    AUTHOR = {Ellenberg, Jordan S.},
     TITLE = {Galois representations attached to {$\mathbb{Q}$}-curves and the
              generalized {F}ermat equation {$A^4+B^2=C^p$}},
   JOURNAL = {Amer. J. Math.},
  FJOURNAL = {American Journal of Mathematics},
    VOLUME = {126},
      YEAR = {2004},
    NUMBER = {4},
     PAGES = {763--787},
      ISSN = {0002-9327,1080-6377},
   MRCLASS = {11F80 (11D41 11G30)},
  MRNUMBER = {2075481},
MRREVIEWER = {Imin\ Chen},
       URL =
              {http://muse.jhu.edu/journals/american_journal_of_mathematics/v126/126.4ellenberg.pdf},
}

@article {FNS,
    AUTHOR = {Freitas, Nuno and Naskr\polhk{e}cki, Bartosz and Stoll,
              Michael},
     TITLE = {The generalized {F}ermat equation with exponents {$2$}, {$3$},
              {$n$}},
   JOURNAL = {Compos. Math.},
  FJOURNAL = {Compositio Mathematica},
    VOLUME = {156},
      YEAR = {2020},
    NUMBER = {1},
     PAGES = {77--113},
      ISSN = {0010-437X,1570-5846},
   MRCLASS = {11D41 (11F80 11G05 11G07 11G30 14G05 14G25)},
  MRNUMBER = {4036449},
MRREVIEWER = {Nicolas\ Billerey},
       DOI = {10.1112/s0010437x19007693},
       URL = {https://doi.org/10.1112/s0010437x19007693},
}

@article {FK,
    AUTHOR = {Fouvry, \'{E}tienne and Kl\"{u}ners, J\"{u}rgen},
     TITLE = {On the 4-rank of class groups of quadratic number fields},
   JOURNAL = {Invent. Math.},
  FJOURNAL = {Inventiones Mathematicae},
    VOLUME = {167},
      YEAR = {2007},
    NUMBER = {3},
     PAGES = {455--513},
      ISSN = {0020-9910,1432-1297},
   MRCLASS = {11R29 (11R11 11R45)},
  MRNUMBER = {2276261},
MRREVIEWER = {Jerzy\ Browkin},
       DOI = {10.1007/s00222-006-0021-2},
       URL = {https://doi.org/10.1007/s00222-006-0021-2},
}

@article {FI,
    AUTHOR = {Friedlander, John and Iwaniec, Henryk},
     TITLE = {Ternary quadratic forms with rational zeros},
   JOURNAL = {J. Th\'{e}or. Nombres Bordeaux},
  FJOURNAL = {Journal de Th\'{e}orie des Nombres de Bordeaux},
    VOLUME = {22},
      YEAR = {2010},
    NUMBER = {1},
     PAGES = {97--113},
      ISSN = {1246-7405,2118-8572},
   MRCLASS = {11N37 (11E20)},
  MRNUMBER = {2675875},
MRREVIEWER = {Poo-Sung\ Park},
       URL = {http://jtnb.cedram.org/item?id=JTNB_2010__22_1_97_0},
}

@article {Goldstein,
    AUTHOR = {Goldstein, Larry Joel},
     TITLE = {A generalization of the {S}iegel-{W}alfisz theorem},
   JOURNAL = {Trans. Amer. Math. Soc.},
  FJOURNAL = {Transactions of the American Mathematical Society},
    VOLUME = {149},
      YEAR = {1970},
     PAGES = {417--429},
      ISSN = {0002-9947,1088-6850},
   MRCLASS = {10.65},
  MRNUMBER = {274416},
MRREVIEWER = {H.\ L.\ Montgomery},
       DOI = {10.2307/1995404},
       URL = {https://doi.org/10.2307/1995404},
}

@article {LSDmethod,
    AUTHOR = {Granville, Andrew and Koukoulopoulos, Dimitris},
     TITLE = {Beyond the {LSD} method for the partial sums of multiplicative
              functions},
   JOURNAL = {Ramanujan J.},
  FJOURNAL = {Ramanujan Journal. An International Journal Devoted to the
              Areas of Mathematics Influenced by Ramanujan},
    VOLUME = {49},
      YEAR = {2019},
    NUMBER = {2},
     PAGES = {287--319},
      ISSN = {1382-4090,1572-9303},
   MRCLASS = {11N37},
  MRNUMBER = {3949071},
MRREVIEWER = {Pieter\ Moree},
       DOI = {10.1007/s11139-018-0119-3},
       URL = {https://doi.org/10.1007/s11139-018-0119-3},
}

@article {Guo,
    AUTHOR = {Guo, Charng Rang},
     TITLE = {On solvability of ternary quadratic forms},
   JOURNAL = {Proc. London Math. Soc. (3)},
  FJOURNAL = {Proceedings of the London Mathematical Society. Third Series},
    VOLUME = {70},
      YEAR = {1995},
    NUMBER = {2},
     PAGES = {241--263},
      ISSN = {0024-6115,1460-244X},
   MRCLASS = {11E20 (11L40)},
  MRNUMBER = {1309229},
MRREVIEWER = {Don\ Redmond},
       DOI = {10.1112/plms/s3-70.2.241},
       URL = {https://doi.org/10.1112/plms/s3-70.2.241},
}

@article {HB,
    AUTHOR = {Heath-Brown, D. R.},
     TITLE = {The size of {S}elmer groups for the congruent number problem.
              {II}},
      NOTE = {With an appendix by P. Monsky},
   JOURNAL = {Invent. Math.},
  FJOURNAL = {Inventiones Mathematicae},
    VOLUME = {118},
      YEAR = {1994},
    NUMBER = {2},
     PAGES = {331--370},
      ISSN = {0020-9910,1432-1297},
   MRCLASS = {11G40 (11G05)},
  MRNUMBER = {1292115},
MRREVIEWER = {Fernando\ Q.\ Gouv\^{e}a},
       DOI = {10.1007/BF01231536},
       URL = {https://doi.org/10.1007/BF01231536},
}

@book {Kou,
    AUTHOR = {Koukoulopoulos, Dimitris},
     TITLE = {The distribution of prime numbers},
    SERIES = {Graduate Studies in Mathematics},
    VOLUME = {203},
 PUBLISHER = {American Mathematical Society, Providence, RI},
      YEAR = {[2019] \copyright 2019},
     PAGES = {xii + 356},
      ISBN = {978-1-4704-4754-0; 978-1-4704-6285-7},
   MRCLASS = {11N05 (11-01 11M06 11N35 11N60)},
  MRNUMBER = {3971232},
MRREVIEWER = {Y.-F.\ S.\ P\'{e}termann},
       DOI = {10.1090/gsm/203},
       URL = {https://doi.org/10.1090/gsm/203},
}

@article {KR,
    AUTHOR = {Koymans, P. and Rome, N.},
     TITLE = {Weak approximation on the norm one torus},
   JOURNAL = {Compos. Math.},
  FJOURNAL = {Compositio Mathematica},
    VOLUME = {160},
      YEAR = {2024},
    NUMBER = {6},
     PAGES = {1304--1348},
      ISSN = {0010-437X,1570-5846},
   MRCLASS = {11N45 (11L40 14G05 14G12 20G30)},
  MRNUMBER = {4741317},
       DOI = {10.1112/S0010437X24007103},
       URL = {https://doi.org/10.1112/S0010437X24007103},
}

@article {LS,
    AUTHOR = {Loughran, D. and Smeets, A.},
     TITLE = {Fibrations with few rational points},
   JOURNAL = {Geom. Funct. Anal.},
  FJOURNAL = {Geometric and Functional Analysis},
    VOLUME = {26},
      YEAR = {2016},
    NUMBER = {5},
     PAGES = {1449--1482},
      ISSN = {1016-443X,1420-8970},
   MRCLASS = {14G05 (11G35 11N36 14D10)},
  MRNUMBER = {3568035},
MRREVIEWER = {Ulrich\ Derenthal},
       DOI = {10.1007/s00039-016-0381-8},
       URL = {https://doi.org/10.1007/s00039-016-0381-8},
}

@article {LM,
    AUTHOR = {Loughran, Daniel and Matthiesen, Lilian},
     TITLE = {Frobenian multiplicative functions and rational points in
              fibrations},
   JOURNAL = {J. Eur. Math. Soc. (JEMS)},
  FJOURNAL = {Journal of the European Mathematical Society (JEMS)},
    VOLUME = {26},
      YEAR = {2024},
    NUMBER = {12},
     PAGES = {4779--4830},
      ISSN = {1435-9855,1435-9863},
   MRCLASS = {14G05 (11B30 11D45 11N37 14D10)},
  MRNUMBER = {4780494},
       DOI = {10.4171/jems/1374},
       URL = {https://doi.org/10.4171/jems/1374},
}

@article {LTT,
    AUTHOR = {Loughran, Daniel and Takloo-Bighash, Ramin and Tanimoto, Sho},
     TITLE = {Zero-loci of {B}rauer group elements on semi-simple algebraic
              groups},
   JOURNAL = {J. Inst. Math. Jussieu},
  FJOURNAL = {Journal of the Institute of Mathematics of Jussieu. JIMJ.
              Journal de l'Institut de Math\'ematiques de Jussieu},
    VOLUME = {19},
      YEAR = {2020},
    NUMBER = {5},
     PAGES = {1467--1507},
      ISSN = {1474-7480,1475-3030},
   MRCLASS = {14G05 (11D45 11F72 14G12)},
  MRNUMBER = {4138949},
MRREVIEWER = {Charlotte\ Ure},
       DOI = {10.1017/s1474748018000440},
       URL = {https://doi.org/10.1017/s1474748018000440},
}

@incollection {PV,
    AUTHOR = {Poonen, Bjorn and Voloch, Jos\'{e} Felipe},
     TITLE = {Random {D}iophantine equations},
 BOOKTITLE = {Arithmetic of higher-dimensional algebraic varieties ({P}alo
              {A}lto, {CA}, 2002)},
    SERIES = {Progr. Math.},
    VOLUME = {226},
     PAGES = {175--184},
      NOTE = {With appendices by Jean-Louis Colliot-Th\'{e}l\`ene and
              Nicholas M. Katz},
 PUBLISHER = {Birkh\"{a}user Boston, Boston, MA},
      YEAR = {2004},
      ISBN = {0-8176-3259-X},
   MRCLASS = {11D99 (11G50)},
  MRNUMBER = {2029869},
MRREVIEWER = {Antoine\ Ducros},
       DOI = {10.1007/978-0-8176-8170-8\_11},
       URL = {https://doi.org/10.1007/978-0-8176-8170-8_11},
}

@article {San23,
    AUTHOR = {Santens, Tim},
     TITLE = {Diagonal quartic surfaces with a {B}rauer-{M}anin obstruction},
   JOURNAL = {Compos. Math.},
  FJOURNAL = {Compositio Mathematica},
    VOLUME = {159},
      YEAR = {2023},
    NUMBER = {4},
     PAGES = {659--710},
      ISSN = {0010-437X,1570-5846},
   MRCLASS = {14G12 (11D25 11N37)},
  MRNUMBER = {4564991},
       DOI = {10.1112/S0010437X22007916},
       URL = {https://doi.org/10.1112/S0010437X22007916},
}

@article {Serre,
    AUTHOR = {Serre, Jean-Pierre},
     TITLE = {Sp\'{e}cialisation des \'{e}l\'{e}ments de {${\rm Br}_2({\bf
              Q}(T_1,\cdots,T_n))$}},
   JOURNAL = {C. R. Acad. Sci. Paris S\'{e}r. I Math.},
  FJOURNAL = {Comptes Rendus de l'Acad\'{e}mie des Sciences. S\'{e}rie I.
              Math\'{e}matique},
    VOLUME = {311},
      YEAR = {1990},
    NUMBER = {7},
     PAGES = {397--402},
      ISSN = {0764-4442},
   MRCLASS = {12G05 (11G35 13A20)},
  MRNUMBER = {1075658},
MRREVIEWER = {Vyacheslav\ I.\ Yanchevski\u{\i}},
}

@article {Wiles,
    AUTHOR = {Wiles, Andrew},
     TITLE = {Modular elliptic curves and {F}ermat's last theorem},
   JOURNAL = {Ann. of Math. (2)},
  FJOURNAL = {Annals of Mathematics. Second Series},
    VOLUME = {141},
      YEAR = {1995},
    NUMBER = {3},
     PAGES = {443--551},
      ISSN = {0003-486X,1939-8980},
   MRCLASS = {11G05 (11D41 11F11 11F80 11G18)},
  MRNUMBER = {1333035},
MRREVIEWER = {Karl\ Rubin},
       DOI = {10.2307/2118559},
       URL = {https://doi.org/10.2307/2118559},
}

@article{Bhargava,
    AUTHOR = {Bhargava, Manjul},
     TITLE = {A positive proportion of plane cubics fail the Hasse principle},
      YEAR = {2014},
   JOURNAL = {arXiv:1402.1131v1 [math.NT]},
       URL = {https://doi.org/10.48550/arXiv.1402.1131},
}

@article{LRS,
    AUTHOR = {Loughran, Daniel and Rome, Nick and Sofos, Efthymios},
     TITLE = {The leading constant for rational points in families},
      YEAR = {2022},
   JOURNAL = {arXiv:2210.13559 [math.NT]},
       URL = {https://doi.org/10.48550/arXiv.2210.13559},
}

@article {SV,
    AUTHOR = {Sofos, Efthymios and Visse-Martindale, Erik},
     TITLE = {The density of fibres with a rational point for a fibration
              over hypersurfaces of low degree},
   JOURNAL = {Ann. Inst. Fourier (Grenoble)},
  FJOURNAL = {Universit\'e{} de Grenoble. Annales de l'Institut Fourier},
    VOLUME = {71},
      YEAR = {2021},
    NUMBER = {2},
     PAGES = {679--709},
      ISSN = {0373-0956,1777-5310},
   MRCLASS = {14G05 (11D85 11G35 11P55 14D06 14D10)},
  MRNUMBER = {4353917},
MRREVIEWER = {Yong\ Hu},
       DOI = {10.5802/aif.3413},
       URL = {https://doi.org/10.5802/aif.3413},
}

@article {BL,
    AUTHOR = {Browning, Tim and Loughran, Daniel},
     TITLE = {Sieving rational points on varieties},
   JOURNAL = {Trans. Amer. Math. Soc.},
  FJOURNAL = {Transactions of the American Mathematical Society},
    VOLUME = {371},
      YEAR = {2019},
    NUMBER = {8},
     PAGES = {5757--5785},
      ISSN = {0002-9947,1088-6850},
   MRCLASS = {14G05 (11N36 11P55 14D10)},
  MRNUMBER = {3937309},
MRREVIEWER = {Sajad\ Salami},
       DOI = {10.1090/tran/7514},
       URL = {https://doi.org/10.1090/tran/7514},
}

@article{Wilson,
    AUTHOR = {Wilson, Cameron},
     TITLE = {Asymptotics For Local Solubility Of Diagonal Quadrics Over A Split Quadric Surface},
      YEAR = {2024},
   JOURNAL = {arXiv:\allowbreak2404.11489v2 [math.NT]},
}

@article{Wolff,
    AUTHOR = {Wolff, Golo},
     TITLE = {Local solubility of ternary cubic forms},
      YEAR = {2024},
   JOURNAL = {arXiv:2412.14980v1 [math.NT]},
}
\end{document}